    \def\biblio{\bibliography{duality}\bibliographystyle{alpha}}
\definecolor{DefColor}{rgb}{0.6,0.15,0.25}
\definecolor{dark-red}{rgb}{0.5,0.15,0.15}
\definecolor{dark-blue}{rgb}{0.15,0.15,0.6}
\definecolor{dark-green}{rgb}{0.15,0.6,0.15}
\newcommand{\an}{\mathrm{an}}
\renewcommand*{\backref}[1]{}
\renewcommand*{\backrefalt}[4]{%
  \ifcase #1 %
No citations.% use \relax if you do not want the "No citations" message
  \or
(cit. on p. #2).%
  \else
(cit on pp. #2).%
  \fi%
}
\newtheorem{thm}{Theorem}[subsection]
\newtheorem{cor}[thm]{Corollary}
\newtheorem{prop}[thm]{Proposition}
\newtheorem{lem}[thm]{Lemma}
\newtheorem{lemma}[thm]{Lemma}
\newtheorem{conj}[thm]{Conjecture}
\theoremstyle{definition}
\newtheorem{defn}[thm]{Definition}
\newtheorem{example}[thm]{Example}
\newtheorem{notation}[thm]{Notation}
\theoremstyle{remark}
\newtheorem{rem}[thm]{Remark}
\theoremstyle{theorem}
\newtheorem*{thm*}{Theorem}
\newtheorem*{quest*}{Question}
\newtheorem*{cor*}{Corollary}
\newtheorem{thmx}{Theorem}
\let\c@equation\c@thm
\numberwithin{equation}{subsection}
\DeclareMathOperator{\Sp}{Sp}
\DeclareMathOperator{\Spd}{Spd}
\DeclareMathOperator{\Hom}{Hom}
\DeclareMathOperator{\RHom}{RHom}
\DeclareMathOperator{\End}{End}
\DeclareMathOperator{\RZ}{RZ}
\DeclareMathOperator{\height}{ht}
\DeclareMathOperator{\Sht}{Sht}
\DeclareMathOperator{\univ}{univ}
\DeclareMathOperator{\cC}{\mathcal{C}}
\DeclareMathOperator{\LT}{LT}
\DeclareMathOperator{\Spec}{Spec}
\DeclareMathOperator{\Mod}{Mod}
\DeclareMathOperator{\fib}{fib}
\newcommand{\Q}{\mathbb{Q}}
\DeclareMathOperator{\Pic}{Pic}
\DeclareMathOperator{\Spa}{Spa}
\DeclareMathOperator{\Cond}{Cond}
\DeclareMathOperator{\cond}{cond}
\DeclareMathOperator{\Ab}{Ab}
\DeclareMathOperator{\ab}{ab}
\DeclareMathOperator{\FG}{FG}
\newcommand{\ZZ}{\mathbb{Z}}
\newcommand{\QQ}{\mathbb{Q}}
\newcommand{\PP}{\mathbb{P}}
\DeclareMathOperator{\unit}{\mathbbm{1}}
\DeclareMathOperator{\Perf}{Perf}
\DeclareMathOperator{\Aut}{Aut}
\DeclareMathOperator{\Lie}{Lie}
\DeclareMathOperator{\Def}{Def}
\DeclareMathOperator{\coker}{coker}
\newcommand{\et}{\mathrm{\acute{e}t}}
\newcommand{\proet}{\mathrm{pro\acute{e}t}}
\newcommand{\M}{\mathcal{M}}
\newcommand{\xx}{\ast \ast}
\newcommand{\Oxx}{\OO^{ \xx}}
\newcommand{\E}{\mathcal{E}}
\newcommand{\G}{\mathbb{G}}
\DeclareMathOperator{\LC}{LC}
\newcommand{\Z}{\mathbb{Z}}
\newcommand{\HH}{\mathcal{H}}
\newcommand{\SW}{\mathfrak{M}}
\renewcommand{\diamond}{\diamondsuit}
\Crefname{figure}{Figure}{Figures}
\Crefname{assu}{Assumption}{Assumptions}
\Crefname{lem}{Lemma}{Lemmas}
\Crefname{thm}{Theorem}{Theorems}
\Crefname{thma}{Theorem}{Theorems}
\Crefname{prop}{Proposition}{Propositions}
\DeclareMathOperator{\cts}{cts}
\DeclareMathOperator{\BTT}{BT}
\newcommand{\recollement}[5]{
\xymatrix{{#1} \ar[r]|-{#2} & #3 \ar[r]|-{#4} \ar@<1ex>[l]^-{{#2}_!} \ar@<-1ex>[l]_-{{#2}^*} & #5, \ar@<1ex>[l]^-{{#4}!} \ar@<-1ex>[l]_-{{#4}^*}
}}
\let\lim\relax
\DeclareMathOperator{\lim}{lim}
\newcommand{\cZ}{\mathcal{Z}}
\newcommand{\bG}{\mathbb{G}}
\newcommand{\F}{\mathbb{F}}
\newcommand{\OO}{\mathcal{O}}
\DeclareMathOperator{\GL}{GL}
\DeclareMathOperator{\SL}{SL}
\DeclareMathOperator{\Spf}{Spf}
\DeclareMathOperator{\BT}{BT}
\DeclareMathOperator{\St}{St}
\DeclareMathOperator{\cl}{cl}
\DeclareMathOperator{\alg}{alg}
\DeclareMathOperator{\Gal}{Gal}
\DeclareMathOperator{\Isom}{Isom}
\DeclareMathOperator{\QIsog}{QIsog}
\DeclareMathOperator{\Bun}{Bun}
\newcommand{\abs}[1]{\left\lvert #1 \right\rvert}
\newcommand{\set}[1]{\left\{ #1 \right\}}
\newcommand{\powerseries}[1]{\llbracket #1 \rrbracket}
\newcommand{\isom}{\cong}
\newcommand{\from}{\colon}
\def\isomto{\stackrel{\sim}{\longrightarrow}}
\DeclareMathOperator{\can}{can}
\title{On Hopkins' Picard group}
\author[Barthel]{Tobias Barthel}
\address{Max Planck Institute for Mathematics, Vivatsgasse 7, 53111 Bonn, Germany}
\email{tbarthel@mpim-bonn.mpg.de}
\author[Schlank]{Tomer M.~Schlank}
\address{The Hebrew University of Jerusalem}
\email{tomer.schlank@mail.huji.ac.il}
\author[Stapleton]{Nathaniel Stapleton}
\address{University of Kentucky}
\email{nat.j.stapleton@uky.edu}
\author[Weinstein]{Jared Weinstein}
\address{Boston University Department of Mathematics and Statistics, 665 Commonwealth Avenue, Boston, MA, USA}
\email{jsweinst@bu.edu}
\date{\today}
\begin{document}

\begin{abstract}
    We compute the algebraic Picard group of the category of $K(n)$-local spectra, for all heights $n$ and all primes $p$. In particular, we show that it is always finitely generated over $\Z_p$ and, whenever $n \geq 2$, is of rank $2$, thereby confirming a prediction made by Hopkins in the early 1990s. In fact, with the exception of the anomalous case $n=p=2$, we provide a full set of topological generators for these groups. Our arguments rely on recent advances in $p$-adic geometry to translate the problem to a computation on Drinfeld's symmetric space, which can then be solved using results of Colmez--Dospinescu--Nizio\l.
\end{abstract}

\maketitle

\begin{center}  
\includegraphics[scale=0.4]{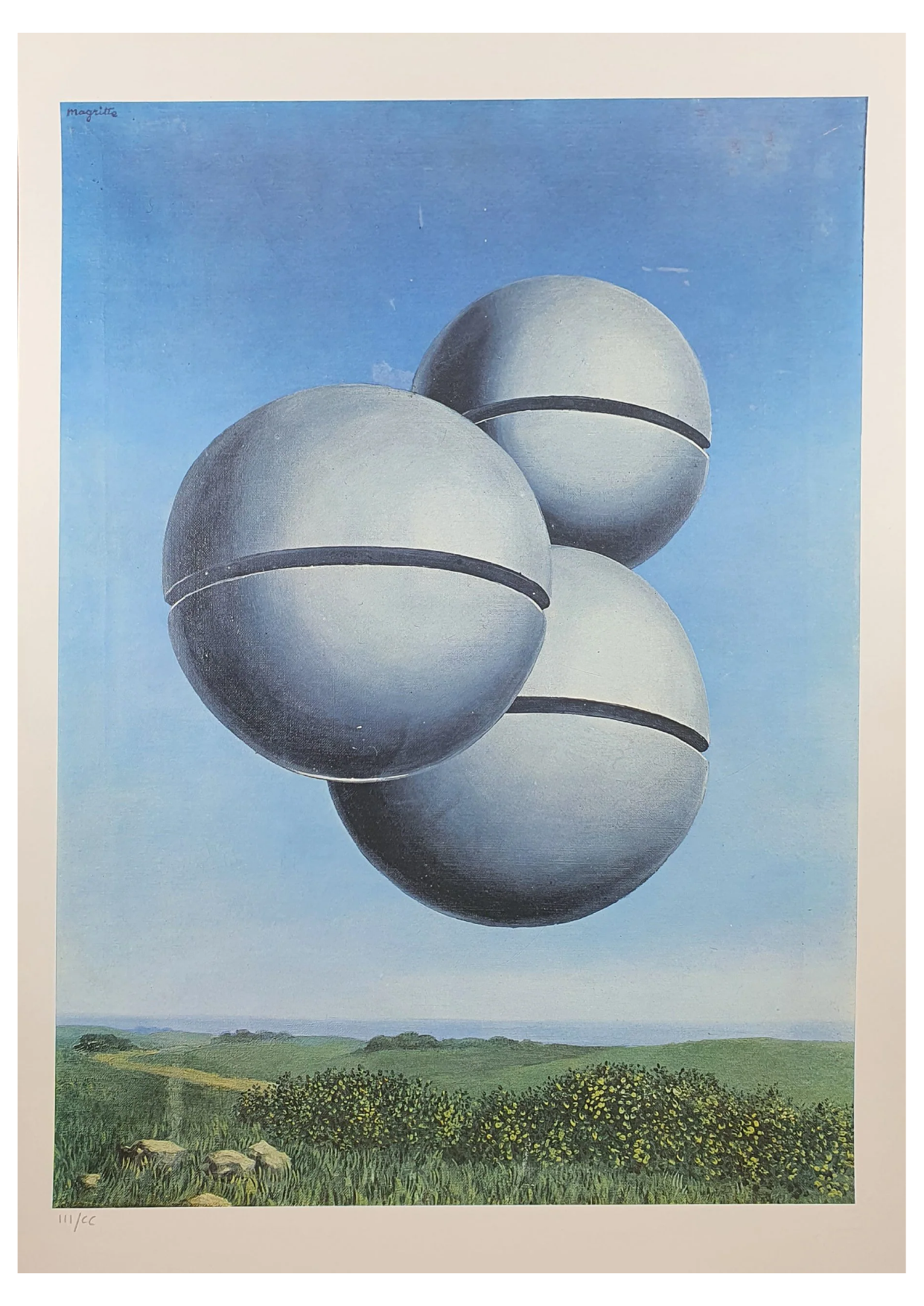} \\
Ren\'e Magritte, {\em The Voice of Space}, 1931
\end{center}

\newpage

\section{Introduction}

\subsection{Results}
This article settles a question about the $K(n)$-local Picard group $\Pic_{n,p}$, relative to a prime $p$ and an integer $n\geq 1$, formulated by Hopkins in the early 1990s (\cite{Strickland1992}). Here, $K(n)$ denotes Morava $K$-theory at height $n$, the prime $p$ being implicit.  Also known as Hopkins' Picard group, $\Pic_{n,p}$ is the group of isomorphism classes of invertible $K(n)$-local spectra, under the operation of $K(n)$-local smash product.  The study of $\Pic_{n,p}$ was initiated by Hopkins--Mahowald--Sadovsky \cite{HopkinsMahowaldSadofsky}, who defined an approximation to $\Pic_{n,p}$ in the form of a comparison map
    \[ 
        \varepsilon\from \Pic_{n,p} \to \Pic_{n,p}^{\alg}. 
    \]
The target $\Pic_{n,p}^{\alg}$ of this map is defined in a purely algebraic manner in terms of equivariant line bundles on Lubin--Tate space (reviewed below).  The map $\varepsilon$ is an isomorphism for $p$ large enough with respect to $n$, but generally it has a kernel $\kappa_{n,p}$ which is a (possibly infinite) product of cyclic $p$-groups with bounded exponent (see \cref{rem:cases} and references therein).  Our main theorem gives a complete description of $\Pic_{n,p}^{\alg}$ together with the image of $\varepsilon$ (up to a $\Z/2$ ambiguity in the case $(n,p)=(2,2)$). 

\begin{thmx}[\Cref{thm:picalg}] \label{ThmMainTopologicalResult} Let $p$ be a prime, assume $n\geq 2$, and write $\mathcal{Z}_{n,p}$ for the pro-cyclic group\footnote[4]{This notation allows us to deal with all primes uniformly. When $p$ is odd, $\mathcal{Z}_{n,p} \cong \Z_p \oplus \Z/(2p^n-2)$, while $\mathcal{Z}_{n,2} \cong \Z_2 \oplus \Z/(2^n-1)$.} $\lim_k \Z/p^k(2p^n-2)$. 
There is an isomorphism
\[\Pic_{n,p}^{\alg}\isom \mathcal{Z}_{n,p}\oplus \Z_p  \oplus (\Z/2)^{\oplus e(n,p)},\]
where
\[ 
e(n,p)=
    \begin{cases}
        0, & p\neq 2; \\
        2, & p=2\text{ and } n\geq 3; \\
        3, & p=2\text{ and } n=2.
    \end{cases}
\]
In all cases except possibly $(n,p)=(2,2)$, the map $\varepsilon$ is surjective. 
In the case $(n,p)=(2,2)$, the image of $\varepsilon$ is a direct summand of
$\Pic_{n,p}^{\alg}$ of index at most 2.
\end{thmx}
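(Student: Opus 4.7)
The plan is to reformulate $\Pic_{n,p}^{\alg}$ as a group of equivariant line bundles on Lubin--Tate space and then transport the computation to Drinfeld's symmetric space via $p$-adic duality, where the results of Colmez--Dospinescu--Niziol become applicable.

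First, I would identify $\Pic_{n,p}^{\alg}$ with the Picard group of $\GG_n$-equivariant invertible modules over Morava $E$-theory $E_n$, equivalently equivariant line bundles on Lubin--Tate space $\LT$, where $\GG_n$ denotes the extended Morava stabilizer group. A descent (hypercohomology) spectral sequence then decomposes this algebraic Picard group into a contribution from the sheaf of units, essentially $H^1_{\cts}(\GG_n, \OO_{\LT}^\times)$, together with $H^0$-information about $\Pic(\LT)$ with its residual $\GG_n$-action.

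Next, I would invoke the Scholze--Weinstein isomorphism of perfectoid towers to transfer the problem to the Drinfeld side: the infinite-level Lubin--Tate tower is equivariantly isomorphic to the infinite-level Drinfeld tower sitting above Drinfeld's symmetric space $\Omega^{n-1}$, and the $\GG_n$-action is the restriction of the natural action of $\GL_n(\Q_p) \times D^\times$. Here the recent computations of Colmez--Dospinescu--Niziol provide explicit descriptions of the relevant pro-\'etale cohomology of $\OO^\times$ on $\Omega^{n-1}$. Feeding these into the descent computation should produce: (i) a free $\Z_p$ summand from the determinantal character on $\GL_n(\Q_p)$, matching the Gross--Hopkins twist; (ii) the pro-cyclic factor $\mathcal{Z}_{n,p}$, whose $\Z_p$-free part is a Frobenius/determinantal character on $D^\times$ and whose cyclic torsion reflects the Teichm\"uller action on the tangent line at the closed point of $\LT$; and (iii) the $(\Z/2)^{e(n,p)}$ contribution at $p=2$, coming from additional $2$-torsion classes in the group cohomology of the stabilizer that survive the descent, which cleanly separates at $n\geq 3$ but becomes slightly more intricate at $n=2$.

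For surjectivity of $\varepsilon$, I would exhibit explicit topological representatives for each algebraic generator: ordinary suspensions of $\unit$ take care of the free generator of the pro-cyclic part, the determinantal spheres of Westerland and Barthel--Beaudry--Goerss--Stojanoska hit the remaining $\Z_p$ summand, and at $p=2$, $n\geq 3$, known exotic invertibles (following Heard--Mathew and the height-two calculations of Bobkova--Goerss and Beaudry) realize the $\Z/2$ factors. The main obstacle will be the anomalous case $(n,p)=(2,2)$: the currently available topological invertibles are only known to hit an index-at-most-$2$ subgroup of $(\Z/2)^3$, and deciding whether $\varepsilon$ is actually surjective or has a genuine $\Z/2$ cokernel requires either constructing an additional topological generator or ruling one out by a finer analysis of the descent that the Drinfeld-side computation alone does not seem to resolve; accordingly, I would be content here to show that $\varepsilon$ is split with cokernel of order at most $2$, which is exactly what the stated theorem asserts.
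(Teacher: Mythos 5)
Your outline captures the paper's broad strategy (reduce to $H^1_{\cts}(\G_n,A_n^\ast)$, pass through the Lubin--Tate/Drinfeld isomorphism, invoke Colmez--Dospinescu--Nizio\l{}), but it is missing the central structural device that makes this strategy actually deliver the answer: the \emph{determinant morphism} $\det\colon \SW_n\to\SW_1$ and the fundamental exact sequence it produces. Concretely, the two presentations $[\LT_{n,K}^\diamond/\G_n]\simeq[\HH_C^{n-1,\diamond}/\Pi_n]$ of the stack $\SW_n$ give low-degree descent sequences, and CDN's results control only the term $H^1_{\proet}(\HH_C^{n-1},\Oxx)^{\Pi_n}$, which turns out to be a \emph{single} free $\Z_p$ of rank one (\cref{CorRank1Invariants}). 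Your plan of ``feeding CDN into the descent computation'' would therefore only account for one $\Z_p$ summand. The other $\Z_p$ together with all the $\Z/2$-torsion at $p=2$ are pulled back from height $1$ along $\det_{\LT}^\ast$: the paper computes $H^1_{\cts}(\G_1,A_1^{\xx})\cong \Z_p$ (resp.\ $\Z_2\oplus(\Z/2)^{\oplus 2}$) directly and splices it with the CDN input via \cref{ThmFundamentalExactSequence}. This also corrects your ascription of the generators: the CDN $\Z_p$ is hit by $\varepsilon_p(\Sigma^2\mathbb{S}_{K(n)})$ (via $\Lie H^{\univ}\leftrightarrow\OO(1)$, and the nontrivial primitivity argument of \cref{ThmSphereClassIsPrimitive}), while the determinant sphere lands in the height-$1$ subgroup; you had these roughly interchanged and attributed the pro-cyclic free part to a character of $D^\times$, which is not how the argument goes.

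Two further points. First, the anomalous $\Z/2$ at $(n,p)=(2,2)$ has a concrete source in the paper: $\SL_2(\Z_2)^{\overline{\ab}}$ surjects onto $\Z/2$ (via $\Z/12$), so $H^1_{\cts}(\SL_2(\Z_2),\Q_2^{\xx})\cong\Z/2$ (\cref{lem:SLncohom}), and a nontrivial lift is constructed via the discriminant of a supersingular Weierstrass model (\cref{lem:22isspecial}); saying the situation merely ``becomes slightly more intricate'' leaves the essential mechanism unexplained. Second, your proposed topological realizations of the $\Z/2$ classes at $p=2$, $n\ge 3$, via Heard--Mathew and height-two calculations are not what the paper uses; the realization comes from cyclotomic $\Z/8$-Galois extensions following Carmeli--Schlank--Yanovski (\cref{prop:surjectivity_p=2}). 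As written, your plan does not yet contain the ideas needed to produce the full rank-$2$ free part plus torsion, so it would not close.
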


The case $n=1$ has long been understood due to \cite{HopkinsMahowaldSadofsky}, so we have excluded it from the statement of the theorem.  The computation for $n=2$ has also been known by work of Hopkins and Lader (\cite{Lader}, $p \geq 5$), Karamanov as well as Goerss--Henn--Mahowald--Rezk (\cite{Karamanov2010, GHMR2015}, $p=3$), and Henn ($p=2$, unpublished). These computations rely on explicit resolutions such as those constructed in \cite{GHMR2005} in the case $n=2$ and $p=3$, and thus seem very difficult to extend to larger heights. Indeed, prior to the present work, there was not a single pair $(n,p)$ with $n\geq 3$ for which $\Pic_{n,p}^{\alg}$ had been computed. 

A precise version of \cref{ThmMainTopologicalResult} appears as  \cref{thm:picalg}, which in fact provides explicit topological generators for $\Pic_{n,p}^{\alg}$. The case $p=2$ being rather delicate, for the moment we focus on what happens when $p$ is odd.  As a profinite group, $\Pic_{n,p}^{\alg}\isom \mathcal{Z}_{n,p}\oplus \Z_p$ is generated by the images under $\varepsilon$ of the following two classes in $\Pic_{n,p}$:  the suspended $K(n)$-local sphere $\Sigma \mathbb{S}_{K(n)}$, and the $(p-1)$-fold power of the determinant sphere $\mathbb{S}_{K(n)}[\det]$, defined in \cref{ssec:pic_twospheres}.  (In the case $n=1$, $\mathbb{S}_{K(1)}[\det]$ and $\Sigma^2\mathbb{S}_{K(1)}$ have the same image in $\Pic_{1,p}^{\alg}$, and $\Pic_{1,p}^{\alg}\isom \mathcal{Z}_{1,p}$ is generated by the image of $\Sigma\mathbb{S}_{K(1)}$.) Combined with the known isomorphism range of the comparison map $\varepsilon$, for all heights and generic primes $p$, we obtain a full computation of the topological $K(n)$-local Picard group, stated here only for the previously unknown cases:

\begin{thmx}[\Cref{cor:pictop}]\label{thmx:pic_genericprimes}
     Let $n > 2$ and $p>(n^2+1)/2$. There is an isomorphism 
        \[
            \Pic_{n,p} \cong \Z_p \oplus \Z_p \oplus \Z/(2p^n-2).
        \]
   As a profinite group, $\Pic_{n,p}$ is generated by $\Sigma \mathbb{S}_{K(n)}$ and $\mathbb{S}_{K(n)}[\det]^{\otimes (p-1)}$.
\end{thmx}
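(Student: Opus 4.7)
The plan is to combine Theorem A with the classical sparsity results on the descent spectral sequence for the Picard spectrum, so that the only genuinely new input is the algebraic computation already carried out. Since Theorem A produces an explicit description of $\Pic_{n,p}^{\alg}$ together with topological generators, the remaining task is to identify $\Pic_{n,p}$ with $\Pic_{n,p}^{\alg}$ under the hypothesis $n > 2$, $p > (n^2+1)/2$, and then transport the generators across $\varepsilon$.

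First, I would specialize Theorem \ref{ThmMainTopologicalResult} to the present range. The hypothesis $n \geq 3$ and $p > (n^2+1)/2 \geq 5$ forces $p$ to be an odd prime, so $e(n,p) = 0$. The footnote then identifies $\mathcal{Z}_{n,p} \cong \Z_p \oplus \Z/(2p^n-2)$, giving an isomorphism
\[
    \Pic_{n,p}^{\alg} \;\cong\; \Z_p \oplus \Z_p \oplus \Z/(2p^n-2),
\]
and Theorem \ref{ThmMainTopologicalResult} further tells us that the images of $\Sigma \mathbb{S}_{K(n)}$ and $\mathbb{S}_{K(n)}[\det]^{\otimes (p-1)}$ generate this profinite group.

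Next I would invoke the known isomorphism range for $\varepsilon$. The kernel $\kappa_{n,p}$ and cokernel of $\varepsilon$ are controlled by higher differentials and filtration pieces in the Picard descent spectral sequence for the Morava stabilizer group $\GG_n$ acting on $\pic(E_n)$. The classical vanishing criterion, going back to Hopkins--Mahowald--Sadofsky and refined in subsequent work (e.g.\ Heard, Pstr\k{a}gowski), says that once $2p - 2$ exceeds $n^2$, the relevant cohomological dimensions of $\GG_n$ are small enough that no nontrivial obstructions survive, and $\varepsilon$ becomes an isomorphism. The inequality $p > (n^2+1)/2$ is exactly the integer-valued version of $2p - 2 \geq n^2$, so this criterion applies verbatim and yields $\kappa_{n,p} = 0$ together with surjectivity of $\varepsilon$; hence $\varepsilon$ is an isomorphism in our range. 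I expect this to be the main step to pin down precisely, though it is not so much an obstacle as a citation exercise once the bound is matched to the standard form.

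Finally, transport through $\varepsilon$ completes the proof: the abstract isomorphism from Theorem \ref{ThmMainTopologicalResult} transfers to $\Pic_{n,p}$, and because $\varepsilon$ is continuous and $\mathbb{S}_{K(n)}$, $\mathbb{S}_{K(n)}[\det]$ lift the algebraic generators tautologically, the classes $\Sigma \mathbb{S}_{K(n)}$ and $\mathbb{S}_{K(n)}[\det]^{\otimes (p-1)}$ topologically generate $\Pic_{n,p}$, yielding the stated description.
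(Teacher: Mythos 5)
Your proposal is correct and follows the paper's proof exactly: specialize \cref{thm:picalg} to $n>2$ and odd $p$ (so $e(n,p)=0$ and $\cZ_{n,p}\cong\Z_p\oplus\Z/(2p^n-2)$), then observe that for $n\geq 3$ the bound $2p-2>\max\{n^2-1,2n\}$ in \cref{prop:comparisontheorem} simplifies to $p>(n^2+1)/2$, so $\varepsilon$ is an isomorphism and the generators transport across. The only nitpick is that the isomorphism range cited in \cref{prop:comparisontheorem} is attributed to Hopkins--Mahowald--Sadofsky (injectivity), Pstr\k{a}gowski, and Goerss--Hopkins (surjectivity), rather than to Heard as you mention.
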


\Cref{ThmMainTopologicalResult} can be reduced to a purely algebraic result concerning deformations of formal groups, as we now explain.  Let $H_n$ be a 1-dimensional formal group of height $n$ over $\overline{\F}_p$, and define the Lubin--Tate ring $A_n$ as the deformation ring of $H_n$.  Then $A_n$ is non-canonically isomorphic to a power series ring in $n-1$ variables over the ring of Witt vectors of $\overline{\F}_p$.  Finally, define the Morava stabilizer group $\G_n$ as the group of automorphisms of the pair $(H_n,\overline{\F}_p)$.  Then $\G_n$ is a profinite group;  it is an extension of $\Gal(\overline{\F}_p/\F_p)$ by the locally pro-$p$ group $\Aut H_n$.  Then $\G_n$ is a profinite group acting continuously on the topological ring $A_n$.  

The ring $A_n$ admits a spectral incarnation $E=E(H_n,\overline{\F}_p)$, known as Morava $E$-theory or the Lubin--Tate spectrum.  This is a $K(n)$-local commutative ring spectrum with homotopy ring $E_* = A_n[u^{\pm 1}]$ for a class $u$ of degree $-2$.  The spectrum $E$ is constructed functorially from the pair $(H_n,\overline{\F}_p)$, so that it admits an action of $\G_n$.  The completed $E$-homology of a spectrum is a graded $E_*$-module which is derived complete with respect to the maximal ideal of $\pi_0 E=A_n$ and equipped with a continuous action of $\G_n$ lying over the action on $E_*$. We call such a structure a Morava module, and refer to \cref{ssec:localPic_morava} for more details.

The algebraic Picard group $\Pic_{n,p}^{\alg}$ is defined as the group of isomorphism classes of invertible Morava modules. The comparison map $\varepsilon$ appearing above is given by applying completed $E$-homology to invertible $K(n)$-local spectra. The algebraic Picard group sits in an exact sequence
\begin{equation}
    \label{eq:even_picard_intro}
 0 \to H^1_{\cts}(\G_n,A_n^\ast)\to \Pic_{n,p}^{\alg} \to \Z/2\to 0.
 \end{equation}
This is due to the fact that $E_*$ is 2-periodic, so an invertible Morava module is supported in either odd or even degrees, giving the map to $\Z/2$;  since the shift $E_*[1]$ is supported in odd degrees, the map to $\Z/2$ is surjective.  The kernel of this map is isomorphic to the group of isomorphism classes of invertible $A_n$-modules carrying an equivariant continuous action of $\G_n$.  Since $A_n$ is a local ring, its Picard group is trivial.  Equivariant continuous actions of $\G_n$ on a free $A_n$-module of rank 1 correspond exactly to classes in the continuous cohomology $H^1_{\cts}(\G_n,A_n^\ast)$, where $A_{n}^{\ast}$ is the group of units in $A_n$.  

The exact sequence \eqref{eq:even_picard_intro} allows us to reduce \cref{ThmMainTopologicalResult} to a computation of $H^1_{\cts}(\G_n,A_n^\ast)$.  
Once again we assume $n\geq 2$, the case $n=1$ being already well-understood.
 
\begin{thmx}
    \label{ThmMainAlgebraicResult}
    Assume $n\geq 2$.  There is an isomorphism
    \[ H^1_{\cts}(\G_n,A_n^\ast)\isom \Z_p^{\oplus 2} \oplus \Z/(p^n-1) \oplus (\Z/2)^{\oplus e(n,p)},\]
    where $e(n,p)$ is as in \cref{ThmMainTopologicalResult}.    
\end{thmx}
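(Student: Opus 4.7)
The plan is to reduce the computation of $H^1_{\cts}(\G_n, A_n^\ast)$ to a coherent cohomology problem on Drinfeld's symmetric space via Scholze--Weinstein duality, and then import the explicit answer from Colmez--Dospinescu--Niziol. As a first step, I would invoke the $\G_n$-equivariant Teichm\"uller splitting
\[
A_n^\ast \;\cong\; \overline{\F}_p^\ast \times (1+\mathfrak{m}),
\]
where $\mathfrak{m} \subset A_n$ is the maximal ideal. The finite factor is easy: $\cO_D^\ast$ acts trivially on $\overline{\F}_p^\ast$, so Hochschild--Serre applied to $1 \to \cO_D^\ast \to \G_n \to \Gal(\overline{\F}_p/\F_p) \to 1$, together with Hilbert 90, reduces $H^1_{\cts}(\G_n, \overline{\F}_p^\ast)$ to continuous characters $\cO_D^\ast \to \overline{\F}_p^\ast$, which contributes the cyclic summand $\Z/(p^n-1)$.

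The main work lies in $H^1_{\cts}(\G_n, 1+\mathfrak{m})$. For $p$ odd the $p$-adic logarithm gives a $\G_n$-equivariant isomorphism $1 + \mathfrak{m}^N \cong \mathfrak{m}^N$ for $N$ large, so up to a controlled extension by the finite quotient $(1+\mathfrak{m})/(1+\mathfrak{m}^N)$ the problem becomes one about the continuous cohomology of $\G_n$ acting on the $A_n$-module $\mathfrak{m}$, and hence on $A_n$ itself. At $p=2$ the failure of $\log$ to be surjective on the lowest filtration quotients produces the additional $(\Z/2)^{\oplus e(n,2)}$ summands, which can be isolated and computed separately.

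The geometric input, and the heart of the argument, is the interpretation of $H^\ast_{\cts}(\G_n, A_n)$ in $p$-adic analytic geometry. The formal spectrum $\Spf A_n$ is the Lubin--Tate deformation space of $H_n$, and its rigid generic fiber $\cM_{\LT}$ is an open polydisk of dimension $n-1$ carrying a continuous $\G_n$-action. By Scholze--Weinstein, at infinite level there is an equivariant isomorphism $\cM_{\LT,\infty} \cong \cM_{\Dr,\infty}$ with the Drinfeld tower, and descent along the Drinfeld side reexpresses the $\G_n$-cohomology of $A_n[1/p]$ in terms of $\GL_n(\Q_p)$-equivariant coherent cohomology on Drinfeld's symmetric space $\Omega^{n-1}$. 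Substituting Colmez--Dospinescu--Niziol's formulas for the cohomology of the structure sheaf on $\Omega^{n-1}$ then produces precisely the free rank $\Z_p^{\oplus 2}$ and pins down the remaining torsion.

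The main obstacle I expect is integral bookkeeping throughout: transporting continuous group cohomology into coherent $p$-adic geometry while keeping precise control of lattices, recovering $A_n$-valued information from $A_n[1/p]$-valued computations, and extracting torsion rather than purely rational data from the Colmez--Dospinescu--Niziol input. Unsurprisingly, this is most delicate at $p=2$, which is why the exceptional summands $(\Z/2)^{\oplus e(n,p)}$ appear and why the statement only achieves an isomorphism after accounting for them by hand.
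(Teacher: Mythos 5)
Your decomposition into prime-to-$p$ and principal-unit parts via the Teichm\"uller splitting matches the paper, and your computation of the $\Z/(p^n-1)$ summand from $H^1_{\cts}(\G_n,\overline{\F}_p^\ast)$ is essentially what the paper does (though the key step is not really Hilbert~90 but the identification of $\Gamma$-equivariant homomorphisms $\F_{p^n}^\ast\to\overline{\F}_p^\ast$). The idea of trading the $\G_n$-action on Lubin--Tate space for the $\GL_n$-action on Drinfeld space via Scholze--Weinstein, and then importing Colmez--Dospinescu--Nizio{\l}, is also the right geometric backbone. But the central step of your proposal goes through the logarithm in a way that fails.

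The issue is that you propose to use $\log$ to reduce $H^1_{\cts}(\G_n,A_n^{\xx})$ to the additive cohomology $H^1_{\cts}(\G_n,A_n[1/p])$, and then to interpret that as $\GL_n(\Q_p)$-equivariant \emph{coherent} cohomology on $\Omega^{n-1}$. That route destroys exactly the information you need. After passing to $A_n[1/p]$ you are in $\Q_p$-vector spaces, and the ``integral bookkeeping'' you flag as an obstacle is not bookkeeping --- the free $\Z_p$ summand is simply not visible there. In the paper's computation of $H^1_{\proet}(\HH_C^{n-1},\Oxx)^{\Gamma_{\Q_p}\times\GL_n(\Z_p)}$, the coherent/differential-forms contribution dies after taking Galois invariants because $H^0(\Gamma_{\Q_p},C(-1))=0$; the entire answer $\Z_p$ comes from the Steinberg piece $\St_1(\Q_p/\Z_p)^\ast$, which is precisely the part killed by $\log$. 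The paper does use the logarithm sequence $0\to\mu_{p^\infty}\to\Oxx\to\OO\to 0$, but as a short exact sequence of sheaves on the pro-\'etale site of $\HH_C^{n-1}$, as a tool to compute the cohomology of the \emph{multiplicative} sheaf $\Oxx$ from CDN's $\Z_p$- and $\Q_p$-coefficient results --- not to replace the multiplicative problem by an additive one on the Lubin--Tate side.

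A second, related gap: your plan produces at most one $\Z_p$ summand. The theorem asserts rank $2$, and the second $\Z_p$ arises from $H^1_{\cts}(\G_1,A_1^{\xx})\cong\Z_p$ (the determinant sphere) mapped in via $\det_{\LT}^\ast$. This requires the determinant construction $\bigwedge^n H$ on height-$n$ formal groups, compatible with both presentations of the stack $\SW_n$, and the resulting fundamental exact sequence
\[
0\to H^1_{\cts}(\G_1,A_1^{\xx}) \xrightarrow{\det_{\LT}^\ast} H^1_{\cts}(\G_n,A_n^{\xx}) \xrightarrow{b} H^1_{\proet}(\HH_C^{n-1},\Oxx)^{\Gamma_{\Q_p}\times\GL_n(\Z_p)}\to 0,
\]
together with a nontrivial primitivity argument showing $b$ is surjective (by tracking the class of $\varepsilon_p(\Sigma^2\mathbb{S}_{K(n)})$). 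Your proposal has no analogue of $\det_{\LT}^\ast$ and no mechanism to produce the second free generator, so even granting a fix for the logarithm issue, the rank count would come out wrong.
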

\Cref{ThmMainAlgebraicResult} implies the description of $\Pic_{n,p}^{\alg}$, at least up to the question of which extension of $\Z/2$ by $H^1_{\cts}(\G_n,A_n^\ast)$ is represented by \eqref{eq:even_picard_intro}.  
What is more, we show in the course of proving \cref{ThmMainAlgebraicResult} that every element of $H^1_{\cts}(\G_n,A_n^\ast)$ (or rather an index 2 subgroup of it, in the case $n=p=2$) lifts to an element of $\Pic_{n,p}$ of even degree;  this implies the properties of $\varepsilon$ claimed by \cref{ThmMainTopologicalResult}.  

Let $A_n^{\xx}\subset A_n^\ast$ be the subgroup of principal units (i.e., topologically unipotent elements).  
It is convenient to divide \cref{ThmMainAlgebraicResult} into pro-$p$ and prime-to-$p$ parts using the $\G_n$-equivariant decomposition $A_n^\ast\isom A_n^{\xx}\oplus  \overline{\F}_p^\ast$.  The prime-to-$p$ part poses no difficulty (\Cref{prop:evenPic_cohom}), so \cref{ThmMainAlgebraicResult} is reduced to the claim $H^1_{\cts}(\G_n,A_n^{\xx})\isom \Z_p^{\oplus 2}\oplus (\Z/2)^{\oplus e(n,p)}$.  We prove a precise statement (\Cref{thm:A**main}) about $H^1_{\cts}(\G_n,A_n^{\xx})$, and show that it implies \Cref{thm:picalg}.

\subsection{Main ideas of the proof}

An even invertible Morava module (meaning a class in $\Pic_{n,p}^{\alg}$ which maps to $0\in \Z/2$ in \eqref{eq:even_picard_intro}) corresponds to a free $A_n$-module with a continuous equivariant $\G_n$-action.  We may view such modules geometrically, as line bundles on the stack $[\LT_n/\G_n]$, where $\LT_n=\Spf A_n$ is the Lubin--Tate deformation space, i.e., the affine formal scheme parameterizing deformations of the formal group $H_n$.  
The stack $[\LT_n/\G_n]$ arises naturally;  it is isomorphic to the completion of the moduli space of formal groups at the height $n$ locus in characteristic $p$ (see \cref{PropUniformizationByLT}).  (Note that $[\LT_n/\G_n]$ is fibered over the base $\Spf \Z_p$ and not over the Witt vectors of $\overline{\F}_p$.)

To access the Picard group of $[\LT_n/\G_n]$, we rely heavily on Faltings' isomorphism between the Lubin--Tate and Drinfeld towers, as interpreted as a relation between perfectoid spaces \cite{ScholzeWeinstein2013}.  This isomorphism is only available after passage to the generic fiber.  Therefore let $\LT_{n,\eta}$ be the adic generic fiber of $\LT_n$, so that $\LT_{n,\eta}$ is isomorphic to a rigid-analytic $(n-1)$-dimensional open ball over $K=W(\overline{\F}_p)[1/p]$.  

In \cref{ThmEquivalenceOfStacks} we interpret Faltings' isomorphism as a relation between stacks on Scholze's category of diamonds:
\begin{equation}
    \label{EqIsomOfStacksIntro}
    [\LT_{n,\eta}^{\diamond}/\G_n] \isom [\HH^{n-1,\diamond}/\GL_n(\Z_p)]. 
\end{equation}
On the left side of the isomorphism is the ``diamond generic fiber'' of the stack $[\LT_n/\G_n]$.  Appearing on the right side is (the diamond version of) Drinfeld's symmetric space $\HH^{n-1}$, defined as the complement in rigid-analytic $\PP^{n-1}_{\Q_p}$ of all $\Q_p$-rational hyperplanes.  

The isomorphism \eqref{EqIsomOfStacksIntro} is indispensable for the proof of \cref{ThmMainAlgebraicResult}, because it trades the opaque action of $\G_n$ on $A_n$ for the transparent (indeed linear) action of $\GL_n(\Z_p)$ on $\HH^{n-1}$.  We offer a summary of the proof of \cref{ThmMainAlgebraicResult}.  Recall we have already reduced 
\cref{ThmMainAlgebraicResult} to the calculation of $H^1_{\cts}(\G_n,A_n^{\xx})$.  In particular, we want to show that for $n\geq 2$, the torsion-free part of $H^1_{\cts}(\G_n,A_n^{\xx})$ is $\Z_p^{\oplus 2}$, generated by the pro-$p$ parts of the even Morava modules $\varepsilon(\Sigma^2\mathbb{S}_{K(n)}[\det])$ and $\varepsilon(\mathbb{S}_{K(n)}[\det])$.  The proof brings together two important constructions, accounting for each of these two generators.

The first construction involves determinants of formal groups.  The idea is that in certain contexts, it is possible to take the determinant of a height $n$ formal group to obtain a height 1 formal group.  See \cref{ThmDeterminant} for the precise statement.  In particular, the determinant of $H_n$ is isomorphic to $H_1$.  Since the determinant is functorial, we obtain a determinant homomorphism $\det\from \G_n\to \G_1$.  Finally, consider the determinant of the universal deformation of $H_n$ over $A_n$:  this is a deformation of $H_1$, and so it is classified by a map $A_1\to A_n$ which is compatible with $\det\from \G_n\to \G_1$.  Consequently there is an induced map on cohomology: 
\[ \det\!_{\LT}^*\from H^1_{\cts}(\G_1,A_1^{\xx})\to H^1_{\cts}(\G_n,A_n^{\xx}). \]
The source of this map can be calculated directly,
\begin{equation}
    \label{EqH1G1}
H^1_{\cts}(\G_1,A_1^{\xx}) \isom 
    \begin{cases} 
        \Z_p, & p\neq 2; \\ 
        \Z_2\oplus (\Z/2)^{\oplus 2}, & p= 2,
    \end{cases} 
\end{equation}
and the image of the generator of the free part of this group under $\det_{\LT}^*$ is the pro-$p$ part of $\varepsilon(\mathbb{S}_{K(n)}[\det])$, the Morava module of the determinant sphere.  At the prime $2$ the images of the torsion classes under the map $\det_{\LT}^*$  can be lifted to $\mathrm{Pic}_{p,n}$ via a construction from \cite[Section 5.3]{CarmeliSchlankYanovski2024}. 

The second construction is the one that leverages \eqref{EqIsomOfStacksIntro}.  In \cref{sec:exactsequence} we define a sheaf $\Oxx$ of principal units on an adic space, and we observe that $A_n^{\ast\ast}\isom H^0(\LT_{n,\eta},\OO^{\ast\ast})$.  Let $C$ be the completion of an algebraic closure $\overline{\Q}_p$ of $\Q_p$, and let $\HH_C^{n-1}$ be the base change of $\HH^{n-1}$ to $C$.  Another version of \eqref{EqIsomOfStacksIntro} is 
\[[\LT_{n,\eta}^{\diamond}/\G_n] \isom [\HH^{n-1,\diamond}_C/(\Gal(\overline{\Q}_p/\Q_p)\times \GL_n(\Z_p))]. \]
As a consequence of this isomorphism, we obtain a map 
\[ b \from H^1_{\cts}(\G_n,A_n^{\xx})\to H^1_{\proet}(\HH_C^{n-1},\Oxx)^{\Gal(\overline{\Q}_p/\Q_p)\times \GL_n(\Z_p)}. \]
The following theorem appears as \cref{ThmFundamentalExactSequence} combined with \cref{ThmSphereClassIsPrimitive}.

\begin{thmx}
    \label{ThmFundamentalExactSequenceIntro} Assume $(n,p)\neq (2,2)$.  The maps $\det_{\LT}^*$ and $b$ fit into an exact sequence
    \[ 
        0\to H^1_{\cts}(\G_1,A_1^{\xx}) \xrightarrow{\det\!_{\LT}^\ast} H^1_{\cts}(\G_n,A_n^{\xx})\stackrel{b}{\to} H^1_{\proet}(\HH_C^{n-1},\Oxx)^{\Gal(\overline{\Q}_p/\Q_p)\times \GL_n(\Z_p)} \to 0.
    \]
If $(n,p)=(2,2)$, there is a similar exact sequence, except the first nonzero term in the sequence is $H^1_{\cts}(\G_1,A_1^{\xx})\oplus \Z/2$.
\end{thmx}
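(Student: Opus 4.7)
The plan is to exploit the diamond stack isomorphism
\[
[\LT_{n,K}^{\diamond}/\G_n] \isom [\HH_C^{n-1,\diamond}/(\Gal(\overline{\Q}_p/\Q_p) \times \GL_n(\Z_p))]
\]
from \Cref{ThmEquivalenceOfStacks} by running Hochschild--Serre spectral sequences for the pro-\'etale cohomology of $\Oxx$ on both sides and comparing them. The map $b$ will then arise as an edge map from the right-hand presentation, while the kernel of $b$ will be identified with $H^1_{\cts}(\G_1, A_1^{\xx})$ via the determinant construction of \Cref{ThmDeterminant}.

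On the Lubin--Tate side, the first step is to show that $H^q_{\proet}(\LT_{n,K}, \Oxx) = 0$ for $q \geq 1$. This should follow by combining the Stein property of the rigid open ball $\LT_{n,K}$, which gives $H^q_{\proet}(\LT_{n,K}, \OO) = 0$ for $q \geq 1$, with a logarithm-type short exact sequence relating $\Oxx$, $\OO$, and $\Z_p(1)$, together with the vanishing of higher pro-\'etale cohomology of $\Z_p(1)$ on a simply connected rigid ball. Granting this, the Lubin--Tate spectral sequence collapses to an identification $H^1_{\proet}(\text{stack}, \Oxx) \isom H^1_{\cts}(\G_n, A_n^{\xx})$. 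Running the Hochschild--Serre five-term exact sequence on the Drinfeld side then produces
\[
0 \to H^1_{\cts}(\Gal \times \GL_n(\Z_p), 1 + \m_C) \to H^1_{\cts}(\G_n, A_n^{\xx}) \stackrel{b}{\to} H^1_{\proet}(\HH_C^{n-1}, \Oxx)^{\Gal \times \GL_n(\Z_p)} \to H^2_{\cts}(\Gal \times \GL_n(\Z_p), 1 + \m_C),
\]
where we have used connectivity and Stein-ness of $\HH_C^{n-1}$ over $C$ to compute $H^0(\HH_C^{n-1}, \Oxx) = 1 + \m_C$.

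The conceptual core is the identification $H^1_{\cts}(\Gal(\overline{\Q}_p/\Q_p) \times \GL_n(\Z_p), 1 + \m_C) \isom H^1_{\cts}(\G_1, A_1^{\xx})$, under which the injection recovers $\det^*_{\LT}$. For this I would use that, via local class field theory, $\G_1$ is built as a suitable extension involving $\Gal(\overline{\Q}_p/\Q_p)$ and $\Z_p^{\ast}$, and that the determinant $\GL_n(\Z_p) \to \Z_p^{\ast}$ paired with the Galois projection realizes $\det \from \G_n \to \G_1$; combined with the natural inclusion $A_1^{\xx} \hookrightarrow 1 + \m_C$ induced by $A_1 \hookrightarrow C$, this gives the desired isomorphism of $H^1$'s. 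The principal obstacle is the surjectivity of $b$, i.e.\ the vanishing of the connecting homomorphism into $H^2_{\cts}(\Gal \times \GL_n(\Z_p), 1 + \m_C)$; this is exactly the content of \Cref{ThmSphereClassIsPrimitive}, which provides explicit lifts of invariant classes generating the target of $b$ to Morava modules. Finally, in the anomalous case $(n,p)=(2,2)$ the left-hand term $H^1_{\cts}(\G_1, A_1^{\xx})$ already carries extra $(\Z/2)$-summands by \eqref{EqH1G1}, and an additional $\Z/2$ surfaces from a non-split extension specific to the low-dimensional geometry of the Drinfeld half-plane, accounting for the modified exact sequence in this case.
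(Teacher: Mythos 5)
The central step of your proposal is incorrect and cannot be repaired as stated. You claim that $H^q_{\proet}(\LT_{n,K},\Oxx)=0$ for $q\geq 1$, arguing first that the Stein property gives $H^q_{\proet}(\LT_{n,K},\OO)=0$, and second that pro-\'etale cohomology of $\Z_p(1)$ on a ``simply connected rigid ball'' vanishes. Both halves of this are false. The Stein property kills \emph{analytic} coherent cohomology, but on the pro-\'etale site one has $R^i\nu_*\OO\isom\Omega^i(-i)$, so for Stein $X$ the paper's \eqref{eq:forms} gives $H^i_{\proet}(X,\OO)\isom\Omega^i(X)(-i)$, which is certainly nonzero for $\LT_{n,K}$ once $n\geq 2$. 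And pro-\'etale cohomology with $\Z_p$- (or $\mu_{p^\infty}$-) coefficients is not homotopy invariant; the paper explicitly notes that $H^1_{\et}(D,\Z_p)$ for a disc $D$ is very large. Consequently $H^1_{\proet}(\LT_{n,K},\Oxx)$ is nonzero (indeed the paper's diagram \eqref{EqBigAssDiagram} carries a nontrivial term $H^1_{\proet}(\LT_{n,K},\Oxx)^{\G_n}$, and \cref{lem:LTdetinjective} studies the determinant map on it), so the Lubin--Tate descent spectral sequence does not degenerate to an isomorphism $H^1_{\cts}(\G_n,A_n^{\xx})\isom H^1_{\proet}(\SW_n,\Oxx)$. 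The map $H^1_{\cts}(\G_n,A_n^{\xx})\to H^1_{\proet}(\SW_n,\Oxx)$ is only an injection, and once this is so your five-term sequence from the Drinfeld side lands in the wrong group and the rest of the argument does not go through.

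The paper's actual route avoids any vanishing claim on the Lubin--Tate side: it compares the \emph{two} low-degree edge sequences (one for each presentation of $\SW_n$) to produce a pullback object $P_n$ and the exact sequence $0\to P_n\to H^1_{\cts}(\G_n,A_n^{\xx})\xrightarrow{b}H^1_{\proet}(\HH_C^{n-1},\Oxx)^{\Pi_n}$ as an instance of \eqref{EqTwoPresentationsExactSequence}. The identification $P_n\isom H^1_{\cts}(\G_1,A_1^{\xx})$ is then obtained not by a direct class-field-theory computation of $H^1_{\cts}(\Pi_n,C^{\xx})$ but by running the same functorial formalism for the determinant morphism $\SW_n\to\SW_1$ and for $\SW_1$ itself: $P_1\isom H^1_{\cts}(\G_1,A_1^{\xx})$ because $\HH_C^0$ is a point (\cref{lem:risom}), $\det^*_{\HH}$ is an isomorphism on $H^1_{\cts}(\Pi_\bullet,C^{\xx})$ because $H^1_{\cts}(\SL_n(\Z_p),\Q_p^{\xx})=0$ when $(n,p)\neq(2,2)$ (\cref{lem:SLncohom}, \cref{lem:detH1iso}, using $H^0(\HH_C^{n-1},\Oxx)=C^{\xx}$ from \cite{BSSW}), and $\det^*_{\LT}$ on invariants is injective because a $K$-point of $\LT_{n,K}$ gives a retract (\cref{lem:LTdetinjective}); these combine to show $q\colon P_1\to P_n$ is an isomorphism (\cref{lem:qisom}). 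You also mislocate the source of the $(n,p)=(2,2)$ anomaly: it comes from $H^1_{\cts}(\SL_2(\Z_2),\Q_2^{\xx})\isom\Z/2$ (i.e.\ a nontrivial continuous homomorphism $\SL_2(\Z_2)\to\{\pm1\}$), together with the nontrivial verification in \cref{lem:22isspecial} that the corresponding class lifts to $P_2$ --- not from ``a non-split extension specific to the low-dimensional geometry of the Drinfeld half-plane.'' Your invocation of \cref{ThmSphereClassIsPrimitive} for surjectivity of $b$ is the one point that matches the paper's structure.
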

Interestingly, the reason that $(n,p)=(2,2)$ is treated differently in \cref{ThmMainTopologicalResult} traces back to the fact that it is the only case where there are nontrivial continuous homomorphisms $\SL_n(\Z_p)\to \Q_p^{\xx}$ (see \cref{lem:SLncohom}).

At this point we apply the recent advances of Colmez--Dospinescu--Nizio\l $ $    
\cite{CDNStein, CDNIntegral} on the cohomology of $\HH_C^{n-1}$, which seem to be tailor-made for our purposes.  From their work we deduce (see \cref{CorRank1Invariants}) that if $n\geq 2$, then 
$H^1_{\proet}(\HH_C^{n-1},\Oxx)^{\Gal(\overline{\Q}_p/\Q_p)\times \GL_n(\Z_p)}$ is canonically isomorphic to $\Z_p$ .  The final challenge (\cref{ThmSphereClassIsPrimitive}) is to show that the map $b$ carries the pro-$p$ part of $\varepsilon(\Sigma^2\mathbb{S}_{K(n)})$ onto a generator of this $\Z_p$, so that $b$ is surjective.  

Combining \cref{ThmFundamentalExactSequenceIntro} with \eqref{EqH1G1} shows that $H^1_{\cts}(\G_n,A_n^{\xx})\isom \Z_p^{\oplus 2}\oplus (\Z/2)^{\oplus e(n,p)}$, which completes the proof of \cref{ThmMainAlgebraicResult}.

\subsection*{Acknowledgements}

We thank Agn{\`e}s Beaudry, Pierre Colmez, Ehud de Shalit, Shai Evra, Paul Goerss, Drew Heard, Hans-Werner Henn, Mike Hopkins, Wies{\l}awa Nizio{\l}, Ori Parzanchevski, and Peter Scholze for many helpful discussions. We are also grateful to Guchuan Li, Ningchuan Zhang, and the anonymous referees for comments on a preliminary draft of this paper.  TB, TMS, and NS are grateful to JW for implementing the bulk of the edits to the article following referee review.

TB is supported by the European Research Council (ERC) under Horizon Europe (grant No.~101042990) and would like to thank the Max Planck Institute for its hospitality. NS and TMS were supported by the US-Israel Binational
Science Foundation under grant 2018389. NS was supported by a Sloan research fellowship, NSF Grant DMS-2304781, and a grant from the Simons Foundation (MP-TSM-00002836, NS).  TMS was supported by ISF1588/18 and the
ERC under the European Union’s Horizon 2020 research and innovation program (grant agreement
No. 101125896).  JW was supported by NSF Grant DMS-2401472.  

\setcounter{tocdepth}{1}
\tableofcontents
\def\biblio{}

\vspace{-1cm}
\section{The $K(n)$-local Picard group}\label{sec:localPic}

The purpose of this section is two-fold. First, it reviews some background material on Picard groups in chromatic homotopy theory. We then explain how to reduce the determination of Hopkins' Picard group to a cohomological computation (\cref{thm:A**main}), which will then be tackled in the remainder of this paper. 

\subsection{Motivation: invertible spectra and degree}\label{ssec:localPic_deg}

Let $\cC = (\cC,\otimes,\unit)$ be a symmetric monoidal $\infty$-category. The \emph{Picard group} $\Pic(\cC)$ of $\cC$ is defined as the collection of isomorphism classes of $\otimes$-invertible objects in $\cC$, i.e., those objects $X \in \cC$ for which there exists $Y \in \cC$ with $X \otimes Y \simeq \unit$. If $\cC$ is presentably symmetric monoidal, then $\Pic(\cC)$ is a set \cite[Remark 2.1.4]{MathewStojanoska2016}. The symmetric monoidal product on $\cC$ then equips $\Pic(\cC)$ with the structure of a group, with unit $\unit$.

For example, if $D(\Z)$ is the derived $\infty$-category of $\Z$-modules, then the invertible objects in $D(\Z)$ are exactly those complexes $X$ whose homology is isomorphic to $\Z$ at a single degree and 0 elsewhere; thus $\Pic D(\Z)\isom\Z$. (See \cite[\href{https://stacks.math.columbia.edu/tag/0FNP}{Tag 0FNP}]{stacks-project} for a description of $\Pic D(R)$ for a general ring $R$.)  For $\Sp=(\Sp,\otimes,\mathbb{S})$ the symmetric monoidal $\infty$-category of spectra, we have a symmetric monoidal functor $\Sp\to D(\Z)$ given by $X\mapsto X\otimes H\Z$, where $H\Z$ is the Eilenberg-MacLane spectrum.  This functor preserves invertible objects, and so it induces a homomorphism
\[
        \xymatrix{\deg\colon\Pic(\Sp) \ar[r] & \Z}
    \]
which sends $X$ to the degree in which $X\otimes H\Z$ is supported.  A Postnikov tower argument from \cite[Section 1]{HopkinsMahowaldSadofsky} shows that $\deg$ is an isomorphism, with inverse $n\mapsto \mathbb{S}^n$.  In particular, $\Pic(\Sp)$ is generated by $\Sigma\mathbb{S}$.

Hopkins, Mahowald, and Sadofsky \cite{HopkinsMahowaldSadofsky} studied the Picard group of the category of $K(n)$-local spectra.  Here $K(n)$ denotes Morava $K$-theory of height $n$ at a prime number $p$, with $0\leq n\leq \infty$.  This $K(n)$ is an associative ring spectrum with coefficients 
    \[
        \pi_*K(n) \cong 
            \begin{cases}
                \Q & \text{if } n = 0; \\
                \F_p[v_n^{\pm 1}] & \text{if } 0 < n < \infty; \\
                \F_p & \text{if }  n = \infty,
            \end{cases} 
    \]
with $v_n$ of degree $2(p^n-1)$. By convention, $K(0) = H\Q$ independently of $p$, while $K(\infty) = H\F_p$.   (The prime $p$, being fixed throughout, will be suppressed from the notation.)  The Morava $K$-theories are examples of fields in $\Sp$, as discussed in \cite[\S2.3]{BarthelBeaudry2020}.  A ring spectrum $K$ is said to be a field if every $K$-module is isomorphic to a direct sum of suspensions of $K$.  Consequently, field spectra obey a K\"unneth formula 
    \begin{equation}\label{eq:kunneth}
        K_*(X \otimes Y) \cong K_*(X) \otimes_{K_*} K_*(Y)
    \end{equation}
for any $X, Y \in \Sp$.  In fact, the Morava $K$-theories constitute a complete list of ``prime fields'' in $\Sp$, in the sense that any field spectrum is isomorphic to a direct sum of shifted copies of $K(n)$.   This is a consequence of the nilpotence theorem due to Devinatz, Hopkins, and Smith \cite{DevinatzHopkinsSmith}, see \cite[Proposition 1.9]{HopkinsSmith1998}. For a general introduction to chromatic homotopy
theory, we refer the reader to \cite{BarthelBeaudry2020}.

For a spectrum $K$, we write $\Sp\to \Sp_K$, $X\mapsto X_K$, for the Bousfield localization of $\Sp$ at $K$.  This is the universal localization which inverts all maps $f$ of spectra for which $K\otimes f$ is an equivalence.  The localization functor $\Sp\to \Sp_K$ admits a fully faithful right adjoint $\Sp_K \subset \Sp$, so we may view a $K$-local spectrum as a spectrum. The smash product on $\Sp$ descends to a symmetric monoidal structure on $\Sp_{K}$; explicitly, the product of two $K$-local spectra $X,Y$ is given by 
    \[
        X \hat{\otimes} Y = (X\otimes Y)_{K}. 
    \]
Equipped with this structure, $\Sp_{K} = (\Sp_{K}, \hat{\otimes},\mathbb{S}_{K})$ forms a presentably symmetric monoidal stable $\infty$-category.

This article focuses on the Picard group of the $K(n)$-local category $\Sp_{K(n)}$ in intermediate characteristics $0 < n < \infty$. For $n=0$, we have $\Pic(\Sp_{H\Q}) \cong \Pic(\mathrm{D}(\Q)) \cong \Z$ generated by the $\Q[1]$. For $n=\infty$, the Picard group of the local category has also been computed by Kamiya and Shimomura. In \cite[Theorem 4.3]{KamiyaShimomura2007}, they show that $\Pic(\Sp_{H\F_p}) \cong \Z$, again generated by (the localization of) $\mathbb{S}^1$.

\begin{defn} Let $p$ be a prime number, and let $0< n<\infty$ be a height.  
  We define the \emph{$K(n)$-local Picard group} as 
        \[
            \Pic_{n,p} \coloneqq \Pic(\Sp_{K(n)}).
        \]
\end{defn}

By the K\"unneth formula \eqref{eq:kunneth}, the Morava $K$-homology $K(n)_*(X)$ of any invertible $X \in \Sp_{K(n)}$ is an invertible graded module over the graded field $\pi_*K(n)\isom \F_p[v_n^{\pm 1}]$.  Therefore there exists a \emph{local degree} $\deg_{n,p}(X)\in \Z/2(p^n-1)$ such that $\pi_*K(n)$ is supported in degrees congruent to $\deg_{n,p}(X)$ modulo $2(p^n-1)$. The next result is proven in \cite[Proposition 14.3]{HoveyStrickland}, relying on \cite{HopkinsMahowaldSadofsky}.

\begin{prop}[Hopkins--Mahowald--Sadofsky, Hovey--Strickland]\label{prop:pic_localdegree}
    Let $n\geq 1$. The $K(n)$-local degree provides a surjective map 
        \[
            \xymatrix{\deg_{n,p}\colon \Pic_{n,p} \ar@{->>}[r] & \Z/2(p^n-1)}
        \]
    with kernel an infinite abelian pro-$p$-group. 
\end{prop}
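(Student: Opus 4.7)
The plan is to handle the three claims --- surjectivity, the pro-$p$ abelian structure of the kernel, and its infinitude --- separately. Abelianness is automatic, since every Picard group is abelian.

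For surjectivity, the idea is to exhibit explicit invertibles realizing each residue class. The suspension functor is a symmetric monoidal autoequivalence of $\Sp_{K(n)}$, so $\Sigma^k \mathbb{S}_{K(n)}$ lies in $\Pic_{n,p}$ for every $k \in \Z$; by the K\"unneth isomorphism \eqref{eq:kunneth} its local degree equals $k$ modulo $2(p^n-1)$. Running $k$ through a full set of residues settles this part.

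The main work is in showing the kernel is pro-$p$. My plan is to compare to the algebraic Picard group via completed Morava $E$-theory $E^\vee_\ast$. Given $X \in \ker(\deg_{n,p})$, the hypothesis $K(n)_\ast X \cong K(n)_\ast$ combined with a Nakayama-type argument for $\mathfrak{m}_n$-complete modules over $A_n = \pi_0 E$ forces $E^\vee_\ast X$ to be free of rank one as an $E_\ast$-module. The twist of the continuous $\G_n$-action records a cocycle and produces a map
$$\ker(\deg_{n,p}) \longrightarrow H^1_{\cts}(\G_n, A_n^\times).$$
The $\G_n$-equivariant splitting $A_n^\times \cong \overline{\F}_p^\times \oplus (1+\mathfrak{m}_n)$ separates a manifestly pro-$p$ summand $1+\mathfrak{m}_n$ from a prime-to-$p$ factor; the latter is precisely what is detected by the degree quotient, so the contribution surviving in the kernel is pro-$p$. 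The remaining piece is the fiber of the comparison with the topological Picard group, i.e., the exotic part $\kappa_{n,p}$, which I would bound by the obstruction theory of Hopkins--Mahowald--Sadofsky to be a pro-$p$ group of bounded exponent.

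For infinitude, the plan is to exhibit algebraically a $\Z_p$-line in the kernel: composing the determinant $\G_n \to \Z_p^\times$ with the inclusion $\Z_p^\times \hookrightarrow A_n^\times$ yields a nontrivial continuous character whose class in $H^1_{\cts}(\G_n, A_n^\times)$ has infinite order, and by construction lifts to the topological determinant sphere of \cref{ssec:pic_twospheres}. The principal technical obstacle is the pro-$p$ control on the exotic Picard group $\kappa_{n,p}$; this rests on vanishing-line and $p$-completeness properties of the $E$-based Adams spectral sequence computing maps between $K(n)$-locally invertible spectra, and is by far the heaviest ingredient.
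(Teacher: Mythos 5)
The paper itself offers no proof of this statement; it is quoted as \cite[Proposition 14.3]{HoveyStrickland}, relying on \cite{HopkinsMahowaldSadofsky}, and the surrounding machinery (the comparison map $\varepsilon$, the exotic group $\kappa_{n,p}$, the decomposition of $A_n^\ast$, \cref{prop:degformula}) is introduced afterwards. Your proposal reconstructs a proof along exactly those lines, which is a reasonable approach; the surjectivity and pro-$p$ parts of your sketch are sound, up to the usual care that the pro-$p$ control on $\kappa_{n,p}$ be obtained directly from the descent/Adams spectral sequence (so that it does not circularly invoke this very proposition, as \cref{rem:exotic} in the paper does when deriving the pro-$p$ property \emph{from} \cref{prop:pic_localdegree}).

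The infinitude argument, however, has a genuine gap. The class $\det \in H^1_{\cts}(\G_n, A_n^\ast)$ does not lie in the image of $\ker(\deg_{n,p})$ when $p$ is odd: the determinant sphere $\mathbb{S}_{K(n)}[\det]$ has even local degree $(p^n-1)/(p-1) \neq 0$ in $\Z/(p^n-1)$ (this is \cref{cor:detdeg}), so for $p > 2$ the topological lift you propose is not an element of the kernel at all. The correct object is $\mathbb{S}_{K(n)}[\det]^{\otimes(p-1)}$, whose even degree vanishes and whose Morava-module class $(p-1)\det$ still has infinite order in $H^1_{\cts}(\G_n,A_n^{\xx})$ (this last assertion also needs a word: restrict $\det$ to the center $\Z_p^\ast \subset \G_n$, which acts trivially on $A_n$, so a multiple of $\det$ being a coboundary would force it to vanish on the center, contradicting surjectivity of $\det|_{\Z_p^\ast}$ onto a finite-index subgroup of $\Z_p^\ast$). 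Alternatively, the more elementary route is to observe that the pro-cyclic subgroup generated by $\Sigma^{2p^n-2}\mathbb{S}_{K(n)}$ via the $v_n$-adic interpolation map $\iota_{\can}$ of \eqref{eq:iotacan} lies inside the kernel and is a copy of $\Z_p$; this is what is implicitly used throughout the paper (e.g.\ in \cref{thm:picalg}) and is closer to the cited Hovey--Strickland argument.
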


In particular, in contrast to the case of $\Pic(\Sp)$, the naive notion of degree in the $K(n)$-local setting is only a rather coarse approximation to the Picard group. In the next subsection, we will review a refined invariant which provides a much closer algebraic approximation to $\Pic_{n,p}$.

\subsection{Morava modules}\label{ssec:localPic_morava}

As the starting point for the construction of a good algebraic approximation to $\Pic_{n,p}$, we take the following fundamental result of Devinatz--Hopkins \cite{DevinatzHopkins}. A general account of the Galois theory of commutative ring spectra was developed by Rognes \cite{Rognes}.  

Let $H_n$ be a 1-dimensional formal group of height $n$ over $\overline{\F}_p$.  Let $E = E_n$ be Morava $E$-theory of height $n$ associated with $H_n$. This is a $K(n)$-local commutative ring spectrum with
    \[ 
        \pi_*E_n = A_n[u^{\pm 1}],\;\abs{u}=-2.
    \]
Here $\pi_0E_n=A_n$ is the deformation ring of $H_n$, known as the Lubin--Tate deformation ring.  There is a (non-canonical) isomorphism $A_n \isom W(\bar{\F}_p)\llbracket u_1,\ldots, u_{n-1}\rrbracket$, with $W(\bar{\F}_p)$ the ring of Witt vectors of $\bar{\F}_p$.  Let $H_n^{\univ}$ be the universal deformation of $H_n$;  then $\pi_{-2}E_n=\Lie H_n^{\univ}=A_nu$ is the Lie algebra of $H_n^{\univ}$. 

The {\em Morava stabilizer group} is the profinite group $\G_n:=\Aut(H_n,\overline{\F}_p)$.  It is an extension
        \[ 
            0 \to \OO_D^\ast\to \G_n\to \Gal(\overline{\F}_p/\F_p)\to 0,
        \]
    where $\OO_D=\End H_n$ is the ring of integers in a central simple algebra $D$ of invariant $1/n$ over $\Q_p$.  The group $\G_n$ acts continuously on the spectrum $E_n$ and the ring $A_n$.  

\begin{thm}[Devinatz--Hopkins]
    The unit map $\mathbb{S}_{K(n)} \to E_n$ witnesses the target as a $K(n)$-local Galois extension with Galois group $\bG_n$. In particular, there is a canonical equivalence $\mathbb{S}_{K(n)} \simeq E_n^{h\bG_n}$, where the homotopy fixed points must be understood in a continuous sense.
\end{thm}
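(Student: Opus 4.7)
The plan is to follow the original argument of Devinatz--Hopkins, subsequently reinterpreted in the language of Rognes' Galois theory. The starting point is the Goerss--Hopkins--Miller theorem, which upgrades Morava $E$-theory to an $E_\infty$-ring and makes its construction functorial in the pair $(H_n, \overline{\F}_p)$; applying functoriality to the automorphism group yields an action of $\bG_n$ on $E_n$ in a suitable pro-category of spectra. The first step is therefore to promote this formal action to a continuous action by open subgroups, using that $\bG_n$ is profinite: fix a filtration of $\bG_n$ by open normal subgroups $\{U_i\}$ whose intersection is trivial, and observe that each quotient $\bG_n/U_i$ is a finite group which does act genuinely on something.

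The main step, which is also the principal obstacle, is the definition and analysis of the continuous homotopy fixed point spectra $E_n^{hU}$ for $U \subseteq \bG_n$ closed. Devinatz--Hopkins construct these by hand: for $U$ open one uses a finite-group construction at the level of finite Galois approximations, and for general closed $U$ one takes the $K(n)$-localized filtered colimit over open subgroups containing $U$. The crucial input is a convergent descent spectral sequence
\[
    E_2^{s,t} = H^s_{\cts}(U, \pi_t E_n) \Longrightarrow \pi_{t-s} E_n^{hU},
\]
whose construction requires a careful handling of continuity (the coefficients $\pi_\ast E_n$ are pro-finite) and whose convergence uses the fact that $\pi_\ast E_n$ is $L$-complete over $A_n$. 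Specialized to $U = \bG_n$, Morava's computation $H^s_{\cts}(\bG_n, \pi_\ast E_n)$ together with the known $K(n)$-local homotopy of the sphere identifies the abutment with $\pi_\ast \mathbb{S}_{K(n)}$, yielding the equivalence $\mathbb{S}_{K(n)} \simeq E_n^{h\bG_n}$.

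To upgrade the fixed point equivalence to a statement about Galois extensions in the sense of Rognes, one verifies the two conditions: dualizability of $E_n$ as an $\mathbb{S}_{K(n)}$-module, and the Galois condition that the natural map
\[
    E_n \hat{\otimes}_{\mathbb{S}_{K(n)}} E_n \xrightarrow{\ \sim\ } \prod^{\cts}_{\bG_n} E_n
\]
(the right-hand side being the mapping spectrum $\Map_{\cts}(\bG_n, E_n)$ of continuous maps) is an equivalence. Dualizability follows from the fact that any $K(n)$-local $E_n$-module is a retract of a finite sum of suspensions of $E_n$, together with the descent statement just established. For the Galois condition, one reduces via the Morava module functor to the computation $(E_n)^\vee_\ast E_n \cong \Map_{\cts}(\bG_n, (E_n)_\ast)$, which is the classical computation of the cooperation algebra originally due to Morava and rigorously established in this form by Strickland and Hovey. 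Faithfulness is automatic once one works inside the $K(n)$-local category, since $E_n$ detects $K(n)$-acyclicity.

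Once these two conditions are in hand, Rognes' general theory automatically packages the data into a $K(n)$-local faithful Galois extension with Galois group $\bG_n$, and the canonical equivalence $\mathbb{S}_{K(n)} \simeq E_n^{h\bG_n}$ is recovered as the unit component of the Galois descent equivalence. The bulk of the real work lies in the construction and analysis of the continuous homotopy fixed points and the descent spectral sequence; the Galois interpretation is essentially formal once this foundational piece is in place.
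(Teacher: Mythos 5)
The paper does not prove this theorem; it cites it directly from Devinatz--Hopkins \cite{DevinatzHopkins} and Rognes \cite{Rognes}, so the only meaningful comparison is against the argument in those references. Your outline captures the overall shape of the Devinatz--Hopkins construction (continuous homotopy fixed points for open subgroups, then closed subgroups via colimits, then the descent spectral sequence) and the Rognes-style packaging (dualizability plus the Galois condition, with the latter reducing to the computation of $E_n^\vee E_n$ as continuous maps $\bG_n \to E_{n,\ast}$).

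However, there is a genuine gap at the step where you claim that ``Morava's computation $H^s_{\cts}(\bG_n,\pi_\ast E_n)$ together with the known $K(n)$-local homotopy of the sphere identifies the abutment with $\pi_\ast\mathbb{S}_{K(n)}$.'' The $K(n)$-local homotopy of the sphere is not known in general (computing it is a central open problem), and even matching homotopy groups would not produce an equivalence. The actual argument in Devinatz--Hopkins identifies the descent spectral sequence with the $E_n$-based $K(n)$-local Adams spectral sequence, whose strong convergence to $\pi_\ast\mathbb{S}_{K(n)}$ is a consequence of the Hopkins--Ravenel smash product theorem---this is exactly the descendability input the present paper invokes a few lines after the theorem statement when discussing \eqref{eq:descentss}. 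Without that ingredient you have no convergence result and no comparison map $\mathbb{S}_{K(n)} \to E_n^{h\bG_n}$; the smash product theorem is the load-bearing fact, and your sketch omits it. A secondary caveat: passing to closed subgroups by $K(n)$-localized colimits over open supergroups requires justification that the result agrees with any ``honest'' continuous homotopy fixed points (later clarified by Behrens--Davis), though for the present statement only $U=\bG_n$ is needed and this subtlety does not bite.
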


We can twist the equivalence of this theorem by any $X \in \Sp_{K(n)}$, resulting in a new equivalence $X \simeq (E_n \hat{\otimes} X)^{h\bG_n} = (L_{K(n)}(E_n \otimes X))^{h\bG_n}$, where $X$ is equipped with trivial $\bG_n$-action. This in turn gives rise to a (Bousfield--Kan) descent spectral sequence of signature 
    \begin{equation}\label{eq:descentss}
     E_2^{s,t}(X):= H_{\cts}^s(\G_n,E_*(X)_t)\implies \pi_{t-s}X 
    \end{equation}
that has excellent convergence properties. (For conditions on $X$ that guarantee this identification of the $E_2$-page, see \cite[Theorem 4.3]{BarthelHeard2016}.) Indeed, since $\mathbb{S}_{K(n)} \to E_n$ is a descendable morphism of commutative ring spectra by the smash product theorem \cite[Theorem 7.5.6]{RavenelBook1992} (see also \cite[Lecture 31]{LurieLectures}), the spectral sequence converges strongly and collapses on a finite page with a horizontal vanishing line.

The input to the spectral sequence \eqref{eq:descentss} is the continuous $\bG_n$-cohomology of the completed $E_n$-homology of $X$:
\[ E_*(X) := \pi_*(E_n \hat \otimes X).\] 
Here, $E_*(X)$ is given the $\frak{m}$-adic topology, where 
$\frak{m}= (p,u_1,\ldots, u_{n-1}) \subset \pi_0E_n$ is the maximal ideal of $\pi_0E_n$.  
This structure is captured algebraically in the category of \emph{Morava modules}, $\Mod_{E_*}^{\circlearrowleft \bG_n}$, defined as follows.  A Morava module is a derived $\mathfrak{m}$-complete graded $E_*$-module equipped with a continuous semi-linear action of $\bG_n$. Morphisms of Morava modules are continuous equivariant maps. The category of Morava modules can be modeled as $(\pi_*E_n,E_*E)$-comodules whose underlying $E_*$-module is derived $\frak{m}$-complete. Completing the usual tensor product induces a symmetric monoidal structure on $\Mod_{E_*}^{\circlearrowleft \bG_n}$, denoted $\hat{\otimes}$. With these definitions, we obtain a canonical lift 
    \begin{equation}\label{eq:moravamodules}
        \xymatrix{& \Mod_{E_*}^{\circlearrowleft \bG_n} \ar[d]^U \\
        \Sp_{K(n)} \ar[r]_-{E_*(-)} \ar[ru]^-{E_*(-)} & \Mod_{E_*},}
    \end{equation}
where $U$ denotes the evident forgetful functor, which forgets the action by $\bG_n$ on the given Morava module. Note that $E_*(-)$ is in general not symmetric monoidal. However, when restricted to spectra $X$ with $E_*(X)$ projective (as $E_*$-module), it is symmetric monoidal. In particular, this applies to invertible $K(n)$-local spectra, in light of the following characterization \cite[Theorem 1.3]{HopkinsMahowaldSadofsky}:

\begin{lem}[Hopkins--Mahowald--Sadofsky]
    For $X \in \Sp_{K(n)}$, the following conditions are equivalent:
        \begin{enumerate}
            \item $X$ is invertible;
            \item $E_*(X)$ is free of rank 1 as a $E_*$-module;
            \item $K(n)_*(X)$ is free of rank 1 as a $K(n)_*$-module.
        \end{enumerate}
\end{lem}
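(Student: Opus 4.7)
The plan is to prove the circular chain of implications $(1) \Rightarrow (2) \Rightarrow (3) \Rightarrow (1)$, treating the last as the main step.

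For $(1) \Rightarrow (2)$, I would start from an equivalence $X \hat{\otimes} Y \simeq \mathbb{S}_{K(n)}$. Any invertible object is in particular dualizable, so the completed $E$-homology functor of \eqref{eq:moravamodules} restricts to a symmetric monoidal functor on dualizables. This produces an isomorphism $E_*(X) \hat{\otimes}_{E_*} E_*(Y) \cong E_*$ of Morava modules, showing that $E_*(X)$ is invertible as a derived $\mathfrak{m}$-complete graded $E_*$-module. Since $E_* \cong A_n[u^{\pm 1}]$ is a graded local ring with (homogeneous) maximal ideal $\mathfrak{m}$, every such invertible module is free of rank $1$.

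For $(2) \Rightarrow (3)$, I would invoke flat base change along the graded quotient $E_* \twoheadrightarrow E_*/\mathfrak{m} \cong K(n)_*$. Because $E_*(X)$ is free (hence flat) and $K(n)$ arises from $E_n$ by killing a regular system of parameters, the natural map $K(n)_* \otimes_{E_*} E_*(X) \to K(n)_*(X)$ is an isomorphism and the target is free of rank $1$ over $K(n)_*$.

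For $(3) \Rightarrow (1)$, the essential direction, I would construct an inverse via Spanier--Whitehead duality. Let $DX := F(X, \mathbb{S}_{K(n)})$ denote the $K(n)$-local function spectrum. The goal is to show that the evaluation map
\[
    \epsilon \colon DX \hat{\otimes} X \to \mathbb{S}_{K(n)}
\]
is an equivalence in $\Sp_{K(n)}$, which (as both sides are $K(n)$-local) may be checked after smashing with $K(n)$. I would invoke the standard fact that $X \in \Sp_{K(n)}$ with $K(n)_*(X)$ finite-dimensional is strongly dualizable, with $K(n)_*(DX) \cong \Hom^*_{K(n)_*}(K(n)_*(X), K(n)_*)$; together with the K\"unneth formula \eqref{eq:kunneth}, this identifies $K(n)_*(\epsilon)$ with the canonical pairing of $K(n)_*(X)$ against its $K(n)_*$-linear dual, which is an isomorphism precisely because $K(n)_*(X)$ is free of rank $1$.

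The main obstacle is the duality input in $(3) \Rightarrow (1)$: verifying that finite-dimensional $K(n)$-homology forces strong dualizability in $\Sp_{K(n)}$ and that $K(n)_*(DX)$ admits the expected linear-algebraic description. This is part of the structural theory of $\Sp_{K(n)}$ due to Hopkins--Ravenel and Hovey--Strickland; the other two implications are then essentially formal consequences of symmetric monoidality together with the fact that $E_*$ is graded local with residue field $K(n)_*$.
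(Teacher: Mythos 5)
There is a genuine gap in your $(1)\Rightarrow(2)$ step, and it is worth flagging because it reflects a common misconception. You assert that since $X$ is dualizable, the completed $E$-homology functor ``restricts to a symmetric monoidal functor on dualizables.'' This is false for a general lax symmetric monoidal functor: lax monoidal functors do not preserve dualizable objects, and the lax structure maps $F(X)\otimes F(Y)\to F(X\otimes Y)$ need not become isomorphisms on dualizables (the coevaluation of a dual even goes the wrong way to transport). Concretely, completed $E$-homology carries a K\"unneth spectral sequence with potential higher Tor terms, and nothing about $X$ being invertible by itself forces those to vanish. Worse, in the context of this paper your argument is circular: the surrounding text explicitly states that $E_*(-)$ is symmetric monoidal only \emph{after} restricting to spectra with projective $E_*(X)$, and it invokes \emph{this very lemma} to conclude that invertible spectra fall into that class. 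So you cannot use the monoidality of $E_*(-)$ to prove the lemma.

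The correct route makes $(3)$ the hinge. The implication $(1)\Rightarrow(3)$ is the easy one: because $K(n)_*$ is a graded field, the K\"unneth formula \eqref{eq:kunneth} holds unconditionally, so $X\hat\otimes Y\simeq\mathbb{S}_{K(n)}$ immediately gives $K(n)_*(X)\otimes_{K(n)_*}K(n)_*(Y)\cong K(n)_*$, forcing $K(n)_*(X)$ to have rank~$1$. The real content is $(3)\Rightarrow(2)$, which you have not addressed at all: one must show that if $K(n)_*(X)$ is one-dimensional then $E_*(X)$ is (pro-)free of rank~$1$. This is a nontrivial lifting statement, proved by Hovey--Strickland via their structure theory for $L$-complete $E_*$-modules, and it cannot be deduced formally. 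Your $(2)\Rightarrow(3)$ (vanishing of Koszul Tor terms because $E_*(X)$ is flat) and your $(3)\Rightarrow(1)$ (strong dualizability from finiteness of $K(n)$-homology, then checking the evaluation map on $K(n)_*$) are both sound, modulo citing Hovey--Strickland for the dualizability input. In short: replace $(1)\Rightarrow(2)$ by $(1)\Rightarrow(3)$, and supply the missing $(3)\Rightarrow(2)$ implication from Hovey--Strickland; the other two arrows stand as written.
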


\begin{defn}
    The \emph{algebraic Picard group} $\Pic_{n,p}^{\alg}$ is defined as the group of isomorphism classes of $\hat{\otimes}$-invertible Morava modules. The Morava module functor \eqref{eq:moravamodules} then induces a \emph{comparison map}
        \begin{equation}\label{eq:picalg_comparison}
            \varepsilon\colon \Pic_{n,p} \longrightarrow \Pic_{n,p}^{\alg} \coloneqq \Pic(\Mod_{E_*}^{\circlearrowleft \bG_n}), \quad X \mapsto E_*(X).
        \end{equation}
\end{defn}

The comparison map $\varepsilon$ turns out to be a close approximation to the topological Picard group, essentially due to the excellent properties of the descent spectral sequence \eqref{eq:descentss}. The next result summarizes the situation:

\begin{prop}[Hopkins--Mahowald--Sadofsky, Pstragowski, Goerss--Hopkins]\label{prop:comparisontheorem}
    Let $p$ be prime such that $2p-2 > \max\{n^2-1,2n\}$, then the comparison map is bijective:
        \[
            \varepsilon\colon \Pic_{n,p} \xrightarrow{\cong} \Pic_{n,p}^{\alg}.
        \] 
\end{prop}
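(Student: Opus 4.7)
The plan is to analyze the comparison map $\varepsilon$ via the Bousfield--Kan style descent spectral sequence for the Picard spectrum, following the obstruction theory of Mathew--Stojanoska and Goerss--Hopkins. Applying the truncated Picard functor $\tau_{\geq 0}\pic(-)$ to the $K(n)$-local Galois extension $\mathbb{S}_{K(n)} \to E_n$ produces an equivalence $\tau_{\geq 0}\pic(\mathbb{S}_{K(n)}) \simeq \tau_{\geq 0}\pic(E_n)^{h\G_n}$ and a descent spectral sequence
$$E_2^{s,t} = H^s_{\cts}(\G_n, \pi_t\pic(E_n)) \Longrightarrow \pi_{t-s}\pic(\mathbb{S}_{K(n)}),$$
with abutment $\Pic_{n,p}$ at $t-s=0$. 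The relevant homotopy groups are $\pi_0\pic(E_n) \cong \Z/2$, $\pi_1\pic(E_n) \cong A_n^\times$, and $\pi_t\pic(E_n) \cong \pi_{t-1}E_n$ for $t \geq 2$; above the bottom two rows the spectral sequence thus agrees with a reindexed version of \eqref{eq:descentss}.

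Next, I would identify $\Pic^{\alg}_{n,p}$ with the combined $E_2$-contribution at $(s,t)=(0,0)$ and $(1,1)$, coming from the parity invariant $H^0_{\cts}(\G_n, \Z/2)$ and the equivariant line bundle datum $H^1_{\cts}(\G_n, A_n^\times)$, matching the short exact sequence \eqref{eq:even_picard_intro}. Under this identification the map $\varepsilon$ is the associated-graded map of the spectral sequence filtration on $\Pic_{n,p}$, and bijectivity amounts to showing that no differentials leave the classes contributing to $\Pic^{\alg}_{n,p}$ (which would prevent $\varepsilon$ from being surjective) and that no additional filtration pieces appear below those two (which would enlarge the kernel).

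The final and central step is a sparsity argument. The hypothesis $2p-2 > \max\{n^2-1,2n\}$ implies in particular $p-1 > n$, so $\G_n$ is $p$-torsion free of cohomological dimension $n^2$, giving a horizontal vanishing line on the $E_2$-page. In parallel, since $\pi_*E_n$ is concentrated in even degrees, the $E_2^{s,t}$-page of the Picard spectral sequence is concentrated in $t$ odd above row $2$, and $H^s_{\cts}(\G_n, \pi_t E_n)$ enjoys a Greek-letter type diagonal sparsity of gauge $2p-2$ coming from chromatic periodicity. The numerical hypothesis is calibrated precisely so that any potentially non-zero differential $d_r$ leaving $(0,0)$ or $(1,1)$ lands in a bidegree ruled out either by the vanishing line or by the diagonal sparsity; hence all such differentials vanish and the filtration splits. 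The main technical obstacle is the combinatorial bookkeeping of the Picard spectral sequence near the non-standard bottom rows $\pi_0\pic(E_n)$ and $\pi_1\pic(E_n)$, which I would handle using the low-degree exact sequences comparing $\pic(E_n)$ to the connective cover of its unit spectrum.
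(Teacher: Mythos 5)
The paper does not actually prove this proposition; it is quoted with a citation to Hopkins--Mahowald--Sadofsky (injectivity), Pstragowski, and Goerss--Hopkins (surjectivity under the stated bound). Your sketch follows the same obstruction-theoretic approach those references use, namely the descent spectral sequence for the Picard spectrum of the $K(n)$-local Galois extension $\mathbb{S}_{K(n)} \to E_n$, and the identification of $\Pic_{n,p}^{\alg}$ with the $(0,0)$- and $(1,1)$-contributions. So the framework is the right one, but two steps in your plan are not covered by the sparsity and vanishing-line reasoning as you have described it.

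First, you assert that $\G_n$ has cohomological dimension $n^2$ once $p-1>n$. But $\G_n$ sits in an extension $1\to \mathbb{S}_n \to \G_n \to \Gal(\overline{\F}_p/\F_p)\to 1$, and $\mathbb{S}_n$ is the piece that is a Poincar\'e duality group of dimension $n^2$; naively the $\hat{\Z}$-factor raises the cohomological dimension by one. That the answer is nonetheless $n^2$ for the coefficients appearing on the $E_2$-page is a consequence of the fact that these are $W(\overline{\F}_p)$-semilinear $\hat{\Z}$-modules, whose higher continuous $\hat{\Z}$-cohomology vanishes (additive Hilbert 90 / Lang's theorem). This is exactly what makes $2p-2 > n^2-1$ (rather than $>n^2$) the correct bound. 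Without this observation your numerical calibration is off by one and, for instance, $E_2^{2p-1,2p-1}$ in bidegree $s=n^2+1$ is not a priori killed.

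Second, and more substantively, your argument that "any potentially non-zero differential $d_r$ leaving $(0,0)$ or $(1,1)$ lands in a bidegree ruled out either by the vanishing line or by the diagonal sparsity" does not address $d_2\colon E_2^{0,0}\to E_2^{2,1}\cong H^2_{\cts}(\G_n, A_n^{\ast})$. The row $t=1$ carries $\pi_1\pic(E_n)\cong A_n^{\ast}$, not a homotopy group of $E_n$, so neither the parity sparsity of $\pi_*E_n$ nor the $\mu_{p-1}$-weight sparsity applies to it, and the horizontal vanishing line never excludes an $H^2$. This differential vanishes for a different reason: the nontrivial class in $E_2^{0,0}\cong \Z/2$ is a permanent cycle because $\Sigma\mathbb{S}_{K(n)}$ realizes the odd-parity Morava module. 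That is a genuine topological input, not a sparsity observation, and it must be supplied separately for the surjectivity argument to close.
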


The injectivity part of this result was established in \cite{HopkinsMahowaldSadofsky}, while surjectivity was proven under slightly stronger hypothesis in \cite{Pstragowski2022}; the version given here is due to Goerss and Hopkins, \cite[Remark 2.4]{GoerssHopkinsComparingDualities}.

In general, however, the map $\varepsilon$ is not injective; non-trivial elements in the kernel are called {\em exotic}. This can only occur in the non-algebraic realm, i.e., when the descent spectral sequence \eqref{eq:descentss} for invertible $X \in \Sp_{K(n)}$ does not collapse for cohomological reasons. 

\begin{defn}\label{def:exoticpic}
    The kernel of $\varepsilon$ is the \emph{exotic Picard group} $\kappa_{n,p}$, fitting into an exact sequence
        \begin{equation}\label{eq:picardsequence}
        \xymatrix{0 \ar[r] & \kappa_{n,p} \ar[r] & \Pic_{n,p}\ar[r]^-{\varepsilon} &  \Pic_{n,p}^{\alg}.}
    \end{equation}
\end{defn}

\begin{rem}\label{rem:exotic}
    For any $n$ and $p$, the group $\kappa_{n,p}$ is an abelian pro-$p$ group of finite exponent.  It is pro-$p$ as a consequence of \cref{prop:pic_localdegree}.  The finiteness of its exponent can be seen as follows: Consider the descent spectral sequence \eqref{eq:descentss}. Since it collapses at a finite page with a horizontal vanishing line, uniformly for any $X \in \Pic_{n,p}$, there is an induced descent filtration on $\kappa_{n,p}$ with graded pieces that are subgroups of $E_r^{r,r-1}(\mathbb{S})$, see for example \cite[Lemma 3.29]{BBGHPS2022pp}. By \cite[Remark 1.8]{GoerssHopkinsComparingDualities}, the latter groups are all $p$-torsion of finite exponent, hence so is $\kappa_{n,p}$. It follows from the Pr\"ufer--Baer theorem that $\kappa_{n,p}$ is then a (possibly infinite) direct product of finite cyclic $p$-groups. For odd $p$, this has already been observed by Heard \cite[Theorem 4.4.1]{HeardThesis}.  
\end{rem}

\subsection{Two distinguished spheres}\label{ssec:pic_twospheres}

We discuss here the two distinguished classes in the local Picard group $\Pic_{n,p}$ which make an appearance in \cref{thm:picalg}.

The first of these classes is simply the suspension $\Sigma\mathbb{S}_{K(n)}$.  This has local degree 1, and so its tensor power $\Sigma^{2p^n-2}\mathbb{S}_{K(n)}$ has local degree 0. In light of \cref{prop:pic_localdegree}, this implies that the assignment $k \mapsto \Sigma^{(2p^n-2)k}\mathbb{S}_{K(n)}$ extends to a homomorphism
    \[
        \xymatrix{\iota_{\can}\colon \Z_p \ar[r] & \Pic_{n,p} \ar[r] & \Pic_{n,p}^{\alg}.}
    \]
In fact, we can do better, extending to a larger procyclic group generated by $\Sigma\mathbb{S}_{K(n)}$. To do so, we will introduce an auxiliary piece of notation (inspired by \cite{GoerssHopkinsComparingDualities}) that allows us to treat the cases of odd and even primes uniformly:

\begin{notation}\label{nota:zpn}
    For each prime $p$ and height $n$, we set $\cZ_{n,p} \coloneqq \lim_k \Z/(p^k(2p^n-2))$.
\end{notation}

\begin{rem}\label{rem:zpn_isoclass}
    For any $n \geq 1$ and prime $p$, the group $\cZ_{n,p}$ is procyclic, isomorphic to
        \[
            \cZ_{n,p} \cong 
                \begin{cases}
                    \Z_p \oplus \Z/2(p^n-1) & \text{if } p>2; \\
                    \Z_2 \oplus \Z/(2^n-1) & \text{if } p=2.
                \end{cases}
        \]
    For example, if $p$ is odd, a generator for the torsion summand in this presentation is given by the element $(p^{nk})_k$. 
\end{rem}

Based on the periodicity theorem \cite{HopkinsSmith1998}, \cite{HopkinsMahowaldSadofsky} uses ``$v_n$-adic interpolation'' to construct an extension of $\iota_{\can}$, which we denote by the same symbol:
    \begin{equation}\label{eq:iotacan}
        \xymatrixcolsep{4pc}{
        \xymatrix{\iota_{\can}\colon \cZ_{n,p} \ar[r]^-{1 \mapsto \Sigma\mathbb{S}_{K(n)}} & \Pic_{n,p}^{\alg}.}}
    \end{equation}
Here, the map $\Z_p \to \cZ_{n,p}$ is induced by maps $\Z/p^k \to \Z/(p^k(2p^n-2))$ determined by $1 \mapsto 2p^n-2$. In particular, for $p$ odd, this splits the presentation of \cref{rem:zpn_isoclass}. However, for $p=2$, this embedding includes $\Z_2$ as an index 2 subgroup of the $\Z_2$ appearing in \cref{rem:zpn_isoclass}.

The second important class is the \emph{determinant sphere}.  Let $\mathbb{S}(1)$ be the $p$-complete sphere with $\bG_n$-action given through the determinant map $\det\colon \bG_n \to \Z_p^{\ast}$. The determinant sphere is defined as 
    \begin{equation}\label{eq:detsphere}
        \mathbb{S}_{K(n)}[\det] \coloneqq (E_n \otimes \mathbb{S}(1))^{h\bG_n},
    \end{equation}
where $\bG_n$ acts diagonally on the $p$-completed smash product $E_n \otimes \mathbb{S}(1)$. As shown in \cite[Theorem 1.1]{BBGS2022}, the determinant sphere topologically realizes the Morava module $A_n[\det]$, meaning the free $A_n$-module of rank 1 where the $\G_n$-action has been twisted by $\det$.  Thus:
    \[ 
        \varepsilon(\mathbb{S}_{K(n)}[\det]) = A_n[\det]. 
    \]

\begin{rem}\label{rem:ghduality}
    The significance of having a concrete topological model for the determinant sphere comes from Gross--Hopkins duality, which describes the Morava module of the dualizing module for the $K(n)$-local category. The latter is by definition the Brown--Comenetz dual of the $n$-th monochromatic layer of the sphere $M_n\mathbb{S} = \fib(L_n\mathbb{S} \to L_{n-1}\mathbb{S})$, denoted $I_n \coloneqq IM_nS^0$. Here, Brown--Comenetz duality $I$ is a lift of Pontryagin duality to the category of spectra. In \cite[ Theorem 6.]{HopkinsGross1994a} and \cite{HopkinsGross1994b} Gross and Hopkins prove that $I_n \in \Pic_{n,p}$ with Morava module $E_*(I_n) \simeq \pi_{n-n^2}E_{n}[\det]$, see also \cite{Strickland2000}. We may then restate this algebraic identification as a $K(n)$-local equivalence 
        \[
            I_n \simeq \Sigma^{n^2-n}\mathbb{S}_{K(n)}[\det] \hat{\otimes} P_{n,p},
        \]
    where $P_{n,p} \in \kappa_{n,p}$. This formula is a source for the construction of exotic elements in the Picard group, see for example \cite{BarthelBeaudryStojanoska2019,HeardLiShi2021}. 
\end{rem}

\subsection{Conjectures and results}\label{ssec:localPic_sota}

The following conjecture, dating from the early 1990s and due to Hopkins, summarizes the expected behaviour of the sequence \eqref{eq:picardsequence}:

\begin{conj}\label{conj:picard}
    For all heights $n \geq 1$ and primes $p$:
        \begin{enumerate}
            \item $\kappa_{n,p}$ is a finite $p$-group;
            \item $\Pic_{n,p}^{\alg}$ is finitely generated over $\Z_p$.
        \end{enumerate}
\end{conj}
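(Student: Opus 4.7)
The plan is to prove part (2) via a cohomological analysis of the algebraic Picard group, then comment separately on why part (1) is substantially harder. The starting point is the short exact sequence
\[ 0 \to H^1_{\cts}(\G_n, A_n^\ast) \to \Pic_{n,p}^{\alg} \to \Z/2 \to 0, \]
coming from the 2-periodicity of $E_n$ together with triviality of $\Pic(A_n)$ for the local Lubin--Tate ring. This reduces finite generation of $\Pic_{n,p}^{\alg}$ over $\Z_p$ to the same statement for $H^1_{\cts}(\G_n, A_n^\ast)$. The $\G_n$-equivariant splitting $A_n^\ast \cong A_n^{\xx} \oplus \overline{\F}_p^\ast$ further reduces matters to the pro-$p$ piece $H^1_{\cts}(\G_n, A_n^{\xx})$, since the prime-to-$p$ summand contributes only a finite cyclic group of order dividing $p^n-1$.

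For the pro-$p$ piece, I would invoke the Faltings isomorphism in its diamond-theoretic form $[\LT_{n,K}^\diamond/\G_n] \cong [\HH_C^{n-1,\diamond}/(\Gal(\overline{\Q}_p/\Q_p) \times \GL_n(\Z_p))]$ in order to exchange the opaque $\G_n$-action on Lubin--Tate space for the linear $\GL_n(\Z_p)$-action on Drinfeld's symmetric space. Recognizing $A_n^{\xx}$ as the global sections of a sheaf $\OO^{\xx}$ of principal units then produces a comparison map
\[ b \colon H^1_{\cts}(\G_n, A_n^{\xx}) \to H^1_{\proet}(\HH_C^{n-1}, \OO^{\xx})^{\Gal(\overline{\Q}_p/\Q_p) \times \GL_n(\Z_p)}. \]
The target on the right should be computable via recent results of Colmez--Dospinescu--Niziol on the $p$-adic cohomology of $\HH_C^{n-1}$, and should come out to a single copy of $\Z_p$. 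The expected kernel of $b$ is supplied by the pullback $\det_{\LT}^\ast \colon H^1_{\cts}(\G_1, A_1^{\xx}) \to H^1_{\cts}(\G_n, A_n^{\xx})$ induced by the determinant $\G_n \to \G_1$, whose source is accessible by a classical one-variable local class field theory computation and yields $\Z_p$ (with extra $2$-torsion at $p=2$). Assembling these ingredients into a short exact sequence finishes part (2), with total $\Z_p$-rank equal to $2$.

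The main obstacles are twofold. First, within this strategy, proving exactness in the middle of the proposed sequence is delicate: one must verify surjectivity of $b$ by identifying a specific topologically meaningful class (such as the image of $\Sigma^2 \mathbb{S}_{K(n)}$) as a primitive generator on the Drinfeld side, and the case $(n,p)=(2,2)$ will presumably require separate treatment due to the existence there of nontrivial continuous characters $\SL_n(\Z_p)\to\Q_p^\ast$. Second, part (1), the finiteness of $\kappa_{n,p}$, appears to lie largely outside the reach of the $p$-adic geometric machinery above; the descent spectral sequence does supply a finite filtration whose subquotients have bounded $p$-exponent, so that $\kappa_{n,p}$ is a product of finite cyclic $p$-groups, but upgrading this to outright finiteness demands uniform control on those subquotients which would require independent cohomological input about the stabilizer group $\G_n$, and this is where I expect the genuinely hard work to lie.
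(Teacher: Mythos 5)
Note that the statement you were asked to prove is stated in the paper as a \emph{conjecture}, and the paper itself does not resolve both parts: it establishes part (2) as a consequence of its main computation (\cref{thm:picalg}), but part (1) remains open (the paper only observes, in \cref{rem:exotic}, that $\kappa_{n,p}$ is a possibly infinite product of finite cyclic $p$-groups of bounded exponent, via descent-filtration considerations). Your sketch of part (2) — reducing via the degree sequence to $H^1_{\cts}(\G_n,A_n^\ast)$, splitting off the prime-to-$p$ part, passing to Drinfeld space through the diamond-stack isomorphism, computing the target via Colmez--Dospinescu--Nizio{\l}, identifying the kernel with $\det_{\LT}^\ast H^1_{\cts}(\G_1,A_1^{\xx})$, and singling out $(n,p)=(2,2)$ as anomalous because of $\SL_n(\Z_p)$-characters — is precisely the paper's strategy, and your assessment of part (1) as lying outside the reach of this machinery also matches the paper's own position.
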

Taken together, the two parts of the conjecture imply that $\Pic_{n,p}$ is finitely generated over $\Z_p$.  

\begin{rem}\label{rem:cases}
\Cref{conj:picard} has been confirmed at heights $1$ and $2$ and all primes $p$ through explicit computations; the following table summarizes the state of the art before the present work: 
\begin{table}[H]\label{table:1}
\centering
\resizebox{\textwidth}{!}{  
\begin{tabular}{|c | c | c | c | c | c | }
\hline
$n$ & $p$ & $\Pic_{n,p}$ & $\Pic_{n,p}^{\mathrm{alg}} $ & $\kappa_{n,p}$  &  Reference \\
\hline \hline
$1$ & $\geq 3$ & $\Z_p \oplus \Z/2(p-1)$ &  $\Z_p \oplus \Z/2(p-1)$  & $0$  & \cite{HopkinsMahowaldSadofsky}  \\
\hline
$1$ & $ 2$ & $\Z_2 \oplus \Z/2 \oplus \Z/4$ &  $\Z_2 \oplus (\Z/2)^2$  & $\Z/2$  & \cite{HopkinsMahowaldSadofsky}     \\
\hline
$2$ & $\geq 5$ & $\Z_p^2 \oplus \Z/2(p^2-1)$ &  $\Z_p^2 \oplus \Z/2(p^2-1)$  & $0$  &  Hopkins, \cite{Lader} \\
\hline
$2$ & $3$ & $\Z_3^2 \oplus \Z/16 \oplus (\Z/3)^2$ &  $\Z_3^2 \oplus \Z/16 $  & $(\Z/3)^2$ &  \cite{Karamanov2010,GHMR2015,KamiyaShimomura2004}  \\
\hline
$2$ & $2$ & (?) & $\Z_2^2 \oplus \Z/3 \oplus (\Z/2)^3$ & $(\Z/8)^2 \oplus (\Z/2)^3$ & \cite{BBGHPS2022pp}, Henn \\
\hline
$\geq 3$ & $p$ & ?? & ?? & ?? & \\
\hline
\end{tabular}
}
\caption{Comments: The case $n=2$ and $p\geq 5$ is due to Hopkins and was later part of Lader's thesis \cite{Lader}. The computation of $\Pic_{2,2}^{\alg}$ has been completed by Hans-Werner Henn, but at the time of writing, remains unpublished. Additional partial information about $\kappa_{n,p}$ can be found in \cite{BarthelBeaudryStojanoska2019,HeardLiShi2021,CulverZhang2022pp}.}
\end{table}
\end{rem}

The main result of the present paper is a complete calculation of the algebraic Picard group of $\Sp_{K(n)}$, for all heights $n$ and all primes $p$; in particular, we recover the the computations of $\Pic_{n,p}^{\alg}$ for $n\leq 2$ summarized in the table above. In addition, we provide explicit topological elements which under the comparison map $\varepsilon$ generate the algebraic Picard group with the exception of the case $n=p=2$. The following is \cref{ThmMainTopologicalResult} from the introduction:

\begin{thm}\label{thm:picalg}
    For any prime $p$ and any height $n\ge 2$, the algebraic Picard group of the category of $K(n)$-local spectra is given by:
    \[
        \Pic_{n,p}^{\alg} \cong
            \begin{cases}
                \cZ_{n,p} \oplus \Z_p & \text{if }  p>2 \text{ and } n \geq 2;\\
                \cZ_{n,2} \oplus \Z_2 \oplus (\ZZ/2)^{\oplus 2} & \text{if }  p=2  \text{ and } n>2; \\
                \cZ_{n,2} \oplus \Z_2 \oplus (\ZZ/2)^{\oplus 2} \oplus \Z/2 & \text{if }  p=2  \text{ and } n=2,
            \end{cases}
    \]
    with $\cZ_{n,p}$ as in \cref{nota:zpn}. For all primes $p$, the summands $\cZ_{n,p}$ and $\Z_p$ are generated by $\varepsilon(\Sigma \mathbb{S}_{K(n)})$ and $\varepsilon(\mathbb{S}_{K(n)}[\det]^{\otimes (p-1)})$, respectively. For $p=2$, generators for the additional $(\ZZ/2)^{\oplus 2}$ summand are described in \cref{ssec:evenprime}. 
\end{thm}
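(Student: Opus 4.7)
The plan is to derive \cref{thm:picalg} from \cref{ThmMainAlgebraicResult} by pinning down the extension class of the short exact sequence \eqref{eq:even_picard_intro},
\[ 0 \to H^1_{\cts}(\G_n, A_n^\ast) \to \Pic_{n,p}^{\alg} \to \Z/2 \to 0, \]
which arises from the even/odd parity of the grading. Since $[\Sigma^1\mathbb{S}_{K(n)}]\in\Pic_{n,p}^{\alg}$ provides a set-theoretic section of the projection to $\Z/2$, the extension class in $\mathrm{Ext}^1(\Z/2, H^1_{\cts}(\G_n, A_n^\ast)) \cong H^1_{\cts}(\G_n, A_n^\ast)/2$ is the reduction of $[\Sigma^2\mathbb{S}_{K(n)}] = \varepsilon(\Sigma^2\mathbb{S}_{K(n)}) = \Lie H_n^{\univ}$. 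Combined with the identification $H^1_{\cts}(\G_n, A_n^\ast) \cong \Z_p^{\oplus 2} \oplus \Z/(p^n-1) \oplus (\Z/2)^{\oplus e(n,p)}$ from \cref{ThmMainAlgebraicResult}, the whole problem reduces to locating $\Lie H_n^{\univ}$ inside this decomposition and then identifying the summand into which the square root $[\Sigma^1\mathbb{S}_{K(n)}]$ absorbs.

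For odd $p$, I would first argue that $\Lie H_n^{\univ}$ maps to a generator of the $\Z/(p^n-1)$ summand: under the $\G_n$-equivariant decomposition $A_n^\ast \cong A_n^{\xx} \oplus \overline{\F}_p^\ast$, the torsion part of $H^1$ is identified with $H^1_{\cts}(\G_n, \overline{\F}_p^\ast)$, and $\Lie H_n^{\univ}$ twists by the scalar character $\chi\colon\G_n\to\overline{\F}_p^\ast$ recording the action on the Lie algebra, with image $\F_{p^n}^\ast$. Since $H^1_{\cts}(\G_n, A_n^\ast)/2 \cong \Z/2$ (the $\Z_p^{\oplus 2}$ part dies because $2$ is a unit in $\Z_p$), the extension is nontrivial, and $[\Sigma^1\mathbb{S}_{K(n)}]$ enlarges $\Z/(p^n-1)$ to $\Z/2(p^n-1)$. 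This yields $\Pic_{n,p}^{\alg} \cong \Z_p^{\oplus 2}\oplus\Z/2(p^n-1) \cong \cZ_{n,p}\oplus\Z_p$. For $p=2$, the $\Z/(2^n-1)$ summand is odd and drops out modulo $2$, so the extension class lives in the $(\Z/2)^{\oplus 2}$ coming from $\Z_2^{\oplus 2}$. Here I would invoke \cref{ThmSphereClassIsPrimitive}, which states that the image of $[\Sigma^2\mathbb{S}_{K(n)}]$ under the map $b$ from \cref{ThmFundamentalExactSequenceIntro} generates $H^1_{\proet}(\HH_C^{n-1},\Oxx)^{\Gal(\overline{\Q}_p/\Q_p)\times\GL_n(\Z_p)} \cong \Z_p$. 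Hence $\Lie H_n^{\univ}$ is a primitive generator of the $\Z_2$-summand complementary to $\det\!_\LT^\ast$, and passing to $[\Sigma^1\mathbb{S}_{K(n)}]$ enlarges this $\Z_2$ to a larger $\Z_2$ of which it is an index-$2$ subgroup, giving $\Pic_{n,2}^{\alg} \cong \cZ_{n,2}\oplus\Z_2 \oplus (\Z/2)^{\oplus e(n,2)}$.

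The topological generators then fall out of the argument: $[\Sigma^1\mathbb{S}_{K(n)}]$ generates $\cZ_{n,p}$ by construction of $\iota_{\can}$, and $[\mathbb{S}_{K(n)}[\det]^{\otimes(p-1)}]$ generates the complementary $\Z_p$ as the $\det\!_\LT^\ast$-image of a topological generator of $H^1_{\cts}(\G_1, A_1^{\xx})$ from \eqref{EqH1G1}. For $p=2$, the extra $(\Z/2)^{\oplus 2}$ (and the additional $\Z/2$ when $n=2$) will be pulled back from the $2$-torsion of $H^1_{\cts}(\G_1, A_1^{\xx})$ along $\det\!_\LT^\ast$, with the exceptional class for $(n,p)=(2,2)$ reflecting the correction term in \cref{ThmFundamentalExactSequenceIntro}. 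The principal technical obstacle is the primitivity claim for $[\Sigma^2\mathbb{S}_{K(n)}]$ in the $p=2$ case, which rests on \cref{ThmSphereClassIsPrimitive} and ultimately on the Faltings-type equivalence \eqref{EqIsomOfStacksIntro} linking Morava modules to the geometry of Drinfeld's symmetric space.
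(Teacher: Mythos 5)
Your proposal is correct and takes essentially the same approach as the paper: both proofs resolve the parity sequence $0\to\Pic_{n,p}^{\alg,0}\to\Pic_{n,p}^{\alg}\to\Z/2\to 0$ by observing that $\Sigma^1\mathbb{S}_{K(n)}$ lifts the generator of $\Z/2$ and that its square is $\varepsilon(\Sigma^2\mathbb{S}_{K(n)})$, whose position inside $H^1_{\cts}(\G_n,A_n^\ast)$ is pinned down by \cref{thm:A**main}, \cref{prop:primetop}, and \cref{ThmSphereClassIsPrimitive}. The paper packages this as a commutative square of short exact sequences with the snake lemma, whereas you carry it out directly via the extension class in $\Ext^1(\Z/2,H^1_{\cts}(\G_n,A_n^\ast))\cong H^1_{\cts}(\G_n,A_n^\ast)/2$, tracking summand by summand; these are equivalent presentations of the identical argument.
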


In the range where the comparison map $\varepsilon$ is an isomorphism (\cref{prop:comparisontheorem}), we thus obtain a complete computation of the Picard group of $\Sp_{K(n)}$; this is \cref{thmx:pic_genericprimes}.

\begin{cor}\label{cor:pictop}
    When $2p-2 > \max(n^2-1,2n)$, there is an isomorphism 
        \[
            \Pic_{n,p} \cong \Z_p \oplus \Z_p \oplus \Z/(2p^n-2),
        \]
    with generators given by $\Sigma \mathbb{S}_{K(n)}$ and $\mathbb{S}_{K(n)}[\det]^{\otimes (p-1)}$.
\end{cor}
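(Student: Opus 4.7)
The plan is to derive this corollary as an immediate combination of two results already established earlier in the paper. First, I would observe that the hypothesis $2p-2 > \max(n^2-1,2n)$ is precisely the hypothesis of \cref{prop:comparisontheorem}, so that the comparison map
\[
    \varepsilon \colon \Pic_{n,p} \xrightarrow{\ \cong\ } \Pic_{n,p}^{\alg}
\]
is a bijection under our assumption. Thus the problem reduces entirely to identifying $\Pic_{n,p}^{\alg}$ together with topological lifts of its generators.

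Next, I would invoke \cref{thm:picalg}. I would first note that the hypothesis forces $p$ to be odd (indeed $2p-2>2n$ with $n\geq 1$ gives $p>2$) and moreover forces $n\geq 2$ in the nontrivial range (the case $n=1$ is classical). Hence the relevant branch of \cref{thm:picalg} is the one for $p>2$ and $n\geq 2$, giving
\[
    \Pic_{n,p}^{\alg} \cong \cZ_{n,p}\oplus \Z_p,
\]
with the two summands generated, respectively, by $\varepsilon(\Sigma^1\mathbb{S}_{K(n)})$ and $\varepsilon(\mathbb{S}_{K(n)}[\det]^{\otimes(p-1)})$.

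Finally, I would unpack the notation $\cZ_{n,p}$ using \cref{rem:zpn_isoclass}: since $p$ is odd, there is a (non-canonical) isomorphism $\cZ_{n,p}\cong \Z_p\oplus \Z/2(p^n-1)$, and $2(p^n-1) = 2p^n-2$. Substituting this back into the previous display yields
\[
    \Pic_{n,p}^{\alg} \cong \Z_p\oplus \Z_p\oplus \Z/(2p^n-2),
\]
and transporting the isomorphism across $\varepsilon^{-1}$ produces the stated description of $\Pic_{n,p}$ together with the topological generators $\Sigma\mathbb{S}_{K(n)}$ and $\mathbb{S}_{K(n)}[\det]^{\otimes(p-1)}$.

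There is no real obstacle here: the corollary is essentially bookkeeping, combining the large-prime comparison theorem with the algebraic computation that constitutes the main work of the paper. The only mild subtlety is checking that the hypothesis on $p$ forces oddness and that it covers the relevant cases $n=2,\, p\geq 5$ and $n\geq 3,\, p>(n^2+1)/2$, which is an elementary arithmetic verification.
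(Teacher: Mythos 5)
Your proposal is correct and is exactly the (implicit) argument of the paper: it combines \cref{prop:comparisontheorem} with \cref{thm:picalg} and unwinds $\cZ_{n,p}$ via \cref{rem:zpn_isoclass}; the paper states the corollary without further proof after the remark "In the range where the comparison map $\varepsilon$ is an isomorphism...we thus obtain a complete computation." One small caveat: the hypothesis $2p-2 > \max(n^2-1,2n)$ does \emph{not} force $n\geq 2$ (it admits $n=1,\ p\geq 3$), and for $n=1$ the displayed formula would be off (one wants $\Z_p\oplus\Z/(2p-2)$, not $\Z_p^{\oplus 2}\oplus\Z/(2p-2)$, cf.\ \cref{rem:height1}); so you should read the corollary, as in the introductory \cref{thmx:pic_genericprimes}, with an implicit $n\geq 2$ for which \cref{thm:picalg} applies. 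Your hedge "in the nontrivial range" silently assumes this but does not justify it.
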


Another direct consequence of \cref{thm:picalg} is the surjectivity of the comparison map in all cases except for possibly the anomalous one when $n=p=2$; see also \cref{rem:nonsurjectivity}.

\begin{cor}
    For all heights $n \geq 1$ and all primes $p$ with the possible exception of $n=p=2$, the comparison map $\varepsilon\colon \Pic_{n,p} \to \Pic_{n,p}^{\alg}$ is surjective.
\end{cor}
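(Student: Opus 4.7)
The plan is to read off the surjectivity directly from \cref{thm:picalg}, since that theorem has been stated in a form that already exhibits topological preimages for each summand of $\Pic_{n,p}^{\alg}$ outside the anomalous case. Thus the ``proof'' of the corollary amounts to organizing the information by case and recording which classes in $\Pic_{n,p}$ map to the listed generators.

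First, I would dispose of the case $n=1$, which is not covered by \cref{thm:picalg} but is classical: the table attributed to Hopkins--Mahowald--Sadofsky shows that at height 1 the map $\varepsilon$ is surjective for every prime $p$, so nothing more is required. For $n\geq 2$ and $p$ odd, \cref{thm:picalg} identifies $\Pic_{n,p}^{\alg}\cong \cZ_{n,p}\oplus \Z_p$ and specifies that the generator of $\cZ_{n,p}$ is $\varepsilon(\Sigma^1\mathbb{S}_{K(n)})$ and the generator of $\Z_p$ is $\varepsilon(\mathbb{S}_{K(n)}[\det]^{\otimes(p-1)})$. Since $\cZ_{n,p}$ is procyclic (\cref{rem:zpn_isoclass}) and $\Z_p$ is topologically cyclic, surjectivity onto a dense subgroup of a profinite group implies surjectivity; strictly speaking one observes that a continuous homomorphism of profinite groups whose image contains a set of topological generators is surjective. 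This handles all odd primes in the range $n\geq 2$.

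For $p=2$ and $n\geq 3$, \cref{thm:picalg} again gives explicit topological generators for the $\cZ_{n,2}\oplus \Z_2$ part, and by hypothesis the additional $(\Z/2)^{\oplus 2}$ summand is topologically generated by the classes constructed in \cref{ssec:evenprime}. Combining these generators, the image of $\varepsilon$ contains a set of topological generators of $\Pic_{n,2}^{\alg}$, hence $\varepsilon$ is surjective by the same argument as above. The only remaining case is $n=p=2$, in which \cref{thm:picalg} exhibits an additional $\Z/2$ summand whose topological realizability is precisely what the statement of the main theorem declines to assert (cokernel of order at most 2); this accounts for the exclusion in the corollary.

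The main potential obstacle, and really the only nontrivial point, is ensuring that the generators for the $(\Z/2)^{\oplus 2}$ summand at $p=2$ are indeed in the image of $\varepsilon$, rather than merely algebraic classes. The plan is to cite \cref{ssec:evenprime}, where these classes are produced as Morava modules of genuine $K(n)$-local spectra; no further argument is needed here. All other summands come with explicit topological preimages built into the statement of \cref{thm:picalg}, so no additional work is required.
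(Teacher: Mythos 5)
Your proof is correct and takes essentially the same case-by-case approach as the paper: for $p$ odd and $n\geq 2$ you read surjectivity directly off the generators named in \cref{thm:picalg}, for $p=2$ and $n>2$ you invoke the realizations in \cref{ssec:evenprime}, and you handle $n=1$ and the excluded case $n=p=2$ as the paper does. Two small points of phrasing deserve mention. First, when you write that the extra $(\Z/2)^{\oplus 2}$ summand at $p=2$ is ``by hypothesis'' topologically generated, this is not a hypothesis of \cref{thm:picalg} (which merely says the generators are \emph{described} there) but the content of \cref{prop:surjectivity_p=2}; citing that proposition rather than the section heading would be cleaner. Second, you are right to flag the only nontrivial point — that a subgroup containing a topologically generating set need not be the whole group unless one knows the image is closed, which here follows from $\varepsilon$ being a continuous homomorphism from a profinite group; the paper passes over this silently, so making it explicit improves the exposition rather than departing from it.
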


The goal of the remainder of this section is to reduce \cref{thm:picalg} to a computation in $p$-adic geometry, stated as \cref{thm:A**main} below, which we then carry out in the rest of this paper.

\subsection{The even algebraic local Picard group}\label{ssec:pic_alg}

We now begin our reduction of \cref{thm:picalg} to a purely cohomological statement. Observe that the $K(n)$-local Picard group and its algebraic counterpart are naturally $\Z/2$-graded, since $E_*$ is concentrated in even degrees and is 2-periodic. Let $\Pic_{n,p}^{\alg,0}$ be the \emph{even algebraic Picard group}, consisting of those invertible Morava modules whose underlying $E_*$-module is \emph{even}, i.e., concentrated in even degrees. Using 2-periodicity, from now on we also restrict attention to the degree $0$ part of the even Morava module under consideration. Thus there is a short exact sequence
    \begin{equation}
        \label{eq:even_picard}
        \xymatrix{0 \ar[r] & \Pic_{n,p}^{\alg,0} \ar[r] & \Pic_{n,p}^{\alg} \ar[r] & \Z/2 \ar[r] & 0.}
    \end{equation}

The even algebraic Picard group has a simple description in terms of continuous group cohomology. 
Let $M \in \Pic_{n,p}^{\alg,0}$ be an even invertible Morava module.  We can think of $M$ as an invertible $A_n$-module with continuous $\G_n$-action.  Since $A_n$ is a local ring, the underlying $A_n$-module of $M$ is free of rank $1$, say with generator $m$.  Define a continuous 1-cocycle $\phi_M\from \G_n\to A_n^\ast$ by
    \[
        g \cdot m = \phi_M(g)m
    \]
for any $g \in \bG_n$. The class of $\phi_M$ in $H^1_{\cts}(\bG_n,A_n^\ast)$ does not depend on the chosen generator.  Conversely, a continuous cocycle in $H^1_{\cts}(\bG_n,A_n^\ast)$ determines an even Morava module by the same formula, and we obtain:

\begin{prop}\label{prop:evenPic_cohom}
    The assignment $M \mapsto \phi_M$ induces an isomorphism
        \[
            \Pic_{n,p}^{\alg,0} \cong H^1_{\cts}(\bG_n,A_n^\ast).
        \]
\end{prop}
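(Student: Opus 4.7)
The assignment $M \mapsto \phi_M$ given in the statement is essentially the proof itself, so the task reduces to verifying that it is well-defined (independent of the chosen generator), multiplicative, and bijective. My plan is to first reduce an even invertible Morava module to a rank-one free $A_n$-module equipped with a continuous semi-linear $\G_n$-action, then to construct the cocycle, then to exhibit an explicit inverse map, and finally to check multiplicativity. The only mildly substantive step is the reduction to rank-one free $A_n$-modules, which uses that $\pi_0 E_n = A_n$ is a complete Noetherian local ring (so its usual Picard group is trivial) combined with $2$-periodicity via $u \in \pi_{-2}E_n$.

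Starting from $M \in \Pic_{n,p}^{\alg,0}$, the invertibility lemma in \cref{ssec:localPic_morava} shows that $E_*(M)$ is free of rank one over $E_*$; since $M$ is even, a generator may be taken in degree zero, so $M_0$ is a rank-one free $A_n$-module and $M \cong M_0[u^{\pm 1}]$ as graded $E_*$-modules. Picking a generator $m \in M_0$, the formula $g \cdot m = \phi_M(g)\, m$ defines $\phi_M \from \G_n \to A_n^{\ast}$. Semi-linearity of the $\G_n$-action on $M$ yields the cocycle identity $\phi_M(gh) = \phi_M(g) \cdot g(\phi_M(h))$, and continuity of the action translates into continuity of $\phi_M$. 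Replacing $m$ by $v \cdot m$ for $v \in A_n^{\ast}$ alters $\phi_M$ by the coboundary $g \mapsto g(v)/v$, so the class $[\phi_M] \in H^1_{\cts}(\G_n, A_n^{\ast})$ is independent of the choice of generator.

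For the inverse, given a continuous cocycle $\phi$, I endow $A_n[u^{\pm 1}]$ with the twisted $\G_n$-action $g \cdot_{\phi} x = \phi(g) \cdot g(x)$ on the degree-zero part, extended $\G_n$-equivariantly through $u$ (which carries the standard action coming from the universal deformation). The cocycle identity is exactly what is required for this to define a group action; continuity of $\phi$ ensures continuity of the action; and derived $\m$-completeness is automatic since $A_n$ itself is. This construction manifestly inverts the previous one. Multiplicativity is immediate: if $M, N$ correspond to $\phi_M, \phi_N$ with generators $m, n$, then $m \otimes n$ generates $M \hat{\otimes} N$ and transforms by $\phi_M \cdot \phi_N$, so the assignment is a group homomorphism.

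The main (minor) obstacle is the bookkeeping surrounding derived completeness and graded structure in the reduction step: one must confirm that an invertible, derived $\m$-complete, \emph{even} $E_*$-module is necessarily of the form $A_n[u^{\pm 1}]$ with a free rank-one degree-zero part. Once that has been pinned down, everything else is formal and no deeper input is required; in particular, the \emph{evenness} hypothesis is what eliminates the $\Z/2$ of degree shifts that otherwise obstructs the identification with plain group cohomology of $A_n^{\ast}$.
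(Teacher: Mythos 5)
Your proposal follows essentially the same route as the paper: reduce an even invertible Morava module to a rank-one free $A_n$-module with continuous semi-linear $\G_n$-action (freeness because $A_n$ is local), read off a continuous $1$-cocycle from the action on a generator, and observe that the construction is invertible and multiplicative. The paper's proof is given quite tersely in the surrounding discussion rather than as a separate block, so your version simply spells out the bookkeeping (cocycle identity from semi-linearity, coboundary from change of generator, preservation of derived completeness), but no new idea is introduced.
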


\begin{rem}\label{rem:cocyclespheres}
    Let $\Pic_{n,p}^0$ be the even part of the local Picard group, i.e., those invertible $K(n)$-local spectra $X$ with even Morava module $E_*(X)$. We can translate the Morava modules of the two distinguished spheres of \cref{ssec:pic_twospheres} into a 1-cocycle description. Indeed, for each prime $p$ and height $n$, under the composite 
        \[
            \xymatrix{\Pic_{n,p}^0 \ar[r]^-{\varepsilon} & \Pic_{n,p}^{\alg,0} \ar[r]^-{\phi}_-{\cong} & H^1_{\cts}(\bG_n,A_n^\ast)}
        \]
    we have:
        \begin{enumerate}
            \item since $\varepsilon(\Sigma^2\mathbb{S}_{K(n)}) \cong \Lie H_n^{\univ}$, it maps under $\phi$ to the class of the cocycle $t_0\colon \bG_n  \to A_n^{\ast}$, where $t_0$ is determined by the formula $g\cdot u=t_0(g)u$ for $g\in \bG_n$ and $u$ a generator of $\pi_{-2}E_n \cong \Lie H_n^{\univ}$.  
            \item by construction, $\mathbb{S}_{K(n)}[\det]$ maps to the composition of the determinant map $\det\colon \bG_n \to \Z_p^\ast$ with the inclusion map $\Z_p^\ast\to A_n^\ast$.  
        \end{enumerate}
\end{rem}

In order to better understand these elements and in particular their relation, it will be useful to separate the \emph{prime-to-$p$} part of $\Pic_{n,p}^{\alg,0}$ from its \emph{principal} part. Recall that we consider Morava $E$-theory based on $\bar{\F}_p$. As a topological ring, 
    \[ 
        A_n\isom W\powerseries{u_1,\dots,u_{n-1}},
    \]
where $W=W(\overline{\F}_p)$ is the ring of Witt vectors. The ring $A_n$ is endowed with the $\frak{m}$-adic topology, where $\frak{m}=(p,u_1,\dots,u_{n-1})$ is the maximal ideal. The quotient map $A_n\to A_n/\frak{m}\isom \overline{\F}_p$ admits a multiplicative splitting via the Teichm\"uller lift $\overline{\F}_p\to W$.  Thus there is a $\G_n$-equivariant isomorphism of topological groups:
    \[ 
        A_n^\ast\isom A_n^{\ast\ast}\oplus\overline{\F}_p^\ast, 
    \]
where $A_n^{\ast\ast}=1+\frak{m}$.  We call $A_n^{\ast\ast}$ the group of {\em principal units}; principal units will appear in various contexts throughout the article;  we will always use the double-star notation to refer to them. 

Accordingly, the even algebraic Picard group decomposes into prime-to-$p$ and principal parts:
    \begin{equation}\label{eq:picalgeven_decomposition} 
        \Pic_{n,p}^{\alg,0}\isom H^1_{\cts}(\G_n,\overline{\F}_p^\ast)\oplus H^1_{\cts}(\G_n,A_n^{\ast\ast}). 
    \end{equation}
With respect to this decomposition, let us write 
    \[ 
        \varepsilon = \varepsilon^p \oplus \varepsilon_p, 
    \]
where $\varepsilon^p$, resp. $\varepsilon_p$, refers to the composition of $\varepsilon$ with the projection onto the prime-to-$p$ part, resp.~the principal part. 

\subsection{The prime-to-$p$ part and a relation}\label{ssec:prime-to-p-part}

We begin by determining the prime-to-$p$ part of $\Pic_{n,p}^{\alg,0}$. 

\begin{lem}\label{lem:primetop}
    There are isomorphisms 
        \[
            H^1_{\cts}(\G_n,\overline{\F}_p^\ast)\isom \Hom_{\Gal(\overline{\F}_p/\F_p)}(\F_{p^n}^\ast,\overline{\F}_p^\ast) \cong \Z/(p^n-1).
        \]
\end{lem}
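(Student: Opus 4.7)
The strategy is to apply the Lyndon--Hochschild--Serre spectral sequence to the split extension
\[
    1 \to S_n \to \G_n \to \Gal(\overline{\F}_p/\F_p) \to 1,
\]
where $S_n = \Aut_{\overline{\F}_p}(H_n)$ is the kernel. The crucial structural input is that the subgroup $\overline{\F}_p^\ast \subset A_n^\ast$ sits inside $A_n$ via the Teichm\"uller lift, so that the action of $\G_n$ on $\overline{\F}_p^\ast$ factors through the Galois quotient; in particular, $S_n$ acts trivially on $\overline{\F}_p^\ast$.

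Next I would dispose of the ``horizontal'' rows of the spectral sequence. The Frobenius $F$ acts on $\overline{\F}_p^\ast$ by $x \mapsto x^p$, so, multiplicatively, $F-1$ becomes the map $x \mapsto x^{p-1}$. Since $\overline{\F}_p^\ast$ is divisible by every integer coprime to $p$, this map is surjective, giving $H^1_{\cts}(\Gal(\overline{\F}_p/\F_p), \overline{\F}_p^\ast) = 0$. Combined with the fact that $\Gal(\overline{\F}_p/\F_p) \cong \widehat{\Z}$ has cohomological dimension $1$ on discrete torsion modules, one gets $H^i_{\cts}(\Gal(\overline{\F}_p/\F_p), \overline{\F}_p^\ast) = 0$ for all $i \geq 1$. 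The five-term exact sequence then collapses to
\[
    H^1_{\cts}(\G_n, \overline{\F}_p^\ast) \cong H^1_{\cts}(S_n, \overline{\F}_p^\ast)^{\Gal(\overline{\F}_p/\F_p)} = \Hom_{\cts}(S_n, \overline{\F}_p^\ast)^{\Gal(\overline{\F}_p/\F_p)}.
\]

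To finish, I would identify the right-hand side. Recall that $S_n \cong \OO_D^\ast$, where $D$ is the central division algebra over $\Q_p$ of invariant $1/n$, and it fits into an exact sequence
\[
    1 \to 1 + \pi\OO_D \to \OO_D^\ast \to \F_{p^n}^\ast \to 1
\]
whose kernel $1 + \pi\OO_D$ is pro-$p$. Since $\overline{\F}_p^\ast$ has only prime-to-$p$ torsion, every continuous homomorphism from $S_n$ factors through $\F_{p^n}^\ast$, and the induced Galois action on this residual quotient agrees with the natural Frobenius action. Hence we obtain the first claimed isomorphism
\[
    H^1_{\cts}(\G_n, \overline{\F}_p^\ast) \cong \Hom_{\Gal(\overline{\F}_p/\F_p)}(\F_{p^n}^\ast, \overline{\F}_p^\ast).
\]
For the final identification with $\Z/(p^n-1)$: any such homomorphism lands in the $(p^n-1)$-torsion subgroup $\mu_{p^n-1}(\overline{\F}_p) = \F_{p^n}^\ast$, and Galois equivariance is automatic from multiplicativity (both sides commute with $p$th powers). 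Thus the Hom-set identifies with $\End(\F_{p^n}^\ast) \cong \Z/(p^n-1)$, since $\F_{p^n}^\ast$ is cyclic of order $p^n-1$.

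The main point requiring care is matching the Galois action on the residual quotient $\F_{p^n}^\ast$ of $S_n$ with the natural Frobenius action on $\F_{p^n}^\ast \subset \overline{\F}_p^\ast$; this follows from the $\Gal(\overline{\F}_p/\F_p)$-equivariance of the Teichm\"uller section $\F_{p^n} \hookrightarrow \OO_D$.
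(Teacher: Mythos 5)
Your proof is correct and takes essentially the same route as the paper: both identify the prime-to-$p$ quotient $\F_{p^n}^\ast$ of $\OO_D^\ast$ as the only part that can contribute, and both use the Lyndon--Hochschild--Serre spectral sequence together with the vanishing of $H^{\geq 1}_{\cts}(\widehat{\Z},\overline{\F}_p^\ast)$ to isolate the $\Gamma$-equivariant homomorphisms $\F_{p^n}^\ast \to \overline{\F}_p^\ast$. The only cosmetic difference is the order: the paper first replaces $\G_n$ by $\F_{p^n}^\ast \rtimes \Gamma$ (since $\OO_D^{\ast\ast}$ is pro-$p$ and acts trivially) and then runs LHS, whereas you run LHS with $\OO_D^\ast$ as the kernel and dispose of its pro-$p$ part afterwards; both are valid. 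Your further observation that $\Gamma$-equivariance of a homomorphism $\F_{p^n}^\ast\to\overline{\F}_p^\ast$ is automatic (because both Frobenius actions are $x\mapsto x^p$ and any group homomorphism commutes with $p$-th powers) is a small tidy remark that the paper does not spell out, though they end up with the same count $\Z/(p^n-1)$.
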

\begin{proof}
    For the proof, let us abbreviate $\Gamma=\Gal(\overline{\F}_p/\F_p)$.  We write $\sigma$ for the Frobenius automorphism, so that $\sigma$ generates $\Gamma$ as a profinite group.  The ring $\OO_D=\End H_n$ has a unique two-sided maximal ideal $J$, with quotient ring $\F_{p^n}$. Let $\OO_D^{\ast\ast}=1+J$ be the group of principal units;  then $\OO_D^{\ast\ast}$ is a normal pro-$p$ subgroup of $\G_n$, with quotient
        \[  
            \G_n/\OO_D^{\xx}\isom \F_{p^n}^\ast \rtimes\Gamma. 
        \]
    The pro-$p$ subgroup $\OO_D^{\ast\ast}\subset \G_n$ acts trivially on $\overline{\F}_p^\ast$, on which $p$ acts invertibly, so we have an isomorphism for all $i\geq 0$:
        \[
            H^i_{\cts}(\G_n,\overline{\F}_p^\ast)\isom H^i_{\cts}(\F_{p^n}^\ast \rtimes\Gamma, \overline{\F}_p^\ast).
        \]
    Consider the Lyndon--Hochschild--Serre spectral sequence
        \[ 
            H^i_{\cts}(\Gamma,H^j(\F_{p^n}^\ast,\overline{\F}_p^\ast)) \implies H^{i+j}_{\cts}(\G_n,\overline{\F}_p^\ast). 
        \]
    Since $\Gamma$ is pro-cyclic, its cohomology vanishes in degrees $\geq 2$, so we have a short exact sequence
        \[ 
            0\to H^1_{\cts}(\Gamma,\overline{\F}_p^\ast)\to H^1_{\cts}(\G_n,\overline{\F}_p^\ast) \to H^1(\F_{p^n}^\ast,\overline{\F}_p^\ast)^{\Gamma} \to 0.
        \]
    The term  $H^1_{\cts}(\Gamma,\overline{\F}_p^\ast)$ is the cokernel of $\sigma-1$ on $\overline{\F}_p^\ast$, which is 0.  The term $H^1(\F_{p^n}^\ast,\overline{\F}_p^\ast)^{\Gamma}$ is the group of $\Gamma$-equivariant homomorphisms $\F_{p^n}^\ast\to \overline{\F}_p^\ast$.  This is a cyclic group of order $p^n-1$, generated by the inclusion map $\F_{p^n}^\ast\hookrightarrow \overline{\F}_p^\ast$.
\end{proof}

Next, we can determine the fate of $\Sigma^2\mathbb{S}_{K(n)}$ and $\mathbb{S}_{K(n)}[\det]$ in the prime-to-$p$ part.

\begin{prop} 
\label{prop:primetop}
As a cyclic group of order $p^n-1$, $H^1_{\cts}(\G_n,\overline{\F}_p^\ast)$ is generated by $\varepsilon^p(\Sigma^2\mathbb{S}_{K(n)})$. Furthermore, the following relation holds in $H^1_{\cts}(\G_n,\overline{\F}_p^\ast)$:
    \[ 
        \varepsilon^p(\mathbb{S}_{K(n)}[\det])=\frac{p^n-1}{p-1}\varepsilon^p(\Sigma^2\mathbb{S}_{K(n)}).
    \]
\end{prop}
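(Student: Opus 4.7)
The plan is to realize both classes as elements of the cyclic group $H^1_{\cts}(\G_n,\overline{\F}_p^\ast)\cong \Hom_{\Gamma}(\F_{p^n}^\ast,\overline{\F}_p^\ast)\cong \Z/(p^n-1)$ produced by \cref{lem:primetop}, whose distinguished generator is the tautological inclusion $\iota\colon \F_{p^n}^\ast\hookrightarrow \overline{\F}_p^\ast$. The spectral sequence argument in the proof of that lemma makes the isomorphism explicit: it is given by restricting cocycles to the subgroup $\F_{p^n}^\ast\subset \OO_D^\ast\subset\G_n$. So it will suffice to compute the restrictions of each of the two cocycles furnished by \cref{rem:cocyclespheres} to $\F_{p^n}^\ast$, and to read off the resulting power of $\iota$.

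For $\varepsilon^p(\Sigma^2\mathbb{S}_{K(n)})$, \cref{rem:cocyclespheres}(1) identifies the relevant cocycle $t_0$ with the character through which $\G_n$ acts on $\Lie H_n^{\univ}=A_n\cdot u$. I would first extract its prime-to-$p$ component by reducing modulo $\mathfrak{m}$ (which is how the Teichm\"uller splitting $A_n^\ast\cong A_n^{\ast\ast}\oplus\overline{\F}_p^\ast$ projects onto $\overline{\F}_p^\ast$), thereby obtaining the action of $\G_n$ on $\Lie H_n\cong \overline{\F}_p$. By the standard theory of Lubin--Tate formal groups, the derivative map $\OO_D=\End H_n\to \End\Lie H_n=\overline{\F}_p$ factors through the residue map $\OO_D\twoheadrightarrow \OO_D/J=\F_{p^n}$, and the induced map $\F_{p^n}\hookrightarrow\overline{\F}_p$ is the tautological inclusion. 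Passing to units yields $\varepsilon^p(\Sigma^2\mathbb{S}_{K(n)})|_{\F_{p^n}^\ast}=\iota$, proving that this class is a generator.

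For $\varepsilon^p(\mathbb{S}_{K(n)}[\det])$, \cref{rem:cocyclespheres}(2) expresses the class as $\det\colon \G_n\to \Z_p^\ast\hookrightarrow A_n^\ast$. The decomposition $\Z_p^\ast\cong \mu_{p-1}\times(1+p\Z_p)$ with $1+p\Z_p\subset A_n^{\ast\ast}$ identifies its prime-to-$p$ component as the composition $\G_n\xrightarrow{\det}\mu_{p-1}\cong\F_p^\ast\hookrightarrow\overline{\F}_p^\ast$. By construction of $\det$ in \cite{BBGS2022}, its restriction to $\OO_D^\ast$ is the reduced norm $\Nm_{D/\Q_p}$, which in turn restricts on the Teichm\"uller copy of $\F_{p^n}^\ast\subset \OO_D^\ast$ to the classical finite-field norm
\[
\Nm_{\F_{p^n}/\F_p}(x)=x\cdot x^p\cdots x^{p^{n-1}}=x^{(p^n-1)/(p-1)}.
\]
Translating this into the cyclic group $\Z/(p^n-1)$ in which $\iota$ corresponds to $1$ yields the class $(p^n-1)/(p-1)$, and combining with the previous paragraph gives the asserted relation.

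The only real subtlety is pinning down the reduced-norm interpretation of $\det|_{\OO_D^\ast}$; assuming that input (which is part of the definition of the determinant sphere), everything else is bookkeeping between the Teichm\"uller splittings on $A_n^\ast$ and $\Z_p^\ast$, the residue map $\OO_D\to\F_{p^n}$, and the classical finite-field norm.
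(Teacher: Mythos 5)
Your proof is correct and follows essentially the same route as the paper's: identify $H^1_{\cts}(\G_n,\overline{\F}_p^\ast)$ with $\Hom_{\Gamma}(\F_{p^n}^\ast,\overline{\F}_p^\ast)$ via \cref{lem:primetop}, observe that the class of $\Lie H_n^{\univ}$ restricts to the tautological inclusion $\F_{p^n}^\ast\hookrightarrow\overline{\F}_p^\ast$ (hence is a generator), and recognize $\det$ as restricting to the finite-field norm $\F_{p^n}^\ast\to\F_p^\ast$, which is the $(p^n-1)/(p-1)$ power of the inclusion. Your treatment makes the Teichm\"uller splitting and the reduced-norm interpretation of $\det|_{\OO_D^\ast}$ more explicit, but these are the same ingredients the paper relies on.
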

\begin{proof}  
We use the identification of \cref{lem:primetop} and claim that this group is generated by the prime-to-$p$ part of $\varepsilon(\Sigma^2\mathbb{S}_{K(n)})$.  Recall from \cref{ssec:pic_twospheres} that $\varepsilon(\Sigma^2\mathbb{S}_{K(n)})$ is the Morava module $\Lie H_n^{\univ}$, where $H_n^{\univ}$ is the universal deformation of the (unique up to isomorphism) formal group $H_n \isom H_n^{\univ}\otimes_{A_n} \overline{\F}_p$ of height $n$ over $\overline{\F}_p$.  The prime-to-$p$ part of $\varepsilon(\Sigma^2\mathbb{S}_{K(n)})$ is the class in $H^1_{\cts}(\G_n,\overline{\F}_p^\ast)$ represented by the $\G_n$-equivariant $\overline{\F}_p$-vector space $\Lie H_n^{\univ}\otimes_{A_n} \overline{\F}_p\isom \Lie H_n$.  

Consider the derivative of the action of $\OO_D=\End H_n$ on $H_n$, meaning the action on $\Lie H_n$:  this is a ring homomorphism $\OO_D\to \overline{\F}_p$ identifying the residue field of $\OO_D$ with $\F_{p^n}\subset \overline{\F}_p$. Therefore the subgroup $\OO_D^\ast=\Aut H_n$ of $\G_n$ acts on $\Lie H_n$ through the composition $\OO_D^\ast\to \F_{p^n}^\ast\hookrightarrow\overline{\F}_p^\ast$.  Tracing through the calculation of $H^1_{\cts}(\G_n,\overline{\F}_p^\ast)$ in the proof of \cref{lem:primetop}, we see that it is generated by the class of $\Lie H_n^{\univ}$.  

With respect to the isomorphism $H^1_{\cts}(\G_n,\overline{\F}_p^\ast)\isom \Hom_{\Gamma}(\F_{p^n}^\ast,\overline{\F}_p^\ast)$, the 
class $\varepsilon^p(\mathbb{S}_{K(n)}[\det])$ corresponds to the norm map $\F_{p^n}^\ast \to \F_p^\ast\hookrightarrow\overline{\F}_p^\ast$.  The norm map is $(p^n-1)/(p-1)$ times the inclusion map, giving the relation claimed in the second part of the proposition. 
\end{proof}

The proposition also allows us to compute the local degree of $\mathbb{S}_{K(n)}[\det]$, as we shall explain next. First, we will denote the even Picard group of the $K(n)$-local category by $\Pic_{n,p}^0$; by definition, this is the subgroup of $\Pic_{n,p}$ consisting of those elements that have even Morava module. Restricting the local degree from \cref{prop:pic_localdegree} to the even part, it factors through the inclusion $\Z/(p^n-1) \subseteq \Z/2(p^n-1)$ determined by $1 \mapsto 2$. Therefore, we obtain the \emph{even local degree} homomorphism
    \[
        \xymatrix{\deg_{n,p}^0\colon \Pic_{n,p}^{\alg,0} \ar[r] & \Z/(p^n-1).}
    \]
We observe that the target is normalized so that $\deg_{n,p}^0\varepsilon(\Sigma^2\mathbb{S}_{K(n)}) = 1$. Moreover, the quotient map $A_n^\ast \to (A_n/\frak m)^\ast \cong \bar{\F}_p^\ast$ is $\G_n$-equivariant, and hence induces a reduction map on continuous cohomology, $q\colon H^1_{\cts}(\bG_n,A_n^\ast) \to H^1_{\cts}(\bG_n,\bar{\F}_p^\ast)$. The next result provides an alternative perspective on the even local degree:

\begin{prop}\label{prop:degformula}
    There is a commutative diagram:
        \[
            \xymatrix{\Pic_{n,p}^{0} \ar[r]^-{\deg_{n,p}^0} \ar[d]_{\phi_{(-)}\circ \varepsilon} & \Z/(p^n-1) \ar[d]^{\cong} \\
            H^1_{\cts}(\bG_n,A_n^\ast) \ar[r]^-{q} & H^1_{\cts}(\bG_n,\bar{\F}_p^\ast).}
        \]
\end{prop}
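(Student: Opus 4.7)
The plan is to show that both composites in the diagram descend to the same homomorphism $\Pic_{n,p}^0 \to \Z/(p^n-1)$; the right-hand column is via the isomorphism of \cref{lem:primetop}. Since both maps are group homomorphisms, it suffices to verify they agree on any generating set of $\Pic_{n,p}^0$. My strategy is to reinterpret the left-hand composite via the $K(n)$-homology of $X$ and match it with $\deg_{n,p}^0$ term by term.

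For $X \in \Pic_{n,p}^0$ with $M = \varepsilon(X)$, the Teichm\"uller splitting $A_n^\ast \cong A_n^{\xx} \oplus \overline{\F}_p^\ast$ identifies $q$ with the projection to the prime-to-$p$ summand, so by \cref{prop:evenPic_cohom} the class $q(\phi_M)$ is represented by the character of the $1$-dimensional $\overline{\F}_p$-representation $M \otimes_{A_n} \overline{\F}_p$. Via $\tilde K(n) := E_n/\mathfrak{m}$ this $\overline{\F}_p$-line is identified with $\tilde K(n)_0(X)$. The canonical ring-spectrum map $K(n) \to \tilde K(n)$ induces on coefficients the base change $\F_p \to \overline{\F}_p$ together with $v_n \mapsto u^{1-p^n}$; accordingly a generator of $\tilde K(n)_0(X)$ can be written as $u^d(\eta \otimes 1)$, with $\eta \in K(n)_{2d}(X)$ an $\F_p$-generator, $d = \deg_{n,p}^0(X)$, and $u \in \tilde K(n)_{-2}$ the standard generator on which $\bG_n$ acts by $\bar t_0$. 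Unwinding the $\bG_n$-actions on $u$ (by $\bar t_0$) and on $\eta$ (by a character $\chi\colon \bG_n \to \F_p^\ast$) then yields
\[
    q(\phi_{\varepsilon(X)}) = d \cdot [\bar t_0] + [\chi] \quad \text{in } H^1_{\cts}(\bG_n, \overline{\F}_p^\ast).
\]

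The main obstacle is showing that $[\chi] = 0$ for every $X \in \Pic_{n,p}^0$. Since $\chi$ takes values in $\F_p^\ast$, which is Galois-fixed inside $\overline{\F}_p^\ast$, the argument in the proof of \cref{lem:primetop} reduces this triviality to $\chi|_{\F_{p^n}^\ast}$ being trivial. I would use that \cref{prop:primetop} already matches the two composites on $\Sigma^2\mathbb{S}_{K(n)}$ (where $\chi = 1$ tautologically, since $K(n) \otimes \Sigma^2\mathbb{S}_{K(n)} \simeq \Sigma^2 K(n)$ $\bG_n$-equivariantly), and then appeal to multiplicativity of both homomorphisms to reduce the problem to a complementary set of generators of $\Pic_{n,p}^0$, such as $\mathbb{S}_{K(n)}[\det]^{\otimes(p-1)}$ and the elements of $\kappa_{n,p}$. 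On each of these, the corresponding Morava module cocycle lands either in the principal units $A_n^{\xx}$ or is trivial, forcing $[\chi] = 0$. Combining these verifications via the $\otimes$-multiplicativity of both sides completes the proof.
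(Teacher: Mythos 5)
Your computational decomposition $q(\phi_{\varepsilon(X)}) = d\cdot[\bar t_0] + [\chi]$ is a legitimate way to organize the problem, and it correctly isolates the essential claim: the error term $[\chi]$ vanishes for every $X \in \Pic^0_{n,p}$. But the route you propose for that vanishing has a genuine gap. You write that you will ``reduce the problem to a complementary set of generators of $\Pic_{n,p}^0$, such as $\mathbb{S}_{K(n)}[\det]^{\otimes(p-1)}$ and the elements of $\kappa_{n,p}$,'' and check the claim on these. The trouble is that you do not have such a generating set at your disposal: the assertion that $\Sigma^2\mathbb{S}_{K(n)}$, $\mathbb{S}_{K(n)}[\det]^{\otimes(p-1)}$, and $\kappa_{n,p}$ together generate $\Pic_{n,p}^0$ is, up to the extension problems, \emph{exactly} the content of \Cref{thm:picalg} --- the theorem this proposition is a lemma for. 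The proposition appears earlier in the logical development precisely because it cannot assume knowledge of those generators.

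The fix is short and uses only information already available at this stage, and it is exactly what the paper does. By \Cref{prop:pic_localdegree}, the kernel of $\deg^0_{n,p}\colon \Pic^0_{n,p}\to\Z/(p^n-1)$ is a pro-$p$ group, while $H^1_{\cts}(\bG_n,\overline{\F}_p^\ast)$ has order $p^n-1$, prime to $p$, by \Cref{lem:primetop}. There are no nontrivial homomorphisms from a pro-$p$ group into a finite group of order prime to $p$, so $q\circ\phi_{(-)}\circ\varepsilon$ kills $\ker(\deg^0_{n,p})$ and therefore factors through $\deg^0_{n,p}$; this is the vanishing $[\chi]=0$ for \emph{all} $X$, with no appeal to generators. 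It then remains only to check that the induced map $\Z/(p^n-1)\to H^1_{\cts}(\bG_n,\overline{\F}_p^\ast)$ is an isomorphism, which follows by evaluating at the single element $\Sigma^2\mathbb{S}_{K(n)}$ and invoking \Cref{prop:primetop}, as you do. So the computation you set up is not wasted --- it supplies the final step --- but the order-of-torsion argument is the missing ingredient that makes the proof non-circular.
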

\begin{proof}
    Recall that the kernel of the even local degree map is a pro-$p$-group by \cref{prop:pic_localdegree}, while $H^1_{\cts}(\bG_n,\bar{\F}_p^\ast)$ is a group of order prime to $p$, see \cref{lem:primetop}. It follows that the composite $q \circ \phi_{(-)}\circ \varepsilon$     factors through $\deg_{n,p}^0$, i.e., there is a commutative square
        \[
            \xymatrix{\Pic_{n,p}^{0} \ar[r]^-{\deg_{n,p}^0} \ar[d]_{\phi_{(-)}\circ \varepsilon} & \Z/(p^n-1) \ar[d] \\
            H^1_{\cts}(\bG_n,A_n^\ast) \ar[r]^-{q} & H^1_{\cts}(\bG_n,\bar{\F}_p^\ast).}
        \]
    We claim that the induced right vertical map is an isomorphism. Indeed, it suffices to verify that on a generator of $\Z/(p^n-1)$, which we may take to be $\deg_{n,p}^0(\Sigma^2\mathbb{S}_{K(n)}) = 1$. As already observed in the proof of \cref{prop:primetop}, we have that  $q \circ \phi_{(-)}\circ \varepsilon(\Sigma^2\mathbb{S}_{K(n)}) = \varepsilon^p(\Sigma^2\mathbb{S}_{K(n)})$ is the canonical inclusion $\F_{p^n}^\ast\hookrightarrow \overline{\F}_p^\ast$. This is our preferred generator of the cyclic group $H^1_{\cts}(\bG_n,\bar{\F}_p^\ast)$, which verifies the claim.
\end{proof}

\begin{cor}\label{cor:detdeg}
    The even local degree of $\mathbb{S}_{K(n)}[\det]$ is $\frac{p^n-1}{p-1}$. In particular, we have 
        \[
            \varepsilon^p(\mathbb{S}_{K(n)}[\det]^{\otimes (p-1)}) = 0.
        \]
\end{cor}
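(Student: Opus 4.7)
The plan is to simply chase the results just established (\cref{prop:primetop} and \cref{prop:degformula}) on the element $\mathbb{S}_{K(n)}[\det] \in \Pic_{n,p}^0$, and then use bilinearity to deduce the vanishing of $\varepsilon^p$ on its $(p-1)$-fold tensor power.

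First I would apply \cref{prop:degformula} to the class $X = \mathbb{S}_{K(n)}[\det]$. The commutativity of the square says that $\deg_{n,p}^0(X)$, read inside $\Z/(p^n-1)$, agrees under the canonical identification $\Z/(p^n-1) \xrightarrow{\cong} H^1_{\cts}(\bG_n,\bar{\F}_p^\ast)$ (sending $1$ to the class of the inclusion $\F_{p^n}^\ast \hookrightarrow \bar\F_p^\ast$, which by the proof of \cref{prop:primetop} is $\varepsilon^p(\Sigma^2\mathbb{S}_{K(n)})$) with $q\circ\phi_X\circ \varepsilon(X) = \varepsilon^p(\mathbb{S}_{K(n)}[\det])$. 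By the relation established in \cref{prop:primetop}, the latter equals $\frac{p^n-1}{p-1}\,\varepsilon^p(\Sigma^2\mathbb{S}_{K(n)})$. Transporting back through the normalization, this reads $\deg_{n,p}^0(\mathbb{S}_{K(n)}[\det]) = \frac{p^n-1}{p-1}$ in $\Z/(p^n-1)$, which is the first claim.

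For the second claim, I would use that $\varepsilon^p$ is a group homomorphism from $\Pic_{n,p}^0$ to $H^1_{\cts}(\bG_n,\bar\F_p^\ast)$ (which is additive for the $K(n)$-local smash product on the source). Consequently,
\[
    \varepsilon^p\bigl(\mathbb{S}_{K(n)}[\det]^{\otimes(p-1)}\bigr) = (p-1)\,\varepsilon^p(\mathbb{S}_{K(n)}[\det]) = (p-1)\cdot \frac{p^n-1}{p-1}\,\varepsilon^p(\Sigma^2\mathbb{S}_{K(n)}) = (p^n-1)\,\varepsilon^p(\Sigma^2\mathbb{S}_{K(n)}),
\]
and this is zero because $\varepsilon^p(\Sigma^2\mathbb{S}_{K(n)})$ has order $p^n-1$ in the cyclic group $H^1_{\cts}(\bG_n,\bar\F_p^\ast) \cong \Z/(p^n-1)$ (again by \cref{lem:primetop} and \cref{prop:primetop}).

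There is no genuine obstacle here: the corollary is a purely formal consequence of the normalization choices made in \cref{prop:degformula} together with the relation between $\varepsilon^p(\mathbb{S}_{K(n)}[\det])$ and $\varepsilon^p(\Sigma^2\mathbb{S}_{K(n)})$ already extracted in \cref{prop:primetop}. The only point that is worth flagging explicitly is the bookkeeping of the normalization constant on $\Z/(p^n-1)$ (our generator on the topological side being $\Sigma^2\mathbb{S}_{K(n)}$, not $\Sigma\mathbb{S}_{K(n)}$), which is exactly what was fixed in the statement of \cref{prop:degformula}.
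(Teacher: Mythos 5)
Your proof is correct and follows exactly the route the paper intends: the paper's own proof is simply the one-line remark ``This follows from \cref{prop:primetop} and \cref{prop:degformula},'' and what you have written is precisely the unpacking of that one-liner, tracking the normalization of the isomorphism $\Z/(p^n-1)\cong H^1_{\cts}(\bG_n,\bar{\F}_p^\ast)$ and using additivity of $\varepsilon^p$.
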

\begin{proof}
    This follows from \cref{prop:primetop} and \cref{prop:degformula}.
\end{proof}

\subsection{A reduction of \cref{thm:picalg}}

The next result, whose proof will occupy the remaining sections of this paper, describes the principal part of the even algebraic Picard group. Assuming it, we can then deduce \cref{thm:picalg} up to some special features for the prime 2 that are treated separately in \cref{ssec:evenprime}.

\begin{thm}\label{thm:A**main} Assume $n\geq 2$.  There are isomorphisms
        \[
            H^1_{\cts}(\bG_n,A_n^{\ast\ast}) \cong 
                \begin{cases}
                    \Z_p^2  & \text{if } p>2; \\
                    \Z_2^2 \oplus (\Z/2)^{\oplus 2} & \text{if } p=2 \text{ and } n>2; \\
                    \Z_2^2 \oplus (\Z/2)^{\oplus 3} & \text{if } p=2 \text{ and } n=2.
                \end{cases}
        \]
    For all primes $p$, the free summands are generated by the principal parts of the 1-cocycles $t_0$ and $\det$ of \cref{rem:cocyclespheres}, respectively. For $p=2$, generators for the $(\Z/2)^{\oplus 2}$-summand are described in \cref{ssec:evenprime}. 
\end{thm}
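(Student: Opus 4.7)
The plan is to realize $H^1_{\cts}(\bG_n, A_n^{\ast\ast})$ as the middle of the short exact sequence supplied by \cref{ThmFundamentalExactSequenceIntro},
\[
0 \to H^1_{\cts}(\G_1, A_1^{\ast\ast}) \xrightarrow{\det\!_{\LT}^{\ast}} H^1_{\cts}(\G_n, A_n^{\ast\ast}) \xrightarrow{b} H^1_{\proet}(\HH_C^{n-1}, \OO^{\ast\ast})^{\Gal(\overline{\Q}_p/\Q_p) \times \GL_n(\Z_p)} \to 0,
\]
with the extra summand $\Z/2$ appended to the left in the anomalous case $(n,p)=(2,2)$. The two outer terms will be computed by independent means, and the two distinguished classes $\det$ and $t_0$ of \cref{rem:cocyclespheres} will be shown to generate, respectively, the free part of the left term and (the image of) the right term. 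Since the right term will turn out to be torsion-free (namely $\Z_p$), the sequence splits as a sequence of abelian groups, and the decomposition of \cref{thm:A**main} follows by adding up the torsion and free parts.

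The first step is to compute $H^1_{\cts}(\G_1, A_1^{\ast\ast})$ by hand. Here $A_1 = W(\overline{\F}_p)$, so $A_1^{\ast\ast} = 1 + pW(\overline{\F}_p)$, and the $p$-adic logarithm identifies it, as a $\G_1$-module, with $pW(\overline{\F}_p)$ for $p$ odd and with $2W(\overline{\F}_2) \oplus \Z/2$ at $p=2$. Using the extension $1 \to \Z_p^\ast \to \G_1 \to \Gal(\overline{\F}_p/\F_p) \to 1$ together with pro-cyclicity of the Frobenius quotient, the Lyndon--Hochschild--Serre spectral sequence directly gives the isomorphism~\eqref{EqH1G1}. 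The map $\det\!_{\LT}^{\ast}$ is pullback along the inclusion $A_1 \hookrightarrow A_n$ and the reduced-norm $\det \colon \G_n \to \G_1 = \Z_p^\ast$; by construction it sends the free generator to the cocycle $\det$ of the determinant sphere, identifying the $\det$ summand in the statement.

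The second step is to identify the right-hand term. I will use the isomorphism of diamond stacks $[\LT_{n,K}^{\diamond}/\G_n] \simeq [\HH_C^{n-1,\diamond}/(\Gal(\overline{\Q}_p/\Q_p) \times \GL_n(\Z_p))]$ to define $b$ as the composition of the base-change map $A_n^{\ast\ast} \to H^0(\LT_{n,K}, \OO^{\ast\ast})$ with pro-étale descent on the Drinfeld side. To compute the $\Gal \times \GL_n(\Z_p)$-invariants of $H^1_{\proet}(\HH_C^{n-1}, \OO^{\ast\ast})$, I will apply the Colmez--Dospinescu--Niziol computation \cite{CDNStein, CDNIntegral} of the pro-étale cohomology of Drinfeld's symmetric space, which supplies enough of the module structure for $\GL_n(\Q_p)$ to isolate the invariants, as in \cref{CorRank1Invariants}. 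The outcome for $n \geq 2$ is a canonical copy of $\Z_p$; the fact that the only exceptional continuous character of $\SL_n(\Z_p)$ into $\Q_p^\ast$ occurs at $(n,p)=(2,2)$ (\cref{lem:SLncohom}) is precisely what produces the extra $\Z/2$ in the anomalous case.

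The hard part, and the third step, is to promote the abstract exact sequence to the statement about generators: I must show that $b(t_0)$ is a topological generator of the $\Z_p$ on the right, i.e., that the Lie-algebra class of $\varepsilon(\Sigma^2 \mathbb{S}_{K(n)})$ is primitive after transport across Faltings's isomorphism. This is \cref{ThmSphereClassIsPrimitive}, and it will not be formal. The plan is to track the universal deformation $H_n^{\univ}$ on $\LT_n$ through the isomorphism~\eqref{EqIsomOfStacksIntro}: its Lie algebra becomes a very concrete equivariant line bundle on $\HH_C^{n-1}$ (essentially the restriction of the tautological line bundle from $\PP^{n-1}$), and the class of this line bundle in pro-étale $\OO^{\ast\ast}$-cohomology can be matched against the explicit generator produced by the Colmez--Dospinescu--Niziol calculation. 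Once primitivity of $b(t_0)$ is established, the sequence immediately assembles to $H^1_{\cts}(\G_n, A_n^{\ast\ast}) \cong H^1_{\cts}(\G_1, A_1^{\ast\ast}) \oplus \Z_p$, which, combined with step one, yields the three cases listed in \cref{thm:A**main}; the identification of the extra $2$-primary generators at $p=2$ is deferred to \cref{ssec:evenprime}.
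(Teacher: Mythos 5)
Your proposal follows essentially the same route as the paper: combine the fundamental exact sequence (\cref{ThmFundamentalExactSequence}) with the primitivity statement (\cref{ThmSphereClassIsPrimitive}), compute the left end at height $1$ (\cref{lem:height1contribution}) and the right end via Colmez--Dospinescu--Nizio{\l} (\cref{CorRank1Invariants}), and split off the free $\Z_p$. One small imprecision: at $p=2$ the logarithm does not split $A_1^{\ast\ast}=1+2W(\overline{\F}_2)$ as a direct sum $2W(\overline{\F}_2)\oplus\Z/2$ of $\G_1$-modules (it only converges on $1+4W$, and the torsion $\{\pm1\}$ does not split off equivariantly in any obvious way); the paper instead computes $H^1_{\cts}(\G_1,A_1^{\ast\ast})$ directly as $\Hom_{\cts}(\Z_p^\ast,\Z_p^{\ast\ast})$ after taking Frobenius fixed points, which sidesteps this issue.
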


\begin{proof}[Proof of \cref{thm:picalg}, assuming \cref{thm:A**main}]
    By \cref{prop:evenPic_cohom}, \cref{thm:A**main}, \cref{lem:primetop}, and \eqref{eq:picalgeven_decomposition}, we have isomorphisms
        \[
            \psi_{n,p}^0\colon\Pic_{n,p}^{\alg,0} \cong H^1_{\cts}(\bG_n,A_n^{\ast}) \cong 
                \begin{cases}
                    \Z_p^2 \oplus \Z/(p^n-1) & \text{if } p>2; \\
                    \Z_2^2 \oplus \Z/(2^n-1) \oplus (\Z/2)^{\oplus 2} & \text{if } p=2 \text{ and } n>2; \\
                    \Z_2^2 \oplus \Z/(2^n-1) \oplus (\Z/2)^{\oplus 3} & \text{if } p=2 \text{ and } n=2.
                \end{cases}
        \]
    Since the 2-torsion part for $p=2$ is dealt with separately in \cref{ssec:evenprime} below, we will ignore these summands for the remainder of this proof. Keeping in mind \cref{rem:cocyclespheres}, it follows from the identifications of the generators in \cref{thm:A**main} together with \cref{cor:detdeg} that the group $\Z_p \oplus \Z/(p^n-1)$ is generated by $\varepsilon(\Sigma^2\mathbb{S}_{K(n)})$, while the other $\Z_p$ summand is generated by $\varepsilon(\mathbb{S}_{K(n)}[\det])$. 

    In order to deduce \cref{thm:picalg} from this description, it remains to solve the extension problem of \eqref{eq:even_picard}. To this end, recall from \cref{ssec:pic_twospheres} that we have constructed a map
        \[
            \xymatrixcolsep{8pc}{
            \xymatrix{\psi_{n,p}\colon \cZ_{n,p} \oplus \Z_p \ar[r]^-{(\Sigma\mathbb{S}_{K(n)},\mathbb{S}_{K(n)}[\det]^{\otimes (p-1)})} & \Pic_{n,p}^{\alg},}
            }
        \] 
    whose two components send the distinguished generators of $\cZ_{n,p}$ and $\Z_p$ to $\varepsilon(\Sigma\mathbb{S}_{K(n)})$ and $\varepsilon(\mathbb{S}_{K(n)}[\det]^{\otimes (p-1)})$, respectively. This map fits into a commutative diagram of short exact sequences
        \[
            \xymatrix{0 \ar[r] & (\Z_p \oplus \Z/(p^n-1)) \oplus \Z_p \ar[r]^-{(2,1)} \ar[d]_{\psi_{n,p}^0} & \cZ_{n,p} \oplus \Z_p \ar[r] \ar[d]^{\psi_{n,p}} & \Z/2 \ar[r] \ar[d]^-{=} & 0 \\ 
            0 \ar[r] & \Pic_{n,p}^{\alg,0} \ar[r] & \Pic_{n,p}^{\alg} \ar[r] & \Z/2 \ar[r] & 0,}
        \]
    where the top left map is induced by multiplication by $2$ resp.~the identity map on the two cyclic summands. By \cref{thm:A**main}, the left vertical map is split injective with cokernel $(\Z/2)^{\oplus e(n,p)}$ for 
        \[ 
            e(n,p)=
                \begin{cases}
                    0, & p\neq 2;\\
                    2, & p=2\text{ and } n\geq 3;\\
                    3, & p=2\text{ and } n=2.
                \end{cases}
        \]
    Tracing $\varepsilon(\Sigma\mathbb{S}_{K(n)})$ in the right square, we deduce that the right vertical map is an isomorphism. Therefore, the snake lemma shows that $\psi_{n,p}$ is a split injection with cokernel $(\Z/2)^{\oplus e(n,p)}$, as desired. 
\end{proof}

\subsection{Addenda and anomalies for $p=2$}\label{ssec:evenprime}

The case of the even prime has some oddities, which we discuss separately in this subsection. The description of $\Pic_{n,p}^{\alg,0}$ in \cref{thm:picalg} implies that, at least if $p\neq 2$, the comparison map $\varepsilon\from \Pic_{n,p}\to \Pic_{n,p}^{\alg}$ is surjective, since $\Pic_{n,p}^{\alg}$ is generated as a profinite group by the images of $\Sigma\mathbb{S}_{K(n)}$ and $\mathbb{S}_{K(n)}[\det]$ under $\varepsilon$.  For completeness' sake we discuss the surjectivity of $\varepsilon$ when $p=2$, referring to \cref{rem:nonsurjectivity} for the anomalous case $n=p=2$.

\begin{prop}\label{prop:surjectivity_p=2}
    The comparison map $\varepsilon\colon \Pic_{n,2} \to \Pic_{n,2}^{\alg}$ is surjective if $n>2$.
\end{prop}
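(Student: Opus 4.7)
The strategy is to reduce surjectivity of $\varepsilon$ to the even algebraic Picard group, decompose the latter into pro-$2$ and prime-to-$2$ parts, and then exhibit explicit topological lifts for each of the generators appearing in \cref{thm:picalg} and \cref{thm:A**main}.

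First, using the short exact sequence \eqref{eq:even_picard}, the class $\Sigma \mathbb{S}_{K(n)}$ maps to a generator of the odd quotient $\Z/2$, so it suffices to show that every class in the even algebraic Picard group $\Pic_{n,2}^{\alg,0}$ lies in the image of $\varepsilon$. By the decomposition \eqref{eq:picalgeven_decomposition}, we may then argue separately on $H^1_{\cts}(\bG_n,\overline{\F}_2^\ast)$ and $H^1_{\cts}(\bG_n,A_n^{\ast\ast})$. The prime-to-$p$ part is cyclic of order $2^n-1$ and, by \cref{prop:primetop}, is already generated by $\varepsilon^p(\Sigma^2 \mathbb{S}_{K(n)})$. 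For the principal part, \cref{thm:A**main} gives $H^1_{\cts}(\bG_n,A_n^{\ast\ast}) \cong \Z_2^{\oplus 2}\oplus(\Z/2)^{\oplus 2}$ (here we use $n>2$), where the free summand is topologically generated by $\varepsilon_p(\Sigma^2\mathbb{S}_{K(n)})$ and by $\varepsilon_p(\mathbb{S}_{K(n)}[\det])$, using that $p-1=1$ so $\mathbb{S}_{K(n)}[\det]^{\otimes(p-1)}=\mathbb{S}_{K(n)}[\det]$.

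It therefore remains to realize the extra $(\Z/2)^{\oplus 2}$-summand in $H^1_{\cts}(\bG_n,A_n^{\ast\ast})$ by topological Picard classes. The generators of this summand are described earlier in the present subsection as classes coming from continuous characters $\chi\from\bG_n\to\{\pm 1\}\subset A_n^{\ast\ast}$ (these exist only for $p=2$, since $-1\in 1+\mathfrak{m}$ only at $p=2$). To each such $\chi$ one attaches a $K(n)$-local determinant-type sphere
\[
\mathbb{S}_{K(n)}[\chi] \;\coloneqq\; (E_n \otimes \mathbb{S}(\chi))^{h\bG_n},
\]
in direct analogy with the construction \eqref{eq:detsphere} of $\mathbb{S}_{K(n)}[\det]$; here $\mathbb{S}(\chi)$ is the $2$-complete sphere with its $\bG_n$-action obtained from $\chi$ via the natural $\Z_2^\ast$-action on $\mathbb{S}_2^\wedge$. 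The same argument that identifies the Morava module of $\mathbb{S}_{K(n)}[\det]$ in \cite{BBGS2022} shows $E_\ast(\mathbb{S}_{K(n)}[\chi]) \cong A_n[\chi]$, so $\varepsilon(\mathbb{S}_{K(n)}[\chi])$ corresponds under \cref{prop:evenPic_cohom} to the $1$-cocycle $\chi$ itself. Taking $\chi$ to run over the two distinguished characters described earlier in this subsection produces preimages for the two $\Z/2$-generators, and assembling these with $\Sigma\mathbb{S}_{K(n)}$, $\Sigma^2\mathbb{S}_{K(n)}$ and $\mathbb{S}_{K(n)}[\det]$ yields a generating set for $\Pic_{n,2}^{\alg}$ in the image of $\varepsilon$.

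The main obstacle is the construction of $\mathbb{S}_{K(n)}[\chi]$ and the verification that its Morava module is exactly $A_n[\chi]$; this is a mild generalization of the determinant sphere story and depends on having enough continuous characters $\bG_n \to \{\pm 1\}$ to hit the algebraic generators. The restriction $n>2$ enters here: for $n=2$ at $p=2$, one of the needed $\Z/2$-classes is obstructed by the presence of nontrivial continuous homomorphisms $\SL_n(\Z_p)\to \Q_p^{\ast}$ (cf.~\cref{lem:SLncohom}), which is precisely the anomaly addressed in \cref{rem:nonsurjectivity}.
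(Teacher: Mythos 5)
Your proposal takes a genuinely different route from the paper's. The paper realizes the $(\Z/2)^{\oplus 2}$-torsion of $\Pic_{n,2}^{\alg}$ by invoking the cyclotomic $(\Z/8)^{\ast}$-Galois extension $\mathbb{S}_{K(n)} \to \mathbb{S}_{K(n)}[\omega_n^8]$ constructed in \cite{CarmeliSchlankYanovski2021pp}: the cofibers of the unit maps of its three $\Z/2$-subextensions give the three nonzero elements of $\Pic_{n,2}^{\alg}[2]$. You instead propose building twisted spheres $\mathbb{S}_{K(n)}[\chi]$ directly for the quadratic characters $\chi = \sigma\circ\det\colon\bG_n\to\{\pm 1\}$, generalizing the determinant sphere. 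Your identification of the $(\Z/2)^{\oplus 2}$-generators as such characters is correct — it follows from the proof of \cref{thm:A**main}, since they arise from the torsion $\Hom_{\cts}(\Z_2^\ast,\{\pm 1\})$ of $H^1_{\cts}(\G_1,A_1^{\xx})$ via $\det_{\LT}^\ast$ — although \cref{ssec:evenprime} does not describe them that way. Your reduction to the even part and the decomposition into prime-to-$2$ and principal parts is also sound and matches the overall structure of the paper's argument.

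The gap is exactly the one you flag: the claim $E_*(\mathbb{S}_{K(n)}[\chi]) \cong A_n[\chi]$. The reference \cite{BBGS2022} treats the specific case $\chi = \det$; extending the construction and the Morava-module computation to an arbitrary continuous character $\bG_n\to\Z_2^{\ast}$ is plausible (the essential ingredient is the $\Z_p^{\ast}$-equivariant $p$-complete sphere $\mathbb{S}(1)$, and precomposing the $\bG_n$-action by a different character should cause no new difficulty), but it is not a citeable statement and you do not discharge it. Note that because the needed $\chi$'s have finite order $2$, the kernel $\ker\chi$ gives a $\Z/2$-Galois extension of $\mathbb{S}_{K(n)}$, and the cofiber-of-unit-map construction for $\Z/2$-Galois extensions is precisely what \cite{CarmeliSchlankYanovski2021pp} analyzes; the paper's use of the cyclotomic tower is a way of producing exactly these extensions with the algebraic Picard classes already identified, which is what lets the paper avoid re-proving anything. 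So your approach and the paper's land on the same constructions, but the paper's citation closes the gap you leave open.
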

\begin{proof}
    Let $n>2$. In light of \cref{thm:picalg}, it remains to find $K(n)$-local spectra which realize the 2-torsion in $\Pic_{n,2}^{\alg}$, i.e., to find topological lifts for the non-trivial elements in $\Z/2 \oplus \Z/2  = \Pic_{n,2}^{\alg}[2]$. This has already been achieved in \cite{CarmeliSchlankYanovski2024}; for the convenience of the reader, we sketch the construction. There is a cyclotomic $(\Z/8)^{\ast}$-Galois extension $\mathbb{S}_{K(n)} \to \mathbb{S}_{K(n)}[\omega_{n}^8]$, obtained from $\mathbb{S}_{K(n)}[B^n\Z/8]$ by splitting off a certain idempotent. By \cite[Section 5.3]{CarmeliSchlankYanovski2024}, the cofibers of the unit maps of the three $\Z/2$-Galois subextensions of $\mathbb{S}_{K(n)}[\omega_{n}^8]$ then give the three non-zero elements in $\Pic_{n,2}^{\alg}[2]$.     
\end{proof}

\begin{rem}\label{rem:nonsurjectivity}
    Combining \cref{thm:picalg} and \cref{prop:surjectivity_p=2} show that the comparison map $\varepsilon$ is surjective in all cases except potentially when $(n,p) = (2,2)$. We currently do not know if the generator of the third, ``anomalous'' $\Z/2$-summand that arises in the algebraic Picard group $\Pic_{2,2}^{\alg}$ can be realized topologically. This question has also been studied in unpublished work of Hans-Werner Henn. 
\end{rem}

\subsection{A toy case: height 1}\label{ssec:height1}

For completeness and because we will make use of the following lemma later in the proof of \cref{thm:A**main}, we briefly include a discussion of the case $n=1$.

\begin{lem}\label{lem:height1contribution}
There are isomorphisms 
    \[ 
        H^1_{\cts}(\G_1,A_1^{\xx}) \cong \Hom_{\cts}(\ZZ_p^{\ast},\ZZ_p^{\xx}) \cong  
            \begin{cases}
                \Z_p & \text{if } p>2; \\
                \Z_2 \oplus (\Z/2)^{\oplus 2} & \text{if } p=2.
            \end{cases}
    \]
\end{lem}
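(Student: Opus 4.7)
The plan is to apply the Lyndon--Hochschild--Serre spectral sequence to the extension
\[0 \to \Z_p^\ast \to \G_1 \to \Gamma \to 0,\]
where $\Gamma = \Gal(\overline{\F}_p/\F_p)$ and $\OO_D^\ast = \Z_p^\ast$ because at height $1$ the associated division algebra has invariant $0$ and so equals $\Q_p$. The essential structural input at this height is that the Lubin--Tate deformation ring is simply $A_1 = W(\overline{\F}_p)$ with trivial $\Z_p^\ast$-action, since the universal deformation is the multiplicative formal group $\widehat{\G}_m$ whose endomorphisms $[a]$ are already defined over $\Z_p$. The conjugation action of $\Gamma$ on $\Z_p^\ast$ is likewise trivial, so the sequence splits as $\G_1 \cong \Z_p^\ast \times \Gamma$.

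The first main step is to compute the $\Gamma$-cohomology of $A_1^{\xx} = 1 + pW(\overline{\F}_p)$. Since $\Gamma \cong \hat{\Z}$, continuous cohomology is concentrated in degrees $0$ and $1$ and is computed by the two-term complex $x \mapsto \sigma(x)/x$, where $\sigma$ is the Frobenius. Fixed points give $(A_1^{\xx})^\Gamma = 1 + p\Z_p = \Z_p^{\xx}$. For the vanishing $H^1_{\cts}(\Gamma, A_1^{\xx}) = 0$, I would filter by the subgroups $1 + p^kW(\overline{\F}_p)$ and reduce to the surjectivity of $\sigma - 1$ on each associated graded $\overline{\F}_p$, which holds by Artin--Schreier; a Mittag-Leffler argument then upgrades this to the full inverse limit. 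At $p = 2$ one must start the filtration slightly higher for the logarithm to converge, but the argument is otherwise identical.

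Combined with the fact that $\Z_p^\ast$ acts trivially on $A_1^{\xx}$ and that $H^2_{\cts}(\Gamma, -)$ vanishes on pro-cyclic $\Gamma$, the low-degree exact sequence of the spectral sequence collapses to an isomorphism
\[H^1_{\cts}(\G_1, A_1^{\xx}) \xrightarrow{\cong} \Hom_{\cts}(\Z_p^\ast, A_1^{\xx})^\Gamma.\]
Because $\Z_p^\ast$ carries the trivial $\Gamma$-action, a $\Gamma$-fixed continuous homomorphism $\Z_p^\ast \to A_1^{\xx}$ is exactly one landing in $(A_1^{\xx})^\Gamma = \Z_p^{\xx}$, which establishes the first claimed isomorphism.

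For the explicit computation, I would decompose both sides into torsion and torsion-free parts. For odd $p$, $\Z_p^\ast \cong \Z/(p-1) \oplus \Z_p$ while $\Z_p^{\xx} \cong \Z_p$; continuous homomorphisms from the prime-to-$p$ torsion into a pro-$p$ group vanish, giving $\Hom_{\cts}(\Z_p^\ast, \Z_p^{\xx}) \cong \Z_p$. For $p = 2$ both groups are isomorphic to $\Z/2 \oplus \Z_2$, and expanding the Hom into its four components yields $\Z_2 \oplus (\Z/2)^{\oplus 2}$. The only delicate step in the whole argument is the Hilbert 90--type vanishing from the second paragraph, so that is where I would concentrate the most care.
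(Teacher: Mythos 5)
Your proof is correct and is essentially the paper's argument, recast in Lyndon--Hochschild--Serre language rather than the homotopy-fixed-point/Fubini language of the paper; the key computation in both is identical, namely the vanishing $H^1_{\cts}(\hat{\Z},A_1^{\xx})=0$ together with the identification $(A_1^{\xx})^{\hat\Z}=\Z_p^{\xx}$, both obtained by passing to the $p$-adic filtration and applying Artin--Schreier on each associated graded piece $\overline{\F}_p$. One small remark: your aside that at $p=2$ one ``must start the filtration slightly higher for the logarithm to converge'' is a red herring — the filtration by $1+2^kW(\overline{\F}_2)$ for $k\ge 1$ works directly, since the associated graded map $a\mapsto a^p-a$ is surjective with kernel $\F_p$ in all characteristics, and no logarithm is invoked anywhere in the argument (the paper likewise avoids it).
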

\begin{proof}
Recall that $A_1\cong W(\overline{\F}_p)$.  The Morava stabilizer group is $\G_1\isom \Z_p^\ast \times \hat{\ZZ}$;  it acts on $A_1$ through the $\hat{\ZZ}$ quotient.   We first claim that the inclusion $\ZZ_p^{\xx} \to A_1^{\xx}$ induces an equivalence of graded rings $\ZZ_p^{\xx} \cong H^*_{\cts}(\hat{\ZZ},A_1^{\xx})$.  
Indeed, taking $\phi$ to be a topological generator of $\hat{\ZZ}$, what we need to prove is that there is a short exact sequence
    \[
        0\to \ZZ_p^{\xx} \to W(\overline{\F}_p)^{\xx} \xrightarrow{\mathrm{Id}-\phi}  W(\overline{\F}_p)^{\xx} \to 0.
    \]
Using the compatible $p$-adic filtrations on $\ZZ_p$ and $A_1$, it suffices to show that the corresponding sequence of  associated graded pieces 
    \[
        0\to \F_p \to \overline{\F}_p \xrightarrow{\mathrm{Id}-\phi}  \overline{\F}_p \to 0
    \]
is short exact, which is indeed the case. Using the Lyndon--Hochschild--Serre spectral sequence, we find there are isomorphisms
    \[ 
        H^1_{\cts}(\G_1,A_1^{\xx})\isom H^1_{\cts}(\Z_p^\ast \times \hat{\ZZ},A_1^{\xx})\isom H^1_{\cts}(\Z_p^\ast,\Z_p^{\xx})
        \isom \Hom_{\cts}(\Z_p^\ast,\Z_p^{\xx}).
    \]
The calculation of $\Hom_{\cts}(\Z_p^\ast,\Z_p^{\ast\ast})$ is routine.
\end{proof}

\begin{rem}\label{rem:height1}
    The calculation in the proof of \cref{lem:height1contribution} recovers the computation of the algebraic Picard group at height $1$ due to Hopkins, Mahowald, and Sadofsky \cite{HopkinsMahowaldSadofsky}. Indeed, combined with \cref{lem:primetop} and \cref{prop:evenPic_cohom}, we get the even algebraic Picard group $\Pic_{1,p}^{0,\alg}$. The extension problem \eqref{eq:even_picard} is then resolved as in the proof of \cref{thm:A**main}. We obtain isomorphisms
        \[
            \Pic_{1,p}^{\alg} \cong   
                \begin{cases}
                    \Z_p \oplus \Z/(2p-2) & \text{if } p >2; \\
                    \Z_2 \oplus (\Z/2)^{\oplus 2} & \text{if } p = 2.
                \end{cases}
        \]
    For odd primes, the algebraic Picard group is generated by $\varepsilon(\Sigma^1 \mathbb{S}_{K(n)})$; for the even prime, the same class generates the $\Z_2$-summand. Explicit generators for the 2-torsion part of $\Pic_{1,2}^{\alg}$ can again be constructed as in the proof of \cref{prop:surjectivity_p=2}; the original reference is \cite[Sections 4 and 5]{HopkinsMahowaldSadofsky}. For $p>2$, the comparison map gives an isomorphism $\Pic_{1,p} \cong \Pic_{1,p}^{\alg}$, while the exotic part for $p=2$ is a $\Z/2$, see \cite{HopkinsMahowaldSadofsky}. 
\end{rem}

\section{Moduli of formal groups}\label{sec:moduli}

We have reduced our main topological result to an algebraic one, namely \cref{thm:A**main}, which describes the continuous cohomology group $H^1_{\cts}(\G_n,A_n^{\xx})$.  Its proof will require a dive into the theory of moduli of formal groups, especially those modern aspects of the theory which meet $p$-adic geometry. The star player in the proof is $\widehat{\M}_{\FG}^n$, the completion of the moduli stack of formal groups along the height $n$ locus in characteristic $p$.  It enters the picture via \cref{PropUniformizationByLT}, which states that $\widehat{\M}_{\FG}^n$ is the quotient of the formal scheme $\Spf A_n$ by $\G_n$.  

In this section we offer the reader an exposition of the theory of moduli of formal groups and present two results required for the proof of \cref{thm:A**main}.   Neither result is entirely original;  our work here is more or less a direct adaptation of material appearing in \cite{ScholzeWeinstein2013} and \cite{ScholzeLectures}.  Both results concern the ``diamond generic fiber'' of the stack $\widehat{\M}_{\FG}^n$, which we call $\mathfrak{M}_n$, see  \cref{def:diamondstack}.  This $\mathfrak{M}_n$ is a sheaf of groupoids on the v-topology of perfectoid spaces in characteristic 0.  

The first result is \cref{ThmEquivalenceOfStacks}, which gives an alternative description of $\mathfrak{M}_n$ in terms of linear algebra objects (``modifications of vector bundles on the Fargues--Fontaine curve'').  Faltings' isomorphism between the Lubin--Tate and Drinfeld towers appears within \cref{ThmEquivalenceOfStacks}.

The second result is \cref{ThmDeterminant}, which leverages the linear-algebraic description of $\mathfrak{M}_n$ to construct a {\em determinant morphism} $\mathfrak{M}_n\to\mathfrak{M}_1$.  This result states that if $H$ is a 1-dimensional formal group of height $n$ over a perfectoid space $S$, there exists a functorially associated 1-dimensional formal group $\bigwedge^n H$ of height 1 over $S$, whose associated linear algebra objects (the Tate and Dieudonn\'e modules) are the top exterior powers of those of $H$.  Ultimately this result will help us understand the class of the determinant sphere in $H^1_{\cts}(\G_n,A_n^{\xx})$.

\subsection{The stack $\widehat{\M}_{\FG}^n$ of height $n$ formal groups}

For an introduction to the theory of moduli of formal groups aimed at the algebraic topologist, see \cite{Goerss_MFG}.  Unless specified otherwise, all formal groups in this discussion have dimension 1.

Let $\M_{\FG}$ be the moduli stack of 1-dimensional commutative formal groups on the category of schemes with the fpqc topology.  Then $\M_{\FG}$ admits a uniformization by an affine scheme $\Spec L$, where $L=\Z[x_1,x_2,\dots]$ is the Lazard ring.  The scheme $\Spec L$ parametrizes formal group laws, which carry a choice of coordinate $T$.  For each $n\in \Z$, let $[n](T)\in L\powerseries{T}$ be the multiplication-by-$n$ series in the universal formal group law over $L$.

Fix a prime number $p$.  The base change $\M_{\FG}\otimes\Z_{(p)}$ 
admits a well-known height stratification by closed subsets $\M_{FG}^{\geq n}$ for $0\leq n\leq\infty$, characterized as follows.  For $n\geq 0$, let $v_n$ be the coefficient of $T^{p^n}$ in $[p](T)$.  In particular $v_0=p$.  For $1\leq n\leq \infty$, the vanishing locus of $(v_0,v_1,\dots,v_{n-1})$ in $\Spec L$ is coordinate-invariant, meaning that it descends to a closed substack of $\M_{\FG}\otimes \Z_{(p)}$, to wit $\M_{\FG}^{\geq n}$.   We refer to the locally closed substack $\M_{\FG}^n\coloneqq \M_{FG}^{\geq n+1}\setminus \M_{FG}^{\geq n}$ as the moduli stack of formal groups of height $n$.  Given a characteristic $p$ scheme $S$, the $S$-points of $\M_{\FG}^n$ classify formal groups $H$ over $S$ whose $p$-torsion $H[p]$ is a locally free group scheme of rank $p^n$ over $S$.  

Formal groups of height $n$ obey an important rigidity condition with respect to isomorphisms. This is proven by Goerss in \cite[Theorem 5.23]{Goerss_MFG}, who attributes it to Lazard; see also \cite[Lecture 14]{LurieLectures}.

\begin{lemma} \label{LemmaRigidity}
Let $R$ be a ring of characteristic $p$, and let $H,H'$ be formal groups over $R$ of height $n$, where $1\leq n < \infty$.  Let $\underline{\Isom}(H,H')$ denote the functor sending an $R$-algebra $R'$ to the set of isomorphisms $H\otimes_R R'\to H'\otimes_R R'$.  Then $\underline{\Isom}(H,H')$ is represented by an affine scheme $\Spec R(\infty)$, where $R(\infty)$ is the inductive limit of injective finite \'etale maps $R=R(1)\to R(2) \to\cdots$.
\end{lemma}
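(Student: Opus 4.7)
My plan is to realize $\underline{\Isom}(H, H')$ as a cofiltered limit of finite étale covers of $\Spec R$ coming from the $p$-power torsion subgroups.

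First, I would check that $\underline{\Isom}(H, H')$ is representable as an affine scheme over $\Spec R$: an isomorphism $f \colon H \to H'$ is a power series $f(T) = a_1 T + a_2 T^2 + \cdots$ with $a_1 \in R^{\times}$ satisfying $f(x +_H y) = f(x) +_{H'} f(y)$, which translates into polynomial relations on the $a_i$, carving out a closed subscheme of $\Spec R[a_1^{\pm 1}, a_2, \ldots]$. Next, for each $k \geq 1$, I would consider the restriction map
\[
\rho_k \colon \underline{\Isom}(H, H') \longrightarrow \underline{\Isom}(H[p^k], H'[p^k]),
\]
where $H[p^k]$ is a finite locally free group scheme of rank $p^{nk}$ over $\Spec R$ (by the height-$n$ hypothesis). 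The crucial input from characteristic $p$ is that, by iterating $[p]_H(T) = v_n T^{p^n} + (\text{higher})$ with $v_n \in R^\times$, the $[p^k]$-series begins with a unit multiple of $T^{p^{nk}}$. Hence $H[p^k] = \Spec R[T]/h_k(T)$ for a Weierstrass polynomial $h_k$ of degree $p^{nk}$. It follows that a homomorphism $f \colon H \to H'$ is determined by $f(T) \bmod h_k(T)$ as $k$ varies, which gives an isomorphism
\[
\underline{\Isom}(H, H') \;\cong\; \lim_k \underline{\Isom}(H[p^k], H'[p^k])
\]
of functors (surjectivity coming from the standard compatibility argument for assembling maps of $p$-divisible groups from maps of finite torsion levels).

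The main obstacle is then to prove each $\underline{\Isom}(H[p^k], H'[p^k])$ is finite étale over $\Spec R$. Finiteness follows because the scheme of bialgebra isomorphisms between the coordinate rings of $H[p^k]$ and $H'[p^k]$ sits inside the finite locally free $\Hom$-scheme of their coordinate rings, and $\underline{\Isom}$ is a closed subscheme (cut out by group-law compatibility). For étaleness, I would apply the infinitesimal lifting criterion: given a square-zero extension $\widetilde R \twoheadrightarrow R$, any isomorphism of $p^k$-torsion lifts uniquely, since the obstruction to lifting is controlled by a Lie-algebra cohomology group that vanishes once one knows Frobenius is sufficiently nilpotent, as guaranteed by the height-$n$ assumption. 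A more hands-on alternative is to verify that geometric fibres are discrete — over a separably closed base, $\underline{\Isom}$ becomes a torsor under $\Aut(H_{\bar k}[p^k])$, a finite discrete group by the classification of endomorphism rings of height-$n$ formal groups over separably closed fields — and then combine discreteness of fibres with finite flatness to conclude étaleness. Passing to the limit in $k$ presents $\underline{\Isom}(H, H')$ as a pro-finite étale affine scheme over $\Spec R$, as required.
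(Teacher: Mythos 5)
Your reduction to $p$-power torsion is natural, but the central claim that $\underline{\Isom}(H[p^k],H'[p^k])$ is \emph{finite \'etale} over $\Spec R$ is false, and this is precisely where the height hypothesis must do real work. Take $p=2$, $n=2$, and let $H$ over $\F_2$ be the Honda formal group with $[2]_H(T)=T^4$; one computes that its group law satisfies
\[
F(X,Y)\equiv X+Y+X^2Y^2 \pmod{(X^4,Y^4)}.
\]
For any $\F_2$-algebra $R$ and any $b\in R$, the polynomial $g(T)=T+bT^2$ defines a Hopf automorphism of $H[2]=\Spec R[T]/T^4$: one checks directly that $g\bigl(F(X,Y)\bigr)=X+Y+X^2Y^2+bX^2+bY^2=F\bigl(g(X),g(Y)\bigr)$ modulo $(X^4,Y^4)$, and $g\circ g=\mathrm{id}$. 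Thus $\GG_a\hookrightarrow\underline{\Aut}(H[2])$, so $\underline{\Aut}(H[2])$ is neither finite nor unramified over $\F_2$; in fact $\underline{\Aut}(H[2])\cong \GG_a\rtimes\mu_3$. This also defeats both of your proposed mechanisms for \'etaleness: the infinitesimal lifting is not unique (take $b=\epsilon$ over $\F_2[\epsilon]/\epsilon^2$), and the geometric fibres are not discrete. Your finiteness argument (``closed subscheme of a finite free $\Hom$-scheme'') has the same defect: that $\Hom$-scheme is an affine space, whose closed subschemes need not be finite.

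The identification $\underline{\Isom}(H,H')\cong\varprojlim_k \underline{\Isom}(H[p^k],H'[p^k])$ is correct, but the transition maps in this tower are far from surjective, and the ``extra'' automorphisms at each finite level are exactly what prevents the levels from being finite \'etale. Isolating the stabilized images that do form a pro-finite \'etale system would require, in substance, the argument your route was meant to bypass. The paper attributes this lemma to Lazard via \cite[Theorem~5.23]{Goerss_MFG} and \cite[Lecture~14]{LurieLectures}; that argument filters $\underline{\Isom}(H,H')$ by the \emph{degree} of the power series $f(T)=a_1T+a_2T^2+\cdots$ rather than by torsion level. The commutation relation $[p]_{H'}\circ f = f\circ[p]_H$, together with $[p]_H(T)=v_nT^{p^n}+\cdots$ with $v_n\in R^\times$, forces each new coefficient to satisfy an Artin--Schreier-type relation such as $v_n' a_1^{p^n}=v_n a_1$, i.e.\ $a_1^{p^n-1}=v_n/v_n'$, which is separable. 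It is the separability of these equations in degree, not any property of the torsion subgroups, that produces finite \'etale stages, and no variant of your argument can avoid engaging with it.
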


In the context of Lemma \ref{LemmaRigidity}, the morphism $\underline{\Isom}(H,H')\to \Spec R$ admits a section over any geometric point of $\Spec R$. Therefore this morphism is surjective.  

For each $1\leq n<\infty$, there exists a formal group $H_n$ of height $n$ over $\overline{\F}_p$ (see \cite[Lecture 13, Corollary 2]{LurieLectures}), which is then unique up to isomorphism.  Recall the Morava stabilizer group $\G_n=\Aut(H_n,\overline{\F}_p)$.  The point $\Spec \overline{\F}_p\to \M_{\FG}^n$ classifying $H_n$ is a $\G_n$-torsor.  In other words, the choice of $H_n$ determines an isomorphism of stacks over $\Spec \F_p$:
\[ \M_{\FG}^n \isom [\Spec \overline{\F}_p/ \G_n]. \]

For $1\leq n<\infty$, let $\widehat{\M}_{\FG}^n$ be the completion of $\M_{\FG}$ along the locally closed substack $\M_{\FG}^n$.  This is a formal stack over $\Spf \Z_p$.  We may describe it by giving its values on schemes on which $p$ is locally nilpotent.   If $R$ is a ring in which $p$ is nilpotent, the $\Spec R$-points of $\widehat{\M}_{\FG}^n$ classify formal groups $H$ over $R$ such that $H\otimes_R R/I$ has height $n$ for some nilpotent ideal $I$ containing $p$.  

An equivalent formulation of the completion $\widehat{\M}_{\FG}^n$ extends its domain to more general formal schemes over $\Spf \Z_p$.  An adic ring \cite[\href{https://stacks.math.columbia.edu/tag/07E8}{Tag 07E8}]{stacks-project} is a topological ring which is complete for the $I$-adic topology, where $I$ is an ideal.  An adic $\Z_p$-algebra is then an adic ring $R$ admitting a continuous structure homomorphism $\Z_p\to R$; equivalently, it is an adic ring for which one can take $I$ to contain $p$.  Then for an adic $\Z_p$-algebra $R$, the $\Spf R$-points of $\widehat{\M}_{\FG}^n$ classify formal groups $H$ over $R$ such that $H\otimes_R R/I$ has height $n$ for some open ideal $I\subset R$ containing $p$.  

The stack $\widehat{\M}_{\FG}^n$ has a standard presentation as a pro-\'etale quotient of a finite-type formal scheme.  Let 
\[ \LT_n=\Def(H_n)\]
be the deformation space of $H_n$.  This means that for an Artinian local ring $R$ with residue field $\overline{\F}_p$, the $R$-points of $\LT_n$ classify 
pairs $(H,\rho)$, where $H$ is a formal group over $R$, and $\rho\from H_n\to H\otimes_R \overline{\F}_p$ is an isomorphism. 
Then $\LT_n$ is pro-representable by an affine formal scheme:  $\LT_n=\Spf A_n$, where $A_n$ is an adic $\Z_p$-algebra, isomorphic to $W(\overline{\F}_p)\powerseries{u_1,\dots,u_{n-1}}$ with its $(p,u_1,\dots,u_{n-1})$-adic topology \cite{LubinTate}.  The formal scheme $\LT_n$ admits an action of $\G_n$, which is compatible with the action of $\Gal(\overline{\F}_p/\F_p)$ on $W(\overline{\F}_p)$.

\begin{prop}  \label{PropUniformizationByLT}  We have an isomorphism of stacks of formal schemes in the pro-\'etale topology over $\Spf \Z_p$:
\[ \widehat{\M}_{\FG}^n\isom [\LT_n/\G_n]. \]
\end{prop}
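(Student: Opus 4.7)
The plan is to produce the map $[\LT_n/\G_n]\to\widehat{\M}_{\FG}^n$ and its inverse directly on functors of points, using the rigidity of height $n$ formal groups (\cref{LemmaRigidity}) as the crucial input.

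First I would define the forgetful morphism $\pi\from \LT_n\to \widehat{\M}_{\FG}^n$, sending a deformation datum $(H,\rho)$ to its underlying formal group $H$, on admissible $\Z_p$-algebras. The group $\G_n=\Aut(H_n,\overline{\F}_p)$ acts on $\LT_n$ through the trivialization $\rho$, and this action becomes trivial after composition with $\pi$; hence $\pi$ descends to a morphism $[\LT_n/\G_n]\to\widehat{\M}_{\FG}^n$ of pro-\'etale stacks over $\Spf\Z_p$.

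Next I would construct an inverse on functors of points. Given an admissible $\Z_p$-algebra $R$ with ideal of definition $I\supseteq (p)$ and a formal group $H/R$ with $H_0\coloneqq H\otimes_R R/I$ of height $n$, set
\[ \tilde T_0 \coloneqq \underline{\Isom}_{R/I}(H_n\otimes_{\overline{\F}_p} R/I,\,H_0). \]
By \cref{LemmaRigidity} applied over the $\F_p$-algebra $R/I$, $\tilde T_0$ is a pro-finite \'etale $\G_n$-torsor over $\Spec R/I$. Because pro-finite \'etale covers lift uniquely through admissible (in particular nilpotent) thickenings, there is an essentially unique pro-finite \'etale $\G_n$-torsor $\tilde T\to \Spf R$ reducing to $\tilde T_0$. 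Over $\tilde T=\Spf \tilde R$ the pullback of $H$ now carries a tautological trivialization of its reduction modulo $I\tilde R$, which by the universal property of $\LT_n$ yields a $\G_n$-equivariant morphism $\tilde T\to\LT_n$. This gives the required $\Spf R$-point of $[\LT_n/\G_n]$. Conversely, starting from such torsor data $(\tilde T,\phi\from\tilde T\to \LT_n)$, pro-\'etale descent of the pullback of the universal formal group along $\phi$ recovers a formal group on $\Spf R$, and the two constructions are manifestly mutually inverse once one checks that the formation of $\underline{\Isom}$ commutes with base change.

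The main obstacle will be the lifting step: producing the pro-finite \'etale $\G_n$-torsor $\tilde T$ over the formal scheme $\Spf R$ from its special fiber $\tilde T_0$ over $\Spec R/I$ in a $\G_n$-equivariant way, together with the descent step recovering $H$ from its $\tilde T$-pullback. This rests on the rigidity of \'etale morphisms under admissible thickenings, applied level by level along the filtration $\{R/I^k\}_k$ of $R$, combined with the fact that the transition maps in the pro-system defining $\underline{\Isom}$ are finite \'etale so that the limit lifts coherently. The fact that \cref{LemmaRigidity} produces a pro-finite (rather than finite) \'etale cover is precisely the reason the equivalence is claimed in the pro-\'etale rather than the \'etale topology.
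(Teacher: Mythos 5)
Your outline of the statement — produce the forgetful map $\pi\colon \LT_n\to\widehat{\M}_{\FG}^n$ and build the inverse from the $\underline{\Isom}$-torsor supplied by \cref{LemmaRigidity} — matches the shape of the paper's argument, and the observation that pro-finiteness of the cover in \cref{LemmaRigidity} forces the pro-\'etale topology is exactly right. However, the crucial step where you invoke ``the universal property of $\LT_n$'' to produce a $\G_n$-equivariant morphism $\tilde T\to\LT_n$ has a real gap.

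The deformation-theoretic universal property of $\LT_n=\Spf A_n$ is only stated for Artinian local rings with residue field $\overline{\F}_p$: an $R$-point is a pair $(H,\rho)$ with $\rho\colon H_n\to H\otimes_R\overline{\F}_p$ an isomorphism. Your ring $\tilde R$ is a general admissible $\Z_p$-algebra, and your trivialization lives only modulo the ideal of definition $I\tilde R$, not modulo a maximal ideal with residue field $\overline{\F}_p$; so the universal property you appeal to does not literally apply. This is precisely why the paper, before giving the proof, spends a subsection developing the interpretation of $\LT_n$ as a Rapoport--Zink space: it quotes $\LT_n\isom\RZ_n^{(0)}$ (\cref{PropRZvsLT}), which gives a moduli description of $\LT_n$ over \emph{arbitrary} rings in which $p$ is nilpotent, in terms of $p$-divisible groups equipped with a \emph{quasi-isogeny} to $G_n$ modulo $p$. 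The passage from your isomorphism of \emph{formal groups} over $R/I$ to a \emph{quasi-isogeny of $p$-divisible groups} over $R/p$ is then carried out via \cref{PropRigidityOfQIsog}, which has no counterpart in your sketch. Without this step you do not actually have a well-defined map to $\LT_n$.

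A secondary issue: you form $\underline{\Isom}_{R/I}(H_n\otimes_{\overline{\F}_p}R/I,\,H_0)$, but $R/I$ is only an $\F_p$-algebra, not an $\overline{\F}_p$-algebra, so the tensor product is not defined as written. The paper first passes pro-\'etale locally to $R'=R\otimes_{\Z_p}W(\overline{\F}_p)$ before applying \cref{LemmaRigidity}. Relatedly, when identifying which group the torsor is for, the natural ambiguity between local sections is controlled by \emph{quasi-isogenies}, so one lands on a $\G_n^0$-torsor over $\RZ_n\to\widehat{\M}_{\FG}^n$; the reduction to $\G_n$ comes only after restricting to the height-$0$ component $\RZ_n^{(0)}\isom\LT_n$. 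So the bookkeeping with $\G_n$ versus $\G_n^0$, and with $\RZ_n$ versus $\RZ_n^{(0)}$, also needs to be made precise. The overall strategy is salvageable, but the missing ingredient — the Rapoport--Zink moduli interpretation of $\LT_n$ over non-Artinian bases and the rigidity of quasi-isogenies — is the actual content of the proof.
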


\Cref{PropUniformizationByLT} is standard, but we discuss its proof in some detail as a means of building up the necessary machinery for what follows.  To do so, it will be important to have a moduli interpretation for the $R$-points of $\LT_n$, where $R$ is a general ring (not Artinian or even Noetherian) in which $p$ is nilpotent.  This interpretation, appearing in the moduli spaces studied by Rapoport--Zink \cite{RapoportZink}, shifts the focus from formal groups to the larger category of $p$-divisible groups, which we review in the next subsection.

\subsection{$p$-divisible groups}

These were introduced by Tate \cite{TatePDivisibleGroups}.  

\begin{defn} Let $R$ be a ring.  A {\em $p$-divisible group} (or {\em Barsotti--Tate group}) of height $n$ over $R$ is an inductive system $G=\varinjlim_{\nu\geq 1} G^\nu$ of commutative group schemes $G^\nu$ over $R$, such that each $G^\nu$ is locally free over $R$ of rank $p^{n\nu}$.  It is required that $G^\nu\to G^{\nu+1}$ induces an isomorphism of $G^\nu$ with the $p^\nu$-torsion in $G^{\nu+1}$.  Let $\BTT_R$ denote the category of $p$-divisible groups over $R$, with the evident notion of morphism;  then $\BTT_R$ is a $\Z_p$-linear category.  
\end{defn}

A $p$-divisible group $G=\varinjlim_\nu G^\nu$ over a ring $R$ determines an fppf sheaf of abelian groups on the category of $R$-algebras $S$, via its functor of points:  $G(S)=\varinjlim_\nu G^{\nu}(S)$.  In this way $\BT_R$ can be seen as a full subcategory of the category of fppf sheaves of abelian groups on $\Spec R$.  The functor of points extends to the situation where $G$ is a $p$-divisible group over an adic ring $R$:  Whenever $S$ is an adic $R$-algebra, one defines $G(S)=\varprojlim G(S/I)$, where $I$ runs over ideals of definition of $S$.  

There is a close link between formal groups and $p$-divisible groups.  Let $R$ be an adic $\Z_p$-algebra, and let $H/R$ be a formal group representing a $\Spf R$-point of $\widehat{\M}_{\FG}^n$.  Then $H[p^\infty]:=\varinjlim_{\nu\geq 1} H[p^\nu]$ is a $p$-divisible group over $R$ of height $n$. A theorem of Tate \cite[\S2, Proposition 1]{TatePDivisibleGroups} states that if $R$ is a complete Noetherian local ring with residue field $k$ of characteristic $p$, then the functor $H\mapsto H[p^\infty]$ is an equivalence between the category of formal groups over $R$ (of whatever dimension) which are $p$-divisible (meaning that the $p$-torsion is a locally free group scheme) and the full subcategory of $\BT_R$ whose objects $G$ are connected $p$-divisible groups (meaning, each $G^\nu$ is 
connected).  

\begin{defn}  Let $G$ and $G'$ be $p$-divisible groups over a ring $R$.  An {\em isogeny} $f\from G\to G'$ between $p$-divisible groups over $R$ is a morphism which is surjective (as a map between fppf sheaves) whose kernel is a finite locally free group scheme over $R$.  If $f$ is an isogeny, its kernel $\ker f$ has rank $p^n$, where $n$ is a locally constant function on $\Spec R$;  we call $n=\height(f)$ the {\em height} of $f$.  Let $\BTT_R^0=\BTT_R\otimes\Q$ and call this the category of $p$-divisible groups up to isogeny. An isomorphism in $\BTT_R^0$ is called a {\em quasi-isogeny}.
\end{defn}

Thus the multiplication-by-$p$ map on $G$ is an isogeny whose height equals the height of $G$.  Note that if $f\from G\to G'$ is an isogeny of height $n$, then $\ker f$ is contained in $G[p^n]$, meaning that $f$ factors through multiplication by $p^n$, so that $f$ is also a quasi-isogeny.  Conversely, a quasi-isogeny becomes an isogeny when multiplied by a sufficient power of $p$.  If a composition $f'\circ f$ of isogenies is defined, then $\ker(f'\circ f)$ is an extension of $\ker f'$ by $\ker f$, whence $\height(f'\circ f)=\height(f')+\height(f)$.  There is a unique extension of the height function to quasi-isogenies preserving this relation.

Quasi-isogenies satisfy a rigidity property similar to what appears in \cref{LemmaRigidity}.  For $p$-divisible groups $G$ and $G'$ over a ring $R$, let $\underline{\QIsog}(G,G')$ be the functor which associates to an $R$-algebra $S$ the set $\QIsog(G_S,G'_S)$, meaning the set of quasi-isogenies $G\otimes_R S\to G'\otimes_R S$.  Then $\underline{\QIsog}(G,G')$ is formally \'etale over $\Spec R$:

\begin{prop}[{\cite{DrinfeldCoverings}}]
    \label{PropRigidityOfQIsog} Let $R$ be a ring in which $p$ is nilpotent, and let $I\subset R$ be a nilpotent ideal.  Let $G$ and $G'$ be $p$-divisible groups over $R$.  The natural map $\QIsog(G,G')\to \QIsog(G_{R/I},G'_{R/I})$ is a bijection. 
\end{prop}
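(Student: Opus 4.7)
The plan is to deduce the bijection from the corresponding statement for $\Hom$-sets after inverting $p$, via a standard deformation-theoretic argument. By the definition of the isogeny category, $\QIsog(G,G')$ sits inside $\Hom_{\BTT_R}(G,G') \otimes \QQ$ as the subset of elements admitting a two-sided inverse in $\BTT_R \otimes \QQ$. A morphism $f\colon G \to G'$ is a quasi-isogeny precisely when some integer multiple is an isogeny, i.e.~has finite locally free kernel; since kernels of morphisms of finite locally free group schemes lift uniquely along nilpotent thickenings, this property is both preserved and detected by reduction modulo $I$. Thus it suffices to show that the natural map
\[
    \alpha\colon \Hom(G,G') \otimes \QQ \longrightarrow \Hom(G_{R/I},G'_{R/I}) \otimes \QQ
\]
is a bijection, or equivalently that the map $\Hom(G,G') \to \Hom(G_{R/I},G'_{R/I})$ has kernel and cokernel killed by a power of $p$.

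Next, I would perform d\'evissage along the filtration $R \supset I \supset I^2 \supset \cdots \supset I^n=0$. Since the statement is transitive under composition of thickenings, this reduces the problem to the case where $I^2 = 0$. Assume this from now on.

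Under the square-zero assumption, classical deformation theory of $p$-divisible groups identifies the kernel and cokernel of $\Hom(G,G') \to \Hom(G_{R/I},G'_{R/I})$ with sub- and quotient-modules of $R$-modules built tensorially from $I$ and the (co-)Lie algebras of $G$ and $G'$ --- concretely, of the shape $I \otimes_R N$ for a finitely generated $R$-module $N$. These modules are annihilated by $I$, and since $p$ is nilpotent in $R$ (say $p^N=0$), the ideal $I$ is annihilated by $p^N$, so the kernel and cokernel vanish after inverting $p$. This proves the desired bijection.

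The main obstacle is articulating the square-zero deformation-theoretic input cleanly. The slickest route is via Grothendieck--Messing / crystalline Dieudonn\'e theory, which along a square-zero thickening of $\ZZ_{(p)}$-algebras realises a $p$-divisible group as a pair (evaluation of its crystal, admissible Hodge filtration) and thereby reduces the comparison of $\Hom$-sets to a linear algebra problem over $I$. For this proposition one does not actually need the full crystalline apparatus: it is enough to observe directly that a homomorphism lifting zero is pinned down by its effect on the $I$-infinitesimal neighbourhood of the identity section of $G'$, producing an element of an explicit $I$-linear $R$-module and hence one that dies after inverting $p$; the obstruction to lifting is handled symmetrically. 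Combining these steps yields the proposition.
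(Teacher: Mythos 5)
The paper does not prove this proposition; it is given as a direct citation of Drinfeld, so there is no in-paper argument to compare against. Your Grothendieck--Messing proof is correct and rests on the same mechanism as Drinfeld's: reduce to showing that $\Hom(G,G')\to\Hom(G_{R/I},G'_{R/I})$ becomes an isomorphism after inverting $p$, d\'evissage to a square-zero thickening, and then kill the infinitesimal obstruction by a power of $p$ using that $p^N=0$ in $R$. Two places could be tightened. First, the reduction from $\QIsog$ to $\Hom$ does not require anything about lifting kernels of finite locally free group schemes: once $\Hom(G,G')\otimes\Q\to\Hom(G_{R/I},G'_{R/I})\otimes\Q$ is a bijection, the match of $\QIsog$-sets is formal (lift a quasi-inverse of $\bar f$ to $g$; then $gf$ and $\mathrm{id}$ both lift $\mathrm{id}$, so $gf=\mathrm{id}$ by injectivity, and likewise $fg=\mathrm{id}$). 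Second, the module bookkeeping is slightly off. A square-zero ideal carries the trivial, nilpotent PD structure, so Grothendieck--Messing applies (including for $p=2$) and identifies $\Hom(G,G')$ with the set of $\bar f\in\Hom(G_{R/I},G'_{R/I})$ whose induced crystal map sends $\mathrm{Fil}(G)$ into $\mathrm{Fil}(G')$. Hence the restriction map is literally injective --- the kernel is zero, not merely a submodule of something of the shape $I\otimes_R N$. For the cokernel, the obstruction $o(\bar f)$ lands in the $R$-module $\Hom_R\bigl(\mathrm{Fil}(G),\ \mathbb{D}(G'_{R/I})(R)/\mathrm{Fil}(G')\bigr)$, and $o$ is additive in $\bar f$. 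What makes this vanish is not that $I$ annihilates this module but that $p^N$ does (being zero in $R$); then $o(p^N\bar f)=p^N\,o(\bar f)=0$, so $p^N\bar f$ lifts. The phrase ``these modules are annihilated by $I$'' plays no role.

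For comparison, Drinfeld's argument (well exposited in Katz, \emph{Serre--Tate local moduli}, Lemma 1.1.3) avoids crystalline Dieudonn\'e theory. It uses formal smoothness of $G'$ (reducing to its connected part, the \'etale part being formally \'etale) to see that $\ker\bigl(G'(B)\to G'(B/IB)\bigr)$ has a finite filtration with $R$-module graded pieces and is therefore killed by $p^{Nk}$ when $I^{k+1}=0$; then $p$-divisibility of $G$ upgrades this both to genuine injectivity of the restriction on $\Hom$ and to the construction of a lift of $p^{Nk}\bar f$ by dividing by $p^{Nk}$ in $G$ and multiplying by $p^{Nk}$ in $G'$. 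Your route is shorter if Grothendieck--Messing is on the shelf; the classical route is elementary and self-contained. Both succeed for the same underlying reason: infinitesimal obstruction spaces are $R$-modules, hence $p^N$-torsion.
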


Consequently, given $p$-divisible groups $G$ and $G'$ over $R$, an isogeny $G\otimes_R R/I\to G'\otimes_R R/I$ will lift to an isogeny $G\to G'$ once multiplied by a sufficiently high power of $p$.

\subsection{Lubin--Tate space as a moduli space of $p$-divisible groups}
\label{SecLTRZ}
Recall that $H_n$ is the (unique up to isomorphism) 1-dimensional formal group of height $n$ over $\overline{\F}_p$.  Let
\[ G_n = H_n[p^\infty], \]
so that $G_n$ is a $p$-divisible group over $\overline{\F}_p$ of height $n$.  Let $\OO_{D_n}=\End H_n\isom \End G_n$, and let $D_n=\OO_{D_n}[1/p]$.  Then $D_n$ is a division algebra over $\Q_p$ of invariant $1/n$, and $\OO_{D_n}$ is its ring of integers.  Therefore the group of automorphisms of $G_n$ (as an object in $\BT_{\overline{\F}_p}$) is $\OO_{D_n}^\ast$, and the group of quasi-isogenies of $G_n$ (that is, automorphisms in $\BT^0_{\overline{\F}_p}$) is $D_n^\ast$.  By \cref{PropRigidityOfQIsog}, the functor $\QIsog(G_n,G_n)$ is formally \'etale over an algebraically closed field; it is therefore the constant group scheme associated to $D_n^\ast$.  

We can view the Morava stabilizer group $\G_n$ as the group of automorphisms of the pair $(G_n,\overline{\F}_p)$.  Let $\G_n^0$ denote the group of automorphisms of the pair $(G_n,\overline{\F}_p)$ in the isogeny category.  Explicitly, an element of $\G_n^0$ is a pair $(\tau,u)$, where $\tau\in \Gal(\overline{\F}_p/\F_p)$, and $u\from G_n\to G_n\otimes_{\overline{\F}_p,\tau} \overline{\F}_p$ is a quasi-isogeny.  Let $\height\from\G_n^0\to\Z$ be the homomorphism sending $(\tau,u)$ to the height of the quasi-isogeny $\tau$.  

The groups discussed here fit into the diagram
\[
\xymatrix{
& 0 \ar[d] & 0 \ar[d] & & \\
0 \ar[r] & \OO_{D_n}^\ast \ar[r] \ar[d] & D_n^\ast \ar[r]^{\height}\ar[d] & \Z \ar[r] & 0 \\
0 \ar[r] &  \G_n \ar[r] \ar[d] & \G_n^0 \ar[r]^{\height} \ar[d] & \Z \ar[r] & 0\\
& \Gamma_{\F_p} \ar[d] \ar[r]_{=} & \Gamma_{\F_p} \ar[d] & & \\
& 0 & 0 &&
}
\]
in which rows and columns are exact, and $\Gamma_{\F_p}=\Gal(\overline{\F}_p/\F_p)$.  

The following definition is a special case of the constructions appearing in \cite{RapoportZink}. 

\begin{defn} Let $\RZ_n$ be the functor which inputs a ring $R$ in which $p$ is nilpotent and outputs the set of isomorphism classes of triples $(G,\iota,\rho)$, where $G/R$ is a $p$-divisible group, $\iota$ is a ring homomorphism $\overline{\F}_p\to R/p$, and 
\[ \rho\from G_n \otimes_{\overline{\F}_p,\iota} R/p\to G\otimes_R R/p \]
is a quasi-isogeny. 
\end{defn}

Let $W=W(\overline{\F}_p)$ be the ring of Witt vectors.  The homomorphism $\iota$ determines a homomorphism $W\to R$, so that there is a natural transformation $\RZ_n\to \Spf W$.  $\RZ_n$ may be viewed as a sheaf on the Zariski site of $\Spf W$, and as such there is a decomposition:
\[\RZ_n=\coprod_{h\in\Z} \RZ_n^{(h)},\] where $\RZ_n^{(h)}$ classifies those $(H,\iota,\rho)$ where $\rho$ has constant height $h$.  

There is an action of the group $\G_n^0$ on $\RZ_n$ lying over the action of $\Gamma_{\F_p}$ on $\Spf W$:    An element $g=(\tau,u)\in \G_n^0$ sends $(G,\iota,\rho)$ to $(G,\iota\circ \tau^{-1},\rho\circ u^{-1})$.  Under this action, an element $g\in \G_n^0$ induces an isomorphism $\RZ_n^{(h)}\isomto \RZ_n^{(h-\height(g))}$.  Therefore all components $\RZ_n^{(h)}$ are isomorphic, and $\RZ_n$ is isomorphic to the ``induction'' $(\RZ_n^{(0)}\times \G_n)/\G_n^0$.  

Recall that $A_n$ is the universal deformation ring of $H_n$.  Let $\mathfrak{m}$ be the maximal ideal of $A_n$, and let $H_n^{\univ}$ be the universal deformation of $H_n$, so that there is an isomorphism $H_n\isom H_n^{\univ} \otimes_{A_n} A_n/\mathfrak{m}$.  Let $G_n^{\univ}=H_n^{\univ}[p^\infty]$, so that we have an isomorphism 
\[  G_n\isom G_n^{\univ}\otimes_{A_n} A_n/\mathfrak{m}. \]
For each $k\geq 1$, we apply \cref{PropRigidityOfQIsog} over the ring $A_n/(p,\mathfrak{m}^k)$;  the above isomorphism lifts to a quasi-isogeny
\[ \rho_k\from G_n\otimes_{A_n} A_n/(p,\mathfrak{m}^k) \to G_n^{\univ}\otimes_{A_n} A_n/(p,\mathfrak{m}^k) \]
of height 0.  Thus one obtains a morphism $\Spec A_n/\mathfrak{m}^k\to \RZ_n^{(0)}$ for all $k\geq 1$, or all the same, a morphism $\LT_n=\Spf A_n\to \RZ_n^{(0)}$.  This morphism is $\G_n$-equivariant.  In fact:

\begin{prop}[{\cite[Proposition 3.79]{RapoportZink}}]  
\label{PropRZvsLT} 
    The morphism $\LT_n\to \RZ_n^{(0)}$ is an isomorphism.
\end{prop}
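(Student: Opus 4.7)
The plan is to reduce the claim to an explicit verification on Artinian test rings and then extend by formal continuity. Both $\LT_n = \Spf A_n$ and $\RZ_n^{(0)}$ are determined by their values on Artinian quotients of admissible $W$-algebras in which $p$ is nilpotent: the former by completeness of $A_n$ with respect to its maximal ideal $\mathfrak{m}$, and the latter (up to the representability input recalled below) by the rigidity of quasi-isogenies recorded in \cref{PropRigidityOfQIsog}. Since the natural transformation $\LT_n \to \RZ_n^{(0)}$ has already been constructed above, it suffices to show that for $R$ an Artinian local $W$-algebra with residue field $\overline{\F}_p$ (for which the structure map $\iota\colon \overline{\F}_p \to R/p$ is canonical), the induced map $\LT_n(R) \to \RZ_n^{(0)}(R)$ is a bijection.

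To exhibit the inverse on such $R$, I would start with a triple $(G, \iota, \rho) \in \RZ_n^{(0)}(R)$. The existence of the quasi-isogeny $\rho\colon G_n \to G \otimes_R \overline{\F}_p$ forces the special fiber $G \otimes_R \overline{\F}_p$ to be $1$-dimensional, connected, and of height $n$ (any non-zero \'etale summand would obstruct a height-zero quasi-isogeny from the connected group $G_n$), and by henselianness of $R$ these properties propagate to $G$ itself; Tate's theorem on connected $p$-divisible groups then produces a unique formal group $H$ over $R$ with $G = H[p^\infty]$. The crucial input is that the height function $\QIsog(G_n) = D_n^\ast \xrightarrow{\height} \Z$ has kernel $\Aut(G_n) = \OO_{D_n}^\ast$, which forces the height-$0$ quasi-isogeny $\rho$ to be an honest isomorphism of $p$-divisible groups, and by Tate's equivalence this descends to an isomorphism of formal groups $\rho_0\colon H_n \isomto H \otimes_R \overline{\F}_p$. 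The assignment $(G, \iota, \rho) \mapsto (H, \rho_0)$ is manifestly inverse to the forward construction.

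I expect the main conceptual obstacle to lie in justifying that $\RZ_n^{(0)}$ is pro-represented by its Artinian reductions, so that the Artinian bijection above extends to a morphism of formal schemes. This I would handle by combining \cref{PropRigidityOfQIsog}, which propagates quasi-isogeny data uniquely through nilpotent thickenings, with the standard representability theorems for deformations of $p$-divisible groups à la Grothendieck--Messing, giving an identification $\RZ_n^{(0)}(R) \cong \varprojlim_k \RZ_n^{(0)}(R/I^k)$ for any ideal of definition $I$. Granted this continuity, the inverse constructed above on Artinian rings glues into a morphism of formal schemes $\RZ_n^{(0)} \to \LT_n$ inverse to the natural transformation already in hand, completing the argument.
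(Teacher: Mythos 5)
The paper offers no proof of this proposition: it is cited verbatim from Rapoport--Zink as \cite[Proposition 3.79]{RapoportZink}, so there is no internal argument for you to match. That said, your sketch captures the right circle of ideas for a direct verification, so I will assess it on its own terms.

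Two points where you should be more careful. First, a formal imprecision: for an Artinian local $W$-algebra $R$ with residue field $\overline{\F}_p$, the datum in $\RZ_n^{(0)}(R)$ is a quasi-isogeny $\rho$ defined over $R/p$, not over $\overline{\F}_p$. It is only the restriction of $\rho$ to the residue field that becomes an honest isomorphism (via the fact that height-$0$ elements of $\QIsog(G_n)\cong D_n^\ast$ lie in $\OO_{D_n}^\ast$); one then needs \cref{PropRigidityOfQIsog} a second time to see that $\rho$ over $R/p$ is \emph{uniquely} recoverable from its residue-field restriction, and it is this uniqueness that makes the assignment $(G,\iota,\rho)\mapsto(H,\rho_0)$ a bijection rather than merely a surjection. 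Relatedly, your justification that $G\otimes_R\overline{\F}_p\cong G_n$ via ``no \'etale summand'' is not quite complete: the existence of a quasi-isogeny shows $G\otimes\overline{\F}_p$ has the same isocrystal as $G_n$, hence is $1$-dimensional, connected, and of height $n$, and then the classification of $1$-dimensional formal groups over $\overline{\F}_p$ (or Fontaine's equivalence) gives the isomorphism.

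The genuine gap, which you yourself flag, is the passage from Artinian test rings to the full statement. Checking the bijection $\LT_n(R)\to\RZ_n^{(0)}(R)$ for Artinian local $R$ shows the two agree as deformation functors, but the proposition asserts an isomorphism of functors on all rings in which $p$ is nilpotent, and this requires knowing that $\RZ_n^{(0)}$ is pro-representable. That is the substantive content of Rapoport--Zink's Theorem 2.16, and it is not a direct consequence of Grothendieck--Messing: Grothendieck--Messing gives formal smoothness of the lifting problem, but one still needs a Schlessinger-type criterion and, more subtly, a descent/continuity argument to pass from Artinian rings to general $R$, where the quasi-isogeny lives over $R/p$ with $R/p$ possibly non-reduced and non-Noetherian, so one cannot reduce to residue fields by rigidity alone. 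Your appeal to ``standard representability theorems'' elides exactly the step that makes this a theorem rather than a tautology, so the sketch does not constitute a complete proof; but it does correctly identify the ingredients that go into the Rapoport--Zink argument.
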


We can now complete the proof of  \cref{PropUniformizationByLT}, which claims that $\widehat{\M}_{\FG}^n\isom [\LT_n/\G_n]$.

\begin{proof}[Proof of \cref{PropUniformizationByLT}] The universal deformation of $H_n$ determines a $\G_n$-equivariant morphism $\LT_n\to \widehat{\M}_{\FG}^n$.  Composing with the isomorphism in \cref{PropRZvsLT}, we obtain a $\G_n$-equivariant morphism $\RZ_n^{(0)}\to \widehat{\M}_{\FG}^n$.  As we observed above, $\RZ_n\isom (\RZ_n^{(0)}\times \G_n)/\G_n^0$,
so this morphism extends uniquely to a $\G_n^0$-equivariant morphism $\RZ_n\to \widehat{\M}_{\FG}^n$.  To complete the proof, we need to show that $\RZ_n\to \widehat{\M}_{\FG}^n$ is a pro-\'etale $\G_n^0$-torsor.

Suppose $R$ is a ring in which $p$ is nilpotent, and that we are given a morphism $\Spec R\to \widehat{\M}_{\FG}^n$, corresponding to a formal group $H$ over $R$
such that $H\otimes_R R/I$ has height $n$ for a nilpotent ideal $I$ containing $p$. The fiber $F:=\Spec R\times_{\widehat{\M}_{\FG}^n} \RZ_n$ is the functor which sends an $R$-algebra $S$ to the set of pairs $(\iota,\rho)$, where $\iota\from \overline{\F}_p\to S/p$ is a ring homomorphism, and 
\[ \rho \from G_n\otimes_{\overline{\F}_p,\iota} S/p \to H[p^\infty]\otimes_R S/p \]
is a quasi-isogeny.  Our objective is to show that $F$ is a pro-\'etale $\G_n^0$-torsor over $\Spec R$.

First we show that $F\to\Spec R$ has pro-\'etale local sections.  Let $R'=R\otimes_{\Z_p} W$, so that $\Spec R'\to \Spec R$ is pro-\'etale.  By \cref{LemmaRigidity}, 
\[ \underline{\Isom}(H_n\otimes_{\overline{\F}_p} R'/I, H\otimes_R R'/I) \]
is represented by an affine scheme which is pro-finite \'etale and surjective over $\Spec R'/I$;  let $\Spec S$ be the lift of this affine scheme to $\Spec R'$.    
Let $H_n\otimes_{\overline{\F}_p} S/I\to H\otimes_R S/I$ be the universal isomorphism. By \cref{PropRigidityOfQIsog} applied to the ring $S/p$, the associated isomorphism of $p$-divisible groups lifts to a quasi-isogeny $G_n\otimes_{\overline{\F}_p} S/p\to H[p^\infty]\otimes_R S/p$ of height 0, determining a morphism $\Spec S\to F$.  Therefore $F\to \Spec R$ admits pro-\'etale local sections.

Any two sections of $F$ over an $R$-algebra $S$ differ by a pair $(\tau,u)$, where $\tau\in \Gamma_{\F_p}$ and $u\from G_n\otimes_{\overline{\F}_p} S\to \tau^*G_n\otimes_{\overline{\F}_p} S$ is a quasi-isogeny.  (Here $\tau^*G_n$ means $G_n\otimes_{\overline{\F}_p,\tau}\overline{\F}_p$.)  The quasi-isogeny $u$ represents an $S$-point of the $\overline{\F}_p$-scheme $\QIsog(G_n,\tau^*G_n)$.  By \cref{PropRigidityOfQIsog}, $\QIsog(G_n,\tau^*G_n)$ is formally \'etale over $\overline{\F}_p$, which (since the latter is algebraically closed) must be the constant scheme associated to the set of quasi-isogenies $G_n\to\tau^*G_n$.  Thus any two sections of $F$ differ by an $S$-point of the constant scheme associated with $\G_n^0$. We have shown that $F\to \Spec R$ is a $\G_n^0$-torsor, which completes the proof.
\end{proof}

\subsection{Perfectoid spaces, diamonds, and v-stacks}

The stack $\widehat{\M}_{\FG}^n$ is fibered over $\Spf \Z_p$.  The next step is to pass to the generic fiber to obtain an object living over $\Q_p$.  The generic fiber of general Rapoport-Zink spaces was studied in \cite{ScholzeWeinstein2013} and \cite{ScholzeLectures}, which found an alternate description not in terms of $p$-divisible groups, but rather in terms of certain linear-algebra data called {\em mixed-characteristic shtukas}.  We tell the story in a way adapted to our purposes, starting with some recollections from \cite{ScholzeDiamonds} on perfectoid spaces and v-stacks. 

Let $\Perf$ denote the category of perfectoid spaces in characteristic $p$.  Then $\Perf$ carries two important topologies:  the {\em pro-\'etale topology} and the finer {\em v-topology}.   These are the respective analogues of the \'etale and fppf topologies on the category of schemes.  Importantly, a representable presheaf on $\Perf$ is a sheaf for the v-topology \cite[Theorem 1.2]{ScholzeDiamonds}.  We write $\Spd \F_p$ for the final object in the category of v-sheaves, and $\Spd \overline{\F}_p$ for the v-sheaf taking an object $S$ in $\Perf$ to $\Hom(S,\Spa \overline{\F}_p)$ in the category of adic spaces.

A {\em diamond} is a v-sheaf which admits a presentation as a quotient of a representable sheaf by a pro-\'etale equivalence relation. There is a functor $X\mapsto X^{\diamond}$ from analytic adic spaces over $\Z_p$ to diamonds \cite[Theorem 1.5]{ScholzeDiamonds}.  An important example is $X=\Spa\Q_p$, in which case $X^\diamond=\Spd \Q_p$ takes an object $S$ in $\Perf$ to the set of isomorphism classes of untilts $S^\sharp$ in characteristic 0 \cite[Theorem 9.4.4]{ScholzeLectures}. 

A {\em v-stack} is a sheaf of groupoids on the v-site of $\Perf$.  A v-stack fibered over $\Spd\Q_p$ is the same thing as a sheaf of groupoids on the v-site of perfectoid spaces over $\Q_p$.

We introduce notation extending this construction to formal schemes.

\begin{defn} Suppose $X$ is a presheaf of groupoids on the category of adic $\Z_p$-algebras.  The {\em diamond generic fiber} $X^{\diamond}_\eta$ of $X$ is the v-stack over $\Spd\Q_p$ defined as the v-sheafification of the presheaf which has the value 
\[ (R,R^+)\mapsto X(R^+) \]
on affinoid perfectoid $\Q_p$-algebras.
\end{defn}
Thus for a perfectoid space $S$ over $\Q_p$, an object of $X_{\eta}^{\diamond}(S)$ is given by a v-cover of $S$ by affinoids $\Spa(R_i,R^+_i)$, and for each $i$ an object of $X(R^+_i)$, together with a descent datum to $S$.

Passage to the diamond generic fiber commutes with pro-\'etale quotients, as the following lemma shows.

\begin{lemma}
\label{LemFormalToDiamond}
Let $X$ be a formal scheme over $\Spf \Z_p$ admitting a continuous action of a profinite group $G$, and let $[X/G]$ denote the quotient stack in the pro-\'etale topology.  There is a natural isomorphism of v-stacks $[X/G]^{\diamond}_{\eta}\isom [X^{\diamond}_{\eta}/G].$
\end{lemma}

\begin{proof}  A v-stack is determined by its values on $S$-points, where $S=\Spa(R,R^+)$ is strictly totally disconnected \cite[Lemma 1.16]{ScholzeDiamonds}.  In our situation, the v-stacks in question are fibered over $\Spd \Q_p$, so we may assume these $S$ live in characteristic 0.  For such an $S$, consider a $\Spf R^+$-point of $[X/G]$:  this means a pro-\'etale $G$-torsor $\tilde{R}^+/R^+$ in adic $\Z_p$-algebras together with a $G$-equivariant map $\Spf \tilde{R}^+\to X$.  Letting $\tilde{R}=\tilde{R}^+[1/p]$ and $\tilde{S}=\Spa(\tilde{R},\tilde{R}^+)$, we find a pro-\'etale $G$-torsor in perfectoid spaces $\tilde{S}\to S$, and a $G$-equivariant map $\tilde{S}\to X^{\diamond}_{\eta}$.  The defining property of strictly totally disconnected spaces is that every pro-\'etale $\tilde{S}\to S$ is split, and therefore the $G$-torsor is split:  $\tilde{S}=S\times G$.  So in fact we have a map $S\to X^{\diamond}_{\eta}$, well-defined up to translation by $G(R^+)=G(S)$. 

The diamond generic fiber $[X/G]_{\eta}^{\diamond}$ is therefore the v-sheafification of the presheaf sending a strictly totally disconnected $S$ to $X^{\diamond}_{\eta}(S)/G(S)$, which is to say it is isomorphic to $[X^{\diamond}_{\eta}/G]$.
\end{proof}

\begin{defn} \label{def:diamondstack}
Let 
    \[ 
        \SW_n = \left(\widehat{\M}_{FG}^n\right)^{\diamond}_{\eta}, 
    \]
and call this the {\em v-stack of 1-dimensional formal groups of height $n$}.  
\end{defn}

Applying Lemma \ref{LemFormalToDiamond} to the isomorphism in Proposition \ref{PropUniformizationByLT} gives an isomorphism 
of v-stacks over $\Spd \Q_p$:
\[ \SW_n\isom [\LT_{n,\eta}^{\diamond}/\G_n]. \] 

\subsection{The Fargues--Fontaine curve, and the moduli space of local shtukas}
The theory of the Fargues--Fontaine curve was introduced in \cite{FarguesFontaine} in the absolute setting, and developed in \cite{KedlayaLiu} in the relative setting.  Further preparations for this material may be found in \cite{ScholzeLectures} and \cite{FarguesScholze}.  

Let $S=\Spa(R,R^+)$ be an affinoid object of $\Perf$, with pseudo-uniformizer $\varpi\in R^+$.  Let $Y_S$ be the open locus in $\Spa W(R^+)$ where both $p$ and $[\varpi]$ are nonzero.  The {\em Fargues--Fontaine curve} $X_S$ is defined as the quotient of $Y_S$ by the Frobenius automorphism of $S$.  Then $X_S$ is an analytic adic space over $\Q_p$ \cite[Proposition II.1.1]{FarguesScholze}.  The construction of $X_S$ extends to general objects $S$ of $\Perf$ \cite[Proposition II.1.3]{FarguesScholze}.  Given an untilt $S^\sharp$ of $S$ in characteristic 0, which is to say a morphism $S\to \Spd \Q_p$, there is an associated closed immersion $i\from S^\sharp\to X_S$ which presents $S^\sharp$ as a Cartier divisor of $X_S$ \cite[Proposition II.1.4]{FarguesScholze}.

In the case that $C$ is an algebraically closed perfectoid field of characteristic $p$ with ring of integers $C^\circ$, one refers to $X_C:=X_{\Spa(C,C^\circ)}$ as an absolute curve. 
A major accomplishment of \cite{FarguesFontaine} is the classification of vector bundles over $X_C$, by means of a Harder--Narasimhan formalism.  Every vector bundle $\E$ over $X_C$ is isomorphic to a direct sum of vector bundles $\OO(\lambda)$ parameterized by rational numbers $\lambda$, known as the slopes of $\E$.  We call $\E$ isoclinic if it has only one slope.  For each $\lambda$, the degree and rank of $\OO(\lambda)$ are the numerator and denominator of $\lambda$ in lowest terms, and the endomorphism algebra of $\OO(\lambda)$ is a central division algebra over $\Q_p$ of invariant $\lambda$.  (In particular $\End \OO=\Q_p$.) 

We now turn to the relative setting.  Define a morphism $\Bun_n\to \Spd \overline{\F}_p$ as follows:  Given a perfectoid space $S$ over $\overline{\F}_p$, let $\Bun_n(S)$ be the groupoid of vector bundles of rank $n$ over $X_S$.  For a rational number $\lambda$ whose denominator $d\geq 1$ (in lowest terms) is a divisor of $n$, let $\Bun_n^{\lambda}(S)\subset \Bun_n(S)$ classify those vector bundles which are isoclinic of slope $\lambda$ at every geometric point of $S$.

\begin{prop}[{\cite[Theorem 8.5.12]{KedlayaLiu}, \cite[I.4.1(v)]{FarguesScholze}}] \label{PropIsoclinic} Let $n\geq 0$.
\begin{enumerate}
    \item $\Bun_n$ is a v-stack.  
    \item Let $\lambda$ be a rational number whose denominator $d\geq 1$ is a divisor of $n$.  Then $\Bun_n^{\lambda}$ is isomorphic to the classifying stack $[\Spd \overline{\F}_p/J_\lambda]$, where $J_{\lambda}$ is the group of nonzero elements of the unique central simple algebra over $\Q_p$ of degree $n$ and invariant $\lambda$.  
\end{enumerate}
\end{prop}

We spell out the consequences of Proposition \ref{PropIsoclinic} in the two cases relevant to us.  Note that for a perfectoid space $S$ over $\overline{\F}_p$ and a profinite group $\Gamma$, an $S$-point of the classifying stack $[\Spd \overline{\F}_p/\Gamma]$ (commuting with the structure maps to $\Spd \overline{\F}_p$) is the same as a pro-\'etale $\Gamma$-torsor on $S$.

In the case $\lambda=0$, we have $J_\lambda=\GL_n(\Q_p)$.  Note that a pro-\'etale $\GL_n(\Q_p)$-torsor is the same thing as a $\Q_p$-local system of rank $n$.  If $T$ is a $\Z_p$-local system of rank $n$, let us write $T\otimes_{\Z_p} \OO_{X_S}$ for the vector bundle over $X_S$ corresponding to the $\Q_p$-local system $T[1/p]$.  

In the case $\lambda=1/n$, we have that $J_\lambda=D_n^\ast$ is a subgroup of the extended Morava stabilizer group $\G_n^0$ described in \S\ref{SecLTRZ}.  As a corollary of Proposition \ref{PropIsoclinic}, we have the following statement for perfectoid spaces $S$ over $\F_p$:  There is an equivalence of groupoids between $S$-points of $[\Spd \overline{\F}_p/\G_n^0]$ and rank $n$ vector bundles over $X_S$ which have slope $1/n$ at every geometric point of $S$.

Let $K=W(\overline{\F}_p)[1/p]$.  We define a morphism $\Sht_n \to \Spd K$ of sheaves on $\Perf$ as follows.  Given a morphism $S\to \Spd K$, which is to say a perfectoid space $S$ in characteristic $p$ equipped with an untilt $S^\sharp$ over $K$, let $\Sht_n(S)$ be the set of modifications 
\begin{equation}
    \label{DefnShtukas}
    0\to T\otimes_{\Z_p}\OO_{X_S}\to \OO_{X_S}(1/n)\to i_*L\to 0,
\end{equation}
    where $T$ is a pro-\'etale $\Z_p$-local system of rank $n$ on $S$, and $L$ is a locally free $\OO_{S^\sharp}$-module of rank 1.  
Then $\Sht_n$ is an example of a local shtuka space as introduced in \cite{ScholzeLectures}.  There is an action of $\G_n^0$ on $\Sht_n$ lying over the action on $\Spd K$.  

The following theorem relates $p$-divisible groups to shtukas. 

\begin{thm}
\label{ThmShtukas} There is a natural $\G_n^0$-equivariant isomorphism of diamonds $\RZ_n^\diamond\to \Sht_n$ commuting with the maps to $\Spd K$.  
\end{thm}

This is a special case of \cite[Theorem 24.2.5]{ScholzeLectures}.  We only comment to identify the objects $T$ and $L$ in the shtuka corresponding to a triple $(G,\iota,\rho)$ over $R^+$, when $S^\sharp=\Spa(R,R^+)$ is a perfectoid space in characteristic 0:  The pro-\'etale $\Z_p$-local system $T$ is the Tate module of the \'etale $p$-divisible group $G\otimes_{R^+} R$, and $L=\Lie G \otimes_{R^+} R$.

\subsection{The isomorphism between the Lubin--Tate and Drinfeld towers}

Recall that $\SW_n$ is the $v$-stack of 1-dimensional formal groups of height $n$.  The following theorem is an equivalent way of stating the ``isomorphism between the Lubin--Tate and Drinfeld towers'' appearing in \cite{ScholzeWeinstein2013}.  

\begin{thm}
\label{ThmEquivalenceOfStacks} There are isomorphisms between $\SW_n$ and the following v-stacks:
    \begin{enumerate}
        \item The quotient stack $[\LT_{n,\eta}^{\diamond}/\G_n]$;
        \item the quotient stack $[\HH^{n-1,\diamond}/\GL_n(\Z_p)]$.
    \end{enumerate}
    Here, $\LT_{n,\eta}$ is the rigid-analytic fiber of $\LT_n$, and $\HH^{n-1}$ is Drinfeld's symmetric space, defined as the complement in rigid $\PP^{n-1}_{\Q_p}$ of all $\Q_p$-rational hyperplanes.  

With respect to the isomorphism $
[\LT_{n,\eta}^{\diamond}/\G_n] \isom 
[\HH^{n-1,\diamond}/\GL_n(\Z_p)]$, the following two invertible sheaves are identified:
\begin{enumerate}
    \item[(a)] $(\Lie H_n^{\univ})^{\diamond}_{\eta}$, the diamond generic fiber of the Lie algebra of the universal deformation $H_n^{\univ}$ over $[\LT_n/\G_n]$. 
    \item[(b)] $\OO(-1)^{\diamond}\otimes_{\Q_p} \Q_p(-1)$, where $\OO(-1)$ is the restriction to $\HH^{n-1}$ of the dual tautological bundle on the ambient projective space, $\OO(-1)^{\diamond}$ represents the descent of the diamond version of this module to the quotient $[\HH^{n-1,\diamond}/\GL_n(\Z_p)]$, and $\Q_p(-1)$ is a Tate twist.  
\end{enumerate}
\end{thm}

\begin{proof}   
Combining \cref{PropRZvsLT} with \cref{ThmShtukas} gives isomorphisms
\[ \SW_n\isom [\LT_{n,\eta}^{\diamond}/\G_n] \isom [\RZ_{n,\eta}^{\diamond}/\G_n^0] \isom [\Sht_n/\G_n^0]. \] 
Given a perfectoid space $S$, the $S$-points of the v-stack $[\Sht_n/\G_n^0]$ classify untilts $S^\sharp$ together with modifications    
\[  
                 0\to T\otimes \OO_{X_S} \to \mathcal{E}\to i_*L \to 0,
             \]
as in \eqref{DefnShtukas}, except that $\mathcal{E}$ is a variable vector bundle of rank $n$ which is isoclinic of slope $1/n$ at every geometric point of $S$. 

There is a morphism $[\Sht_n/\G_n^0]\to [\Spd \overline{\F}_p/\GL_n(\Z_p)]$ which outputs the \'etale $\Z_p$-local system $T$.  The pullback of this morphism along the $\overline{\F}_p$-point represented by the trivial local system classifies modifications
\begin{equation}
    \label{EqModif}
 0\to \OO_{X_S}^{\oplus n} \to \E\to i_*L \to 0.
 \end{equation}
 We will show that such modifications correspond to $S$-points of $\HH^{n-1,\diamond}$, and then quotienting by $\GL_n(\Z_p)$ would give the equivalence between (1) and (2).

Modifications as in \eqref{EqModif} are in duality with modifications 
\begin{equation}
\label{EqModif2}
0 \to \check{\E}\to \OO_{X_S}^{\oplus n} \to i_*\ell \to 0, 
\end{equation}
where $\check{\E}=\Hom(\E,\OO_{X_S})$, and $\ell$ is a locally free $\OO_{S^\sharp}$-module.  Disregarding any condition on $\E$, these are parameterized by $S$-points of $\PP^{n-1,\diamond}$.  Indeed, an $S$-point of $\PP^{n-1,\diamond}$ is the same as an $S^\sharp$-point of $\PP^{n-1}$, and this is in turn the same as a  locally free $\OO_{S^\sharp}$-module $\ell$ together with a surjective map $\OO_{S^{\sharp}}^{\oplus n}\to \ell$.  Such a map determines an exact sequence as in \eqref{EqModif2}.

Finally, we claim that $\E$ is isoclinic of slope $1/n$ at every point of $S$ if and only if the corresponding map $S\to \PP^{n-1,\diamond}$ factors through $\HH^{n-1,\diamond}$.  For this it suffices to assume that $S=\Spa(C,C^\circ)$ is a geometric point, so that $C^\sharp/\Q_p$ is an algebraically closed perfectoid field, and $\ell$ is a 1-dimensional quotient of $(C^\sharp)^{\oplus n}$.  The following statements are equivalent:
\begin{enumerate}
    \item $\E$ has 0 as a slope.
    \item $\check{\E}$ has 0 as a slope.
    \item $\check{\E}$ admits $\OO_{X_C}$ as a summand.
    \item There is a morphism $\OO_{X_C}\to \OO_{X_C}^{\oplus n}$, determined by a nonzero vector $v\in \Q_p^{\oplus n}$, whose composition with $\OO_{X_C}^{\oplus n} \to i_*\ell$ is zero.
    \item The image of the morphism $\Spa(C,C^\circ)\to \PP^{n-1,\diamond}$ lies in the $\Q_p$-rational hyperplane defined by the condition that $(C^\sharp)^{\oplus n}\to \ell$ vanishes on some nonzero vector $v\in \Q_p^{\oplus n}$.   
\end{enumerate}
Negating these conditions, we find that $S\to \PP^{n-1,\diamond}$ factors through $\HH^{n-1,\diamond}$ if and only if $\E$ has only positive slopes.  Since $\E$ has degree 1 and rank $n$, the only possibility is that $\E$ is isoclinic of slope $1/n$.

In the duality between modifications \eqref{EqModif} and \eqref{EqModif2}, the relationship between the locally free $\OO_{S^\sharp}$-modules $L$ and $\ell$ is
\[ \ell = \check{L} \otimes T_{S^\sharp},\] 
where $T_{S^\sharp}$ is the tangent space of $X_S$ along $i\from S^\sharp\to X_S$;  this relation holds when dualizing modifications of vector bundles over any regular curve.  For $S=\Spa(R,R^+)$ affinoid, the completed local ring of $\OO_{X_S}$ along $S^\sharp$ is the de Rham period ring $B_{\mathrm{dR}}^+(R^\sharp)$, say with uniformizing parameter $(\xi)$;  then $T_{S^\sharp}$ is the $R^\sharp$-linear dual of $(\xi)/(\xi^2) \isom R^\sharp \otimes_{\Q_p}\Q_p(1)$.  

When the shtuka in \eqref{EqModif} arose from a formal group $H$ over $R^{\sharp+}$, the invertible sheaf $L$ has been identified with $\Lie H[1/p]$, cf. the comment following Theorem \ref{ThmShtukas}.  On the other hand, $\ell$ is the pullback to $S$ of the tautological bundle $\OO(1)^{\diamond}$ through $S\to\PP^{n-1,\diamond}$.  Taking into account the relation between $L$ and $\ell$, we find the relation between the invertible modules claimed in the theorem.
\end{proof}

\subsection{The determinant map $\SW_n\to \SW_1$.}

To access the determinant sphere $\mathbb{S}_{K(n)}[\det]$ as a Picard class on $\SW_n$, we will need to understand the role of the determinant in the study of 1-dimensional formal groups.  

\begin{thm} \label{ThmDeterminant} 
There exists a determinant morphism
\[ \det \from \SW_n\to \SW_1 \]
making the following diagram commute:
    \[
        \xymatrix{
        [\LT_{n,\eta}^{\diamond}/\G_n] \ar[rr]^-{\isom} \ar[d]_{\det_{\LT}} && \SW_n \ar[rr]^-{\isom} \ar[d]_{\det} && [\HH^{n-1,\diamond}/\GL_n(\Z_p) ] \ar[d]^{\det_{\HH}} \\
        [\LT_{1,\eta}^{\diamond}/\G_1] \ar[rr]_-{\isom} && \SW_1 \ar[rr]_-{\isom} && [\HH^{0,\diamond}/\Z_p^\ast]. 
        }
    \]
In the diagram:
\begin{enumerate}
    \item the map labeled $\det_{\LT}$ is induced from:
    \begin{enumerate}
        \item the structure map $\LT_{n,\eta}\to \LT_{1,\eta}\isom \Spa(K,W)$;
        \item the determinant map $\G_n\to \G_1$, explained in the remarks below,
    \end{enumerate}
    \item the map labeled $\det_{\HH}$ is induced from:
    \begin{enumerate}
        \item the structure map $\HH^{n-1}\to \HH^0\isom \Spa(\Q_p,\Z_p)$;
        \item the determinant map $\GL_n(\Z_p)\to \Z_p^\ast$.
    \end{enumerate}
    \end{enumerate}
\end{thm}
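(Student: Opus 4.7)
My plan is to construct $\det$ intrinsically using the middle incarnation $\SW_n\isom \Modif_n$ from \cref{ThmEquivalenceOfStacks}, and then verify that the induced maps on each of the two equivalent quotient stacks agree with $\det_{\LT}$ and $\det_{\HH}$. Concretely, given an $S$-point of $\Modif_n$ represented by a modification
\[
0\to T\otimes_{\Z_p} \OO_{X_S} \to \mathcal{E}\to i_*L \to 0,
\]
I will take top exterior powers to produce an exact sequence
\[
0\to (\det T)\otimes_{\Z_p} \OO_{X_S} \to \det \mathcal{E} \to i_*L'\to 0
\]
in which $\det T$ is a $\Z_p$-local system of rank $1$, $L'$ is a canonically determined line bundle on $S$, and $\det\mathcal{E}$ is a line bundle on $X_S$ of everywhere constant slope $1$, hence everywhere isomorphic to $\OO(1)$ by \cref{PropIsoclinic}. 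This assembles into an $S$-point of $\Modif_1$, visibly functorial in $S$, and defines $\det\from \SW_n\to \SW_1$.

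For the right-hand square of the diagram, I will use the explicit description of the equivalence $[\HH^{n-1,\diamond}/\GL_n(\Z_p)]\isomto \Modif_n$ recorded in the proof of \cref{ThmEquivalenceOfStacks}: a line $\ell\subset \OO_S^{\oplus n}$, together with the trivial local system $T = \Z_p^{\oplus n}$, is sent to the modification of $\OO_{X_S}^{\oplus n}$ allowing a simple pole along $\ell$, with $L=\ell$. Taking determinants on this side collapses the data of $\ell$ to the point $\det\ell$ in $\HH^0 = \Spa(\Q_p,\Z_p)$ and transports the $\GL_n(\Z_p)$-action along the determinant homomorphism $\GL_n(\Z_p)\to \Z_p^\ast$, immediately giving commutativity of the right square.

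For the left-hand square, I will use the identification $\LT_n\isom \RZ_n^{(0)}$ from \cref{PropRZvsLT}: after a pro-\'etale cover, an $S$-point of $\LT_n^\diamond$ amounts to a $p$-divisible group $G$ over $R^\circ$ together with a height $0$ quasi-isogeny $\rho\from G_n\otimes R^\circ/p\to G\otimes R^\circ/p$, whose associated modification arises via \cref{PropBTToModificaton}. Applying the determinant on this side is realized by taking the top exterior power of the rational Dieudonn\'e module $M^0(G)$; by \cref{ThmFullyFaithful}, this descends to a uniquely determined height $1$ $p$-divisible group up to isogeny, with an induced quasi-isogeny from $\det G_n \isom G_1$. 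Since the Tate module and rational Dieudonn\'e module functors are both exact and hence compatible with top exterior powers, the resulting map $\LT_{n,K}^\diamond\to \LT_{1,K}^\diamond$ intertwines with the group-theoretic determinant $\G_n\to\G_1$ and matches the intrinsic $\det$ on modifications.

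The main obstacle I anticipate is pinning down the canonical identifications that make the intrinsic functor $\det\from \Modif_n\to \Modif_1$ well-defined in families: specifically, identifying the quotient line bundle $L'$ on $S$, and verifying that $\det\mathcal{E}$ has everywhere constant slope $1$ as a family over $X_S$, not merely on individual geometric fibers. Both facts are consequences of the standard compatibility of determinants with short exact sequences of coherent sheaves together with the isoclinic classification of \cref{PropIsoclinic}, but some care is needed since the target is a non-abelian stack. Once these identifications are in place, commutativity of the diagram reduces to the manifest compatibility of all three constructions with top exterior powers.
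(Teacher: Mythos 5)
Your proposal follows essentially the same route as the paper: define $\det$ intrinsically on $\Modif_n$ by taking top exterior powers of the modification, then verify commutativity of the two squares. The paper's verification is even more terse than yours, observing simply that the functorial isomorphisms $T(\bigwedge^n H)\isom\bigwedge^n TH$ and $\E(\bigwedge^n H)\isom\bigwedge^n\E(H)$ are tautologies of the construction.

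One point to tighten in the left-square argument: you invoke \cref{ThmFullyFaithful} to claim that $\bigwedge^n M^0(G)$ ``descends to a uniquely determined height 1 $p$-divisible group up to isogeny,'' but full faithfulness gives uniqueness, not existence. The existence of $\bigwedge^n H$ is supplied by the stack equivalence $\SW_1\isom\Modif_1$ itself — you build the rank-1 modification, and the equivalence hands you the corresponding formal group — so no separate realizability statement for Dieudonn\'e modules is needed. Relatedly, ``the Tate module and rational Dieudonn\'e module functors are exact and hence compatible with top exterior powers'' is not a valid inference (exactness alone does not control $\bigwedge^n$), but again the compatibility you want holds by construction: once $\bigwedge^n H$ is defined to be the formal group attached to the top-exterior-power modification, $T(\bigwedge^n H) = \bigwedge^n TH$ and $\E(\bigwedge^n H) = \bigwedge^n \E(H)$ are definitional. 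Neither slip damages the argument, but it is worth phrasing the left-square verification in terms of the construction rather than appealing to properties of the Dieudonn\'e functor that it does not have.
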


Before beginning the proof, we offer some remarks.  For a perfectoid space $S$ over $\Spa(\Q_p,\Z_p)$, the $S$-points of the morphism $\det\from \SW_n\to \SW_1$ should input a formal group $H$ of dimension 1 and height $n$ over $S$, and output a formal group $\bigwedge^nH$ over $S$ of dimension 1 and height 1. The crucial computation that makes this work comes from the case of Dieudonn\'e modules over $\overline{\F}_p$:  if $M(G_n)$ is the Dieudonn\'e module of $G_n$, then the top exterior power $\bigwedge_{W(\overline{\F}_p)}^n M(G_n)$ is isomorphic to $M(G_1)$.  

Considering the action of $\G_n=\Aut(G_n,\overline{\F}_p)$ on $\bigwedge_{W(\overline{\F}_p)}^n M(G_n)\isom M(G_1)$, we find a determinant map
    \[ 
        \det\from \G_n\to \G_1.
    \]
This map is isomorphic to the profinite completion of the reduced norm map $D_n^\ast\to \Q_p^\ast$, where $D_n=\End G_n \otimes_{\Z_p}\Q_p$.  

Similar considerations on the Fargues--Fontaine curve give an isomorphism $\bigwedge^n_{\OO_{X_S}}\OO(1/n)\isom \OO(1)$ for any perfectoid space $S$ over $\overline{\F}_p$, which is compatible with $\det\from \G_n\to\G_1$. 

\begin{rem} One might wonder whether general exterior powers exist beyond the determinant.  
    For a general $p$-divisible group $G$ of height $n$ over a perfect field $k$ of characteristic $p$, the exterior powers $\bigwedge^i_{W(k)} M(G)$ are Dieudonn\'e modules if and only if $\dim G\leq 1$, so one can only expect exterior powers of $G$ itself to exist in this case.  A  construction of exterior powers of $p$-divisible groups of dimension $\leq 1$ over the ring of integers in a nonarchimedean local field appeared previously for odd primes $p$ in \cite{Hedayatzadeh} and extended to $p=2$ in \cite{hopkins2013ambidexterity}.
\end{rem}

\begin{proof}[Proof of \cref{ThmDeterminant}] 

As we saw in the proof of \cref{ThmEquivalenceOfStacks}, there is an isomorphism of v-stacks $\SW_n\isom [\Sht_n/\G_n^0]$.  For a perfectoid space $S$ over $\overline{\F}_p$, the $S$-points of $[\Sht_n/\G_n^0]$ are modifications of vector bundles on $X_S$:
\[ 0 \to T\otimes_{\Z_p}\OO_{X_S} \stackrel{\alpha}{\to} \E \to i_* L\to 0,\]
where $T$ is a pro-\'etale $\Z_p$-local system of rank $n$, $\E$ is a vector bundle which is isoclinic of slope $1/n$ at every geometric point of $S$, $i$ is the inclusion of $S^\sharp$ into $X_S$ for some untilt $S^\sharp$ of $S$ to characteristic 0, and $L$ is a locally free $\OO_{S^\sharp}$-module of rank 1.  Given such data, one obtains an $S$-point of $[\Sht_1/\G_1^\circ]\isom \SW_1$ by taking determinants:
\[ 0\to \bigwedge^n_{\Z_p} T \otimes_{\Z_p} \OO_{X_S} \stackrel{\det\alpha}{\to} \bigwedge^n_{\OO_{X_S}} \E\to i_*i^*\bigwedge^n_{\OO_{X_S}} \E \to 0. \] 
(Explanation:  $\alpha$ has a zero along $i$, so the quotient map $\bigwedge^n_{\OO_{X_S}}\E\to \coker \det \alpha$ factors through a surjective map $\beta\from i_*i^*\bigwedge^n_{\OO_{X_S}} \E \to \coker \det\alpha$.  
On the other hand, since $\det\alpha$ is a map between line bundles on $X_S$ which are degree 0 and 1 at every geometric point of $S$, $\coker \det\alpha$ must be length 1 at every geometric point of $S$. Since $i_*i^*\bigwedge^n_{\OO_{X_S}} \E$ is already length 1 at every geometric point of $S$,  this forces $\beta$ to be an isomorphism.)

Thus we have constructed a morphism $\det\from \SW_n\to \SW_1$.  The commutativity of the right square of the diagram is by construction:  both compositions output the $S$-point of $[\HH^{0,\diamond}/\Z_p^\ast]\isom \Spd(\Q_p,\Z_p) \times [\Spd \F_p/\Z_p^\ast]$ defined by the untilt $S^\sharp$ and the pro-\'etale $\Z_p^\ast$-torsor $\bigwedge_{\Z_p}^n T$.  The commutativity of the left square amounts to the fact noted above, that there is an isomorphism $\bigwedge_{\OO_{X_S}}\OO(1/n) \isom \OO(1)$ compatible with the determinant map $\det\from \G_n\to\G_1$.  
\end{proof}

\section{The fundamental exact sequence}\label{sec:exactsequence}

The goal of this section is to construct what we dub the \emph{fundamental exact sequence}, describing the group $H^1_{\cts}(\G_n,A_n^{\ast\ast})$ in terms of $H^1_{\cts}(\G_1,A_1^{\ast\ast})$ and $H^1_{\proet}(\HH_C^{n-1},\OO^{\ast\ast})^{\Pi_n}$, where $\Pi_n=\Gal(\overline{\Q}_p/\Q_p)\times \GL_n(\Z_p)$.  (We intend no confusion with the fundamental exact sequence of $p$-adic Hodge theory appearing in \cite{FarguesFontaine}.) 

The key inputs are the isomorphism of towers \cref{ThmEquivalenceOfStacks}, the determinant map, as well as a logarithm exact sequences for the sheaf of principal units (\cref{lem:log_ses}) to compute the terms. Throughout, special care is needed for the ``anomalous'' case $n=p=2$. 

The objects of study in this section are closely related to those considered by Ertl, Gilles, and Nizio{\l} in \cite{VPicardGroup}, whose methods inspired the approach in this section. 

\subsection{Pro-\'etale sheaves and condensed abelian groups}

Suppose that $X$ is a v-stack.  Let us say that a morphism of v-stacks $Y\to X$ is pro-\'etale (resp., a pro-\'etale cover) if its base change through a map $U\to X$ from a representable object is also pro-\'etale (resp., a pro-\'etale cover).  
Define the pro-\'etale site $X_{\proet}$ as follows:  the objects are pro-\'etale morphisms $Y\to X$, and the covers are pro-\'etale covers.  Let us assume the existence of a pro-\'etale cover $Y\to X$ from a representable object $Y$: then $X_{\proet}$ admits a basis consisting of pro-\'etale morphisms $U\to X$ from affinoid perfectoid $U$.

We will encounter various sheaves of sets or abelian groups on $X_{\proet}$, for instance:
\begin{itemize}
    \item The sheaf $h_Y$ on $X_{\proet}$ represented by a diamond $Y$.
    \item The structure sheaves $\OO_X^+\subset \OO_X$, whose sections over an affinoid perfectoid $U=\Spa(R,R^+)$ are $R^+\subset R$.
    \item When $X$ is fibered over $\Spd \Q_p$, any affinoid perfectoid $U=\Spa(R,R^+)$ in $X_{\proet}$ has an untilt $U^\sharp=\Spa(R^\sharp,R^{\sharp +})$, and then we may define sheaves $\OO_X^{\sharp +}\subset \OO_X^{\sharp}$, whose value on $U$ is $R^{\sharp+}\subset R^{\sharp}$.  
\end{itemize}

An interesting feature of such sheaves is that they automatically carry a condensed structure.  This because there is a morphism of sites
\[ \lambda\from X_{\proet}\to \ast_{\proet} \]
onto the pro-\'etale site of a point, whose objects are profinite sets $S$;  the morphism $\lambda$ pulls back $S$ to the object $X\times S$ of $X_{\proet}$.  As a result, if $\mathcal{F}$ is a sheaf of sets on $X_{\proet}$, then $\Gamma_{\cond}(X_{\proet},\mathcal{F}):=\lambda_*\mathcal{F}$ lies in the category $\Cond$ of condensed sets (= sheaves of sets on $\ast_{\proet}$).  Explicitly, if $S$ is a profinite set, we can describe the $S$-points: $\Gamma_{\cond}(X_{\proet},\mathcal{F})(S)=\Gamma((X\times S)_{\proet},\mathcal{F})$. 

If $\mathcal{F}$ is a sheaf of abelian groups on $X_{\proet}$, then $H^0_{\cond}(X_{\proet},\mathcal{F})=\lambda_*\mathcal{F}$ lies in $\Cond(\Ab)$, the category of condensed abelian groups.  Once again, the $S$-points are $H^0((X\times S)_{\proet},\mathcal{F})$; note that this can also be described as $\Hom(\Z[S],H^0(X_{\proet},\mathcal{F}))$, where $\Z[S]$ is the free condensed abelian group on $S$.  The higher cohomology groups of $\mathcal{F}$ also carry a condensed structure:   Let $R\Gamma_{\cond}(X,\mathcal{F}):=R\lambda_*\mathcal{F}$, an object of the derived category $D(\Cond(\Ab))$, and then each cohomology group $H^i_{\cond}(X,\mathcal{F}):=R^i\lambda_*\mathcal{F}$ is condensed.
We can describe an object $A$ of $D(\Cond(\Ab))$ in terms of its ``derived $S$-points'' $A(S):=\RHom(\Z[S],A)$, for any profinite $S$.  The derived $S$-points of $R\Gamma_{\cond}(X,\mathcal{F})$ are
\begin{equation}
    \label{EqDerivedSPoints}
  \RHom(\Z[S],R\Gamma_{\cond}(X_{\proet},\mathcal{F})) = R\Gamma((X\times S)_{\proet},\mathcal{F}).  
\end{equation}
It may happen that $\mathcal{F}$ is already a sheaf of topological abelian groups on $X_{\proet}$.  In that case one can package $\mathcal{F}$ together with its topology into a sheaf $\mathcal{F}_{\cond}$ of condensed abelian groups on $X_{\proet}$, and then $R\Gamma(X,\mathcal{F}_{\cond})$ would be an object of $D(\Cond(\Ab))$, a priori distinct from $R\Gamma_{\cond}(X,\mathcal{F})$.  For $X$ fibered over $\Spd \Q_p$, the untilted integral structure sheaf $\OO_X^{\sharp+}$ is an example:  for $U=\Spa(R,R^+)$ an affinoid perfectoid object of $X_{\proet}$, we have $\OO_X^{\sharp+}(U)=R^{\sharp+}$, a topological abelian group.   In this case the two condensed enhancements agree \cite[Lemma 3.6.1]{BSSW}:  $R\Gamma_{\cond}(X_{\proet},\OO^{\sharp+})\isom R\Gamma(X_{\proet},\OO^{\sharp+}_{\cond})$.  

Recall the functor $X\mapsto X^{\diamond}$ from rigid-analytic adic spaces over $\Q_p$ to diamonds over $\Q_p$.  In this situation we have a map of sites
\[ \nu\from X_{\proet}^{\diamond}\to X_{\an},\]
where $X_{\an}$ is the analytic site of $X$, and a pullback map $\OO_X\to \nu_* \OO_{X^\diamond}^{\sharp}$ of sheaves of topological rings.  When $X$ is smooth, this map is an isomorphism \cite[Theorem 8.2.3]{kedlaya2019relativepadichodgetheory}.  (We remark that when $X=\Spa \Q_p$, this result states that the subfield of $\Gal(\overline{\Q}_p/\Q_p)$-invariants in $C=\widehat{\overline{\Q}}_p$ is $\Q_p$, which is \cite[Theorem 1]{TatePDivisibleGroups}.) 

\begin{lemma} \label{LemFullyFaithful} Let $X$ and $Y$ be rigid-analytic spaces over $\Q_p$, with $X$ smooth.  The map $\Hom_{\Q_p}(X,Y)\to \Hom_{\Spd \Q_p}(X^\diamond,Y^\diamond)$ is a bijection.  There is even an isomorphism of condensed enhancements: for any profinite $S$, the map $\Hom_{\Q_p}(X\times S,Y)\to \Hom_{\Spd\Q_p}(X^{\diamond}\times S,Y^{\diamond})$ is a bijection.
\end{lemma}

\begin{proof} The non-condensed statement is \cite[10.2.3]{ScholzeLectures};  we review it only to point out the condensed enhancement.  It suffices to assume that $X=\Spd(R,R^\circ)$ is affinoid, with $R$ a Tate algebra over $\Q_p$.  The result cited above states that $R=\OO_X(X)\to \OO_{X^\diamond}^{\sharp}(X^{\diamond})$ is an isomorphism of topological rings, and so for any profinite $S$, we have that  $\OO_X(X\times S)\isom C_{\cts}(S,\OO_X(X))\isom C_{\cts}(S,\OO_{X^\diamond}^{\sharp}(X^\diamond))\isom \OO_{X^\diamond}^{\sharp}(X^\diamond\times S)$ is an isomorphism.  Now apply the functor of continuous maps from $\OO_Y(Y)$ to obtain the result. 
\end{proof}

Next, we document the interaction between pro-\'etale cohomology and continuous group cohomology.  First, let us discuss the condensed version of group cohomology for a profinite group $G$.  Let $\mathcal{M}$ be a condensed $G$-module.  This means $\mathcal{M}$ is a condensed abelian group together with an action map $G\times \mathcal{M}\to \mathcal{M}$ in the category of condensed sets, satisfying the usual axioms.  For any profinite set $S$, let $\Z[S]$ be the free condensed abelian group on (the condensed set associated to) $S$.  Consider the standard resolution of the trivial $G$-module $\Z$ in condensed $\Z[G]$-modules:
\[ 
        \cdots\to\Z[G\times G]\to\Z[G].
    \]
Applying the functor $\Hom_G(-,\mathcal{M})$ in the category of condensed $G$-modules, we arrive at a complex of abelian groups $R\Gamma(G,\mathcal{M})$, whose $n$th term is $\Hom(\Z[G^n],\mathcal{M})\isom \mathcal{M}(G^n)$.  Note that if $\mathcal{M}$ is the condensed abelian group associated to a topological $G$-module $M$, then $\mathcal{M}(G^n)=C_{\cts}(G^n,M)$ is the group of continuous $n$-cocycles, and thus $R\Gamma(G,\mathcal{M})$ computes the continuous cohomology groups $H^i_{\cts}(G,M)$.

We extend the definition of $R\Gamma(G,\mathcal{M})$ and $H^i(G,\mathcal{M})$ to the setting where $\mathcal{M}$ is a complex of condensed $G$-modules, by taking the totalization of the double complex whose $n$th column is $\RHom(\Z[G^n],\mathcal{M})$.  In this setting one formally gets a spectral sequence
\[ H^i(G,H^j(\mathcal{M}))\implies H^{i+j}(G,\mathcal{M}). \]
    
\begin{prop}  
\label{PropOxxDescent}  Let $G$ be a profinite group, and let $Y\to X$ be a $G$-torsor in v-stacks.   Let $\mathcal{F}$ be a sheaf of abelian groups on $X_{\proet}$. Then there is a natural isomorphism
\[ R\Gamma(X_{\proet},\mathcal{F})\isomto R\Gamma(G,R\Gamma_{\cond}(Y_{\proet},\mathcal{F})). \]
In particular there is a spectral sequence
\[ H^i(G,H^j_{\cond}(Y_{\proet},\mathcal{F}))\implies H^{i+j}(X_{\proet},\mathcal{F}). \] 
\end{prop}

\begin{proof} The proof copies \cite[Proposition 3.6.3]{BSSW}, which proves the same result for the integral structure sheaf $\OO^+$ on $X_{\proet}$.  

Since $Y\to X$ is pro-\'etale, the pro-\'etale cohomology of $X$ can be computed by means of the simplicial cover: 
\[ \cdots\mathrel{\substack{\textstyle\rightarrow\\[-0.6ex]
                      \textstyle\rightarrow \\[-0.6ex]
                      \textstyle\rightarrow}} Y \times_X Y \rightrightarrows Y \]
Namely, $R\Gamma(X_{\proet},\mathcal{F})$ is quasi-isomorphic to the totalization of the corresponding double complex
    \[ 
    R\Gamma(Y_{\proet},\mathcal{F}) \to R\Gamma((Y\times_X Y)_{\proet},\mathcal{F}) \to\cdots.
    \] 
Since $Y\to X$ is a $G$-torsor, the $n$th column in the double complex is isomorphic to 
\[ R\Gamma((Y\times G^n)_{\proet}, \mathcal{F}) \isom 
\RHom(\Z[G^n], R\Gamma_{\cond}(Y_{\proet},\mathcal{F})) \]
by \eqref{EqDerivedSPoints}.  
This is exactly the double complex whose totalization computes $R\Gamma(G,R\Gamma_{\cond}(Y_{\proet},\mathcal{F}))$.
\end{proof}

\subsection{The sheaf of principal units $\OO^{\xx}$, and the logarithm exact sequence}

Let $X$ be a v-stack over $\Spd \Q_p$ admitting a pro-\'etale cover by a representable object.  
By the tilting equivalence, we can think of $X$ as a sheaf of groupoids on the category of perfectoid spaces over $\Q_p$.
Then a basis for $X_{\proet}$ consists of affinoid perfectoid $U=\Spa(R,R^+)$ over $\Q_p$ admitting a pro-\'etale morphism to $X$. Let us define the sheaf of principal units $\OO^{\xx}$ by
\[ \OO^{\xx}(U)=1+R^{\circ\circ},\]
where $R^{\circ\circ}\subset R$ is the subset of topologically nilpotent elements.  Note that since $R$ is complete, $R^{\circ\circ}$ is a group:  for $x\in R^{\circ\circ}$, the inverse of $1+x$ is $1-x+x^2-\dots$. 

(The sheaf $\OO^{\xx}$ would more properly be called $\OO^{\sharp\xx}$, since it is a subsheaf of the sheaf we have called $\OO^{\sharp}$.  We have chosen to use $\OO^{\xx}$ to ease the notational burden.)

\begin{example} \label{ExDisc} Let $K$ be a nonarchimedean field containing $\Q_p$, let $A=\OO_K\powerseries{T_1,\dots,T_d}$, and let $D=\Spf A$.  Let $D_\eta$ be the adic generic fiber of $D$, so that $D_\eta$ is the nonvanishing locus of $\abs{p}$ in $\Spa(A,A)$.  Then $H^0(D_\eta^{\diamondsuit}, \OO^{\ast\ast})\isom A^{\xx}=1+M$, where $M\subset A$ is the maximal ideal.  Applied to Lubin--Tate space, we find $H^0(\LT_{n,\eta}^{\diamondsuit},\Oxx)\isom A_n^{\xx}$.
\end{example}

The following {\em logarithm exact sequence} will allow us to access the cohomology of $\OO^{\ast\ast}$ on various rigid-analytic spaces.

\begin{lemma}\label{lem:log_ses} Let $X$ be a v-stack over $\Spd \Q_p$ admitting a pro-\'etale cover by a representable object.  There is an exact sequence of sheaves of abelian groups on $X_{\proet}$:
    \begin{equation}\label{eq:logseq}
        0 \to \mu_{p^\infty} \to \OO^{\ast\ast} \stackrel{\log}{\to} \OO \to 0. 
    \end{equation}
Here $\mu_{p^\infty}$ is the sheaf of $p$th power roots of 1.  
\end{lemma}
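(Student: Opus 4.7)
The plan is to define $\log$ by the usual power series and verify exactness at each step, with étale-local surjectivity as the technical heart.

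First, I would define $\log\colon \OO^{\ast\ast} \to \OO$ on a section $1+g$ by the series $\sum_{n \geq 1} (-1)^{n-1} g^n/n$. Since sections of $\OO^{\ast\ast}$ are locally of the form $1+g$ with $g$ topologically nilpotent, this converges uniformly on a sufficiently small affinoid: on a Tate affinoid $\Spa(A,A^+)$, any $g \in A^{\circ\circ}$ satisfies $\abs{g}_{\sup} < 1$, so $\sum g^n/n$ converges in $A$. The identity $\log(xy) = \log(x) + \log(y)$ holds formally and converges here, so $\log$ is a continuous homomorphism of sheaves of condensed abelian groups. The inclusion $\mu_{p^\infty} \hookrightarrow \OO^{\ast\ast}$ is immediate, since $\zeta - 1$ is topologically nilpotent for any $p$-power root of unity.

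Next, I would compute that $\ker \log = \mu_{p^\infty}$. The containment $\mu_{p^\infty} \subset \ker\log$ follows from the fact that $\OO$ is a sheaf of $\Q_p$-algebras, hence $p$-torsion-free, together with $p^N \log \zeta = \log(\zeta^{p^N}) = 0$ for $\zeta \in \mu_{p^N}$. Conversely, if $h = 1+t$ is a principal unit with $\log h = 0$, then the binomial expansion of $(1+t)^p$ and the Gauss valuation give $v(h^p - 1) = \min(1 + v(t),\, p\cdot v(t)) > v(t)$, so iterating yields $v(h^{p^N} - 1) \to \infty$. For $N$ large enough, $h^{p^N}$ lies in the open disc of radius $\abs{p}^{1/(p-1)}$ around $1$ where $\exp$ converges and inverts $\log$, and from $\log(h^{p^N}) = p^N \log h = 0$ we deduce $h^{p^N} = 1$, so $h \in \mu_{p^\infty}$.

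The main obstacle is étale-local surjectivity. Given $f \in \OO(U)$ on an affinoid $U$, choose $N$ large enough that $\abs{p^N f}_{\sup} < \abs{p}^{1/(p-1)}$; then $g := \exp(p^N f)$ converges to a principal unit with $\log g = p^N f$. I would then extract a principal-unit $p^N$-th root $h$ of $g$ after a finite étale extension: the polynomial $X^{p^N} - g \in \OO(U)[X]$ is separable (since $g$ is a unit), so it cuts out a finite étale cover $V \to U$, and after adjoining $p^N$-th roots of unity if necessary one can single out the component on which $h - 1$ is topologically nilpotent (the root matching the Newton slope of $g-1$). Then $p^N \log h = \log g = p^N f$ in $\OO(V)$, and $p$-torsion-freeness of $\OO(V)$ yields $\log h = f$.

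Finally, since the morphism of sites $X_{\proet} \to X_{\et}$ has exact pullback on abelian sheaves, the exact sequence on $X_{\et}$ transfers to $X_{\proet}$. The subtle point throughout is the selection of a principal-unit $p^N$-th root, which is exactly what forces the use of the étale (rather than analytic) topology in the statement.
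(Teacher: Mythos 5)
Your proof is correct and follows essentially the same route as the paper: define $\log$ by its power series, show $\ker\log = \mu_{p^\infty}$ by pushing $h^{p^N}$ into the disc where $\exp$ inverts $\log$ (the paper reaches the same conclusion slightly more directly, via the estimate $p^n\log f = (f^{p^n}-1)\cdot(\text{unit})$), and obtain surjectivity by applying $\exp$ to $p^N f$ and extracting a $p^N$-th root after a finite \'etale extension. One small remark: the "singling out the component on which $h-1$ is topologically nilpotent" step is unnecessary, since over $B = A[X]/(X^{p^N}-g)$ the residue fields all have characteristic $p$, so $\bar h^{p^N}=1$ forces $\bar h = 1$ and hence $h\in B^{\ast\ast}$ automatically, with no need to adjoin roots of unity or discuss Newton slopes.
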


\begin{proof} Suppose $U\to X$ is an affinoid perfectoid object of $X_{\proet}$, with $U=\Spa(R,R^+)$.  The usual logarithm series converges on $\OO^{\ast\ast}(U)$ to an element of $\OO(U)$.  

We first claim that the kernel of $\log\from \OO^{\ast\ast}(U)\to \OO(U)$ is $\mu_{p^\infty}(U)$. It is clear that $\mu_{p^\infty}(U)$ is contained in the kernel. For the other inclusion, let $f\in \OO^{\ast\ast}(U)$, then there exists $n\geq 0$ large enough so that $f^{p^n}-1\in p^2R^+$.  Then 
\[ p^n \log f =\log f^{p^n} =\sum_{i\geq 1} (-1)^{i-1}\frac{(f^{p^n}-1)^i}{i}\in (f^{p^n}-1)(1+pR^+). \]
(Here we have used $2(i-1)\geq v_p(i)+1$ for all $i\geq 2$, where $v_p$ is the $p$-adic valuation on integers.)  
Thus if $\log f = 0$, we must have $f^{p^n}=1$.

Now we claim that $\log\from \OO^{\ast\ast}\to \OO$ is surjective in the pro-\'etale topology. Let $U=\Spa(R,R^+)$ as above.  For $f\in \OO(U)=R$, let $n\geq 0$ be large enough so that $p^nf \in p^2 R^+$.  Then $\exp(p^nf)$ converges to an element $e\in A^{\ast\ast}$ such that $\log e = p^n f$.  Let $S=R[e^{1/p^n}]:= R[T]/(T^{p^n}-e)$;  since $e\in R^\ast$ and $p$ is invertible in $R$, we have that $S$ is a faithfully flat and finite \'etale $R$-algebra.  We find that if $V=\Spa(S,S^+)$ (with $S^+=$ integral closure of $R^+$ in $S$), then $V\to U$ is an \'etale cover for which $\OO^{\ast\ast}(V)$ contains a preimage of $f$ under $\log$, namely $e^{1/p^n}$.  
\end{proof}

\subsection{The fundamental exact sequence}

In this section we leverage the isomorphisms between stacks in \cref{ThmEquivalenceOfStacks} to control the continuous cohomology  $H^1_{\cts}(\G_n,A_n^{\ast\ast})$. 

Throughout, we will suppress the subscripts ``$\proet$'' and ``cts'' to ease notational burden;  it will always be understood that the cohomology on diamonds is with respect to the pro-\'etale topology, and that the cohomology of profinite groups is meant to be continuous.   

Suppose we are in the situation of Proposition \ref{PropOxxDescent}:  A profinite group $G$ acts on a diamond $Y$ over $\Spd\Q_p$, with quotient $X=[Y/G]$.  Then we have a spectral sequence 
\[ 
H^i(G,H^j_{\cond}(Y,\Oxx)) \implies H^{i+j}(X,\Oxx) 
\] 
whose low-degree terms form an exact sequence:
    \begin{equation}\label{EqLowDegree}
        0\to H^1(G,\Oxx(Y)) \to H^1(X,\OO^{\xx}) \to H^1(Y,\OO^{\xx})^G. 
    \end{equation}
    Here, $\Oxx(Y)$ means $H^0_{\cond}(Y,\Oxx)$, a condensed abelian group with $G$-action.

The formation of the exact sequence in \eqref{EqLowDegree} is functorial in the following sense:  If a morphism of pairs $(Y_1,G_1)\to (Y_2,G_2)$ is interpreted to mean a homomorphism $\phi\from G_1\to G_2$ and $\psi\from Y_1\to Y_2$ such that $\psi(gy)=\phi(g)\psi(y)$ for $g\in G_1$, $y\in Y_1$, then such a morphism induces a morphism of spectral sequences and hence a morphism between the corresponding low-degree exact sequences.

Now suppose there are two such presentations for $X$:
    \[ 
        \mathfrak{X}\isom [Y_1/G_1]\isom [Y_2/G_2]. 
    \]
The corresponding low-degree exact sequences assemble to give a diagram
\begin{equation}
    \label{EqPushoutCross}
    \begin{gathered}
        \xymatrix{
            & & 0 \ar[d] & \\
            & P \ar[r] \ar[d] & H^1(G_1,\OO^{\xx}(Y_1)) \ar[d] & \\
            0 \ar[r] & H^1(G_2,\OO^{\xx}(Y_2)) \ar[r] & H^1(X,\OO^{\xx}) \ar[r] \ar[d] & H^1(Y_{2},\OO^{\xx})^{G_2} \\
            & & H^1(Y_{1},\OO^{\xx})^{G_1}, & 
        }
    \end{gathered}
\end{equation}
where $P$ is the indicated pullback in the diagram. In particular, there is an exact sequence
\begin{equation}
    \label{EqTwoPresentationsExactSequence}
0 \to P \to H^1(G_1,\Oxx(Y_{1})) \to H^1(Y_{2},\OO^{\xx})^{G_2} 
\end{equation}
as well as a similar one which swaps the roles of the two presentations.  The formation of the diagrams in \eqref{EqPushoutCross} and \eqref{EqTwoPresentationsExactSequence} are functorial on the category of data consisting of $(Y_1,G_1)$,$(Y_2,G_2)$, and an isomorphism $[Y_1/G_1]\isomto [Y_2/G_2]$ and with morphisms consisting of a pair of maps of pairs and a $2$-commuting square of stacks.
  
Let $C$ be the completion of an algebraic closure of $\Q_p$, and let $\Gamma_{\Q_p}=\Gal(\overline{\Q}_p/\Q_p)$,
so that $\Spd C\to\Spd \Q_p$ is a pro-\'etale $\Gamma_{\Q_p}$-torsor in diamonds.   \Cref{ThmEquivalenceOfStacks} shows that for each $n\geq 1$ there is a pair of presentations of the stack $\SW_n$:
\begin{equation}
    \label{EqTwoPresentations}
    \SW_n \isom [\LT_{n,\eta}^{\diamond}/\G_n] \isom [\HH_C^{n-1,\diamond}/(\Gamma_{\Q_p}\times\GL_n(\Z_p))]. 
\end{equation}
Here, we have replaced $\HH^{n-1,\diamond}$ with its equivalent presentation $\HH_C^{n-1,\diamond}/\Gamma_{\Q_p}$. It will be convenient to use the notation 
    \begin{equation}\label{eq:notationpin}
        \Pi_n = \Gamma_{\Q_p} \times \GL_n(\Z_p).
    \end{equation}

Recall from \cref{ThmDeterminant} that the determinant morphism $\det\from \SW_n\to \SW_1$ is compatible with the two presentations of $\SW_n$ given in \eqref{EqTwoPresentations}.  Applying \eqref{EqPushoutCross} functorially to the determinant morphism, we obtain a commutative diagram:
\begin{equation}
    \label{EqBigAssDiagram}
\adjustbox{scale=0.65,center}{
\begin{tikzcd}
	&&& 0 \\
	& {P_1} && {H^1(\Pi_1,\Oxx(\HH_C^{0,\diamond}))} & 0 \\
	&& {P_n} && {H^1(\Pi_n,\Oxx(\HH_C^{n-1,\diamond}))} \\
	0 & {H^1(\G_1,\Oxx(\LT_{1,\eta}^{\diamond}))} && {H^1(\SW_1,\Oxx)} && {H^1(\LT_{1,\eta}^{\diamond},\Oxx)^{\G_1}} \\
	& 0 & H^1(\G_n,\Oxx(\LT_{n,\eta}^{\diamond})) && {H^1(\SW_n,\Oxx)} && {H^1(\LT_{n,\eta}^{\diamond},\Oxx)^{\G_n}} \\
	&&& {H^1(\HH^{0,\diamond}_C,\Oxx)^{\Pi_1}} \\
	&&&& {H^1(\HH^{n-1,\diamond}_C,\Oxx)^{\Pi_n}.}
	\arrow["q"', from=2-2, to=3-3]
	\arrow[from=3-5, to=5-5]
	\arrow[from=2-5, to=3-5]
	\arrow[from=5-5, to=7-5]
	\arrow[from=3-3, to=3-5]
	\arrow[from=2-4, to=4-4]
	\arrow["\det_{\HH}^\ast", from=2-4, to=3-5]
	\arrow[from=2-2, to=2-4]
	\arrow[from=1-4, to=2-4]
	\arrow[from=4-4, to=5-5]
	\arrow[from=4-4, to=6-4]
	\arrow[from=6-4, to=7-5]
	\arrow[from=3-3, to=5-3]
	\arrow[from=5-3, to=5-5]
	\arrow[from=5-5, to=5-7]
	\arrow["\det_{\LT}^{\ast}",from=4-2, to=5-3]
	\arrow["r", from=2-2, to=4-2]
	\arrow[from=4-2, to=4-4]
	\arrow[from=4-4, to=4-6]
	\arrow["\det_{\LT}^{\ast}", from=4-6, to=5-7]
	\arrow[from=4-1, to=4-2]
	\arrow[from=5-2, to=5-3]
\end{tikzcd}}
\end{equation}
We proceed with seven lemmas concerning objects in the diagram \eqref{EqBigAssDiagram}.

\begin{lemma}\label{lem:liouville}As condensed abelian groups we have $\OO^{\xx}(\HH^{n-1,\diamond}_C)\isom C^{\xx}$, and $\OO^{\xx}(\LT_{n,\eta}^{\diamond})\isom A_n^{\xx}$.
\end{lemma}

\begin{proof}    
On rigid-analytic spaces $X$ over $C$, the sheaf $\OO^{\xx}$ is represented by the rigid-analytic fiber  of the formal multiplicative group.   Now apply \cref{LemFullyFaithful} to find that for smooth $X$, $\OO^{\ast\ast}(X)\isom \OO^{\ast\ast}(X^{\diamond})$ as condensed abelian groups.  Therefore the lemma is reduced to calculating $\OO^{\ast\ast}(X)$ as a topological abelian group.   

In the case $X=\HH^{n-1}_C$, we appeal to \cite[Lemma 3]{MR1360770}, which states that every bounded function on $\HH^{n-1}_C$ is constant.  In particular $\OO^{\ast\ast}(X)=C^{\xx}$.  In the case $X=\LT_{n,\eta}$, which is a rigid-analytic open ball, we have $\OO^{\xx}(\LT_{n,\eta})=A_n^{\xx}$.  
\end{proof}

\begin{lemma}\label{lem:SLncohom}
    Considering $\Q_p^{\xx}=1+p\Z_p$ with its trivial $\SL_n(\Z_p)$-action, we have:
        \[
            H^1(\SL_n(\Z_p),\Q_p^{\xx}) \isom 
                \begin{cases}
                    0, & (n,p) \neq (2,2); \\ 
                    \ZZ/2 & (n,p) = (2,2).
                \end{cases}
        \]
In the case $(n,p) = (2,2)$, the group $H^1(\SL_n(\Z_p),\Q_p^{\xx})$ is generated by the homomorphism given by the composition 
    \[
        \SL_2(\Z_2)\xrightarrow{\mod 2\;\;\;} \SL_2(\Z/2)\isom S_3\xrightarrow{\mathrm{sign}} \{\pm 1\} \subset \QQ_2^{\xx}. 
    \]
\end{lemma}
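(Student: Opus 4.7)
The plan is to exploit the trivial action to replace $H^1_{\cts}$ with $\Hom_{\cts}$, and then to split the target $\Q_p^{\xx} = 1 + p\Z_p$ into its torsion-free part (identified with $\Z_p$ by the $p$-adic logarithm for $p$ odd, or by $\log$ on the subgroup $1 + 4\Z_2$ for $p = 2$) and, when $p = 2$, an additional $\Z/2$-summand generated by $-1$. This way the computation separates into bounding $\Hom_{\cts}(\SL_n(\Z_p), \Z_p)$ in all cases and, only for $p = 2$, also $\Hom_{\cts}(\SL_n(\Z_2), \Z/2)$.

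For the torsion-free piece I would use a Lie algebra argument: $\SL_n(\Z_p)$ contains an open uniform pro-$p$ subgroup $U$---the first congruence subgroup for $p$ odd, the second for $p = 2$---whose $\Q_p$-Lie algebra is $\mathfrak{sl}_n(\Q_p)$, which for $n \geq 2$ is simple, hence perfect. Perfectness of the Lie algebra gives $\Hom_{\cts}(U, \Q_p) = 0$ and a fortiori $\Hom_{\cts}(U, \Z_p) = 0$, so any continuous homomorphism $\SL_n(\Z_p) \to \Z_p$ vanishes on $U$, factors through the finite quotient $\SL_n(\Z_p)/U$, and must be trivial since $\Z_p$ is torsion-free. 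This already settles all cases with $p$ odd.

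For $p = 2$ and $n \geq 3$, the Steinberg commutator identity $e_{ij}(a) = [e_{ik}(a), e_{kj}(1)]$ (for any triple of distinct indices) exhibits every elementary matrix as a commutator, so $\SL_n(\Z_2)$---being topologically generated by elementary matrices over the local ring $\Z_2$---is topologically perfect. This kills the $\Z/2$-contribution, completing the case $n \geq 3$.

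The anomalous case $(n, p) = (2, 2)$ is the most delicate. The stated sign map $\SL_2(\Z_2) \twoheadrightarrow \SL_2(\F_2) \cong S_3 \xrightarrow{\sgn} \{\pm 1\} \subset \Q_2^{\xx}$ is clearly a nontrivial continuous homomorphism: the generator $e_+(1)$ reduces to a transposition in $S_3$, hitting $-1$. For the matching upper bound I plan to use that $\SL_2(\Z) \hookrightarrow \SL_2(\Z_2)$ has dense image, which follows from the classical surjectivity of the reductions $\SL_2(\Z) \twoheadrightarrow \SL_2(\Z/2^k)$ for every $k$. Any continuous homomorphism $\SL_2(\Z_2) \to \Z/2$ (with target discrete) is then determined by its restriction to this dense subgroup, so $\Hom_{\cts}(\SL_2(\Z_2), \Z/2)$ injects into $\Hom(\SL_2(\Z), \Z/2)$. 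The standard presentation $\SL_2(\Z) = \langle S, T \mid S^4 = 1,\ (ST)^3 = S^2 \rangle$ gives $\SL_2(\Z)^{\ab} \cong \Z/12$, whence $\Hom(\SL_2(\Z), \Z/2) \cong \Z/2$; combined with the nontrivial sign map this yields exactly $\Z/2$, as claimed. The main obstacle is this last paragraph: one must combine density with the abstract abelianization $\Z/12$ of $\SL_2(\Z)$ to pin down the continuous Hom, which is the one place where the structural features of $\SL_2$ at the prime $2$ really enter.
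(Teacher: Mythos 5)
Your proposal is correct but takes a genuinely different route from the paper's, so a comparison is worth making.  The paper's argument is uniform across all $(n,p)$: it observes that $H^1_{\cts}(\SL_n(\Z_p),\Q_p^{\xx})=\Hom_{\cts}(\SL_n(\Z_p)^{\overline{\ab}},\Q_p^{\xx})$, then uses density of $\SL_n(\Z)$ in $\SL_n(\Z_p)$ together with the classical facts $\SL_n(\Z)^{\ab}=0$ for $n\geq 3$ (also $n=1$) and $\SL_2(\Z)^{\ab}=\Z/12$, to conclude that $\SL_n(\Z_p)^{\overline{\ab}}$ is a finite cyclic group which is trivial for $n\neq 2$ and of order dividing $12$ for $n=2$; the target $\Q_p^{\xx}$ being torsion-free for $p$ odd and isomorphic to $\Z_2\oplus\Z/2$ for $p=2$ then yields the answer immediately.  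You instead split $\Q_p^{\xx}$ into its torsion-free and torsion parts and treat them by different mechanisms: a Lazard-style Lie algebra argument (perfectness of $\mathfrak{sl}_n(\Q_p)$) to annihilate $\Hom_{\cts}(\SL_n(\Z_p),\Z_p)$ in all cases, then the Steinberg commutator identity to show topological perfectness of $\SL_n(\Z_2)$ for $n\geq 3$ (killing the $\Z/2$ summand), and finally the density of $\SL_2(\Z)$ and $\SL_2(\Z)^{\ab}=\Z/12$ only for the residual case $(n,p)=(2,2)$.  Both approaches are sound; the paper's is shorter and relies entirely on the arithmetic input about $\SL_n(\Z)$, whereas yours replaces most of that input with a more structural $p$-adic Lie-theoretic argument, at the cost of invoking Lazard's theory (or at least the fact that continuous homomorphisms of $p$-adic analytic groups are analytic) and checking that the relevant congruence subgroup is uniform (for $p=2$ it must be the second congruence subgroup, as you correctly note).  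One small remark: the Steinberg relation actually exhibits $\SL_n(\Z_2)$ as perfect, not merely topologically perfect, since $\SL_n$ over a local ring is generated by elementary matrices; but topological perfectness is all you need.
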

\begin{proof} Since $\Q_p^{\xx}$ is abelian, $H^1(\SL_n(\Z_p),\Q_p^{\xx})$ is isomorphic to the group of continuous homomorphisms $\SL_n(\Z_p)^{\overline{\ab}}\to\Q_p^{\xx}$, where 
    \[\SL_n(\Z_p)^{\overline{\mathrm{ab}}} = \SL_n(\Z_p) / \overline{[\SL_n(\Z_p),\SL_n(\Z_p)]}.\]
Since $\SL_n(\ZZ)$ is dense in $\SL_n(\Z_p)$, the image of $\SL_n(\ZZ)^{\mathrm{ab}} \to \SL_n(\Z_p)^{\overline{\mathrm{ab}}}$ is also dense.  We apply the well-known result \cite[1.2.11, 1.2.15, Theorem 4.3.22]{hahn2013classical}:
    \[
\SL_n(\ZZ)^{\mathrm{ab}} = \begin{cases}
0, & n \neq 2; \\ 
\ZZ/12 & n =2. 
\end{cases}\]
This settles the claim for $n\neq 2$, and shows that in all cases $\SL_n(\Z_p)^{\overline{\mathrm{ab}}}$ is a (possibly trivial) torsion cyclic group. For $p \neq 2$, $\QQ_p^{\xx} \cong \ZZ_p$ is torsion free, so this settles the claim for $p \neq 2$.

We are left with the case $(n,p) = (2,2)$. As $\QQ_2^{\xx} \cong \ZZ_2\oplus \ZZ/2$ and $\SL_2(\Z_2)^{\overline{\mathrm{ab}}}$ is torsion cyclic we see that $H^1(\SL_2(\Z_2),\Q_2^{\xx})$ is of order at most $2$. To conclude we observe that the map $\SL_2(\Z_2)\to\Q_2^{\xx}$ described above determines a non-trivial element in $H^1(\SL_2(\Z_2),\Q_2^{\xx})$.
\end{proof}

\begin{lemma} \label{lem:detH1iso}
For $(n,p)\neq (2,2)$, the map
\[ \det\!_{\HH}^\ast \from H^1(\Pi_1,\Oxx(\HH_C^0))
\to 
H^1(\Pi_n,\Oxx(\HH_C^{n-1})) \]
is an isomorphism.  For $(n,p) = (2,2)$, $\det\!_{\HH}^{\ast}$ is an inclusion of a direct summand with complement $\Z/2$ generated by the class $\alpha\in H^1(\Pi_2,C^{\xx})$ defined as the composite 
    \[
        \Pi_2 \to \GL_2(\ZZ_2) \xrightarrow{\mod 2\;}  \GL_2(\ZZ/2) \simeq S_3 \xrightarrow{\mathrm{sign}} \{\pm 1\} \subset C^{\xx},
    \]
where the first map is the canonical projection.
\end{lemma}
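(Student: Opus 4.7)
The plan is to apply the Lyndon--Hochschild--Serre spectral sequence to the \emph{split} short exact sequence
\[ 1 \to \SL_n(\ZZ_p) \to \Pi_n \to \Pi_1 \to 1 \]
obtained by multiplying the determinant sequence $1 \to \SL_n(\ZZ_p) \to \GL_n(\ZZ_p) \xrightarrow{\det} \ZZ_p^{\ast} \to 1$ by $\Gamma_{\Q_p}$, using the splitting $a \mapsto \mathrm{diag}(a,1,\dots,1)$. With coefficients $M = \Oxx(\HH_C^{n-1})$, the splitting of the group extension induces a splitting of the edge sequence, yielding a split short exact sequence
\[ 0 \to H^1_{\cts}(\Pi_1, M^{\SL_n(\ZZ_p)}) \to H^1_{\cts}(\Pi_n, M) \to H^1_{\cts}(\SL_n(\ZZ_p), M)^{\Pi_1} \to 0. \]
Functoriality applied to the $\Pi_n$-equivariant structure map $\HH_C^{n-1} \to \HH_C^0$ (with the homomorphism $\Pi_n \to \Pi_1$ on groups) identifies the first map with $\det_{\HH}^{\ast}$, provided we first establish $M^{\SL_n(\ZZ_p)} = \Oxx(\HH_C^0) = C^{\ast\ast}$.

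For the invariants, pullback from $\HH_C^0$ supplies an inclusion $C^{\ast\ast} \hookrightarrow M$ landing in $M^{\SL_n(\ZZ_p)}$. For the converse, I would use that $\HH_C^{n-1}$ is connected and that $\SL_n(\ZZ_p)$ acts via automorphisms of the ambient $\PP_{\Q_p}^{n-1}$ with Zariski-dense orbits on $\HH_C^{n-1}$; this forces any $\SL_n(\ZZ_p)$-invariant section of $\Oxx$ to be constant.

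The main step, and the principal obstacle, is the computation of $H^1_{\cts}(\SL_n(\ZZ_p), M)^{\Pi_1}$. The plan is to combine the logarithm exact sequence of \cref{lem:log_ses} with \cref{lem:SLncohom}. Starting from
\[ 0 \to \mu_{p^\infty} \to \Oxx \to \OO \to 0 \]
on $(\HH_C^{n-1})_{\proet}$, taking global sections and applying continuous $\SL_n(\ZZ_p)$-cohomology reduces the problem to understanding continuous $\Pi_1$-equivariant homomorphisms $\SL_n(\ZZ_p) \to M$. Any such homomorphism factors through the closed topological abelianization $\SL_n(\ZZ_p)^{\overline{\ab}}$, which by the argument of \cref{lem:SLncohom} is trivial unless $(n,p)=(2,2)$. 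In the anomalous case, the abelianization is a finite cyclic group, and its only non-trivial character with image in principal units is the sign map $\SL_2(\ZZ_2) \to \{\pm 1\}$ (note that $-1 = 1 - 2 \in \ZZ_2^{\ast\ast}$ since $p=2$). This character lifts uniquely to the class $\alpha$ of the statement, whose non-triviality in $H^1_{\cts}(\Pi_n, M)$ is verified directly by restriction to $\SL_2(\ZZ_2)$.

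Combining these ingredients with the splitting of the exact sequence yields that $\det_{\HH}^{\ast}$ is an isomorphism outside $(n,p)=(2,2)$, and a split inclusion with $\ZZ/2$-complement generated by $\alpha$ in the anomalous case.
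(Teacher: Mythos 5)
Your proposal contains a genuine gap at the crucial step where you need to pin down the coefficient module $M = \Oxx(\HH_C^{n-1})$. You propose to show $M^{\SL_n(\Z_p)} = C^{\xx}$ by arguing that $\SL_n(\Z_p)$ has Zariski-dense orbits on $\HH_C^{n-1}$, and that this forces invariant sections to be constant. This does not work. Zariski-density of orbits would at most control \emph{algebraic} (regular) invariant functions on $\PP^{n-1}$; it says nothing about analytic functions on the Stein open $\HH^{n-1}$, which is not quasi-compact and a priori could carry a large ring of bounded invariant functions (compare the complex-analytic upper half plane, where $\SL_2(\Z)$ has Zariski-dense orbits in $\PP^1$ yet the ring of invariant holomorphic functions is $\C[j]$; boundedness there is controlled by Liouville on the quotient, not by orbit density). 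The paper instead imports a substantive geometric theorem of Ben-Bassat--Scholze--Shen--Weinstein (\cite[Theorem 6.3.1]{BSSW}), which says $H^0(\HH^{n-1}_{\Q_p},\OO^+) \cong \Z_p$; this is proved using the semistable integral model of $\HH^{n-1}$ whose irreducible components in the special fiber are projective. Base-changing to $C$ gives the much stronger statement $M = \Oxx(\HH^{n-1}_C) = C^{\xx}$, with no invariance hypothesis needed.

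This stronger statement also circumvents the other soft spots in your outline. Once $M = C^{\xx}$, the group $\SL_n(\Z_p)$ acts \emph{trivially}, so $H^1_{\cts}(\SL_n(\Z_p), M)$ is literally $\Hom_{\cts}(\SL_n(\Z_p), C^{\xx})$, and after taking $\Gamma_{\Q_p}$-invariants one lands in $\Hom_{\cts}(\SL_n(\Z_p), \Q_p^{\xx})$ by Tate's theorem $(C^{\xx})^{\Gamma_{\Q_p}} = \Q_p^{\xx}$; now \cref{lem:SLncohom} applies directly, without invoking the logarithm exact sequence. By contrast, your route through the log sequence implicitly treats $H^1_{\cts}(\SL_n(\Z_p), M)$ as a group of homomorphisms, which is only correct once $M$ is known to be a trivial $\SL_n(\Z_p)$-module — again requiring the BSSW input. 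Likewise, your claim that the split group extension $1 \to \SL_n(\Z_p) \to \Pi_n \to \Pi_1 \to 1$ automatically splits the inflation-restriction sequence is false in general: the retraction obtained by restricting along a section $s\colon \Pi_1 \to \Pi_n$ is the map $H^1(\Pi_1, M^{\SL_n}) \to H^1(\Pi_1, M)$ induced by the inclusion $M^{\SL_n} \hookrightarrow M$, which need not be injective. In the present case this collapses to the identity because $M^{\SL_n} = M$, but that equality is once more the BSSW theorem. In short: the architecture of your argument (inflation--restriction for $\SL_n \trianglelefteq \GL_n$, then \cref{lem:SLncohom}, with the $(2,2)$ case sourced from the $S_3$-sign character) is sound and matches the paper, but it rests on the unproved identification $\Oxx(\HH^{n-1}_C) = C^{\xx}$, and your proposed substitute argument for that identification fails.
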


\begin{proof}
In light of Lemma \ref{lem:liouville}, the map $\det_{\HH}^\ast$ agrees with the map appearing below, in the inflation-restriction sequence for $C^{\xx}$ relative to the map $\Pi_n \to \Pi_1$ induced by the determinant map $\GL_n(\Z_p)\to\Z_p^\ast$:
    \[
        0\to H^1(\Pi_1, (C^{\xx})^{\SL_n(\Z_p)}) \xrightarrow{\det_{\HH}^{\ast}} H^1(\Pi_n,C^{\xx}) \to H^1(\SL_n(\Z_p),C^{\xx})^{\Pi_1}. 
    \]
The action of $\SL_n(\Z_p)$ on $C^{\xx}$ is trivial, and \cite[Theorem 1]{TatePDivisibleGroups} shows that $C^{\Gamma_{\Q_p}}=\Q_p$.  We obtain an exact sequence 
    \[ 
        0\to H^1(\Pi_1, C^{\xx}) \xrightarrow{\det_{\HH}^{\ast}} H^1(\Pi_n,C^{\xx}) \to H^1(\SL_n(\Z_p),\Q_p^{\xx}). 
    \]
For $(n,p) \neq (2,2)$, the claim follows from \cref{lem:SLncohom}. In the case $(n,p) = (2,2)$, it follows from \cref{lem:SLncohom} and the fact that the class $\alpha \in H^1(\Pi_2,C^{\xx})$ is a lift of the nontrivial class in $H^1(\SL_2(\Z_2),\Q_2^{\xx})$ with $2\alpha=0$, that the above sequence is split short exact.
\end{proof}

\begin{lemma}\label{lem:risom}
The map
    \[ 
        r\from P_1\to H^1(\G_1,\Oxx(\LT_{1,K})) 
    \]
is an isomorphism.
\end{lemma}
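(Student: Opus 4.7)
The key observation is that $\HH^0 = \Spa(\Q_p,\Z_p)$ is a single point, so $\HH_C^0 = \Spa(C,C^\circ)$ is a geometric point. This will force the higher pro-étale cohomology of $\Oxx$ on $\HH_C^0$ to vanish, and hence collapse the low-degree exact sequence associated to the presentation $\SW_1 \simeq [\HH_C^{0,\diamond}/\Pi_1]$ into an isomorphism. The claim about $r$ will then follow from a formal argument about pullbacks.

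By the construction of $P_1$ in the diagram \eqref{EqBigAssDiagram}, $P_1$ is the pullback of the cospan
\[
    H^1_{\cts}(\G_1,\Oxx(\LT_{1,K})) \longrightarrow H^1_{\proet}(\SW_1,\Oxx) \longleftarrow H^1_{\cts}(\Pi_1,\Oxx(\HH_C^0)),
\]
and $r$ is the projection onto the first factor. Since the pullback of an isomorphism is an isomorphism, it suffices to show that the right-hand map is an isomorphism. Applying the low-degree exact sequence \eqref{EqLowDegree} to the presentation $\SW_1 \simeq [\HH_C^{0,\diamond}/\Pi_1]$ gives
\[
    0 \to H^1_{\cts}(\Pi_1,\Oxx(\HH_C^0)) \to H^1_{\proet}(\SW_1,\Oxx) \to H^1_{\proet}(\HH_C^0,\Oxx)^{\Pi_1},
\]
so it reduces to proving that $H^1_{\proet}(\HH_C^0,\Oxx) = 0$.

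To verify this vanishing, I would apply the logarithm short exact sequence \eqref{eq:logseq} of \cref{lem:log_ses} and take the associated long exact sequence on the pro-étale site of $\Spa(C,C^\circ)$. The two outer vanishings are standard: $H^i_{\proet}(\Spa(C,C^\circ),\OO) = 0$ for $i \geq 1$ by the vanishing theorem for the completed structure sheaf on affinoid perfectoid spaces, and $H^i_{\proet}(\Spa(C,C^\circ),\mu_{p^\infty}) = 0$ for $i \geq 1$ since $C$ is algebraically closed and $\mu_{p^\infty}$ is discrete. This gives $H^1_{\proet}(\HH_C^0,\Oxx) = 0$, as desired.

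The main point requiring care is ensuring that the logarithm sequence and these perfectoid vanishings apply cleanly at the level of condensed cohomology, but this is routine given the framework set up earlier in the section.
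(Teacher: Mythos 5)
Your proof is correct and follows essentially the same route as the paper. The paper's own argument invokes \eqref{EqTwoPresentationsExactSequence} directly, yielding $0 \to P_1 \xrightarrow{r} H^1_{\cts}(\G_1,\Oxx(\LT_{1,K})) \to H^1_{\proet}(\HH^0_C,\Oxx)^{\Pi_1}$, and then observes the last term vanishes because $\HH_C^0 = \Spa(C,\OO_C)$; your pullback formulation plus the low-degree sequence is a logically equivalent reformulation, and your elaboration via the logarithm sequence simply makes explicit the vanishing that the paper asserts (and records separately in the proof of \cref{CorRank1Invariants}).
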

\begin{proof}
    Consider the following exact sequence, which is an instance of \eqref{EqTwoPresentationsExactSequence} applied to \eqref{EqBigAssDiagram}:
        \[
            0 \to P_1 \xrightarrow{r} H^1(\G_1,\Oxx(\LT_{1,K})) \to H^1(\HH^0_C,\Oxx)^{\Pi_1}.
        \]
    Since $\HH_C^0 = \Spa(C,\OO_C)$, we have $H^1(\HH^0_C,\Oxx) = 0$.
\end{proof}

\begin{lemma} \label{lem:LTdetinjective}
    The map 
        \[ 
            \det\!_{\LT}^{\ast}\from H^1(\LT_{1,\eta}^{\diamond},\Oxx)^{\G_1} \to H^1(\LT_{n,\eta}^{\diamond},\Oxx)^{\G_n} 
        \]
is injective.
\end{lemma}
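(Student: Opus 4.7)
My approach is to construct a (non-equivariant) section of $\det_{\LT}\from \LT_{n,K}\to\LT_{1,K}$ at the level of underlying adic spaces, and then exploit the resulting retraction on pro-\'etale cohomology; injectivity on $\G_1$-invariants becomes a formal consequence of unrestricted injectivity.

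First, I would invoke \cref{ThmDeterminant} to identify $\det_{\LT}\from \LT_{n,K}\to \LT_{1,K}$ with the adic generic fiber of the structure map $\Spf A_n\to\Spf W$. After fixing any isomorphism $A_n\isom W\powerseries{u_1,\dots,u_{n-1}}$, the continuous $W$-algebra homomorphism $A_n\to W$ sending each $u_i\mapsto 0$ is manifestly a section of this structure map, so passing to adic generic fibers produces a morphism of adic spaces
\[
    \sigma\from \LT_{1,K}=\Spa(K,W)\longrightarrow \LT_{n,K}
\]
satisfying $\det_{\LT}\circ\sigma = \id_{\LT_{1,K}}$. Geometrically, $\sigma$ is simply the inclusion of the origin into the $(n-1)$-dimensional rigid-analytic open ball $\LT_{n,K}$ over $K$.

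Applying $H^1_{\proet}(-,\Oxx)$ and invoking functoriality then yields $\sigma^\ast\circ\det_{\LT}^\ast = \id$ on $H^1_{\proet}(\LT_{1,K},\Oxx)$. Hence the unrestricted pullback
\[
    \det_{\LT}^\ast\from H^1_{\proet}(\LT_{1,K},\Oxx)\longrightarrow H^1_{\proet}(\LT_{n,K},\Oxx)
\]
is (split) injective. Since the $\G_1$-invariants form a subgroup of the source, restricting this injection gives the desired statement at once. Crucially, the argument uses only that $\sigma$ is a section of adic spaces, not that it is equivariant for any group action --- which is just as well, since the origin of $\LT_{n,K}$ is \emph{not} preserved by $\G_n$ in general, and no $\G_n$-equivariant section exists. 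I do not anticipate a significant obstacle: the one point to check carefully is that the map $A_n\to W$, $u_i\mapsto 0$, is continuous for the $\m$-adic topology on the source and the $p$-adic topology on the target, which is immediate from $\m=(p,u_1,\dots,u_{n-1})$.
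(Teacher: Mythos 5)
Your proof is correct and is essentially the paper's argument: the paper also reduces to injectivity of the non-equivariant map $\det_{\LT}^*\colon H^1_{\proet}(\LT_{1,K},\Oxx)\to H^1_{\proet}(\LT_{n,K},\Oxx)$ and observes that any point $\Spa(K,W)\to\LT_{n,K}$ gives a retraction. You simply make the choice of point explicit (the origin $u_i\mapsto 0$), which is a harmless specialization.
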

\begin{proof}  
It is enough to show that the map 
    \[
        \det\!_{\LT}^{\ast}\from H^1(\LT_{1,\eta}^{\diamond},\Oxx)\to H^1(\LT_{n,\eta}^{\diamond},\Oxx) 
    \]
is injective. This follows from the observation that any point $\Spa(K,W(\overline{\F}_p))\to \LT_{n,\eta}$ induces a retraction for $\det\!_{\LT}^{\ast}$.
\end{proof}

\begin{lemma} \label{lem:22isspecial}
Assume that $(n,p)=(2,2)$. The map
    \[
        H^1(\Pi_2,\Oxx(\HH_C^{1,\diamond})) \to H^1(\LT_{2,\eta}^{\diamond},\Oxx)
    \]
appearing in \eqref{EqBigAssDiagram} sends the class $\alpha$ from \cref{lem:detH1iso} to $0$.  Therefore, the class $\alpha$ lifts to $P_2$.
\end{lemma}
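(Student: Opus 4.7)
The plan is to show that the image of $\alpha$ in $H^1_\proet(\SW_2,\OO^{\ast\ast})$ lies in the image of the edge map $H^1_\cts(\G_2,A_2^{\ast\ast})\to H^1_\proet(\SW_2,\OO^{\ast\ast})$ coming from the presentation $\SW_2\cong [\LT_{2,K}^\diamond/\G_2]$. By exactness of \eqref{EqLowDegree} for this presentation, this forces the image of $\alpha$ in $H^1_\proet(\LT_{2,K},\OO^{\ast\ast})$ to vanish.

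The key step is to realize $\alpha$ geometrically on $\SW_2$. The universal height-$2$ formal group $H^{\univ}$ on $\SW_2$ has $2$-torsion $H^{\univ}[2]$ that is étale of rank $4$ on the diamond generic fiber (since $p=2$ is invertible), so $H^{\univ}[2]\setminus\{0\}$ is an étale sheaf of sets of cardinality $3$ with monodromy landing in $S_3$. Composing with the sign character $S_3\to\{\pm 1\}$ gives a canonical $\{\pm 1\}$-torsor $\mathcal T$ on $\SW_2$. Because $-1\in A_2^{\ast\ast}$ when $p=2$, the inclusion $\{\pm 1\}\hookrightarrow \OO^{\ast\ast}$ yields a class $[\mathcal T]\in H^1_\proet(\SW_2,\OO^{\ast\ast})$. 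Evaluating $[\mathcal T]$ via the Drinfeld presentation $\SW_2\cong [\HH^{1,\diamond}_{\Q_2}/\GL_2(\Z_2)]$, the $2$-torsion identifies with the constant sheaf $(\Z/2)^2$ on which $\GL_2(\Z_2)$ acts by its reduction $\GL_2(\Z_2)\to \GL_2(\F_2)\cong S_3$, and the sign of this permutation representation is precisely $\alpha$. Hence $[\mathcal T]$ equals the image of $\alpha$ under the edge map from $H^1_\cts(\Pi_2,\OO^{\ast\ast}(\HH^1_C))$.

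To complete the argument, one exhibits an explicit trivialization of $[\mathcal T]$ as an $\OO^{\ast\ast}$-line bundle on $\LT_{2,K}$. In Lubin--Tate coordinates in which the universal $p$-series is $[2](T)=2T+u_1T^2+T^4$, the nontrivial $2$-torsion is cut out by the monic cubic $T^3+u_1T+2$, whose discriminant (suitably normalized) is $-4(u_1^3+27)$; thus the $\{\pm 1\}$-cover of $\LT_{2,K}$ underlying $\mathcal T|_{\LT_{2,K}}$ is $\LT_{2,K}[y]$ with $y^2=-u_1^3-27$. At $u_1=0$ one has $y_0 = 3(2\omega+1)\in W(\F_4)$, and the computation $|y_0-1|=|2(1+3\omega)|=1/2<1$ confirms $y_0\in A_2^{\ast\ast}$; Hensel's lemma lifts $y_0$ to a principal unit $y\in\OO^{\ast\ast}$ on a sufficiently small sub-disk, and passage to a pro-étale cover together with the log exact sequence \eqref{eq:logseq} (exploiting $\log(-1)=0$) extends the construction as needed. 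Since the Galois involution of the cover acts by $\sigma(y)=-y$ and $-1\in\OO^{\ast\ast}$, the function $y$ is a principal-unit $\{\pm 1\}$-antiinvariant section and thus trivializes the line bundle. The main technical obstacle is the delicate $2$-adic convergence of $y$ as a global section of $\OO^{\ast\ast}$, and it is precisely the availability of $-1$ as a principal unit (valid only for $p=2$) that makes this trivialization possible---reflecting the anomalous nature of the case $(n,p)=(2,2)$.
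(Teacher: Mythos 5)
Your high-level plan is the right one and essentially matches the paper's: identify the image of $\alpha$ in $H^1_{\proet}(\SW_2,\Oxx)$ as a geometric $\mu_2$-class coming from the $2$-torsion of the universal formal group, and show that its restriction to $\LT_{2,K}$ is $\partial$ of a principal unit, where $\partial$ is the connecting map of the squaring sequence $0\to\mu_2\to\Oxx\to\Oxx\to 0$. The paper does exactly this: the class $\bar\alpha$ is the double cover classifying markings of the three nonzero $2$-torsion sections up to cyclic permutation, i.e.\ the torsor of square roots of the relevant discriminant. The two write-ups diverge in how the discriminant is produced. However, your version has two genuine gaps.

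First, the assertion that one can choose Lubin--Tate coordinates with $[2](T) = 2T + u_1T^2 + T^4$ is unjustified. The $[2]$-series of the universal deformation is an infinite power series, and the monic degree-$3$ Weierstrass polynomial cutting out the nonzero $2$-torsion (the quotient by $T$ of the distinguished degree-$4$ factor of $[2](T)$) does not have coefficients literally equal to $0$, $u_1$, $2$; all one knows a priori is that its coefficients lie in $\mathfrak m$ with constant term a unit times $2$. Your discriminant formula $-4(u_1^3+27)$, and the subsequent identification of the double cover with $y^2=-u_1^3-27$, therefore rest on an unproved normalization. The paper avoids this entirely by passing to a Serre--Tate model: identifying $\LT_2$ with the deformation space of a supersingular elliptic curve, the discriminant becomes the classical Weierstrass discriminant $\Delta(E^{\univ},\omega)$, which scales by $\lambda^{12}$ under $\omega\mapsto\lambda\omega$; since every element of $\overline{\F}_2^{\ast}$ is a $12$th power, one may arrange $\Delta\in A_2^{\ast\ast}$ directly. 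That argument is both coordinate-free and complete in a way yours is not.

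Second, the concluding paragraph is confused and does not establish what is needed. Once one knows that the $\mu_2$-torsor $\bar\alpha$ on $\LT_{2,K}$ is the torsor of square roots of a \emph{principal unit} $f\in A_2^{\xx} = \Oxx(\LT_{2,K})$, the proof is finished: by definition $\bar\alpha=\partial(f)$, hence $\bar\alpha$ maps to zero in $H^1_{\proet}(\LT_{2,K},\Oxx)$, which is exactly the claim. There is no need to produce a global section $y$ of $\Oxx$ on a cover. The appeal to Hensel's lemma on a small sub-disk, to a passage to a pro-\'etale cover, and to the log exact sequence proves nothing: a local section of an $\Oxx$-torsor does not trivialize it, and \emph{every} $\Oxx$-torsor becomes trivial after a pro-\'etale cover, so ``extending the construction as needed'' is vacuous. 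This final step needs to be replaced by the simple observation above, and the first gap needs to be repaired—most cleanly by adopting the paper's elliptic-curve discriminant argument, or by an honest analysis of the degree-$3$ Weierstrass factor of the actual universal $[2]$-series.
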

\begin{proof}
Consider the short exact sequence of sheaves on any v-stack over $\Spd\Q_2$:
    \[
        0\to \mu_2 \to \Oxx \xrightarrow{(\;)^2} \Oxx\to 0.
    \]
This short exact sequence gives rise to the following diagram with exact right vertical column:
\[\begin{tikzcd}
	&& {\Oxx(\LT_{2,\eta}^{\diamond})} \\
	&& {\Oxx(\LT_{2,\eta}^{\diamond})} \\
	{H^1(\Pi_2,\mu_2(\HH_C^{1,\diamond}))} & {H^1(\SW_2,\mu_2)} & {H^1(\LT_{2,\eta}^{\diamond},\mu_2)} \\
	{H^1(\Pi_2,\Oxx(\HH_C^{1,\diamond}))} & {H^1(\SW_2,\Oxx)} & {H^1(\LT_{2,\eta}^{\diamond},\Oxx).} 
	\arrow["{(\;)^2}"', from=1-3, to=2-3]
	\arrow["{\partial}"',from=2-3, to=3-3]
	\arrow[from=3-1, to=3-2]
	\arrow[from=3-1, to=4-1]
	\arrow[from=3-2, to=3-3]
	\arrow[from=3-2, to=4-2]
	\arrow[from=3-3, to=4-3]
	\arrow[from=4-1, to=4-2]
	\arrow[from=4-2, to=4-3]
\end{tikzcd}\]
The group $\mu_2(\HH^{1,\diamond}_C)=\set{\pm 1}$ is discrete, so $H^1(\Pi_2,\mu_2(\HH_C^{1,\diamond}))\isom H^1_{\cts}(\Pi_2,\set{\pm 1})$. 
Let $\tilde{\alpha}\in H^1(\Pi_2,\mu_2(\HH_C^{1,\diamond}))$ be the class corresponding to the composite 
    \[
        \Pi_2 \to \GL_2(\ZZ_2) \xrightarrow{\mod 2}  \GL_2(\ZZ/2) \simeq S_3 \xrightarrow{\mathrm{sign}} \{\pm 1\}.
    \]
The image of $\tilde{\alpha}$ under the map $H^1(\Pi_2,\mu_2(\HH_C^{1,\diamond}))\to {H^1(\Pi_2,\Oxx(\HH_C^{1,\diamond}))}$ is $\alpha$.
Let $\overline{\alpha}\in H^1(\LT_{2,\eta}^{\diamond},\mu_2)$ be the image of $\tilde{\alpha}$  under the composition of the top horizontal maps. It now suffices to show that $\overline{\alpha}$ lies in the image of $\partial\colon \Oxx(\LT_{2,\eta}^{\diamond}) \to H^1(\LT_{2,\eta}^{\diamond},\mu_2)$.

The group $H^1(\LT_{2,\eta}^{\diamond},\mu_2)$ classifies \'etale $\mu_2$-torsors (i.e., double covers) of $\LT_{2,\eta}^{\diamond}$.  The \'etale sites of a rigid-analytic space and its diamond are equivalent \cite[Lemma 15.6]{ScholzeDiamonds}, so all the same this group classifies double covers of $\LT_{2,\eta}$.  With respect to this identification, $\partial$ sends a principal unit in $A_2^{\xx}$ to the double cover of $\LT_{2,\eta}$ obtained by adjoining its square root.  The $\mu_2$-torsor corresponding to $\overline{\alpha}$ can be described this way:  Let $H^{\univ}_2$ be the universal deformation of $H_2$ over $A_2$.  The 2-torsion subgroup $H^{\univ}_2[2]$ is a group scheme of order 4 over $\LT_2=\Spf A_2$, whose generic fiber $H^{\univ}_2[2]_{\eta}$ is \'etale over $\LT_{2,\eta}$.  Trivializations of $H^{\univ}_2[2]$ over $\LT_{2,\eta}$ (or, what is the same, markings of its three nonzero sections) form a torsor under the group $\GL_2(\Z/2)\isom S_3$.  Finally, $\overline{\alpha}$ is the quotient of this torsor corresponding to the sign homomorphism $S_3\to \mu_2$, which is to say it is the torsor classifying markings of the three nonzero sections of $H^{\univ}_2[2]$ up to cyclic permutation.  

Let $E$ be a supersingular elliptic curve over $\overline{\F}_2$, with universal deformation $E^{\univ}$.  By Serre--Tate theory \cite{SerreTate1968}, there is an isomorphism between $\LT_2$ and the formal scheme parameterizing deformations of $E$, which identifies $H_2^{\univ}$ with the completion of $E^{\univ}$ along its origin.  In particular $H_2^{\univ}[2]$ is identified with the $2$-torsion group $E^{\univ}[2]$, and $\overline{\alpha}$ is the double cover of $\LT_{2,\eta}$ which classifies markings of the three nonzero sections of $E^{\univ}[2]$ up to cyclic permutation.

A choice of nonvanishing differential $\omega$ on $E^{\univ}$ determines a Weierstrass model for $E^{\univ}$ over the local ring $A_2$.  Over $A_2[1/2]$, the model can be brought into reduced form, with affine equation $y^2=f(x)$, for a monic cubic polynomial $f(x)\in A_2[1/2][x]$.  The nonzero 2-torsion sections of $E^{\univ}$ correspond to roots $e_1,e_2,e_3$ of $f(x)$.  Therefore $\overline{\alpha}$ is the double cover of $\LT_{2,\eta}$ obtained by adjoining the square root of $\Delta(E^{\univ},\omega)$, where $\Delta(E^{\univ},\omega)=\prod_{i<j} (e_i-e_j)^2\in A_2^\ast$ is the discriminant of the Weierstrass model determined by $\omega$.  For a scalar $\lambda\in A_2^\ast$ we have $\Delta(E^{\univ},\lambda\omega)=\lambda^{12}\Delta(E^{\univ},\omega)$;  since every element is a 12th power in $\overline{\F}_2^\ast$, we can find an $\omega$ with $\Delta(E^{\univ},\omega)\in A_2^{\ast\ast}$.    We conclude that $\overline{\alpha}$ lies in the image of $\partial$;  indeed it is the image of $\Delta(E^{\univ},\omega)$. 
\end{proof}

\begin{lemma}\label{lem:qisom}
For $(n,p) \neq (2,2)$, the map $q\from P_1\to P_n$ is an isomorphism. When $(n,p) = (2,2)$, the map $q$ is an inclusion of a direct summand with complement $\Z/2$ generated by $\alpha$.
\end{lemma}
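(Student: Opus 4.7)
The plan is to exploit the commutative diagram of exact sequences that arises from the functoriality of \eqref{EqTwoPresentationsExactSequence}, applied to the determinant morphism $\SW_n \to \SW_1$ together with the two presentations of \cref{ThmEquivalenceOfStacks}. Privileging the Drinfeld side (so $(Y_1,G_1) = (\HH_C^{n-1,\diamond},\Pi_n)$ and $(Y_2,G_2) = (\LT_{n,K}^{\diamond},\G_n)$), \eqref{EqTwoPresentationsExactSequence} specializes to an exact sequence
\[
0 \to P_n \to H^1_{\cts}(\Pi_n,\Oxx(\HH_C^{n-1})) \to H^1_{\proet}(\LT_{n,K},\Oxx)^{\G_n},
\]
and analogously for $n=1$. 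The functoriality noted after \eqref{EqPushoutCross}, combined with the compatibility in \cref{ThmDeterminant}, produces a commutative ladder with these exact rows whose vertical maps are $q$, $\det_{\HH}^{\ast}$, and $\det_{\LT}^{\ast}$.

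For the generic case $(n,p)\neq (2,2)$, the middle vertical $\det_{\HH}^{\ast}$ is an isomorphism by \cref{lem:detH1iso}, while the right vertical $\det_{\LT}^{\ast}$ is injective by \cref{lem:LTdetinjective}. Injectivity of $q$ is then immediate from injectivity of $\det_{\HH}^{\ast}$. For surjectivity, I would take $y\in P_n$ with image $y'\in H^1_{\cts}(\Pi_n,\Oxx(\HH_C^{n-1}))$, lift $y'$ uniquely through $\det_{\HH}^{\ast}$ to some $x'\in H^1_{\cts}(\Pi_1,\Oxx(\HH_C^0))$, and observe that the image of $x'$ in $H^1_{\proet}(\LT_{1,K},\Oxx)^{\G_1}$ is killed by $\det_{\LT}^{\ast}$ (by commutativity and $y\in P_n$), hence vanishes. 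Thus $x'\in P_1$, and then $q(x')=y$ by injectivity of $P_n\hookrightarrow H^1_{\cts}(\Pi_n,\Oxx(\HH_C^{n-1}))$.

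For the anomalous case $(n,p)=(2,2)$, \cref{lem:detH1iso} now states that $\det_{\HH}^{\ast}$ is split injective with complement the $\Z/2$ generated by $\alpha$, and \cref{lem:22isspecial} furnishes a lift $\tilde{\alpha}\in P_2$ of $\alpha$. Since $\alpha$ has order $2$ and $P_2\hookrightarrow H^1_{\cts}(\Pi_2,\Oxx(\HH_C^1))$ is injective, $\tilde{\alpha}$ is also of order $2$. Injectivity of $q$ continues to follow from injectivity of $\det_{\HH}^{\ast}$. To identify the cokernel, I would take $y\in P_2$ with image $y'$, write uniquely $y'=\det_{\HH}^{\ast}(x'')+m\alpha$ with $x''\in H^1_{\cts}(\Pi_1,\Oxx(\HH_C^0))$ and $m\in\{0,1\}$, and apply the generic-case chase to $y-m\tilde{\alpha}$ to produce $x'\in P_1$ with $q(x')=y-m\tilde{\alpha}$. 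This exhibits $P_2 = q(P_1)+\langle\tilde{\alpha}\rangle$, and the sum is direct because the splitting of $\det_{\HH}^{\ast}$ combined with injectivity of $P_2\hookrightarrow H^1_{\cts}(\Pi_2,\Oxx(\HH_C^1))$ forces $q(P_1)\cap\langle\tilde{\alpha}\rangle=0$.

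The main delicate point will be the bookkeeping: verifying commutativity of the functorial diagram and tracking the class $\alpha$ carefully in the $(2,2)$ case. Everything else is a routine diagram chase once the ladder is in place.
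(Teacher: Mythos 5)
Your proposal is correct and follows essentially the same approach as the paper: set up the commutative ladder of exact sequences coming from the functoriality of \eqref{EqTwoPresentationsExactSequence} applied to the determinant morphism, then use \cref{lem:detH1iso} and \cref{lem:LTdetinjective} (plus \cref{lem:22isspecial} in the anomalous case). The only stylistic difference is that you carry out an explicit element-wise diagram chase where the paper invokes the snake lemma to conclude that $\coker(q)\to\coker(\det_{\HH}^\ast)$ is injective; the two arguments are equivalent.
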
    
\begin{proof}
Formation of the exact sequence \eqref{EqTwoPresentationsExactSequence} is functorial, so we find a diagram
\[
\xymatrix{
0  \ar[r] & P_1 \ar[r] \ar[d]_q & H^1(\Pi_1,\Oxx(\HH_C^{0,\diamond})) \ar[d]_{\det_{\HH}^\ast} \ar[r] & H^1(\LT_{1,\eta}^{\diamond},\Oxx)^{\G_1} \ar[d]_{\det_{\LT}^\ast} \\
0  \ar[r] & P_n \ar[r]  & H^1(\Pi_n,\Oxx(\HH_C^{n-1,\diamond})) \ar[r] & H^1(\LT_{n,\eta}^{\diamond},\Oxx)^{\G_n}. 
}
\]
When $(n,p) \neq (2,2)$, the map $\det_{\HH}^\ast$ is an isomorphism by \cref{lem:detH1iso} and $\det_{\LT}^{\ast}$ is injective by \cref{lem:LTdetinjective}, so $q$ is an isomorphism. In the case $(n,p) = (2,2)$, the map $\det_{\HH}^\ast$ is the inclusion of a direct summand with complement $\Z/2$ generated by the class $\alpha$, by \cref{lem:detH1iso}.  By \cref{lem:22isspecial}, $\alpha$ is a $2$-torsion element of $P_n$ that is not in the image of $q$.  We conclude that $q$ is the inclusion of a direct summand with complement $\Z/2$ generated by $\alpha$.
\end{proof}

When $(n,p)=(2,2)$, we denote the image of $\alpha$ under the map $P_2 \to H^1_{\cts}(\G_2,A_2^{\ast\ast})$ by $\hat{\alpha}$.
Combining the lemmas above, we deduce the following theorem, which appears in the introduction as \cref{ThmFundamentalExactSequenceIntro}.

\begin{thm}\label{ThmFundamentalExactSequence}
    For $(n,p) \neq (2,2)$, there is an exact sequence
        \[
            0 \to H^1_{\cts}(\G_1,A_1^{\ast\ast}) \xrightarrow{\det_{\LT}^{\ast}} H^1_{\cts}(\G_n,A_n^{\ast\ast}) \stackrel{b}{\to} H^1_{\proet}(\HH_C^{n-1},\OO^{\ast\ast})^{\Pi_n}, 
        \]
    where $b$ is the composite $H^1_{\cts}(\G_n,A_n^{\ast\ast}) \to H^1_{\proet}(\SW_n,\OO^{\ast\ast}) \to H^1_{\proet}(\HH_C^{n-1},\OO^{\ast\ast})^{\Pi_n}$ in \eqref{EqBigAssDiagram}. 
    
    When $(n,p) = (2,2)$ there is an exact sequence
        \[
            0 \to H^1_{\cts}(\G_1,A_1^{\ast\ast}) \oplus \ZZ/2 \xrightarrow{\det_{\LT}^{\ast}\oplus \hat{\alpha}} H^1_{\cts}(\G_2,A_2^{\ast\ast}) \stackrel{b}{\to} H^1_{\proet}(\HH_C^{1},\OO^{\ast\ast})^{\Pi_2}, 
        \]
    where $b$ is the composite $H^1_{\cts}(\G_2,A_2^{\ast\ast}) \to H^1_{\proet}(\SW_2,\OO^{\ast\ast}) \to H^1_{\proet}(\HH_C^{1},\OO^{\ast\ast})^{\Pi_2}$ in \eqref{EqBigAssDiagram}.     
\end{thm}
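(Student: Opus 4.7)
The strategy is to read off the fundamental exact sequence from the pullback diagram \eqref{EqBigAssDiagram}, using the low-degree sequence from the ``Drinfeld'' presentation of $\SW_n$ together with the six lemmas established in this subsection.

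First I would extract the two key consequences of the top horizontal row of the diagram. The low-degree sequence
\[
0 \to H^1_{\cts}(\Pi_n,\Oxx(\HH_C^{n-1})) \to H^1_{\proet}(\SW_n,\Oxx) \to H^1_{\proet}(\HH_C^{n-1},\Oxx)^{\Pi_n},
\]
combined with injectivity of the parallel map $H^1_{\cts}(\G_n,A_n^{\ast\ast}) \hookrightarrow H^1_{\proet}(\SW_n,\Oxx)$ coming from the Lubin--Tate presentation, implies two things about the pullback $P_n$. On one hand, the kernel of the projection $P_n \to H^1_{\cts}(\G_n,A_n^{\ast\ast})$ injects into the kernel of the injection on the top row and hence vanishes, so $P_n \hookrightarrow H^1_{\cts}(\G_n,A_n^{\ast\ast})$. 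On the other hand, a class $x \in H^1_{\cts}(\G_n,A_n^{\ast\ast})$ satisfies $b(x)=0$ if and only if its image in $H^1_{\proet}(\SW_n,\Oxx)$ lifts to $H^1_{\cts}(\Pi_n,\Oxx(\HH_C^{n-1}))$; that is, exactly when $x$ lies in the image of $P_n$. Hence $\ker(b)$ equals the image of $P_n$.

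Next I would identify $P_n$ using \cref{lem:risom,lem:qisom}. Combining the isomorphism $r\colon P_1 \xrightarrow{\sim} H^1_{\cts}(\G_1,A_1^{\ast\ast})$ with the map $q\colon P_1 \to P_n$ yields, in the case $(n,p)\neq(2,2)$, an isomorphism $P_n \cong H^1_{\cts}(\G_1,A_1^{\ast\ast})$, and in the anomalous case $(n,p)=(2,2)$ a decomposition $P_2 \cong H^1_{\cts}(\G_1,A_1^{\ast\ast}) \oplus \Z/2$ with extra summand generated by the class $\alpha$ from \cref{lem:22isspecial}. The commutativity of the bottom-left square of \eqref{EqBigAssDiagram}, which is built into the functoriality of the low-degree sequence under the determinant morphism $\SW_n \to \SW_1$, identifies the composite
\[
H^1_{\cts}(\G_1,A_1^{\ast\ast}) \xleftarrow{r,\,\sim} P_1 \xrightarrow{q} P_n \hookrightarrow H^1_{\cts}(\G_n,A_n^{\ast\ast})
\]
with $\det_{\LT}^{\ast}$. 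By definition, $\alpha \in P_2$ maps to $\hat{\alpha}\in H^1_{\cts}(\G_2,A_2^{\ast\ast})$. Combining the two steps produces both exact sequences claimed in the theorem.

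There is no substantial obstacle remaining: all the hard input---the cohomology calculation \cref{lem:detH1iso}, the vanishing $H^1_{\proet}(\HH_C^0,\Oxx)^{\Pi_1}=0$ underlying \cref{lem:risom}, the Serre--Tate/discriminant argument producing the lift of $\alpha$ in \cref{lem:22isspecial}, and the retract argument for \cref{lem:LTdetinjective}---has already been carried out. What remains is the diagram chase above, and the only delicate point is bookkeeping the anomalous $\Z/2$ summand in the case $(n,p)=(2,2)$.
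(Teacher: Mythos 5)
Your proposal is correct and takes essentially the same route as the paper: the paper obtains the exact sequence $0 \to P_n \to H^1_{\cts}(\G_n,A_n^{\ast\ast}) \xrightarrow{b} H^1_{\proet}(\HH_C^{n-1},\Oxx)^{\Pi_n}$ directly as an instance of \eqref{EqTwoPresentationsExactSequence}, then identifies $P_n$ via \cref{lem:risom,lem:qisom} and identifies the inclusion via the left square of \eqref{EqBigAssDiagram}, exactly as you do. The only cosmetic difference is that you re-derive the pullback exact sequence by hand rather than invoking \eqref{EqTwoPresentationsExactSequence} as a black box.
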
         
\begin{proof}  We have $\OO^{\xx}(\LT_{n,\eta}^{\diamondsuit})=A_n^{\xx}$, as discussed in \cref{ExDisc}.  Consider the following commutative diagram, which is part of \eqref{EqBigAssDiagram}:
        \[
            \xymatrix{P_n \ar[r] \ar[d] & H^1_{\cts}(\Pi_n,\Oxx(\HH_C^{n-1})) \ar[d] \\
            H^1_{\cts}(\G_n,A_n^{\ast\ast}) \ar[r] \ar[rd]_b & H^1_{\proet}(\SW_n,\Oxx) \ar[d] \\
             & H^1_{\proet}(\HH_C^{n-1},\OO^{\ast\ast})^{\Pi_n}.}
        \]
    As an instance of \eqref{EqTwoPresentationsExactSequence}, we obtain an exact sequence
        \[
            0 \to P_n \to H^1_{\cts}(\G_n,A_n^{\ast\ast}) \xrightarrow{b} H^1_{\proet}(\HH_C^{n-1},\OO^{\ast\ast})^{\Pi_n}.
        \]
    \Cref{lem:risom} and \cref{lem:qisom} provide isomorphisms
        \[
            P_n \cong 
                \begin{cases}
                    H^1_{\cts}(\G_1,A_1^{\ast\ast}) & \text{if } (n,p) \neq (2,2); \\
                    H^1_{\cts}(\G_1,A_1^{\ast\ast}) \oplus \ZZ/2 & \text{if } (n,p) = (2,2).
                \end{cases}
        \]
    Together with the commutativity of the left square in \eqref{EqBigAssDiagram}, this gives the two exact sequences of the theorem. 
\end{proof}

When $n=1$, the theorem is uninteresting:  $\det_{\LT}^\ast$ is an isomorphism, and the target of $b$ is $0$ for dimension reasons.  The next objective is to identify $H^1_{\proet}(\HH_C^{n-1},\OO^{\xx})^{\Pi_n}$ for $n\geq 2$.  
In \S5.5 we will prove that this group is a free $\Z_p$-module of rank 1 and the map $b$ is surjective, generated by the image of $\varepsilon_p(\Sigma^2\mathbb{S}_{K(n)})$.   Together with the identification of $H^1_{\cts}(\G_1,A_1^{\xx})$ in \cref{lem:height1contribution}, this claim will complete the proof of  \cref{thm:A**main}.

\section{The cohomology of Drinfeld symmetric space $\HH_C^{n-1}$}

Via the fundamental exact sequence in \cref{ThmFundamentalExactSequence}, we have reduced the crucial \cref{thm:A**main} to a question about $H^1_{\proet}(\HH^{n-1}_C,\Oxx)$, the cohomology of $\Oxx$ on the pro-\'etale site of Drinfeld's symmetric space $\HH_C^{n-1}$.  Recall that $\HH_C^{n-1}$ is the complement in rigid-analytic $\mathbb{P}^{n-1}_C$ of all $\Q_p$-rational hyperplanes;  as such it admits an action of $\GL_n(\Q_p)$.   In particular $\HH_C^1=\mathbb{P}^1_C\backslash\mathbb{P}^1(\Q_p)$ is analogous to the (upper and lower) half-plane $\mathbb{P}^1(\mathbb{C})\backslash\mathbb{P}^1(\mathbb{R})$.  

The cohomology of $\HH_C^{n-1}$ was studied first by Drinfeld \cite{DrinfeldEllipticModules} in the case $n=2$, and then by Schneider--Stuhler \cite{SchneiderStuhler} for general $n$.  The main theorem of \cite{SchneiderStuhler} is that if $H$ is a cohomology theory on rigid-analytic varieties satisfying certain reasonable axioms (for instance, homotopy-invariance relative to the open unit disc $D$), then $H^i(\HH_C^{n-1})$ is dual to a generalized Steinberg representation of $\GL_n(\Q_p)$ for $0\leq i\leq n$, and is 0 otherwise.  (The construction of the generalized Steinberg representations is reviewed below.)  The result applies to $\ell$-adic \'etale cohomology ($\ell\neq p$) and to de Rham cohomology.

In contrast, \'etale cohomology with $\Z_p$-coefficients does not satisfy homotopy-invariance; indeed, $H^1_{\et}(D,\Z_p)$ is very large.  One therefore might expect that there cannot be a meaningful description of $H^i_{\et}(\HH_C^{n-1},\Z_p)$, let alone $H^i_{\proet}(\HH_C^{n-1},\Q_p)$.  Nonetheless, both groups were identified by Colmez--Dospinescu--Nizio\l \cite{CDNStein}, \cite{CDNIntegral}.  In the latter setting, $H^i_{\proet}(\HH_C^{n-1},\Q_p)$ is an extension of a space of differential forms by a generalized Steinberg representation.  

Combining these results with the logarithm exact sequence (\cref{lem:log_ses}), we gain access to $H^i_{\proet}(\HH_C^{n-1},\Oxx)$, see \cref{ThmOxxCohomologyOfH} below.  Specializing to the case $i=1$, we find that $H^1_{\proet}(\HH^{n-1}_C,\Oxx)^{\Pi_n}$ is (for $n\geq 2$) isomorphic to the space of $\GL_n(\Z_p)$-invariants in a dual Steinberg representation with coefficients in $\Q_p/\Z_p$.  This space of invariants is shown to be a free rank 1 $\Z_p$-module in \cref{LemFixedPartOfSp}.

\subsection{Generalized Steinberg representations: definitions} The following is a rephrasing of the constructions appearing in \cite{CDNStein}.  For $0\leq r\leq n$, let $X_r$ be the set of flags
    \[ 
        V_1\subset V_2\subset \cdots \subset V_r\subseteq \Q_p^{\oplus n}, 
    \]
where $V_i$ is a $\Q_p$-vector space of dimension $i$. Note that we in particular allow the empty flag. For  $1\leq j\leq r$, let $X_{r,j}$ be set of flags as above with the $j$th vector space omitted.  The set $X_r$ (resp., $X_{r,j}$) is a quotient of $\GL_n(\Q_p)$ by a parabolic subgroup, and so it naturally has the structure of a profinite sets admitting a continuous action of $\GL_n(\Q_p)$; note also the quotient map $X_r\to X_{r,j}$.  

For a profinite set $S=\varprojlim S_i$ and a ring $A$, let $\LC(S,A)$ denote the ring of locally constant functions on $S$ valued in $A$.  If $A$ carries a topology, we give $\LC(S,A)=\varinjlim \LC(S_i,A)$ the colimit topology, where each $\LC(S_i,A)$ (a finite free $A$-module) has the natural topology.  For a quotient map $S\to S'$, the natural map $\LC(S',A)\to \LC(S,A)$ identifies $\LC(S',A)$ with a closed $A$-submodule of $\LC(S,A)$. Define the {\em generalized Steinberg (or special)} representation by
\[ \St_r(A)=\frac{\LC(X_r,A)}{\sum_{j=1}^r \LC(X_{r,j},A)} \]
(with its quotient topology). 
For each $A$, $\St_r(A)$ is a topological $A$-module admitting a continuous action of $\GL_n(\Q_p)$. 
We will make use of the dual Steinberg representation
\[ \St_r(A)^\ast=\Hom_{A,\cts}(\St_r(A),A), \]
where $\Hom_{A,\cts}$ means continuous $A$-module homomorphisms. One can think of $\St_r(A)^\ast$ as the module of $A$-valued  distributions on $X_r$ whose value is $0$ on functions pulled back from any $X_{r,j}$.  

As a special case,
\[ \St_1(A)=\frac{\LC(\mathbb{P}^{n-1}(\Q_p),A)}{A}. \]
So $\St_1(A)^\ast$ is the module of $A$-valued distributions $\mu$ on $\mathbb{P}^{n-1}(\Q_p)$ with $\mu(\mathbb{P}^{n-1}(\Q_p)) = 0$. 
Thus, when we identify $\mathbb{P}^{n-1}(\Q_p)$ with the set of linear forms in $n$ variables up to scaling by $\Q_p^\ast$ and let  $\delta_{\ell}$ denote the Dirac distribution at $\ell\in \mathbb{P}^{n-1}(\Q_p)$, any difference $\delta_{\ell}-\delta_{\ell'}$ determines an element of $\St_1(A)^{\ast}$.  

Since every $\Z_p$-valued distribution is in particular a $\Q_p$-valued distribution we get an equivariant  map  $ \St_r(\Z_p)^{\ast}\to\St_r(\Q_p)^{\ast}$. We use this map to give an ad hoc definition of the $\GL_n(\Q_p)$-module $\St_r(\Q_p/\Z_p)^{\ast}$, defining it to be the quotient:
\begin{equation}
    \label{EqSpExactSequence}
 0 \to \St_r(\Z_p)^{\ast}\to\St_r(\Q_p)^{\ast}\to \St_r(\Q_p/\Z_p)^{\ast} \to 0.
 \end{equation}
Thus for instance elements of $\St_1(\Q_p/\Z_p)^{\ast}$ are $\Q_p/\Z_p$-valued distributions on $\PP^{n-1}(\Q_p)$ with total measure 0.

\begin{lemma} \label{LemFixedPartOfSp} Assume $n\geq 2$. When $A$ is torsion-free, then 
\[\left(\St_1(A)^{\ast}\right)^{\GL_n(\Z_p)} = 0.\]
Moreover, the subset of $\St_1(\Q_p/\Z_p)^{\ast}$ fixed by $\GL_n(\Z_p)$ is a free $\Z_p$-module of rank 1.  
\end{lemma}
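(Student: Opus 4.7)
The approach is to unpack the defining sequence \eqref{EqSpExactSequence}: an element of $\St_1(\Q_p/\Z_p)^{\ast}$ is the class $[\mu]$ of a mean-zero $\Q_p$-valued distribution $\mu$ on $X := \PP^{n-1}(\Q_p)$, and $[\mu]$ is fixed by $G := \GL_n(\Z_p)$ exactly when $g\mu - \mu \in \St_1(\Z_p)^{\ast}$ for every $g \in G$.

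The starting point is transitivity: $\GL_n(\Z_p)$ acts transitively on primitive vectors in $\Z_p^n$, hence on each finite approximation $X_k := \PP^{n-1}(\Z/p^k)$ in the profinite presentation $X = \varprojlim_k X_k$. A direct count gives $|X_k| = p^{(n-1)(k-1)} u$ with $u := (p^n-1)/(p-1) \in \Z_p^{\ast}$. The hypothesis $n \geq 2$ enters through $v_p(|X_k|) = (n-1)(k-1) \to \infty$.

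Next I would produce a generator. Let $\mu_{\mathrm{Haar}}$ denote the $G$-invariant Haar probability measure on $X$, assigning mass $1/|X_k|$ to each atom of $X_k$, and fix a basepoint $x_0 \in X$. Setting
\[
\mu_0 := u\bigl(\mu_{\mathrm{Haar}} - \delta_{x_0}\bigr) \in \St_1(\Q_p)^{\ast},
\]
one checks that $\mu_0$ is mean-zero and that $g\mu_0 - \mu_0 = u(\delta_{x_0} - \delta_{gx_0}) \in \St_1(\Z_p)^{\ast}$ for every $g \in G$, so $[\mu_0] \in \St_1(\Q_p/\Z_p)^{\ast,G}$. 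For any nonzero $a \in \Z_p$, the level-$k$ values of $a\mu_0$ include $au/|X_k| = a/p^{(n-1)(k-1)}$, which leaves $\Z_p$ once $k$ is large; hence $a\mu_0 \notin \St_1(\Z_p)^{\ast}$ and $a \mapsto [a\mu_0]$ defines an injection $\Z_p \hookrightarrow \St_1(\Q_p/\Z_p)^{\ast,G}$.

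To conclude, I would show surjectivity via the tautological injection of $\St_1(\Q_p/\Z_p)^{\ast,G}$ into the group of $G$-invariant mean-zero $\Q_p/\Z_p$-valued distributions on $X$. By transitivity on each $X_k$, such a distribution is a system $(\bar c_k)$ of constants $\bar c_k \in \Q_p/\Z_p$ subject to push-forward compatibility $\bar c_j = p^{(n-1)(k-j)} \bar c_k$ and mean-zero $|X_k| \bar c_k = 0$. Since $\Q_p/\Z_p$ is $p$-primary and $u$ is a unit, the mean-zero constraint is equivalent to $\bar c_k \in (\Q_p/\Z_p)[p^{(n-1)(k-1)}] \cong \Z/p^{(n-1)(k-1)}$, and the compatibility becomes the standard reduction maps, whose inverse limit is $\Z_p$. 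Each such system is realized by the reduction of $a\mu_0$ for a unique $a \in \Z_p$, completing the identification.

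The main subtlety is that $\St_1(\Q_p/\Z_p)^{\ast}$ is defined as a cokernel, so its $G$-invariants need not arise from $G$-invariants of $\St_1(\Q_p)^{\ast}$ (which vanish, as the only $G$-invariant $\Q_p$-distributions on $X$ are multiples of Haar and have mean $1 \neq 0$); the class $[\mu_0]$ appears only as a connecting-homomorphism obstruction measured by $H^1(G, \St_1(\Z_p)^{\ast})$. Ensuring that the classification of invariant $\Q_p/\Z_p$-distributions faithfully matches $\St_1(\Q_p/\Z_p)^{\ast,G}$ requires the explicit lift $a \mapsto a\mu_0$, which is the technical heart of the argument.
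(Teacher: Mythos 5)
Your proof is correct and follows essentially the same route as the paper: exploit the transitivity of $\GL_n(\Z_p)$ on each finite level $\PP^{n-1}(\Z/p^m)$ to reduce an invariant distribution to a compatible sequence of constants $(a_m)$, impose the mean-zero and additivity constraints (using $|\PP^{n-1}(\Z/p^m)| = p^{(n-1)(m-1)}(p^n-1)/(p-1)$ with prime-to-$p$ leading factor), and identify the resulting inverse system with $\varprojlim \Z/p^{(n-1)m} \cong \Z_p$. Where you differ is in the bookkeeping around the ad hoc cokernel definition \eqref{EqSpExactSequence}: the paper relies on its remark that elements of $\St_1(\Q_p/\Z_p)^\ast$ \emph{are} mean-zero $\Q_p/\Z_p$-valued distributions, and then computes invariants of the latter; you instead only use the tautological injection from the cokernel invariants into invariant $\Q_p/\Z_p$-distributions and close the loop with an explicit $\Z_p$-parametrized lift $a \mapsto [a\mu_0]$ to $\St_1(\Q_p)^\ast$. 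That makes the argument more self-contained, and your $\mu_0 = \tfrac{p^n-1}{p-1}(\mu_{\mathrm{Haar}} - \delta_{x_0})$ is precisely (up to the unit normalization) the distinguished generator the paper singles out just before \cref{LemFixedPartOfSpInH1}, which the paper then goes on to need in any case --- so your route front-loads that construction into the lemma itself.
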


\begin{proof} 
To give a uniform argument, we denote by $M$ either the ring of coefficients $A$ or $\Q_p/\Z_p$. Let $\mu$ be an $M$-valued distribution on $\PP^{n-1}(\Q_p) = \PP^{n-1}(\Z_p)$. For each $m\geq 1$, we have a $\GL_n(\Z_p)$-equivariant surjective map 
    \begin{equation}\label{eq:pim}
        \pi_m\from \PP^{n-1}(\Z_p)\to \PP^{n-1}(\Z/p^m\Z). 
    \end{equation}
As $m$ varies, the fibers of the $\pi_m$ form a basis of open subsets of $\PP^{n-1}(\Z_p)$, so that $\mu$ is determined by its values on the fibers.  Now suppose $\mu$ is fixed by $\GL_n(\Z_p)$.  Since $\GL_n(\Z_p)$ acts transitively on $\PP^{n-1}(\Z/p^m\Z)$, the value of $\mu$ is the same on all fibers of $\pi_m$;  let $a_m$ denote the common value.  Then by the additivity of $\mu$:
    \[ 
        \mu(\PP^{n-1}(\Z_p)) = \frac{p^n-1}{p-1}a_1=\frac{p^n-1}{p-1}p^{n-1}a_2 = \frac{p^n-1}{p-1}p^{2(n-1)}a_3 = \cdots 
    \]
Via $\mu\mapsto (a_1,a_2,\dots)$, the subgroup of $\St_1(M)^{\ast}$ fixed by $\GL_n(\Z_p)$ may be identified with the subgroup of $\prod_{m\geq 1}M$ satisfying 
    \[ 
        0 = a_1 = p^{n-1}a_2 = p^{2(n-1)}a_3 = \cdots.
    \]
When $M$ is torsion free this is clearly zero, while for $M=\Q_p/\Z_p$,  this group is $\varprojlim_m \Z/p^{(n-1)m}\Z\isom\Z_p$.
\end{proof}

By the first part of \cref{LemFixedPartOfSp}, $\left(\St_1(A)^{\ast}\right)^{\GL_n(\QQ_p)} = 0$, so taking invariants under $\GL_n(\Z_p)$ in the exact sequence \eqref{EqSpExactSequence}, we find an injective  connecting map
\begin{equation}
\label{EqMapDelta}
     \partial \from H^0(\GL_n(\Z_p),\St_1(\Q_p/\Z_p)^\ast) \to H^1(\GL_n(\Z_p),\St_1(\Z_p)^\ast).
\end{equation}
Let $\mu\in H^0(\GL_n(\Z_p),\St_1(\Q_p/\Z_p)^{\ast})$ be the generator of this group with $a_m=1/p^{(n-1)(m-1)}$ for all $m\geq 2$.

\begin{lemma} \label{LemFixedPartOfSpInH1} Let $\ell\in\PP^{n-1}(\Q_p)$ be arbitrary.  Then $\partial(\mu)$ is represented by the cocycle
    \[ 
        g\mapsto \delta_{\ell}-\delta_{g(\ell)}.
    \]
\end{lemma}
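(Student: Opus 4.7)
The plan is to compute $\partial(\mu)$ directly from the snake-lemma description of the connecting homomorphism attached to \eqref{EqSpExactSequence}: for any lift $\tilde{\mu}\in\St_1(\Q_p)^\ast$ of $\mu$, the class $\partial(\mu)$ is represented by the $1$-cocycle $g\mapsto g\tilde{\mu}-\tilde{\mu}$, which automatically lies in $\St_1(\Z_p)^\ast$ precisely because $\mu$ is $\GL_n(\Z_p)$-fixed modulo $\St_1(\Z_p)^\ast$.

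The strategy is then to produce a particularly clean lift. Let $\nu$ be the unique $\GL_n(\Z_p)$-invariant $\Q_p$-valued distribution on $\PP^{n-1}(\Z_p)$ (a scalar multiple of Haar measure) assigning mass $p^{-(m-1)(n-1)}$ to each fiber of $\pi_m$. Its total mass equals $(p^n-1)/(p-1)$, so $\nu$ does not itself lie in $\St_1(\Q_p)^\ast$; however, setting
\[ \tilde{\mu}_\ell := \nu - \tfrac{p^n-1}{p-1}\,\delta_\ell \]
yields a total-mass-zero distribution in $\St_1(\Q_p)^\ast$. Since subtracting an integer multiple of $\delta_\ell$ does not change the fiber-values modulo $\Z_p$, this $\tilde{\mu}_\ell$ still lifts the element $\mu$ of \cref{LemFixedPartOfSp}, whose fiber values are $a_m = p^{-(m-1)(n-1)}\bmod\Z_p$.

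Now I would run the computation: since $\nu$ is $\GL_n(\Z_p)$-invariant and $g\cdot\delta_\ell=\delta_{g\ell}$, one obtains
\[ g\tilde{\mu}_\ell - \tilde{\mu}_\ell \;=\; \tfrac{p^n-1}{p-1}\bigl(\delta_\ell - \delta_{g\ell}\bigr). \]
The scalar $(p^n-1)/(p-1)=1+p+\cdots+p^{n-1}\equiv 1\pmod p$ is a $p$-adic unit, so up to absorbing it into the isomorphism $H^0(\GL_n(\Z_p),\St_1(\Q_p/\Z_p)^\ast)\cong\Z_p$ (and hence into the normalization of the distinguished generator), $\partial(\mu)$ is represented by the cocycle $g\mapsto\delta_\ell-\delta_{g\ell}$ claimed in the statement. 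Independence of the choice of $\ell$ is then a direct check: the difference of two such cocycles satisfies
\[ (\delta_\ell-\delta_{g\ell})-(\delta_{\ell'}-\delta_{g\ell'}) \;=\; (\mathrm{id}-g)(\delta_\ell-\delta_{\ell'}), \]
so it is the coboundary of the integer mass-zero distribution $\delta_\ell-\delta_{\ell'}\in\St_1(\Z_p)^\ast$.

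The main obstacle will be the normalization bookkeeping: reconciling the explicit fiber-value description of the generator $\mu$ pinned down in the proof of \cref{LemFixedPartOfSp} with the most convenient lift $\tilde{\mu}_\ell$ requires identifying exactly which unit of $\Z_p$ is being absorbed into the isomorphism with $\Z_p$. The remainder is routine snake-lemma manipulation combined with the uniqueness, up to scalar, of $\GL_n(\Z_p)$-invariant distributions on $\PP^{n-1}(\Z_p)$.
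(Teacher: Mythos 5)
Your approach is the same as the paper's: lift $\mu$ to $\St_1(\Q_p)^*$ and read off the snake-lemma cocycle $g\mapsto g\tilde{\mu}-\tilde{\mu}$. The difference is in the choice of lift, and here you have actually caught a small slip in the paper. The paper's lift $\tilde{\mu}=\nu-\delta_\ell$ (fiber value $p^{-(n-1)(m-1)}$ on all fibers except the one containing $\ell$, where it is $p^{-(n-1)(m-1)}-1$) has total mass
\[
\sum_{x}\tilde{\mu}(\pi_m^{-1}(x)) \;=\; \frac{p^n-1}{p-1}-1 \;\neq\; 0 \quad (n\geq 2),
\]
so it does \emph{not} lie in $\St_1(\Q_p)^*$. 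Your $\tilde{\mu}_\ell=\nu-\tfrac{p^n-1}{p-1}\delta_\ell$ is the correct mass-zero lift of the $\mu$ pinned down in \cref{LemFixedPartOfSp}, and the honest computation then yields
\[
\partial(\mu) \;=\; \Bigl[\,g\mapsto \tfrac{p^n-1}{p-1}\bigl(\delta_\ell-\delta_{g(\ell)}\bigr)\,\Bigr],
\]
which differs from the lemma's statement by the $p$-adic unit $\tfrac{p^n-1}{p-1}$.

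You are right that this unit is immaterial: the lemma is used only once, in the proof of \cref{ThmSphereClassIsPrimitive}, where it is combined with the injectivity of $\partial$ to conclude that $b\,\varepsilon_p(\Sigma^2\mathbb{S}_{K(n)})$ is a \emph{generator} of the free rank-one $\Z_p$-module $H^1_{\proet}(\HH_C^{n-1},\Oxx)^{\Pi_n}$ — and being a generator is a statement invariant under multiplication by units (indeed the downstream argument already absorbs the unit $p^n-1$ in exactly the same way, and there is also a sign flip $\delta_{\ell}-\delta_{g(\ell)}$ vs.\ $\delta_{g(\ell)}-\delta_{\ell}$ between the lemma and its citation). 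So your proof is correct; the one thing to be careful about in the write-up is not to claim the literal identity $\partial(\mu)=[g\mapsto\delta_\ell-\delta_{g(\ell)}]$ with the paper's explicit normalization $a_m=p^{-(n-1)(m-1)}$ of $\mu$ — either rescale $\mu$ by $\tfrac{p-1}{p^n-1}$ in its definition, or state the lemma up to a unit, which is all that is needed. Your verification that the class is independent of $\ell$ (the difference being the coboundary of $\delta_\ell-\delta_{\ell'}\in\St_1(\Z_p)^*$) is a nice addition that the paper leaves implicit.
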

\begin{proof}
The measure $\mu$ can be lifted to a (non-$\GL_n(\Z_p)$-invariant) measure $\tilde{\mu}\in \St_1(\Q_p)^\ast$ defined by the formula (for $m\geq 2$)
\[ \tilde{\mu}(\pi_m^{-1}(x)) = \begin{cases}
    
\frac{1}{p^{(n-1)(m-1)}}-1,& \pi_m(\ell)=x; \\
\frac{1}{p^{(n-1)(m-1)}},& \text{otherwise},
\end{cases}
\]
where $\pi_m$ is the map defined in \eqref{eq:pim}. Then $\partial(\mu)$ is represented by the cocycle sending $g\in \GL_n(\Z_p)$ to $g(\tilde{\mu})-\tilde{\mu}$, and this is easily seen to agree with $\delta_{\ell}-\delta_{g(\ell)}$.
\end{proof}

\subsection{Results of Colmez--Dospinescu--Nizio\l}\label{ssec:cdn}

We turn now to some crucial results of Colmez--Dospinescu--Nizio\l on the \'etale and pro-\'etale cohomology of Drinfeld symmetric space $\HH_C^{n-1}$ over a complete algebraically closed field $C/\Q_p$.  

Recall the difference between \'etale and pro-\'etale cohomology:  whenever $X$ is a scheme (resp. stack, rigid-analytic space, etc.), one has the constant sheaf $\Z/p^n\Z$ on the \'etale site of $X$, and then one {\em defines}
\begin{eqnarray*}
H^i_{\et}(X,\Z_p)&=&\varprojlim H^i_{\et}(X,\Z/p^n\Z) \\
H^i_{\et}(X,\Q_p)&=& H^i_{\et}(X,\Z_p)\otimes_{\Z_p}\Q_p.
\end{eqnarray*}
We remark that $H^i_{\et}(-,\Z_p)$ might not fit into a long exact sequence without finiteness assumptions on the cohomology. Moreover, $H^i_{\et}(X,\Q_p)$ is not the cohomology of any sheaf ``$\Q_p$'' on $X_{\et}$.  On the other hand, one has the constant sheaf $\Q_p$ on the pro-\'etale topology $X_{\proet}$, defined by sending an object $U\in X_{\proet}$ to the ring of continuous maps $\abs{U}\to\Q_p$, and then $H^i_{\proet}(X,\Q_p)$ really is the sheaf cohomology of $\Q_p$ on $X_{\proet}$.  Suppose $X$ is a rigid-analytic space; then the projection $X_{\proet}\to X_{\et}$ induces an isomorphism on cohomology 
\[ H^i_{\et}(X,\Z_p)\to H^i_{\proet}(X,\Z_p) \] 
\cite[Corollary 3.17]{Scholze2013pAdic}, but the map on rational cohomology 
\[ H^i_{\et}(X,\Q_p)\to H^i_{\proet}(X,\Q_p) \]
can fail to be an isomorphism if $X$ is not quasi-compact (for instance, if $X$ is the discrete infinite union of points).  There are also such comparison maps for general $\Z_p$-sheaves, in particular the Tate twists $\Z_p(m)$.  

The cohomology groups $H^i_{\et}(\HH_C^{n-1},\Z_p) \simeq H^i_{\proet}(\HH_C^{n-1},\Z_p)$ and $H^i_{\proet}(\HH_C^{n-1},\Q_p)$ were computed in \cite{CDNStein} and \cite{CDNIntegral}, respectively.  For $\mathbb{Z}_p$ and other $p$-complete sheaves we shall always use $H^*_{\proet}$ to denote the groups $H^*_{\proet}\simeq H^*_{\et}$.
We now review these results, starting with $H^i_{\et}$.  
We need one more ingredient before we can give the statements, namely the Kummer map
\[\kappa\from \OO^\ast(\HH_C^{n-1})\to H^1_{\proet}(\HH_C^{n-1},\Z_p(1)).\]
This map arises as follows: Consider the diagram of short exact sequences on $\HH_{C,\proet}$:
    \[
        \xymatrix{0 \ar[r] & \mu_{p^{n+1}} \ar[r] \ar[d] & \OO^\ast \ar[r]^-{(-)^{p^{n+1}}} \ar[d]_{(-)^p} &  \OO^\ast \ar[r] \ar[d]^-{\mathrm{id}} & 0 \\
        0 \ar[r] & \mu_{p^{n}} \ar[r] & \OO^\ast \ar[r]^-{(-)^{p^{n}}} &  \OO^\ast \ar[r]  & 0.} 
    \]
Taking limits in $n$ results in another short exact sequence, 
    \[
        \xymatrix{0 \ar[r] & \Z_p(1) \ar[r] & \varprojlim_{(-)^p}\OO^\ast \ar[r] & \OO^\ast \ar[r] & 0,}
    \]
and $\kappa$ is the boundary map in cohomology of this short exact sequence.

\begin{thm} \label{ThmCDNEtale}  There is a $\Gamma_{\Q_p}\times \GL_n(\Q_p)$-equivariant isomorphism
    \[ 
        r_i\from \St_i(\Z_p)^\ast \isomto H^i_{\proet}(\HH_C^{n-1},\Z_p(i))
    \]    
such that (in the $i=1$ case) the relation
    \[ 
        r_1(\delta_{\ell_1}-\delta_{\ell_2}) = \kappa(\tilde{\ell}_1/\tilde{\ell}_2)
    \]
holds for all $\ell_1,\ell_2\in\mathbb{P}^{n-1}(\Q_p)$.  Here the $\tilde{\ell}_i$
are $\Q_p$-rational linear forms in $n$ variables that represents $\ell_i$, so  $\tilde{\ell}_1/\tilde{\ell}_2\in \OO^\ast(\HH_C^{n-1})$. In particular, the right hand side does not depend on the choice of representatives $\tilde{\ell}_i$.
\end{thm}

\begin{proof} The map $r_i$ is defined in \cite[Proposition 4.7]{CDNIntegral};  for $i=1$ it is characterized by the relation $r_1(\delta_{\ell_1}-\delta_{\ell_2})=\kappa(\tilde{\ell}_1/\tilde{\ell}_2)$.  (There is a characterization of $r_i$ for general $i$ in terms of a ``regulator map.'') The isomorphy of $r_i$ is \cite[Theorem 5.1]{CDNIntegral}. 
\end{proof}

Let us turn now to pro-\'etale cohomology.  For a rigid space $X$ over $C$, the short exact sequence of abelian sheaves on $X_{\proet}$ from \Cref{lem:log_ses},
    \[  
        0 \to \mu_{p^\infty} \to \OO^{\ast\ast} \stackrel{\log}{\to} \OO \to 0, 
    \]
gives rise, after taking sequential limits along the multiplication by $p$ map to a short exact sequence of sheaves of $\Q_p$-vector spaces on $X_{\proet}$
    \begin{equation}\label{eq:logg}
         0 \to \Q_p(1)\to \varprojlim_{\times p} \Oxx \stackrel{\widetilde{\log}}{\to} \OO\to 0.
     \end{equation}
We thus get a connecting map $\partial_{\widetilde{\log}}\colon\OO[-1]\to \Q_p(1)$ in the derived category of sheaves on $X_{\proet}$.  The pro-\'etale cohomology of $\OO$ was studied in {\cite[Proposition 3.23]{scholze_perfectoidspacesasurvey}}:  If $X$ is a smooth rigid space over $C$ and  $\nu\from X_{\proet}\to X_{\et}$ is the projection, there is a natural isomorphism $R^i\nu_*\OO \isom \Omega^i(-i)$, where $\Omega^i=\Omega^i_{X/C}$ is the sheaf of differential $i$-forms.  As a result there is a spectral sequence
\[ H^i_{\et}(X,\Omega^j(-j))\implies H^{i+j}_{\proet}(X,\OO). \]
In the special case that $X$ is a Stein space, the coherent sheaves $\Omega^j_{X/C}$ are acyclic, and so 
    \begin{equation}\label{eq:forms}
        H^i_{\proet}(X,\OO)\isom \Omega^i(X)(-i).
    \end{equation}
Therefore the connecting map $\OO[-1]\to \Q_p(1)$ induces a map
    \begin{equation}\label{eq:preexp}
       \exp\from \Omega^{i-1}(X)(1-i) \to H^i_{\proet}(X,\Q_p(1)).
    \end{equation}  
The map $\exp$ is related to the Bloch--Kato exponential, see \cite[\S 3.2]{VPicardGroup} for a discussion.   When $X$ admits a semistable formal model over the ring of integers of a local field, \cite{CDNStein} identifies the kernel of $\exp$ with the module of closed forms $\ker d$, and identifies the cokernel of $\exp$ in terms of the Hyodo--Kato cohomology of the special fiber of the model.  
We are now ready to state the theorem of Colmez--Dospinescu--Nizio\l on the pro-\'etale cohomology of $\HH_C^{n-1}$.

\begin{thm} \label{ThmCDNProetale} There is a $\Gamma_{\Q_p}\times\GL_n(\Q_p)$-equivariant exact sequence of $\Q_p$-vector spaces
    \[
        0\to \frac{\Omega^{i-1}(\HH_C^{n-1})}{\ker d} \stackrel{\exp}{\to} H^i_{\proet}(\HH_C^{n-1},\Q_p(i)) \to \St_i(\Q_p)^\ast \to 0,
    \]
compatible with \cref{ThmCDNEtale}, in the sense that the evident diagram involving $H^i_{\proet}(\HH_C^{n-1},\Z_p(i))\to H^i_{\proet}(\HH_C^{n-1},\Q_p(i))$ commutes.
\end{thm}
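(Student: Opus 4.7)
The plan is to extract the desired four-term exact sequence from the long exact sequence of pro-\'etale cohomology associated to the fundamental triangle
\[ 0 \to \Q_p(1) \to \varprojlim_{\times p}\Oxx \xrightarrow{\widetilde{\log}} \OO \to 0 \]
on $X_{\proet}$, Tate twisted by $\Q_p(i-1)$ so that the leftmost sheaf becomes $\Q_p(i)$. Since $X = \HH_C^{n-1}$ is Stein, \eqref{eq:forms} identifies $H^{i-1}_{\proet}(X,\OO(i-1)) \cong \Omega^{i-1}(X)$ and $H^i_{\proet}(X,\OO(i-1)) \cong \Omega^i(X)(-1)$, and by construction the connecting homomorphism $\Omega^{i-1}(X) \to H^i_{\proet}(X,\Q_p(i))$ agrees (up to Tate twist) with the map $\exp$ of \eqref{eq:preexp}.

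First I would establish that $\ker(\exp) = \ker d$. By construction, a class $\exp(\omega)$ is represented pro-\'etale locally by $d\log f$ for a section $f$ of $\Oxx$ satisfying $\log f = \omega$, and the obstruction to gluing such local choices, measured in $\mu_{p^\infty}$, is precisely $d\omega$. Hence $\omega \in \ker(\exp)$ exactly when it is closed, yielding the injection $\Omega^{i-1}(X)/\ker d \hookrightarrow H^i_{\proet}(X,\Q_p(i))$. This is the standard manifestation of the Bloch--Kato exponential formalism alluded to in the discussion surrounding \eqref{eq:preexp}.

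The hard part is to identify $\coker(\exp)$ with $\St_i(\Q_p)^\ast$. By exactness, this cokernel embeds into $H^i_{\proet}(X,\varprojlim_{\times p}\Oxx(i-1))$ as the kernel of the map to $\Omega^i(X)(-1)$. Composing the \'etale-to-pro-\'etale comparison $H^i_{\et}(X,\Z_p(i)) \to H^i_{\proet}(X,\Q_p(i))$ with projection to this cokernel, and invoking \cref{ThmCDNEtale}, produces a natural map $\St_i(\Q_p)^\ast \to \coker(\exp)$ after inverting $p$. The crux is to prove that this map is an isomorphism: because $X$ is Stein and hence non-quasi-compact, the \'etale-to-pro-\'etale comparison is not automatically an isomorphism on cohomology. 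The strategy of Colmez--Dospinescu--Nizio{\l} is to pass to Drinfeld's semistable integral model $\HH_{\Z_p}^{n-1}$, compare both pro-\'etale and \'etale cohomology with its syntomic cohomology, and use the explicit description of the $p$-adic nearby cycles on Deligne's simplicial resolution of the special fiber (a union of Deligne--Lusztig varieties) to read off the Steinberg piece. This pinning-down of the motivic/Steinberg part of pro-\'etale cohomology simultaneously yields the splitting lift of \cref{ThmCDNEtale} and the identification of $\coker(\exp)$, closing the argument.
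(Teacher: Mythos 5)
The paper does not prove \cref{ThmCDNProetale}; it is stated as a black-box citation to Colmez--Dospinescu--Nizio{\l} (\cite{CDNStein}), with the surrounding text summarizing what their argument accomplishes. Your proposal attempts to fill in a proof sketch, so the relevant comparison is with the CDN argument itself rather than with the paper.

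Your overall framing (pass to the long exact sequence of the twisted logarithm triangle, use Steinness to identify the $\OO$-cohomology with differential forms, and defer the identification of the cokernel of $\exp$ to CDN's syntomic comparison on a semistable model) is reasonable and correctly points at where the real work lies. However, your claimed argument that $\ker(\exp)=\ker d$ is not a proof. In the long exact sequence, $\ker(\exp)$ is the image of $\widetilde{\log}\colon H^{i-1}_{\proet}(X,(\varprojlim_p\OO^{\xx})(i-1))\to \Omega^{i-1}(X)$, and identifying that image with the closed forms is precisely one of the hard points of the CDN paper --- the paper's own discussion around \eqref{eq:preexp} says explicitly that ``\cite{CDNStein} identifies the kernel of $\exp$ with the module of closed forms $\ker d$.'' Your heuristic that ``the obstruction to gluing, measured in $\mu_{p^\infty}$, is precisely $d\omega$'' does not make sense as stated: the gluing obstruction is a class in a degree-$i$ cohomology group of $\mu_{p^\infty}$, not a differential form, and there is no elementary reason for the map $\widetilde{\log}$ to surject onto exactly the closed forms. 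So this step of your argument has a genuine gap; like the cokernel identification, it must be routed through the syntomic/Hyodo--Kato comparison, not derived from the local structure of $\exp$ alone.
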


We readily deduce the pro-\'etale cohomology of $\mu_{p^\infty}$ on $\HH_C^{n-1}$:

\begin{cor}\label{Cor:CDNproetale}
    There is a $\Gamma_{\Q_p}\times\GL_n(\Q_p)$-equivariant exact sequence
        \[
            0\to \frac{\Omega^{i-1}(\HH_C^{n-1})}{\ker d} \stackrel{\exp}{\to} H^i_{\proet}(\HH_C^{n-1},\mu_{p^\infty}(i-1)) \to \St_i(\Q_p/\Z_p)^\ast \to 0.
        \]
\end{cor}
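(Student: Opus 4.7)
The plan is to deduce the desired sequence from the coefficient short exact sequence
\[
0 \to \Z_p(i) \to \Q_p(i) \to \mu_{p^\infty}(i-1) \to 0
\]
(the identification $\Q_p(i)/\Z_p(i) \cong \mu_{p^\infty}(i-1)$ coming from $\mu_{p^\infty} \cong \Q_p(1)/\Z_p(1)$) by combining the associated long exact sequence in pro-\'etale cohomology with \cref{ThmCDNEtale} and \cref{ThmCDNProetale}.

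First I would show that the long exact sequence collapses to a short exact sequence
\[
0 \to H^i_{\proet}(\HH_C^{n-1},\Z_p(i)) \to H^i_{\proet}(\HH_C^{n-1},\Q_p(i)) \to H^i_{\proet}(\HH_C^{n-1},\mu_{p^\infty}(i-1)) \to 0.
\]
The key observation is that $H^j_{\proet}(\HH_C^{n-1},\Z_p(j)) \to H^j_{\proet}(\HH_C^{n-1},\Q_p(j))$ is injective for every $j$: by the compatibility in \cref{ThmCDNProetale}, its composition with the projection onto $\St_j(\Q_p)^\ast$ equals $r_j^{-1}$ followed by the inclusion $\St_j(\Z_p)^\ast \hookrightarrow \St_j(\Q_p)^\ast$, which is injective. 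Tate-twisting by $\Z_p(i-j)$ preserves injectivity, so applying this for $j=i$ gives the leftmost injectivity above, and applying for $j=i+1$ (then twisting by $\Z_p(-1)$) shows that $H^{i+1}_{\proet}(\Z_p(i)) \to H^{i+1}_{\proet}(\Q_p(i))$ is injective, so the connecting map out of $H^i_{\proet}(\mu_{p^\infty}(i-1))$ vanishes.

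Next I would apply the snake lemma to the commutative diagram
\[
\begin{CD}
0 @>>> H^i_{\proet}(\Z_p(i)) @>>> H^i_{\proet}(\Q_p(i)) @>>> H^i_{\proet}(\mu_{p^\infty}(i-1)) @>>> 0 \\
@. @V{r_i^{-1}}V{\simeq}V @V{\pi}VV @. @. \\
0 @>>> \St_i(\Z_p)^\ast @>>> \St_i(\Q_p)^\ast @>>> \St_i(\Q_p/\Z_p)^\ast @>>> 0,
\end{CD}
\]
whose left square commutes by the compatibility in \cref{ThmCDNProetale} and both of whose rows are short exact. The universal property of the top-row cokernel produces a unique right vertical map making the right square commute; the snake lemma then identifies its kernel with $\ker(\pi) = \Omega^{i-1}(\HH_C^{n-1})/\ker d$ (via \cref{ThmCDNProetale}) and shows it is surjective, since $\coker(r_i^{-1})=\coker(\pi)=0$.

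Finally, I would identify the resulting inclusion $\Omega^{i-1}(\HH_C^{n-1})/\ker d \hookrightarrow H^i_{\proet}(\HH_C^{n-1},\mu_{p^\infty}(i-1))$ produced by the snake lemma with the $\exp$ map coming from \cref{lem:log_ses} and the identification \eqref{eq:forms}. This follows from the map of short exact sequences
\[
\begin{CD}
0 @>>> \Q_p(1) @>>> \varprojlim_{\times p}\Oxx @>>> \OO @>>> 0 \\
@. @VVV @VVV @| @. \\
0 @>>> \mu_{p^\infty} @>>> \Oxx @>{\log}>> \OO @>>> 0,
\end{CD}
\]
which expresses the bottom boundary as the composition of the top boundary (inducing $\exp$ of \cref{ThmCDNProetale}) with the projection $\Q_p(1) \twoheadrightarrow \mu_{p^\infty}$ (after Tate-twisting by $\Z_p(i-1)$). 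All constructions are manifestly $\Gamma_{\Q_p} \times \GL_n(\Q_p)$-equivariant. The main obstacle is verifying the two injectivity inputs needed in the top row of the snake diagram; once these are extracted from the compatibility between \cref{ThmCDNEtale} and \cref{ThmCDNProetale}, the remainder is a formal diagram chase.
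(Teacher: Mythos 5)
Your proof is correct and follows essentially the same strategy as the paper: both deduce the sequence by applying the snake lemma to a diagram comparing the pro-\'etale cohomology of the coefficient sequence $0 \to \Z_p(i) \to \Q_p(i) \to \mu_{p^\infty}(i-1) \to 0$ with the defining sequence \eqref{EqSpExactSequence} for $\St_i(\Q_p/\Z_p)^\ast$, using \cref{ThmCDNEtale}, \cref{ThmCDNProetale}, and their compatibility. Your version is a touch more explicit than the paper's in two spots that the paper leaves implicit: you verify directly that the long exact coefficient sequence collapses to a short exact sequence in each degree (via injectivity of $H^{i+1}_{\proet}(\Z_p(i)) \to H^{i+1}_{\proet}(\Q_p(i))$), and you spell out why the resulting connecting map agrees with $\exp$.
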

\begin{proof}
    \Cref{ThmCDNEtale} and \cref{ThmCDNProetale} together supply a map of exact sequences
        \[
            \xymatrix{ & 0 \ar[r] \ar[d] &  H^i_{\proet}(\HH_C^{n-1},\Z_p(i)) \ar[r]^-{\cong} \ar[d] &  \St_i(\Z_p)^\ast \ar[r] \ar[d] & 0 \\
            0 \ar[r] & \frac{\Omega^{i-1}(\HH_C^{n-1})}{\ker d} \ar[r]^-{\exp} &  H^i_{\proet}(\HH_C^{n-1},\Q_p(i)) \ar[r] & \St_i(\Q_p)^\ast \ar[r] & 0.}
        \]
    Note that the right vertical map is injective with quotient $\St_i(\Q_p/\Z_p)^\ast$, see \eqref{EqSpExactSequence}.
    Thus, for every $i$ and $j$ the map 
    \[H^i_{\proet}(\HH_C^{n-1},\Z_p(j)) \to H^i_{\proet}(\HH_C^{n-1},\Q_p(j))\]
    is injective (note that $j$ does not change the underlying groups). From the long exact sequence associated to the $(i-1)$-fold twist of the short exact sequence
        \begin{equation}\label{eq:canseq}
            0 \to \Z_p(1)\to \Q_p(1) \to \mu_{p^\infty} \to 0,
        \end{equation} 
    we learn that there is an isomorphism
    \[H^i_{\proet}(\HH_C^{n-1},\mu_{p^\infty}(i-1)) \simeq H^i_{\proet}(\HH_C^{n-1},\Q_p(i))/H^i_{\proet}(\HH_C^{n-1},\Z_p(i)).\]
    The snake lemma then provides the short exact sequence of the corollary.
\end{proof}

\subsection{Consequences for $H^i_{\proet}(\HH_{C}^{n-1},\Oxx)$}
We gather here some consequences of \cref{ThmCDNEtale} and \cref{ThmCDNProetale}. The next results concern the $\Oxx$- and $\lim_p\Oxx$-cohomology of $\HH_C^{n-1}$.  

\begin{thm} 
\label{ThmOxxCohomologyOfH}
For $i\geq 0$, there is a map of $\Pi_n$-equivariant short exact sequences
    \[
            \xymatrix{0 \ar[r] & \St_i(\Q_p)^{\ast}(1-i) \ar[r] \ar[d] &  H^i_{\proet}(\HH_C^{n-1},\lim_p \Oxx) \ar[r] \ar[d]  & \Omega^{i,\cl}(\HH_C^{n-1})(-i) \ar[r] \ar[d]^{\simeq}  & 0 \\
            0 \ar[r] & \St_i(\Q_p/\Z_p)^*(1-i) \ar[r] & H^i_{\proet}(\HH_C^{n-1},\Oxx) \ar[r] & \Omega^{i,\cl}(\HH_C^{n-1})(-i) \ar[r] & 0,}
    \]
where $\Omega^{i,\cl}$ denotes the sheaf of closed differential $i$-forms on $\HH_C^{n-1}$ and the vertical maps are the canonical ones.
\end{thm}
\begin{proof}
    We begin with the construction of the lower short exact sequence. Let $\partial_{\log}\colon \OO \to \mu_{p^\infty}[1]$ be the connecting homomorphism  of the logarithm exact sequence \eqref{eq:logseq}, and write
        \[
            \partial_{\log}^i\colon H^i_{\proet}(\HH_C^{n-1},\OO) \to H^{i+1}_{\proet}(\HH_C^{n-1},\mu_{p^\infty})
        \]
    for the induced map on cohomology. It follows from the long exact sequence associated to the logarithm sequence that there is a short exact sequence
        \[
            0 \to \coker(\partial_{\log}^{i-1}) \to H^i_{\proet}(\HH_C^{n-1},\Oxx) \to \ker(\partial_{\log}^{i}) \to 0. 
        \]
    Using \cref{Cor:CDNproetale}, we can identify the kernel and cokernel of $\partial_{\log}^i$. To this end, consider the cofiber sequence
    \[\OO^{**} \to \OO \to \mu_{p^\infty}[1.]\] 
    Applying $\varprojlim_{\times p}$, we get a commutative diagram 
     \[
        \xymatrix{\varprojlim_{p} \OO^{**}\ar[d]\ar[r] & \OO\ar@{=}[d]\ar[r]^-{\partial_{\widetilde{\log}}} & \Q_p(1)[1] \ar[d]\\
        \OO^{**}\ar[r]& \OO\ar[r]^-{\partial_{\log}} & \mu_{p^\infty}[1],}
    \]
    so we get a  lift
        \begin{equation}\label{eq:deltafactorization}
            \xymatrix{& \OO \ar[d]^{\partial_{\log}} \ar@{-->}[dl]_-{\partial_{\widetilde{\log}}} & \\
            \Q_p(1)[1] \ar[r] & \mu_{p^\infty}[1] \ar[r] & \Z_p(1)[2],}
        \end{equation}
    where the bottom exact sequence is a rotation of the triangle coming from \eqref{eq:canseq}. Note that this lift is unique  as all maps $\OO \to \Z_p(1)[1]$ are trivial, since the target is $p$-complete while the $p$ acts invertibly on the source. By the discussion after \Cref{ThmCDNEtale}, on cohomology, the lift $\partial_{\widetilde{\log}}$ gives precisely the map \eqref{eq:preexp}.

    Combining this observation with \cref{Cor:CDNproetale} and \eqref{eq:forms}, we thus obtain a map of exact sequences
        \[
            \xymatrix{0 \ar[r] & \Omega^i(\HH_C^{n-1})(-i) \ar[r]^-{\cong} \ar@{->>}[d] & H^i_{\proet}(\HH_C^{n-1},\OO) \ar[r] \ar[d]^-{\partial_{\log}^i} & 0 \ar[d] \\
            0 \ar[r] & \frac{\Omega^{i}(\HH_C^{n-1})}{\ker d}(-i) \ar[r]^-{\exp} & H^{i+1}_{\proet}(\HH_C^{n-1},\mu_{p^\infty}) \ar[r] &  \St_{i+1}(\Q_p/\Z_p)^\ast(-i) \ar[r] & 0}
        \]
    By our discussion above, the left vertical map is the canonical quotient map. The snake lemma then implies that
        \[
            \ker(\partial_{\log}^i) \cong \Omega^{i,\cl}(\HH_C^{n-1})(-i) \qquad \text{and} \qquad \coker(\partial_{\log}^i) \cong \St_{i+1}(\Q_p/\Z_p)^\ast(-i),
        \]
    as desired.

    The factorization \eqref{eq:deltafactorization} induces a map of short exact sequences, in which we have already identified the bottom sequence:
        \[
            \xymatrix{0 \ar[r] & \coker(\partial_{\widetilde{\log}}^{i-1}) \ar[r] \ar[d] &  H^i_{\proet}(\HH_C^{n-1},\lim_p \Oxx) \ar[r] \ar[d]  & \ker(\partial_{\widetilde{\log}}^{i}) \ar[r] \ar[d]  & 0 \\
            0 \ar[r] & \coker(\partial_{\log}^{i-1}) \ar[r] & H^i_{\proet}(\HH_C^{n-1},\Oxx) \ar[r] & \ker(\partial_{\log}^{i}) \ar[r] & 0.}
        \]
    The same argument applied to the sequence \eqref{eq:logg} and using \cref{ThmCDNProetale} instead of \cref{Cor:CDNproetale} establishes the analogous computation for cohomology with coefficients in $\lim_p \Oxx$, i.e., it identifies also the top sequence. This gives the stated map of short exact sequences.
\end{proof}

\begin{cor} \label{lem:sescoh}
    Consider the short exact sequence of pro-\'etale sheaves on $\HH_C^{n-1}$:
        \[
            0\to \Z_p(1) \to \lim_{p}\Oxx \xrightarrow{} \Oxx \to 0.
        \]
    The associated long exact sequence for cohomology has zero boundary maps. 
    Therefore for every $i \geq 0$ we obtain a map of short exact sequences of $\Pi_n$-modules
\[\begin{tikzcd}
	0 & {\St_i(\Z_p)^{\ast}}(1-i) & {\St_i(\Q_p)^{\ast}}(1-i) & {\St_i(\Q_p/\Z_p)^{\ast}}(1-i) & 0 \\
	0 & {H^i_{\proet}(\mathcal{H}_C^{n-1},\Z_p(1)) } & {H^i_{\proet}(\mathcal{H}_C^{n-1},\lim_p \Oxx) } & { H^i_{\proet}(\mathcal{H}_C^{n-1},\Oxx )} & 0.
	\arrow[from=1-1, to=1-2]
	\arrow[from=1-2, to=1-3]
	\arrow["{\simeq }", from=1-2, to=2-2]
	\arrow[from=1-3, to=1-4]
	\arrow[from=1-3, to=2-3]
	\arrow[from=1-4, to=1-5]
	\arrow[from=1-4, to=2-4]
	\arrow[from=2-1, to=2-2]
	\arrow[from=2-2, to=2-3]
	\arrow[from=2-3, to=2-4]
	\arrow[from=2-4, to=2-5]
\end{tikzcd}\]   
    \end{cor}
\begin{proof}
The splitting of the long exact sequence into short exact sequences is equivalent to the surjectivity of  the map
    \[
        H^i_{\proet}(\mathcal{H}_C^{n-1},\lim_p \Oxx) \to  H^i_{\proet}(\mathcal{H}_C^{n-1},\Oxx )
    \] 
for all $i$. Note that this map factors as
    \[
        H^i_{\proet}(\mathcal{H}_C^{n-1},\lim_p \Oxx) \to \lim_p H^i_{\proet}(\mathcal{H}_C^{n-1},\Oxx ) \to H^i_{\proet}(\mathcal{H}_C^{n-1},\Oxx ).
    \]
Since $\Oxx \xrightarrow{\times p} \Oxx$ is surjective, we have $\lim_p \Oxx \simeq R\lim_p \Oxx$. So \[R\Gamma_{\proet}(\mathcal{H}_C^{n-1}, \lim_p\Oxx)\simeq R\lim_p R\Gamma_{\proet}(\mathcal{H}_C^{n-1}, \Oxx).\]
Thus, by the Milnor sequence (which exists because the pro-\'etale topos is replete), the  map above  
    \[
        H^i_{\proet}(\mathcal{H}_C^{n-1},\lim_p \Oxx) \to  \lim_p H^i_{\proet}(\mathcal{H}_C^{n-1},\Oxx )
    \]
is a surjection. The second map is surjective as well, since 
$H^i_{\proet}(\mathcal{H}_C^{n-1},\Oxx)$ is $p$-divisible. This follows from \cref{ThmOxxCohomologyOfH}, which exhibits $H^i_{\proet}(\mathcal{H}_C^{n-1},\Oxx)$ as an extension of $p$-divisible abelian groups.  

From \Cref{ThmOxxCohomologyOfH}, we obtain a commutative square of $\Pi_n$-modules:
    \[
        \begin{tikzcd}
	{\St_i(\Q_p)^{\ast}}(1-i) & {\St_i(\Q_p/\Z_p)^{\ast}}(1-i) \\
	{H^i_{\proet}(\mathcal{H}_C^{n-1},\lim_p \Oxx) } & { H^i_{\proet}(\mathcal{H}_C^{n-1},\Oxx )}.
	\arrow[from=1-1, to=1-2]
        \arrow[from=1-1, to=2-1]
        \arrow[from=1-2, to=2-2]
	\arrow[from=2-1, to=2-2]
        \end{tikzcd}
    \]
This diagram extends to the desired map of short exact sequences by \eqref{EqSpExactSequence} and the vanishing of the boundary map established above.
\end{proof}

Finally, we identify the $\Pi_n$-invariants in $H^1_{\proet}(\HH_C^{n-1},\Oxx)$.

\begin{cor} \label{CorRank1Invariants} 
There is a canonical isomorphism
    \[ 
        H^1_{\proet}(\HH_C^{n-1},\Oxx)^{\Pi_n}\isom \left(\St_1(\Q_p/\Z_p)^{\ast}\right)^{\GL_n(\Z_p)}.
    \]
Thus if $n\geq 2$, then by \cref{LemFixedPartOfSp}, $H^1_{\proet}(\HH_C^{n-1},\Oxx)^{\Pi_n}$ is a free $\Z_p$-module of rank 1 with distinguished generator. When $n=1$ we have $H^1_{\proet}(\HH_C^{0},\Oxx)^{\Pi_1}=0$.
\end{cor}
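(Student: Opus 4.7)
The plan is to take $\Pi_n$-invariants in the short exact sequence of $\Pi_n$-modules supplied by \cref{ThmOxxCohomologyOfH} with $i=1$, namely
\[
    0 \to \St_1(\Q_p/\Z_p)^{\ast} \to H^1_{\proet}(\HH_C^{n-1},\Oxx) \to \Omega^{1,\cl}(\HH_C^{n-1})(-1) \to 0.
\]
Left-exactness of invariants yields an inclusion
\[
    \bigl(\St_1(\Q_p/\Z_p)^{\ast}\bigr)^{\GL_n(\Z_p)} \hookrightarrow H^1_{\proet}(\HH_C^{n-1},\Oxx)^{\Pi_n},
\]
where I am using that $\St_1(\Q_p/\Z_p)^{\ast}$, being extracted from a $\Q_p$-rational flag variety, carries the trivial $\Gamma_{\Q_p}$-action, so the $\Pi_n$-invariants reduce to $\GL_n(\Z_p)$-invariants. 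The real content is to produce the surjectivity, equivalently to show that the composite
\[
    H^1_{\proet}(\HH_C^{n-1},\Oxx)^{\Pi_n} \to \Omega^{1,\cl}(\HH_C^{n-1})(-1)^{\Pi_n}
\]
is zero.

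For this I would show the stronger statement that $\Omega^{1,\cl}(\HH_C^{n-1})(-1)^{\Gamma_{\Q_p}}$ already vanishes. Since $\HH^{n-1}$ is defined over $\Q_p$, there is a natural identification $\Omega^{1,\cl}(\HH_C^{n-1}) \simeq \Omega^{1,\cl}(\HH_{\Q_p}^{n-1}) \widehat{\otimes}_{\Q_p} C$ in which the first factor is a Fr\'echet $\Q_p$-vector space with trivial $\Gamma_{\Q_p}$-action and the Galois action is concentrated in $C$. The Tate twist converts this into $\Omega^{1,\cl}(\HH_{\Q_p}^{n-1}) \widehat{\otimes}_{\Q_p} C(-1)$, and Tate's theorem gives $C(-1)^{\Gamma_{\Q_p}} = 0$; passing from abstract tensor products to completed tensor products via an exhaustion of $\HH^{n-1}$ by $\Q_p$-rational Stein affinoids (whose sections over $C$ are of the form $A \widehat{\otimes}_{\Q_p} C$ with $A$ a $\Q_p$-Banach space of trivial Galois action) and applying Ax--Sen--Tate componentwise delivers the vanishing on the nose.

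The case $n=1$ is a degeneration: $\HH^0_C = \Spa(C,\OO_C)$ is a single geometric point, so both flanking terms in the short exact sequence vanish ($\PP^0(\Q_p)$ is a point, forcing $\St_1(\Q_p/\Z_p)^{\ast} = 0$, and there are no nonzero differentials on a point), hence $H^1_{\proet}(\HH^0_C, \Oxx) = 0$.

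The step I expect to be the main obstacle is the careful handling of the completed tensor product in the identification of $\Omega^{1,\cl}(\HH_C^{n-1})$ and the corresponding Galois invariants. In particular, one must verify that the invariants functor commutes suitably with the Fr\'echet topology on closed forms, so that killing the $C(-1)$ factor Galois-invariantly really kills the whole completed tensor product; this is where the Stein exhaustion and Ax--Sen--Tate input enter. Once that is in hand, the canonical generator of \cref{LemFixedPartOfSp} transports through the inclusion to give the distinguished generator of $H^1_{\proet}(\HH_C^{n-1},\Oxx)^{\Pi_n}$.
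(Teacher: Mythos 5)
Your proof is correct and follows the same route as the paper: take $\Pi_n$-invariants of the $i=1$ exact sequence from \cref{ThmOxxCohomologyOfH}, then kill the closed-forms quotient by writing $\Omega^{1,\cl}(\HH_C^{n-1})(-1)\isom \Omega^{1,\cl}(\HH^{n-1})\widehat\otimes_{\Q_p}C(-1)$ and invoking Tate's vanishing $C(-1)^{\Gamma_{\Q_p}}=0$. The only (harmless) deviations are cosmetic: you spell out the Stein/Banach reduction justifying the commutation of Galois invariants with the completed tensor product, which the paper takes for granted, and for $n=1$ you derive the vanishing from the triviality of both flanking terms, whereas the paper simply notes that $H^1_{\proet}(\Spa(C,\OO_C),\cF)=0$ for any sheaf.
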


\begin{proof}  We claim that if $X$ is any smooth rigid-analytic space over $\Q_p$, and $j\in\Z$ is nonzero, then $\Omega^1(X_C)(-j)$ has no $\Gamma_{\Q_p}$-invariants.  See the proof of \cite[Theorem 5.1.2]{BSSW} for the proof of this claim.  Now take $\Pi_n$-invariants in the exact sequence in the $i=1$ case of \cref{ThmOxxCohomologyOfH}.  We find an isomorphism
    \[ 
        H^0_{\cts}(\Pi_n,\St_1(\Q_p/\Z_p)^\ast)\isom H^0_{\cts}(\Pi_n,H^1_{\proet}(\HH_C^{n-1},\Oxx)). 
    \]
The group $\Pi_n$ acts on $\St_1(\Q_p/\Z_p)^\ast$ through its quotient $\GL_n(\Z_p)$, and we have seen in \cref{LemFixedPartOfSp} that the module of invariants is rank 1 over $\Z_p$ for $n\geq 2$. For $n=1$, $\HH_C^0 = \Spa(C,\OO_C)$ and thus $H^1_{\proet}(\HH_C^0,
\mathcal{F})$ is zero for any sheaf $\mathcal{F}$.
\end{proof}

\subsection{A primitivity result} Assume throughout that $n\geq 2$. 
Let us recall the fundamental exact sequence from \cref{ThmFundamentalExactSequence}, applying what we have learned from \cref{CorRank1Invariants}:
\[
\xymatrix{
0 \ar[r] &  H^1_{\cts}(\G_1,A_1^{\ast\ast}) \ar[r] & H^1_{\cts}(\G_n,A_n^{\ast\ast}) \ar[r]^-{b} & H^1_{\proet}(\HH_C^{n-1},\Oxx)^{\Pi_n} \ar[d]_{\isom} \\
& & & \Z_p
}
\]
Our current goal is to prove that the map labeled $b$ is surjective.  To do this, it suffices to show that the image under $b$ of the class $\varepsilon_p(\Sigma^2\mathbb{S}_{K(n)})$ is primitive, i.e., that it is a generator of the $\Z_p$.  More precisely, we will prove:

\begin{thm}
\label{ThmSphereClassIsPrimitive} The map $b$ in \cref{ThmFundamentalExactSequence} sends $\varepsilon_p(\Sigma^2\mathbb{S}_{K(n)})$ to the distinguished generator of the $\Z_p$-module $H^1_{\proet}(\HH_C^{n-1},\Oxx)^{\Pi_n}$ from \cref{CorRank1Invariants}.  
\end{thm}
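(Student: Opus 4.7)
The strategy is to trace the class $t_0^{**} := \varepsilon_p(\Sigma^2\mathbb{S}_{K(n)})$ through both presentations of $\SW_n$ in Theorem~\ref{ThmEquivalenceOfStacks}, and to match cocycles via the Kummer isomorphism of Colmez--Dospinescu--Nizio\l.

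\paragraph{Step 1 (Geometric identification).} By Theorem~\ref{ThmEquivalenceOfStacks}, under $\SW_n \cong [\HH_C^{n-1,\diamond}/\Pi_n]$, the line bundle $\Lie H^{\univ}[1/p]$ on $\SW_n$ corresponds to the restriction of the tautological bundle $\OO(1)|_{\HH^{n-1}}$. Any nonzero $\Q_p$-rational linear form $\ell$ is nowhere vanishing on $\HH^{n-1}$, giving a $\Gamma_{\Q_p}$-invariant trivialization; the residual $\Pi_n$-equivariant structure is encoded by the cocycle
\[ c_\ell \colon \Pi_n \to \OO^*(\HH_C^{n-1}), \qquad g \mapsto (g\cdot \ell)/\ell, \]
which factors through $\GL_n(\Z_p)$. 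Viewed under the other presentation $\SW_n \cong [\LT_{n,K}^\diamond/\G_n]$, the same $\Pi_n$-equivariant line bundle on $\HH_C^{n-1}$ corresponds to the Morava module $\Lie H^{\univ}$, and hence in continuous $\G_n$-cohomology to the cocycle $t_0$. Projecting to the principal-units summand of $A_n^\ast = A_n^{\ast\ast}\oplus \overline{\F}_p^\ast$ gives $t_0^{**}$.

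\paragraph{Step 2 (Reduction to $\St_1(\Q_p/\Z_p)^\ast$).} By Theorem~\ref{ThmOxxCohomologyOfH} and the vanishing of $\Gamma_{\Q_p}$-invariants on Tate twists (cf.\ the proof of Corollary~\ref{CorRank1Invariants}), the edge map yields an isomorphism
\[ H^1_{\proet}(\HH_C^{n-1},\Oxx)^{\Pi_n} \xrightarrow{\sim} (\St_1(\Q_p/\Z_p)^\ast)^{\GL_n(\Z_p)} = \Z_p\cdot \mu, \]
where $\mu$ is the distinguished generator of Lemma~\ref{LemFixedPartOfSp}. By Lemma~\ref{LemFixedPartOfSpInH1}, $\mu$ is characterized by the property that, under the connecting homomorphism $\partial$ of \eqref{EqSpExactSequence} in $\Pi_n$-cohomology, $\partial\mu \in H^1(\GL_n(\Z_p),\St_1(\Z_p)^\ast)$ is represented by the cocycle $g\mapsto \delta_\ell - \delta_{g\ell}$.

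\paragraph{Step 3 (Kummer-theoretic comparison).} Since $\partial$ is injective, to show $b(t_0^{**})$ generates it suffices to identify $\partial(b(t_0^{**}))$ up to a unit in $\Z_p$. The cocycle $c_\ell$ from Step~1 a priori lives only in $H^1(\Pi_n,\OO^\ast(\HH_C^{n-1}))$, but composing with the Kummer map gives a $\Pi_n$-cocycle
\[ g\mapsto \kappa(c_\ell(g)) = \kappa((g\cdot\ell)/\ell) \;\in\; H^1_{\et}(\HH_C^{n-1},\Z_p(1)). \]
By the defining relation $\kappa(\ell_1/\ell_2) = r_1(\delta_{\ell_1}-\delta_{\ell_2})$ of Theorem~\ref{ThmCDNEtale}, this cocycle is $r_1$ applied to $g\mapsto \delta_{g\ell} - \delta_\ell = -(\delta_\ell-\delta_{g\ell})$, i.e., to $-\partial\mu$. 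The core of the argument is to verify that this Kummer-theoretic cocycle coincides with $\partial(b(t_0^{**}))$ under $r_1$; granting this, we conclude $b(t_0^{**}) = -\mu$, a generator of $H^1_{\proet}(\HH_C^{n-1},\Oxx)^{\Pi_n}$.

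\paragraph{Step 4 (Compatibility; main obstacle).} The remaining task---and the main technical difficulty---is to compare the connecting homomorphism $\partial$ in $\Pi_n$-cohomology of $0\to \St_1(\Z_p)^\ast \to \St_1(\Q_p)^\ast \to \St_1(\Q_p/\Z_p)^\ast\to 0$ with the Kummer/logarithm boundary applied to the $\OO^\ast$-cocycle $c_\ell$. Concretely, one performs a diagram chase using: the short exact sequence $0\to \mu_{p^\infty}\to \Oxx \to \OO\to 0$; the Kummer sequences $0\to \mu_{p^n}\to \OO^\ast \xrightarrow{(-)^{p^n}} \OO^\ast \to 0$; the identification of $\St_1(\Q_p/\Z_p)^\ast$ as the image of $H^1_{\proet}(\HH_C^{n-1},\mu_{p^\infty})$ in $H^1_{\proet}(\HH_C^{n-1},\Oxx)$ modulo $\OO/C$ (Corollary~\ref{Cor:CDNproetale}, Theorem~\ref{ThmOxxCohomologyOfH}); and the descent spectral sequences for both presentations, compared via the functoriality built into \eqref{EqBigAssDiagram}. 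The outcome of this chase is the desired identification $\partial(b(t_0^{**})) = -\partial\mu$, from which the theorem follows by injectivity of $\partial$ and Step~3.
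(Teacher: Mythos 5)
Your Steps 1--3 correctly identify the right ingredients and essentially recapitulate the paper's Lemma~\ref{LemTwoImagesAgree}, Corollary~\ref{CorRank1Invariants}, Lemma~\ref{LemFixedPartOfSpInH1}, and the Kummer formula of Theorem~\ref{ThmCDNEtale}. But Step~4 is where the actual mathematical content of the theorem lies, and there you only assert ``one performs a diagram chase'' and list ingredients without carrying it out. This is not an innocent omission. The comparison you need is between the map $f_1 = \phi_2\circ e_1$ --- a composite of an edge map of the descent spectral sequence for $[\HH_C^{n-1}/\Pi_n]$ with the boundary map in continuous $\Pi_n$-cohomology --- and the map $f_2$ obtained by applying $H^1_{\cts}(\Pi_n,-)$ to the Kummer boundary $\OO^*(\HH_C^{n-1})\to H^1_{\proet}(\HH_C^{n-1},\Z_p(1))$. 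These are structurally quite different: $f_1$ involves a class living in cohomological degree $1$ of the fiber being pushed to degree $0$ of the group, while $f_2$ takes a degree-$0$ class on the fiber and stays in degree~$1$. The paper isolates the required compatibility as Proposition~\ref{PropTwoMaps}, and its proof is not a chase: it requires the truncation lemma (Lemma~\ref{LemTruncation:alt}), which exploits derived $p$-completeness of $R\Gamma(\Z_p(1)[1])$ together with $p$-divisibility of the cohomology of $\Oxx$ to force the map $\tau^{[0,1]}R\Gamma(\Oxx)\to \tau^{[0,1]}R\Gamma(\Z_p(1)[1])$ to factor through $H^1(\Oxx)[-1]\to H^0(\Z_p(1)[1])$, and Lemma~\ref{LemH1InducesIsom}, which uses the CDN computation of $H^2_{\et}$ to show a certain truncation becomes an isomorphism after $(\cdot)^{h\Pi_n}$. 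Without these, the diagram you would draw does not obviously commute.

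A second, smaller issue: your cocycle $c_\ell\colon g\mapsto g(\ell)/\ell$ takes values in $\OO^*(\HH_C^{n-1})$, not in $\OO^{**}$, and the class $b(t_0^{**})$ is computed entirely in $\Oxx$-cohomology, so the two cannot be matched directly. The paper finesses this by instead matching the $(p^n-1)$-st powers --- setting $a_1 = g_0\varepsilon(\Sigma^{2(p^n-1)}\mathbb{S}_{K(n)})$, which lies in $H^1_{\cts}(\G_n, A_n^{\xx})$ because the prime-to-$p$ part of $H^1_{\cts}(\G_n,A_n^\ast)$ has order $p^n-1$, and $a_2 = [g\mapsto (g(\ell)/\ell)^{p^n-1}]$ --- and only dividing by $p^n-1$ after landing in the torsion-free $\Z_p$-module $H^1_{\cts}(\Pi_n,\St_1(\Z_p)^\ast)$. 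Your Step~1 projects $t_0$ to $t_0^{**}$ on the Lubin--Tate side but leaves $c_\ell$ unprojected on the Drinfeld side, so the equality $g_1(a_1)=g_2(a_2)$ you implicitly need would fail as stated.
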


The players in the proof of \cref{ThmSphereClassIsPrimitive} fit into the following commutative diagram:
\begin{equation}\label{EqDiagramChase}
    \begin{gathered}
    \xymatrix{H^1_{\cts}(\G_n,A_n^{\xx}) \ar[d]_{g_0} \ar@/^-4pc/[dd]_{b} & H^1_{\proet}(\SW_n,\OO^{\ast}) \\
    H^1_{\proet}(\SW_n,\Oxx) \ar[d]_{e_1} \ar[ru]^-{g_1} \ar[rd]_{f_1} & H^1_{\cts}( \Pi_n, \OO^{\ast}(\HH_C^{n-1})) \ar[u]_{g_2} \ar[d]^{f_2} \\
    H^0_{\cts}(\Pi_n,H^1_{\proet}(\HH_C^{n-1},\Oxx)) \ar[r]_-{\phi_2} \ar[d]_{i_1}^{\cong} & H^1_{\cts}(\Pi_n, H^1_{\proet}(\mathcal{H}_C^{n-1},\Z_p(1))) \ar[d]^{i_2}_{\cong} \\
    H^0_{\cts}(\GL_n(\Z_p),\St_1(\Q_p/\Z_p)^\ast) \ar[r]^-{\partial} & H^1_{\cts}(\GL_n(\Z_p),\St_1(\Z_p)^{\ast}).}
    \end{gathered}
\end{equation}
We explain the maps appearing in \eqref{EqDiagramChase}. The map labeled $g_0$ arises from the equivalence $\SW_n\isom [\LT_{n,\eta}^{\diamond}/\G_n]$, $g_1$ arises from the inclusion $\Oxx \to \OO^\ast$, and $g_2$ arises from the equivalence $\SW_n\isom [\HH_C^{n-1}/\Pi_n]$. The map labeled $\phi_2$ appears as a connecting map when applying the long exact sequence in continuous $\Pi_n$-cohomology to the short exact sequence of $\Pi_n$-modules in 
\cref{lem:sescoh}.  The bottom map labeled $\partial$ is the connecting map from \eqref{EqMapDelta}; it is injective as noted in the discussion around \eqref{EqMapDelta}. \Cref{lem:sescoh} implies that the bottom square in \eqref{EqDiagramChase} commutes. The map $i_1$ is the isomorphism from \cref{CorRank1Invariants}, and $i_2$ arises from the identification of $H^1_{\proet}(\HH_C^{n-1},\Z_p(1))$ with $\St_1(\Z_p)^{\ast}$ in \cref{ThmCDNEtale}.   In fact $i_2$ is an isomorphism, as one sees by applying the inflation-restriction sequence to the normal subgroup $\Gamma_{\Q_p}\subset \Pi_n$, using the fact that $\St_1(\Z_p)^\ast$ has no $\GL_n(\Z_p)$-invariants. 
Finally, $e_1$ appears in \eqref{EqBigAssDiagram}, as an example of the low-degree exact sequence \eqref{EqLowDegree}, and $b = e_1 \circ g_0$ and $f_1 = \phi_2 \circ e_1$ are the composites. 

For the construction of $f_2$, start with the short exact sequence of $\Pi_n$-equivariant sheaves 
on $\HH_C^{n-1}$:
\[
    0 \to \Z_p(1) \to \lim_p \OO^* \to \OO^* \to 0.
\]
Consider the boundary map 
\[
    \OO^*(\mathcal{H}_{C}^{n-1}) \to H^1_{\proet}(\mathcal{H}_{C}^{n-1},\Z_p(1))
\]
which is a map of $\Pi_n$-modules. Applying $H^1_{\cts}(\Pi_n,-)$ gives
\[
    f_2 \colon H^1_{\cts}( \Pi_n, \OO^{\ast}(\HH_C^{n-1})) \to H^1_{\cts}(\Pi_n, H^1_{\proet}(\mathcal{H}_C^n,\Z_p(1))).
\]

\begin{lem}\label{LemTwoImagesAgree} Let $a_1=g_0\varepsilon(\Sigma^{2(p^n-1)}\mathbb{S}_{K(n)})$, and let $a_2\in H^1_{\cts}(\Pi_n,\OO^\ast(\HH_C^{n-1}))$ be the class of the cocycle $(\sigma,g)\mapsto (\chi(\sigma)g(\ell)/\ell)^{-(p^n-1)}$, where $\ell\in H^0(\PP^n_{\Q_p},\OO(1))$ is any $\Q_p$-rational linear form, and $\chi\from \Gamma_{\Q_p}\to \Z_p^\times$ is the cyclotomic character.  Then $g_1(a_1)=g_2(a_2)$.
\end{lem}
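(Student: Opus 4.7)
The plan is to identify both classes $g_1(a_1)$ and $g_2(a_2)$ with the class in $H^1_{\proet}(\SW_n, \OO^{\ast}) \cong \Pic(\SW_n)$ of the single invertible $\OO_{\SW_n}$-module
\[
\cL \;\coloneqq\; \bigl(\Lie H^{\univ}[1/p]\bigr)^{\otimes (p^n-1)},
\]
where $H^{\univ}$ denotes the universal formal group on $\SW_n$.

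For $g_1(a_1)$, I would trace through the definitions. By \cref{rem:cocyclespheres}, $\varepsilon(\Sigma^2\mathbb{S}_{K(n)}) = \Lie H_n^{\univ}$ is represented by the cocycle $t_0\from \G_n \to A_n^{\ast}$. Tensor powers of invertible Morava modules correspond to products of cocycles, so $\varepsilon(\Sigma^{2(p^n-1)} \mathbb{S}_{K(n)})$ is represented by $t_0^{p^n-1}$. Its mod-$\mathfrak m$ reduction lies in $\bar\F_p^\ast$ and is obtained by raising the image of $\OO_D^\ast \to \F_{p^n}^\ast \hookrightarrow \bar\F_p^\ast$ to the $(p^n-1)$, so it is trivial; hence $t_0^{p^n-1}$ takes values in $A_n^{\xx}$ and $a_1$ is well-defined. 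Under the equivalence $\SW_n \cong [\LT_{n,K}^{\diamond}/\G_n]$, this cocycle corresponds to the $\G_n$-equivariant descent of the pullback of $\cL$ to $\LT_{n,K}$, so $g_1(a_1) = [\cL]$ in $\Pic(\SW_n)$.

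For $g_2(a_2)$, I would observe that because $\ell$ is a $\Q_p$-rational linear form, it is a nowhere-vanishing section of $\OO_{\PP^{n-1}}(1)$ on $\HH^{n-1} \subset \PP^{n-1}_{\Q_p}$, so $\ell^{p^n-1}$ trivializes $\OO(p^n-1)|_{\HH_C^{n-1}}$. Expressing the canonical $\Pi_n$-equivariant structure on $\OO(p^n-1)$ in this trivialization yields exactly the 1-cocycle $g \mapsto (g(\ell)/\ell)^{p^n-1}$. Under $\SW_n \cong [\HH_C^{n-1,\diamond}/\Pi_n]$, the map $g_2$ therefore sends $a_2$ to the descent of the equivariant line bundle $\OO(p^n-1)|_{\HH_C^{n-1}}$ along $\HH_C^{n-1,\diamond} \to \SW_n$. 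By the last assertion of \cref{ThmEquivalenceOfStacks}, $\Lie H^{\univ}[1/p]$ on $\SW_n$ pulls back to $\OO(1)|_{\HH_C^{n-1}}$ as a $\Pi_n$-equivariant line bundle; taking $(p^n-1)$-th tensor powers gives $\cL \leftrightarrow \OO(p^n-1)|_{\HH_C^{n-1}}$, so $g_2(a_2) = [\cL]$ as well.

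The main obstacle is purely a matter of conventions: verifying that the $\Pi_n$-equivariant structure on $\OO(p^n-1)|_{\HH_C^{n-1}}$, read off via the trivialization $\ell^{p^n-1}$, produces the cocycle $g \mapsto (g(\ell)/\ell)^{p^n-1}$ rather than its inverse. This amounts to matching up (i) the convention in \cref{ThmEquivalenceOfStacks} identifying $\Lie H^{\univ}[1/p]$ with the \emph{tautological} $\OO(1)$, (ii) the direction of the $\GL_n(\Q_p)$-action on $\HH^{n-1}$ induced from the modification description, and (iii) the dictionary between equivariant line bundles and 1-cocycles. Since $(p^n-1)$ is even unless $p=2$ (in which case $\pm 1$ are identified in $A_n^{\xx}$ anyway), these signs are forced, and a short direct check at a single geometric point of $\HH^{n-1}$ suffices to fix them.
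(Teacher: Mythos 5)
Your proposal is correct and takes essentially the same approach as the paper: both identify $g_1(a_1)$ and $g_2(a_2)$ with the class of $(\Lie H^{\univ}[1/p])^{\otimes(p^n-1)}$ in $H^1_{\proet}(\SW_n,\OO^\ast)$, using that $\varepsilon(\Sigma^2\mathbb{S}_{K(n)})$ is the Morava module $\Lie H$, that $\ell$ is a nowhere-vanishing section of $\OO(1)\vert_{\HH_C^{n-1}}$, and the dictionary from \cref{ThmEquivalenceOfStacks} matching $\Lie H^{\univ}[1/p]$ with $\OO(1)\vert_{\HH_C^{n-1}}$. The paper's proof is terser and does not pause to verify that $t_0^{p^n-1}$ lands in $A_n^{\xx}$ (which is anyway guaranteed at the level of cohomology classes by \cref{prop:primetop}), but the argument is the same.
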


\begin{proof} Let $H=H_n^{\univ}$ be the universal deformation of $H_n$.  By \cref{ThmEquivalenceOfStacks}, 
 there is an isomorphism $\SW_n\isom [\HH_C^{n-1}/\Pi_n]$ which identifies the line bundles $L:=(\Lie H)^{\diamondsuit}_{\eta}$ and $\OO(-1)^{\diamondsuit}\otimes_{\Q_p}\Q_p(-1)$.  
Since $\ell$ is a nowhere vanishing section for $\OO(1)\vert_{\HH_C^{n-1}}$, the image of $a_2$ in $H^1_{\proet}(\SW_n,\OO^\ast)$ is the class of $L^{\otimes (p^n-1)}$.  This equals $g_1(a_1)$:  indeed $\varepsilon(\Sigma^2\mathbb{S}_{K(n)})\in H^1_{\cts}(\G_n,A_n^\ast)$ is the class of $L$ as a $\G_n$-equivariant invertible $A_n$-module, and then after multiplying by $2(p^n-1)$ one gets the claimed equality.
\end{proof} 

\begin{prop}\label{PropTwoMaps}
    Consider two classes $x_1\in H^1_{\proet}(\SW_n,\Oxx)$ and $x_2 \in H^1_{\cts}( \Pi_n, \OO^{\ast}(\HH_C^{n-1}))$.
If $g_1(x_1) = g_2(x_2)$, then $f_1(x_1)  = f_2(x_2)$.
\end{prop}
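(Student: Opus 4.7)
The plan is to exhibit a single class $\zeta \in H^2_{\proet}(\SW_n,\Z_p(1))$ which maps to both $f_1(x_1)$ and $f_2(x_2)$ under a natural projection coming from the descent spectral sequence, and to deduce $f_1(x_1) = f_2(x_2)$ from the hypothesis $g_1(x_1)=g_2(x_2)$. The inclusion $\Oxx\hookrightarrow\OO^{\ast}$ fits into a morphism between the Kummer-type distinguished triangles $0\to\Z_p(1)\to\lim_p\Oxx\to\Oxx\to 0$ and $0\to\Z_p(1)\to\lim_p\OO^{\ast}\to\OO^{\ast}\to 0$ on $\SW_n$, with identity on $\Z_p(1)$. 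Writing $\partial'\colon\Oxx\to\Z_p(1)[1]$ and $\partial\colon\OO^{\ast}\to\Z_p(1)[1]$ for the respective boundary morphisms, functoriality yields $\partial_{\ast}\circ g_1 = \partial'_{\ast}$ on $H^1_{\proet}(\SW_n,-)$. Define
\[
\zeta := \partial_{\ast}(g_1(x_1)) = \partial'_{\ast}(x_1) = \partial_{\ast}(g_2(x_2)),
\]
where the last equality invokes the hypothesis $g_1(x_1)=g_2(x_2)$.

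Now consider the descent spectral sequence $E_2^{p,q} = H^p_{\cts}(\Pi_n, H^q_{\proet}(\HH_C^{n-1},-))\Rightarrow H^{p+q}_{\proet}(\SW_n,-)$ and the induced filtration $F^{\bullet}$ on the abutment. The graded piece $F^1/F^2$ injects into $E_2^{1,1} = H^1_{\cts}(\Pi_n, H^1_{\proet}(\HH_C^{n-1},\Z_p(1)))$, which is the common target of $f_1$ and $f_2$. To identify $f_2(x_2)$ with the image of $\zeta$ in $F^1/F^2$: the class $g_2(x_2)$ lies in $F^1 H^1_{\proet}(\SW_n,\OO^{\ast})$ because it is the inflation of a continuous $\Pi_n$-cohomology class. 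By functoriality of the descent spectral sequence with respect to the Kummer boundary on $\OO^{\ast}$, the induced map on $E_2$-terms $E_2^{1,0}(\OO^{\ast})\to E_2^{1,1}(\Z_p(1))$ is $H^1_{\cts}(\Pi_n,\kappa) = f_2$, where $\kappa\colon\OO^{\ast}(\HH_C^{n-1})\to H^1_{\proet}(\HH_C^{n-1},\Z_p(1))$ is the Kummer map. Hence the image of $\zeta$ in $F^1/F^2\subseteq E_2^{1,1}$ is $f_2(x_2)$.

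To identify $f_1(x_1)$ with the same image, the key input is \Cref{lem:sescoh}, according to which $\partial'$ induces the zero map on $H^i_{\proet}(\HH_C^{n-1},-)$ for all $i$. The general mechanism is that a morphism in the derived category whose induced maps on $H^{\ast}_{\proet}(\HH_C^{n-1},-)$ vanish must shift the descent filtration by one, and the resulting map on graded pieces $E_{\infty}^{p,q}(\Oxx)\to E_{\infty}^{p+1,q}(\Z_p(1))$ is the connecting homomorphism $\phi_2^q$ obtained by applying $H^{\ast}_{\cts}(\Pi_n,-)$ to the short exact sequence of $\Pi_n$-modules from \Cref{lem:sescoh} in cohomological degree $q$. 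For $(p,q)=(0,1)$ this connecting homomorphism is precisely the map $\phi_2$ appearing in \eqref{EqDiagramChase}. Since the image of $x_1\in F^0 H^1_{\proet}(\SW_n,\Oxx)$ in $E_{\infty}^{0,1}(\Oxx)\subseteq E_2^{0,1}(\Oxx)$ is $e_1(x_1)$, we conclude that the image of $\zeta = \partial'_{\ast}(x_1)$ in $F^1/F^2\subseteq E_2^{1,1}$ is $\phi_2(e_1(x_1)) = f_1(x_1)$. Comparing the two identifications gives $f_1(x_1)=f_2(x_2)$.

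The main obstacle is justifying the filtration-shift assertion used in the $f_1$ identification. Conceptually, it follows by representing $R\Gamma_{\proet}(\SW_n,-)$ as $R\Gamma_{\cts}(\Pi_n, R\Gamma_{\proet}(\HH_C^{n-1},-))$ and using \Cref{lem:sescoh} to split the induced distinguished triangle of $\Pi_n$-equivariant complexes into a level-wise short exact sequence at each cohomological degree; applying $R\Gamma_{\cts}(\Pi_n,-)$ to each such short exact sequence produces the connecting homomorphism $\phi_2^q$ as the relevant contribution to the boundary on the abutment. Carrying this out rigorously via the associated bicomplex or a dévissage on the filtration is the technical heart of the argument.
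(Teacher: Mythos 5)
Your overall strategy — track the common Kummer boundary class $\zeta\in H^2_{\proet}(\SW_n,\Z_p(1))$ through the descent filtration and show that both $f_1(x_1)$ and $f_2(x_2)$ are its image in the $E_2^{1,1}$-spot — is genuinely parallel to what the paper does, and you have correctly identified the essential inputs: functoriality of the two Kummer-type triangles along $\Oxx\hookrightarrow\OO^\ast$, and \cref{lem:sescoh}, which records that $\partial'$ is zero on $H^*_{\proet}(\HH_C^{n-1},-)$. The identification of $f_2(x_2)$ with the image of $\zeta$ in $E_2^{1,1}$ (via the edge map at $E_2^{1,0}$ and $H^1_{\cts}(\Pi_n,\kappa)$) is fine.

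The gap is exactly where you say the ``technical heart'' is, and I want to stress that it is not a formal ``general mechanism.'' A morphism of $\Pi_n$-equivariant complexes inducing zero on each $H^q$ does indeed raise the descent filtration, but the further claim — that the induced map on associated graded pieces $E_\infty^{0,1}(\Oxx)\to E_\infty^{1,1}(\Z_p(1))$ is \emph{the} connecting map $\phi_2$ from \cref{lem:sescoh}, at the level of the abutment $H^2(\SW_n,\Z_p(1))$ — requires producing an actual factorization of $\tau^{[0,1]}R\Gamma(\HH_C^{n-1},\Oxx)\to\tau^{[0,1]}R\Gamma(\HH_C^{n-1},\Z_p(1)[1])$ through $H^1(\Oxx)[-1]\to H^1(\Z_p(1))[0]$ in the derived category of $\Pi_n$-modules. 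The obstruction to such a factorization lives in nonzero Ext groups (a $k$-invariant of the Postnikov tower), so it does not come for free from the vanishing of $H^*(\partial')$; a dévissage or bicomplex chase along the lines you sketch would have to confront this explicitly and resolve the attendant indeterminacy. The paper's proof produces precisely this factorization in \cref{LemTruncation:alt}, and the proof there is \emph{not} formal: it exploits that $H^*(\Oxx)$ is $p$-divisible (via \cref{ThmOxxCohomologyOfH}) and that $H^*(\Z_p(1)[1])$ is derived $p$-complete, factoring through derived $p$-completion to kill the obstruction. It then uses \cref{LemH1InducesIsom} (vanishing of $H^0(\Pi_n,H^2_{\et}(\HH_C^{n-1},\Z_p(1)))$) to pass to the truncation. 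So while your outline is in the same spirit, the step you defer is the actual content of \cref{PropTwoMaps}, and it needs the $p$-divisibility/$p$-completeness structure rather than a generic filtration-shift argument.
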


The proof of \cref{PropTwoMaps} is postponed to the next subsection.

\begin{proof}[Proof of \cref{ThmSphereClassIsPrimitive} assuming \cref{PropTwoMaps}]  By \cref{LemTwoImagesAgree} and \cref{PropTwoMaps}, we have
\[ f_1g_0\varepsilon(\Sigma^{2(p^n-1)}\mathbb{S}_{K(n)})=(p^n-1)f_2(c),\]
where $c$ the class of the cocycle $(\sigma,g)\mapsto (\chi(\sigma)g(\ell)/\ell)^{-1}$ in $H^1_{\cts}(\Pi_n,\OO^\ast(\HH_C^{n-1}))$.  The prime-to-$p$ part of $H^1_{\cts}(\G_n,A_n^\ast)$ has order $(p^n-1)$, and so $(p^n-1)\varepsilon_p(\Sigma^2\mathbb{S}_{K(n)})=\varepsilon(\Sigma^{2(p^n-1)}\mathbb{S}_{K(n)})$.  Since $(p^n-1)$ acts invertibly on the $\Z_p$-module $H^1_{\cts}(\Pi_n,H^1_{\proet}(\HH_C^{n-1},\Z_p(1)))$, we must have $f_1g_0\varepsilon_p(\Sigma^2\mathbb{S}_{K(n)})=f_2(c)$.  Apply $i_2$ and use the commutativity of the diagram in \eqref{EqDiagramChase} to find that $\partial i_1b\varepsilon_p(\Sigma^2\mathbb{S}_{K(n)})$ equals the cocycle $g\mapsto \delta_{\ell}-\delta_{g(\ell)}$ from \cref{LemFixedPartOfSpInH1}.  Since $\partial$ is injective and $i_1$ is an isomorphism, we conclude that $b\varepsilon_p(\Sigma^2\mathbb{S}_{K(n)})$ is the distinguished generator of $H^1_{\proet}(\HH_C^{n-1},\Oxx)^{\Pi_n}$.  
\end{proof}

\subsection{Proof of \cref{PropTwoMaps} and \cref{thm:A**main}}
We begin with some preparatory lemmas.

\begin{lem}\label{LemTruncation:alt}
Assume that $C$ and $D$ are complexes in $D(\Ab)$ such that $C \simeq \tau^{[0,1]}C$ and $D \simeq \tau^{[0,1]}D$ (i.e., $C,D$ are concentrated in cohomological degrees $0$ and $1$). Assume that $H^*(C)$ is $p$-divisible and $H^*(D)$ is derived $p$-complete.   Then 
every map $f \colon C \to D$ factors as 
\[
C \to H^1(C)[-1] \to H^0(D) \to D,
\]
where $H^1(C)$ and $H^0(D)$ are viewed as complexes concentrated in degree $0$.
\end{lem}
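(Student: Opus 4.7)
The plan is to show that the two obstructions to the factorization---namely the composite $H^0(C)\to C\to D$ and the composite $H^1(C)[-1]\to D\to H^1(D)[-1]$---both vanish. Each reduces via the hypercohomology spectral sequence to a $\Hom$-vanishing in $\Ab$, which in turn follows from a general elementary fact about $p$-divisible and derived $p$-complete groups.

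The key input is the following observation: if $M$ is a $p$-divisible abelian group and $N$ is a derived $p$-complete abelian group, then $\Hom_{\Ab}(M,N)=0$. Indeed, derived $p$-completeness forces $\Hom_{\Ab}(\Z[1/p],N)=0$, i.e., no nonzero element of $N$ admits a coherent sequence of divisions by $p$. If a nonzero $f\from M\to N$ existed, then choosing $m_0\in M$ with $f(m_0)\neq 0$ and $m_k\in M$ with $p m_k=m_{k-1}$ (possible by $p$-divisibility of $M$) would yield $(f(m_k))$, a nonzero coherent sequence in $N$, a contradiction. Note that $H^0(D)$ and $H^1(D)$ are themselves derived $p$-complete, because for the standard $t$-structure on $D(\Ab)$ derived $p$-completeness passes to the cohomology groups of $D$.

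With this lemma available, I would first analyze the composition $H^0(C)\to C\xrightarrow{f}D$. Because $D\simeq \tau^{[0,1]}D$, the hypercohomology spectral sequence collapses in the relevant range, yielding $\Hom_{D(\Ab)}(H^0(C),D)\cong \Hom_{\Ab}(H^0(C),H^0(D))$, which vanishes by the key lemma (since $H^0(C)$ is $p$-divisible and $H^0(D)$ is derived $p$-complete). Hence this composite is null, and the fiber sequence $H^0(C)\to C\to H^1(C)[-1]$ produces a factorization $C\to H^1(C)[-1]\xrightarrow{\bar f}D$.

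To descend further through the target I apply exactly the same argument: since $H^1(C)[-1]$ and $H^1(D)[-1]$ are both concentrated in degree $1$, we have $\Hom_{D(\Ab)}(H^1(C)[-1],H^1(D)[-1])=\Hom_{\Ab}(H^1(C),H^1(D))=0$ by the key lemma. Thus the composite $H^1(C)[-1]\xrightarrow{\bar f}D\to H^1(D)[-1]$ vanishes, and the fiber sequence $H^0(D)\to D\to H^1(D)[-1]$ lifts $\bar f$ to a map $H^1(C)[-1]\to H^0(D)$. Combining these factorizations gives the desired composite $C\to H^1(C)[-1]\to H^0(D)\to D$. There is no serious obstacle: the only real content is the Hom-vanishing, and everything else is routine manipulation of the $t$-structure.
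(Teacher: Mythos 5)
Your proof is correct, and it takes a genuinely different route from the paper's. Where the paper's proof applies the derived $p$-completion functor $\widehat{(\;)}$ — observing that $D$ is derived $p$-complete so $f$ factors through $\widehat{C}$, then computing that $p$-divisibility of $H^*(C)$ shifts $\widehat{C}$ down into cohomological degrees $[-1,0]$, so that $\widehat{C}\to D$ passes through $H^0(\widehat{C})\cong\widehat{H^1(C)[-1]}\to H^0(D)$ — you instead argue directly by two $\Hom$-vanishings in $D(\Ab)$, each reducing to the elementary fact that there are no nonzero maps from a $p$-divisible abelian group to a derived $p$-complete one. Concretely, you kill the restriction $H^0(C)\to D$ (using $\Hom_{D(\Ab)}(H^0(C),D)\cong\Hom_{\Ab}(H^0(C),H^0(D))=0$), descend $f$ to $\bar f\colon H^1(C)[-1]\to D$, and then kill the composite $\bar f$ with $D\to H^1(D)[-1]$ to lift it along $H^0(D)\hookrightarrow D$. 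Your version is more self-contained: it needs nothing about the completion functor beyond the one-line characterization $\Hom(\Z[1/p],N)=0$ for derived $p$-complete $N$, whereas the paper's version relies on knowing how $\widehat{(\;)}$ shifts degrees on $p$-divisible input. The paper's approach does have the small advantage of exhibiting the intermediate map $H^1(C)[-1]\to H^0(D)$ as a concrete completion map; yours only establishes existence via an obstruction-theoretic argument, which is all the statement requires. One stylistic quibble: calling the isomorphism $\Hom_{D(\Ab)}(H^0(C),D)\cong\Hom_{\Ab}(H^0(C),H^0(D))$ an instance of the "hypercohomology spectral sequence" is slightly off — it just falls out of the long exact sequence obtained from mapping $H^0(C)$ into the fiber sequence $H^0(D)\to D\to H^1(D)[-1]$, using that $\Ext^{<0}$ vanishes. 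Likewise your remark that derived $p$-completeness "passes to the cohomology groups of $D$" is unnecessary, since the hypothesis gives $H^*(D)$ derived $p$-complete directly.
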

\begin{proof}
Since the cohomology groups $H^*(D)$ are derived $p$-complete, $D$ is derived $p$-complete, so the map $f$ factors as
    \[
        C \to \widehat{C} \to D,
    \]
where $\widehat{C}$ is the derived $p$-completion of $C$. Since $H^*(C)$ is $p$-divisible, we have $H^i(\widehat{C}) = \widehat{H^{i+1}(C)}$. In particular, we have $\widehat{C} \simeq \tau^{[-1,0]}\widehat{C}$. We thus get a decomposition 
    \[
        C \to \widehat{C} \to H^0(\widehat{C}) \to H^0(D) \to D
    \]
and  $H^0(\widehat{C}) \cong \widehat{H^1(C)[-1]}$. We get that the composition $C \to \widehat{C} \to H^0(\widehat{C})$ factors
as 
    \[ 
        C \to H^{1}(C)[-1] \to \widehat{H^{1}(C)[-1]} \simeq H^0(\widehat{C}).
    \]
Combining the last two displayed factorizations gives the claim.
\end{proof}

Let $A$ be a $p$-divisible pro-\'etale sheaf.  There is a short exact sequence of sheaves
\[ 0 \to T_pA \to \varprojlim_p A \to A \to 0,\]
where $T_pA=\varprojlim_m A_{p^m}$. This induces a map $A \to T_p A[1]$.  Let $R\Gamma(A)$ denote the sheaf cohomology of $A$.  Assume that the corresponding boundary maps $H^{i}_{\proet}(A)\xrightarrow{\partial} H^{i+1}_{\proet}(T_pA)$ for $i=0,1$ are zero. Then $H^*(A)$ are quotients of $p$-divisible groups for $i=0,1$.  
Then, the map $\tau^{[0,1]}{R\Gamma_{\proet}}(A) \to \tau^{[0,1]}{R\Gamma_{\proet}}(T_pA[1])$ satisfies the requirements of \cref{LemTruncation:alt}.  
This provides us with a factorization:
\[
\xymatrix{
\tau^{[0,1]}{R\Gamma_{\proet}}(A) \ar[r] \ar[d] & H^1_{\proet}(A)[-1] \ar[d] \\
\tau^{[0,1]}{R\Gamma_{\proet}}(T_pA[1]) & H^1_{\proet}(T_pA)[0]. \ar[l]
}
\]

\begin{lem} \label{LemH1InducesIsom} In the derived category of condensed abelian groups with $\Pi_n$-action,
    the map 
    \[
        H^1_{\proet}(\HH_C^{n-1}, \Z_p(1))[0] \to \tau^{[0,1]}{R\Gamma_{\proet}}(\HH_C^{n-1},  \Z_p(1)[1])
    \]
    is sent to an isomorphism  via the functor
    \[ A\mapsto H^1_{\cts}(\Pi_n,A):= H^1(A^{h\Pi_n}). \]
\end{lem}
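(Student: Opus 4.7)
Set $C := \tau^{[0,1]}R\Gamma_{\proet}(\HH_C^{n-1}, \Z_p(1)[1])$, so that $H^0(C) = H^1_{\proet}(\HH_C^{n-1}, \Z_p(1))$ and $H^1(C) = H^2_{\proet}(\HH_C^{n-1}, \Z_p(1))$, and the morphism under consideration is simply the canonical truncation $\tau^{\leq 0}C \to C$. My plan is to apply the derived continuous cohomology functor $R\Gamma(\Pi_n, -)$ to the associated distinguished triangle $\tau^{\leq 0}C \to C \to H^1(C)[-1]$ and extract the long exact sequence in degree one. This yields a four-term exact sequence
\[
0 \to H^1_{\cts}(\Pi_n, H^0(C)[0]) \to H^1_{\cts}(\Pi_n, C) \to H^0_{\cts}(\Pi_n, H^1(C)) \to H^2_{\cts}(\Pi_n, H^0(C)),
\]
the vanishing of the leftmost term being automatic, as $(H^1(C)[-1])^{h\Pi_n}$ is supported in positive cohomological degrees. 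The assertion of the lemma thus reduces to showing that $H^2_{\proet}(\HH_C^{n-1}, \Z_p(1))^{\Pi_n} = 0$.

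To obtain this vanishing, I plan to invoke \cref{ThmCDNEtale} together with the identification $H^i_{\et}(\HH_C^{n-1}, \Z_p) \cong H^i_{\proet}(\HH_C^{n-1}, \Z_p)$ recorded in \S\ref{ssec:cdn}, and then apply a Tate twist to obtain a $\Pi_n$-equivariant isomorphism
\[
H^2_{\proet}(\HH_C^{n-1}, \Z_p(1)) \cong \St_2(\Z_p)^\ast \otimes_{\Z_p} \Z_p(-1),
\]
where $\Gamma_{\Q_p}$ acts trivially on $\St_2(\Z_p)^\ast$ and through $\chi^{-1}$ on the Tate twist. The module $\St_2(\Z_p)^\ast$ is $\Z_p$-torsion free, being a module of continuous $\Z_p$-linear functionals, and for any prime $p$ one can choose $\gamma \in \Gamma_{\Q_p}$ with $\chi(\gamma) = -1$ (complex conjugation on fourth roots of unity). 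Then any Galois-invariant $x$ satisfies $(\chi(\gamma)^{-1} - 1)x = -2x = 0$, whence $x = 0$ by torsion-freeness.

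The main obstacle I anticipate is careful bookkeeping of the $\Pi_n$-equivariance of the CDN isomorphism together with the Tate twist needed to pass from the $\Z_p(2)$-coefficients appearing in \cref{ThmCDNEtale} to the $\Z_p(1)$-coefficients of interest here; the slightly subtle point is the $p = 2$ case of the final step, where one cannot divide by $2$ and must genuinely rely on the torsion-freeness of $\St_2(\Z_2)^\ast$. Everything else is formal manipulation of the truncation triangle and standard facts about continuous cohomology of profinite groups.
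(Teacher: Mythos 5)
Your proposal is correct and follows essentially the same route as the paper: take the truncation triangle, reduce to the vanishing of $H^0(\Pi_n, H^2_{\et}(\HH_C^{n-1}, \Z_p(1)))$, identify $H^2$ with $\St_2(\Z_p)^\ast(-1)$ via \cref{ThmCDNEtale}, and kill the invariants using the nontriviality of the Tate twist. The only difference is that you spell out the $p=2$ case, where one cannot find $\gamma$ with $\chi(\gamma)^{-1}-1$ a unit and must genuinely invoke the $\Z_p$-torsion-freeness of $\St_2(\Z_p)^\ast=\Hom_{\cts}(\St_2(\Z_p),\Z_p)$; the paper elides this by writing simply ``$\Z_p(-1)^{\Gamma_{\Q_p}}=0$,'' so your version is slightly more careful on that point.
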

\begin{proof}
    Consider the cofiber sequence 
        \[
            H^1_{\proet}(\mathcal{H}_C^{n-1}, \Z_p(1))[0] \to \tau^{[0,1]}{R\Gamma_{\proet}}(\HH_C^{n-1},  \Z_p(1)[1]) \to H^2_{\proet}(\HH_C^{n-1}, \Z_p(1))[-1].
        \]
    It is enough to show that 
        \[
            H^0_{\cts}(\Pi_n,H^2_{\proet}(\HH_C^{n-1}, \Z_p(1))) = 0.
        \]
From \cref{ThmCDNEtale} we have $H^2_{\proet}(\HH_C^{n-1},\Z_p(1))\isom \St_2(\Z_p)^{\ast}(-1)$, which has no nonzero $\Pi_n$-fixed vectors because $\Z_p(-1)^{\Gamma_{\Q_p}}=0$.
\end{proof}

\begin{proof}[Proof of \cref{PropTwoMaps}]
Consider the following diagram, in which all cohomology is computed on the pro-\'etale site of $\HH_C^{n-1}$:

\[
\xymatrix{
& \tau^{[0,1]}{R\Gamma_{\proet}}(\Oxx) \ar[ddl] \ar[ddd] \ar[rr] & & H^1(\Oxx)[-1] \ar[ddd] \\
&&&\\
\tau^{[0,1]}{R\Gamma_{\proet}}(\OO^{\ast}) \ar[dr] & & H^0(\OO^\ast)[0] \ar@{->}'[l][ll] \ar[dr] & \\
& \tau^{[0,1]}{R\Gamma_{\proet}}(\Z_p(1)[1]) & & H^0(\Z_p(1)[1]). \ar[ll]
}
\]

The diagram is commutative: the front rectangle commutes by \cref{LemTruncation:alt}, the bottom parallelogram commutes by naturality of the edge maps, and the left triangle arises from the composition $\OO^{\ast\ast}\to \OO^{\ast}\to \Z_p(1)[1]$.  

Applying the functor $A\mapsto H^1(A^{h\Pi_n})$ to the above diagram yields: 

\[
\resizebox{\textwidth}{!}{
\xymatrix{
& H^1(\SW_n, \Oxx) \ar[ddl]_{g_1} \ar[ddd]_{\phi_1} \ar[rr]^{e_1} & & H^1(\HH_C^{n-1},\Oxx)^{\Pi_n} \ar[ddd]_{\phi_2} \\
&&&\\
H^1(\SW_n, \OO^{\ast}) \ar[dr]_{\delta} & & H^1(\Pi_n,\OO^{\ast}(\HH_C^{n-1})) \ar@{->}'[l][ll]_{\;\;\;\;\;\;g_2} \ar[dr]_{f_2} & \\
& H^1(\Pi_n,{R\Gamma_{\proet}}(\HH_C^{n-1},\Z_p(1)[1])) & & H^1(\Pi_n,H^1(\HH_C^{n-1},\Z_p(1)[1])). \ar[ll]_{e_2}^{\sim}
}}
\]
Here $e_2$ is an isomorphism by \cref{LemH1InducesIsom}.
We can now prove \cref{PropTwoMaps}, in which $f_1=\phi_2\circ e_1$.  Given that $g_1(a_1)=g_2(a_2)$, we have 
    \[ 
        f_1(a_1)=\phi_2e_1(a_1)=e_2^{-1}\delta g_1(a_1) =e_2^{-1}\delta g_2(a_2)=f_2(a_2),
    \]
as desired.
\end{proof}

\Cref{ThmSphereClassIsPrimitive} is now proved.  We explain how it implies \cref{thm:A**main}, which in turn implies our main theorem (\cref{thm:picalg}), see the discussion following \cref{thm:A**main}.  

\begin{proof}[Proof of \cref{thm:A**main}]
    We first consider the case $(n,p)\neq (2,2)$. \Cref{ThmFundamentalExactSequence} together with \cref{ThmSphereClassIsPrimitive} state that there is an exact sequence
        \[
            0\to H^1_{\cts}(\G_1,A_1^{\xx}) \xrightarrow{\det_{\LT}^{\ast}} H^1_{\cts}(\G_n,A_n^{\xx})\to \Z_p \to 0 
        \]
    in which $\varepsilon_p(\Sigma^2\mathbb{S}_{K(n)})$ is sent to a generator of the $\Z_p$.  Since $H^1_{\cts}(\G_1,A_1^{\xx})$ and $\Z_p$ are $p$-complete the exact sequence is a sequence of $\Z_p$-modules. So as $\Z_p$ is free as $\Z_p$-module, the exact sequence splits:
        \begin{equation}\label{eq:proofofthm:A**main1}
            H^1_{\cts}(\G_n,A_n^{\xx})\isom \Z_p\oplus H^1_{\cts}(\G_1,A_1^{\xx}).
        \end{equation} 
    Similarly for $(n,p)= (2,2)$, we have an isomorphism
        \begin{equation}\label{eq:proofofthm:A**main2}
            H^1_{\cts}(\G_2,A_2^{\xx})\isom \Z_2\oplus H^1_{\cts}(\G_1,A_1^{\xx})\oplus\ZZ/2,
        \end{equation} 
    where in both cases the $\Z_p$ summand is generated by $\varepsilon_p(\Sigma^2\mathbb{S}_{K(n)})$. The term $H^1_{\cts}(\G_1,A_1^{\xx})$ has been computed in \cref{lem:height1contribution}; substituting this into \eqref{eq:proofofthm:A**main1} yields isomorphisms
        \[
            H^1_{\cts}(\bG_n,A_n^{\ast\ast}) \cong 
                \begin{cases}
                    \Z_p^2  & \text{if } p>2; \\
                    \Z_2^2 \oplus (\Z/2)^{\oplus 2} & \text{if } p=2 \text{ and } n>2; \\
                    \Z_2^2 \oplus (\Z/2)^{\oplus 3} & \text{if } p=2 \text{ and } n=2.
                \end{cases}
        \]
    As explained in \Cref{ThmFundamentalExactSequence}, a generator of the torsion-free part of $H^1_{\cts}(\G_1,A_1^{\xx})$ is sent to the determinant in $H^1_{\cts}(\G_n,A_n^{\xx})$, thus finishing the proof.
\end{proof}

\biblio
\bibliography{bib}\bibliographystyle{alpha}
\end{document}